\DeclareFontFamily{OT1}{pzc}{}
\DeclareFontShape{OT1}{pzc}{m}{it}{<-> s * [1.100] pzcmi7t}{}
\DeclareMathAlphabet{\mathpzc}{OT1}{pzc}{m}{it}
    \patchcmd{\section}{\scshape}{\large\bfseries}{}{}
    \renewcommand{\@secnumfont}{\bfseries}
\definecolor{color1}{rgb}{0.9,0.9,0.9}
\definecolor{color2}{rgb}{0.5,0.5,0.5}
\numberwithin{equation}{section}
\newtheorem{theorem}{Theorem}[section]
\newtheorem{corollary}[theorem]{Corollary}
\newtheorem{lemma}[theorem]{Lemma}
\newtheorem{proposition}[theorem]{Proposition}
\theoremstyle{definition}
\newtheorem{definition}[theorem]{Definition}
\newtheorem{remark}[theorem]{Remark}
\newtheorem{example}[theorem]{Example}
\def\CC{\mathsf{C}}
\def\DD{\mathsf{D}}
\def\NN{\mathsf{N}}
\def\PPP{\PPP}
\def\Ker{\mathsf{Ker}}
\def\Im{\mathsf{Im}}
\def\epi{\twoheadrightarrow}
\def\MMM{\mathpzc{M}}
\def\mono{\rightarrowtail}
\def\PPi{\mathsf{P}\Pi}
\def\PS{\mathsf{PS}}
\def\Quiv{\mathsf{Quiv}}
\def\qq{\mathsf{q}}
\def\PPP{\mathpzc{P}}
\def\DDD{\mathpzc{D}}
\def\SSS{\mathpzc{S}}
\def\KK{\mathbb{K}}
\def\CCC{\mathpzc{C}}
\def\LLL{\mathpzc{L}}
\def\AAA{\mathpzc{A}}
\def\ZZZ{\mathpzc{Z}}
\def\FFF{\mathpzc{F}}
\def\GGG{\mathpzc{G}}
\def\JJJ{\mathpzc{J}}
\def\ZZ{\mathbb{Z}}
\def\Nrv{\mathsf{Nrv}}
\def\PH{\mathsf{PH}}
\title[Simplicial approach to path homology]{Simplicial approach to path homology of quivers, marked categories, groups and algebras}
\author{Sergei O. Ivanov} 
\email{ivanov.s.o.1986@gmail.com, ivanov.s.o.1986@bimsa.cn}
\author{Fedor Pavutnitskiy}
\email{fedor.pavutnitskiy@gmail.com, fyo@bimsa.cn}
\thanks{The work is supported by Yanqi Lake Beijing Institute of Mathematical Sciences and Applications (BIMSA)}
\begin{document}

\maketitle

\begin{abstract}
We develop a generalisation of the path homology theory introduced by Grigor'yan, Lin, Muranov and Yau (GLMY-theory) in a general simplicial setting. The new theory includes as particular cases the GLMY-theory for path complexes and new homology theories:  path homology of categories with a chosen set of morphisms (marked categories) groups with a chosen subset (marked groups) and path Hochschild homology of algebras with chosen vector subspaces (marked algebras). Using our general machinery, we also introduce a new homology theory for quivers that we call square-commutative homology of quivers and compare it with the theory developed by Grigor’yan, Muranov, Vershinin and Yau. 
\end{abstract}

\setcounter{tocdepth}{1}
\tableofcontents

\section{Introduction}

For the first time the notion of path homology was introduced by Grigoryan, Lin, Muranov, Yau in an unpublished preprint \cite{grigor2012homologies}. They developed a homology theory for directed graphs and for path complexes. We will call it GLMY-theory. Since then, several articles have been published on this topic  \cite{grigor2014homotopy, grigor2017homologies, grigor2020path, grigor2014graphs, grigor2018path, grigor2018path2, kempton2021homology}. 
In fact, the definition of cohomology of digraphs can be found in earlier works of Dimakis and M\"uller-Hoissen \cite{dimakis1994discrete, dimakis1994differential} but a theory was not developed in them. There is a relation of this theory with the theory of magnitude homology \cite{asao2023magnitude}. These ideas are also used in applied mathematics \cite{grbic2022aspects, chowdhury2018persistent, chowdhury2019path}.

The main aim of this paper is to develop a theory in a general simplicial setting for general structures that we call \emph{path pairs}, which includes as particular cases the original GLMY-theory for path complexes (regular version) and new interesting homology theories: path homology of marked categories and marked groups, and path Hochschild homology of marked algebras. We also introduce a new homology theory for quivers that we call square-commutative homology of quivers, which is the path homology of a marked category associated with a quiver. We compare it with the theory developed in \cite{grigor2018path}, which we call $k$-power homology theory. It seems, our general approach allows to develop many other similar theories theories for ``marked structures'', for example, path homology of Lie algebras with a chosen vector subspace. 

For each of the theories we are interested in two questions:  ``is it homotopy invariant in some sense?'' and ``is it compatible with some product?''. The  GLMY-theory of digraphs and path complexes answers affirmatively on these questions. It has two key theorems: the theorem about homotopy invariance of the homology; and an analogue of the Eilenberg–Zilber theorem together with the K\"unneth formula. 
Versions of both of these theorems were proved for path pairs and we deduce some versions of these theorems for square-commutative homology of quivers and for marked categories and groups. We also obtain a version of Eilenberg-Zilber theorem for path Hochschild homology of marked algebras. 

Recall that a simplicial set is a presheaf on the simplicial indexing category $\Delta.$ We consider its wide subcategory $\Pi\subseteq \Delta,$ whose morphisms are order preserving maps with ``connected'' image i.e. the image is of the form $\{k,k+1,\dots,l-1,l\}$ (Subsection \ref{subsection:path_indexing_cat}). Equivalently this subcategory can be defined as the least subcategory containing all codegeneracy maps $s^i:[n+1]\to [n]$ and all \emph{exterior} coface maps $d^0,d^n:[n-1]\to [n].$ This subcategory $\Pi$ is called \emph{path indexing category} and a \emph{path set} is defined as a presheaf on this category. We can also define path objects in any category as functors from $\Pi^{\sf op}.$ In particular, we will consider path modules.    

A \emph{path pair of modules} (over a commutative ring $\KK$) is a couple $\PPP=(A,B),$ where $A$ is a simplicial module and $B$ is its path submodule. In other words, $A$ is a simplicial module and $B$ is a sequence of submodules $B_n\subseteq A_n$ which are closed with respect to degeneracy maps and exterior face maps (but not necessarily with respect to all face maps). 
Generalising the definition given in \cite{grigor2012homologies} we define a chain complex $\Omega \PPP,$ whose homology are called homology of $\PPP$. We prove homotopy invariance for this definition:
we define a notion of homotopic morphisms of path pairs $f\sim g:\PPP\to \PPP'$ and prove that they induce chain homotopic 
morphisms of chain complexes $\Omega f\sim \Omega g: \Omega \PPP \to \Omega \PPP'.$ We define a box product of path pairs of modules $\PPP\square \PPP'$  which is in some sense generalises the box product of digraphs, and prove a version of the Eilenberg-Zilber theorem: if $\KK$ is a principal ideal domain, under some conditions on path pairs of modules $\PPP$ and $\PPP'$ we obtain an isomorphism chain complexes (Theorem \ref{th:EZ}):
\begin{equation}\label{eq:intro:EZ}
    \Omega \PPP \otimes \Omega \PPP' \cong \Omega(\PPP \square \PPP').
\end{equation}
Note that here we have not just a homotopy equivalence of chain complexes, as in the classical Eilenberg–Zilber theorem, but we have an isomorphism of chain complexes. 
So, this theorem can't be considered as a generalization of the classical Eilenberg–Zilber theorem. 
This is because the box-product of path pairs is not a generalization of the tensor product of simplicial modules. 

Similarly to the definition of a path pair of modules one can define a path pair of sets. Any path pair of sets defines a path pair of free modules. In Section \ref{sec:path_pairs_of_sets} we show that all theorems about path pairs of modules imply some versions of these theorems for path pairs of sets. We also show that path complexes defined by Grigor’yan, Lin, Muranov and Yau in \cite{grigor2012homologies} are particular cases of path pairs of sets, and homotopy invariance theorem \cite[Th. 3.3]{grigor2014homotopy} and the Eilenberg-Zilber theorem \cite[Th.7.6]{grigor2012homologies} follow from the corresponding theorems for path pairs. 

Mimicking the definition of marked simplicial sets \cite[\S 3.1]{lurie2009higher} we define marked categories as couples $\MMM = (\CCC, M),$ where $\CCC$ is a category and $M$ is a set of morphisms, called marked morphisms, containing all identity morphisms. The set of marked morphisms defines a path subset in the nerve of $\CCC,$ so each marked category defines a path pair of sets. In Section \ref{sec:embedded_quivers} we define the chain complex $\Omega\MMM$ as the complex corresponding to the path pair of sets and show that there are corresponding versions for homotopy invariance theorem, Eilenberg–Zilber theorem and the K\"unneth theorem in this setting. If $\KK$ is a field, we also prove an interesting inequality for dimensions of the components of this complex: for any marked category $\MMM$ and natural numbers $k,l$ we have 
\begin{equation}\label{eq:intro:inequality}
    {\sf dim}(\Omega_{k+l}\MMM) \leq {\sf dim}(\Omega_k \MMM)\cdot {\sf dim}(\Omega_l \MMM).
\end{equation}
We also define path cohomology of a marked category and show that this is a graded algebra with respect to the cup-product (Subsection \ref{subsec:cohomology_of_embedded_quivers}). The original path homology of digraphs is a particular case of this theory: a digraph $G=(V,E)$ defines a marked category $({\sf c}(V),E),$ where ${\sf c}(V)$ is the category whose objects are elements of $V,$ with one morphism $(u,v)$ from $u$ to $v$, and marked morphisms are edges of the digraph; the path homology of this marked category is the original path homology of the digraph.  

In Section \ref{sec:linearly_embedded_quiver}  we consider some slight generalisation of the notion of marked category, marked linear category, and generalise some statements to this case. Further, in Section \ref{sec:k-power} we use the machinery of marked linear categories to introduce another approach to $k$-power homology theory of quivers developed by Grigor’yan, Muranov, Vershinin and Yau in  \cite{grigor2018path}.

In Section \ref{sec:square-commutative} we define a new version of homology of quivers that we call square-commutative homology of quivers $H^{\sf sc}_*(Q)$ as the path homology of a marked category $(\ZZZ(Q),Q_1)$ associated with $Q.$ We prove some versions of homotopy invariance theorem for square-commutative homology. A variant for the Eilenberg-Zilber theorem here was proved only for the case of digraphs (treated as quivers). We also compare this theory with the path homology of digraphs. We prove that if a digraph $G$ has no non-degenerated directed triangles and double edges, the square-commutative homology coincides with the path homology $H_*^{\sf sc}(G)\cong \PH_*(G).$ We show that for any simplicial complex $S,$ if we denote by $G(S)$ the associated graph considered in \cite{grigor2014graphs}, then
\begin{equation}
H_*(S)=H^{\sf sc}_*(G(S)).    
\end{equation}
So, the square-commutative homology can be as complicated as the homology of simplicial complexes. We also compare the square commutative homology $H_*^{\sf sc}(Q)$ with $k$-power 
homology of $H_*^{(k)}(Q)$ defined and studied in \cite{grigor2018path}. The power of a quiver $Q$ is the maximal number of arrows with equal head and tail. We show that if the power of $Q$ is strictly less than $k$ and $k\cdot 1_\KK$ is invertible in $\KK,$ then 
\begin{equation}
H^{\sf sc}_*(Q)\cong H^{(k)}_*(Q).  
\end{equation}
In particular, we obtain that, if $k\cdot 1_\KK$ and $l\cdot 1_\KK$ are invertible in $\KK,$ and the power of $Q$ is strictly less then both $k$ and $l,$ then $H_*^{(k)}(Q)\cong H_*^{(l)}(Q).$ 

Section \ref{sec:homology_of_subsets} is devoted to path homology of marked groups. We say that a subset of a group is pointed, if it contains the unit. A marked group is a group $G$ with a chosen pointed subset $M\subseteq G$. Since a group can be treated as a category with one object, a marked group is a particular case of a marked category. As a corollary of our general theorem we obtain a version of Eilenberg–Zilber theorem for marked groups:
\begin{equation}
    \Omega(G\times G',M \vee M') \cong \Omega(G,M) \otimes \Omega(G',M'),
\end{equation}
where $M\vee M'=(M\times 1)\cup (1\times M')$ and prove some other properties for this theory. In Section \ref{sec:Hochschild} we develop a similar theory for Hochschild homology of marked algebras (algebras with a chosen vector subspace).  

In Section \ref{sec:homology_of_subsets} we  study coacyclic subsets of groups. A pointed subset $M$ of $G$ is called coacyclic if the map $\PH_*(G,M)\to H_*(G)$ is an isomorphism for $\KK=\ZZ$. We prove some properties of coacyclic subsets and show several examples of them. 
In all these examples of coacyclic subsets the complexes $\Omega(G,M)$  are much smaller then the standard complex for the group. It would be interesting to develop this theory further and to understand how convenient it is to control the homology of a group $G$ using subsets of $G$. In particular,  it would be  interesting to find a connection between (co)homological dimension of a group and ``nice'' subsets of this group. These ``nice'' subsets should be not just the coacyclic subsets, of course, the definition should take account of (co)homology with coefficients in some way.

In the end of the paper we have an appendix of a more categorical flavor. We show that the box product of path pairs can be defined using Day convolution with respect to a promonoidal structure on the category $\Pi.$

\subsection*{Acknowledgements} We are grateful to Jie Wu for his useful remarks.

\section{Weak cylinder functors}

We will need to define homotopic morphisms in several different categories and prove homotopy invariance of different types of  path homology. A uniform approach to the definitions and proofs is via weak cylinder functors that we define in this section. 

A weak cylinder functor on a category $\CCC$ is a functor ${\sf cyl}:\CCC\to \CCC$ equipped with two natural transformations $i^0,i^1:{\sf Id}\to {\sf cyl}.$  We say that two morphisms $f,g:c\to c'$ of the category $\CCC$ are one-step homotopic (with respect to the cylinder functor), if there is a morphism $H:{\sf cyl}(c)\to c'$ such that $hi^0_c = f$ and $hi^1_c=g.$ We consider the the minimal equivalence relation on the hom-set $\CCC(c,c')$ that contains the relation of being one-step homotopic. Two morphisms are homotopic if they are equivalent with respect to this equivalence relation. Note that if we have two homotopic morphisms $f\sim g:c\to c'$ and a morphism $f':c'\to c'',$ then $f'f\sim f'g.$

\begin{proposition} \label{prop:cyl}
Let $\CCC$ and $\widetilde{\CCC}$ be two categories with weak cylinder functors $({\sf cyl},i^0,i^1)$ and $(\widetilde{\sf cyl}, \tilde{i}^0,\tilde{i}^1).$ Assume that $F:\CCC \to \widetilde{\CCC}$ is a functor and there is a natural transformation
$
\varphi : \widetilde{\sf cyl} \; F \to F \; {\sf cyl} 
$ such that $\varphi \circ (\tilde i^n F) = F i^n$ for any $n=0,1.$
\begin{equation}
\begin{tikzcd}
& F \ar[dl,"\tilde{i}^n F"'] \ar[dr,"F i^n"] & \\
\widetilde{\sf cyl}\: F \ar[rr,"\varphi"]  & & F \:  {\sf cyl}.
\end{tikzcd}
\end{equation}
Then $F$ takes homotopic morphisms to homotopic morphisms.
\end{proposition}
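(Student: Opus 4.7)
The plan is to reduce to the one-step case and then close up under the equivalence relation. Suppose $f,g\colon c\to c'$ are one-step homotopic via a morphism $H\colon{\sf cyl}(c)\to c'$ with $Hi^0_c=f$ and $Hi^1_c=g$. As the candidate homotopy in $\widetilde\CCC$ I would take the composite
\[
\widetilde H \;:=\; FH\circ\varphi_c \;\colon\; \widetilde{\sf cyl}(Fc)\longrightarrow Fc',
\]
which makes sense because $\varphi_c\colon\widetilde{\sf cyl}(Fc)\to F({\sf cyl}(c))$ and $FH\colon F({\sf cyl}(c))\to Fc'$. The required endpoint identities then fall out directly from the hypothesis $\varphi\circ(\tilde i^n F)=Fi^n$: for $n=0,1$ one computes
\[
\widetilde H\circ\tilde i^n_{Fc}
\;=\;FH\circ\varphi_c\circ\tilde i^n_{Fc}
\;=\;FH\circ F i^n_c
\;=\;F(Hi^n_c),
\]
which is $Ff$ when $n=0$ and $Fg$ when $n=1$. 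Hence $Ff$ and $Fg$ are one-step homotopic in $\widetilde\CCC$.

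To conclude, recall that the homotopy relation is, by definition, the smallest equivalence relation on the hom-set containing one-step homotopy. Any pair of homotopic morphisms $f\sim g$ in $\CCC$ is therefore connected by a finite zigzag of morphisms in which consecutive terms are one-step homotopic. Applying the preceding paragraph link by link produces a corresponding zigzag of one-step homotopies between $Ff$ and $Fg$ in $\widetilde\CCC$, whence $Ff\sim Fg$.

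I do not anticipate a genuine obstacle: the whole content lies in recognising $FH\circ\varphi_c$ as the right candidate, after which the verification is a single diagram chase against the compatibility triangle in the hypothesis. The one point worth keeping in mind is that one-step homotopy is not itself symmetric or transitive, but this is absorbed into the equivalence closure on both sides, so the zigzag argument transports intact along $F$.
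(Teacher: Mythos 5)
Your argument is correct and is exactly the paper's: the key construction is the same candidate homotopy $F(H)\circ\varphi_c$, with the endpoint identities following from the compatibility triangle, and the extension from one-step homotopy to the full equivalence relation by a zigzag. You merely spell out a verification that the paper leaves implicit.
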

\begin{proof}
If $H:{\sf cyl}(c) \to c'$ is a homotopy between one-step homotopic morphisms $f$ and $g,$ then $ F(H) \circ \varphi_c: \widetilde{\sf cyl} (F(c)) \to F(c') $ is a homotopy between $F(f)$ and $F(g).$ 
\end{proof}

\section{Graded submodules of chain complexes} \label{section:omega}

\subsection{Complexes \texorpdfstring{$\omega$}{ω} and \texorpdfstring{$\psi$}{ψ}}

In this section we denote by $\KK$ a commutative ring and assume that all modules, chain complexes and tensor products are over $\KK.$

Let $C$ be a non-negatively graded chain complex over a commutative ring $\KK$ and $D$ be its graded submodule $D_n\subseteq C_n,$ which is not necessarily a subcomplex. Then we denote by $\omega(C,D)$ the maximal subcomplex of $C$ whose homogeneous components are submodules of $D.$ In other words $\omega(C,D)$ is a subcomplex of $C,$ whose homogeneous components are given by the formula
\begin{equation}
    \omega(C,D)_n=D_n\cap \partial^{-1}(D_{n-1}).
\end{equation}
We will also consider the minimal subcomplex, whose components contain $D,$ and denote it by $\omega'(C,D):$
\begin{equation}
    \omega'(C,D)_n=D_n+\partial(D_{n+1}).
\end{equation}

\begin{remark}\label{rem:embedding}
Note that if $C$ is a chain subcomplex of $C'$ then 
$\omega(C,D)=\omega(C',D)$ and $\omega'(C,D)=\omega'(C',D).$ Slightly more generally we can say that, if $f:C \to C'$ is a monomorphism of chain complexes, then $f$ induces an isomorphism $\omega(C,D)\cong \omega(C',f(D)).$
\end{remark}

The following proposition  follows from \cite[Prop.2.3]{grbic2022aspects} but we add it here with a proof for convenience. 

\begin{proposition}[{cf. \cite[Prop.2.3]{grbic2022aspects}}]
The inclusion $\omega(C,D) \hookrightarrow \omega'(C,D)$ is a quasi-isomorhism
\begin{equation}
    H_*(\omega(C,D))\cong H_*(\omega'(C,D)).
\end{equation}
\end{proposition}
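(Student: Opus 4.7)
The plan is to verify directly at the level of cycles and boundaries that the inclusion $\iota\colon\omega(C,D)\hookrightarrow\omega'(C,D)$ induces both a surjection and an injection on homology. The key algebraic fact I would exploit is that $\partial$ kills the subgroup $\partial(D_{n+1})$ that distinguishes $\omega'$ from $D$, so every cycle of $\omega'$ is, up to a boundary in $\omega'$, already a cycle sitting inside $D$ and hence inside $\omega$.

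First I would note that $\omega'(C,D)$ really is a subcomplex, since $\partial(D_n+\partial D_{n+1})=\partial D_n\subseteq D_{n-1}+\partial D_n=\omega'(C,D)_{n-1}$; this is implicit in the definition but worth recording. For surjectivity of $\iota_*$, take a cycle $z\in Z_n(\omega')$ and write it as $z=d+\partial d'$ with $d\in D_n$ and $d'\in D_{n+1}$. Then $0=\partial z=\partial d$, so in particular $\partial d=0\in D_{n-1}$, which gives $d\in D_n\cap\partial^{-1}(D_{n-1})=\omega(C,D)_n$. Since $d'\in D_{n+1}\subseteq\omega'(C,D)_{n+1}$, the element $z-d=\partial d'$ is a boundary in $\omega'(C,D)$, so $[z]=[d]$ in $H_n(\omega'(C,D))$, and $[d]$ lies in the image of $\iota_*$.

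For injectivity of $\iota_*$, suppose $z\in Z_n(\omega(C,D))$ becomes a boundary in $\omega'(C,D)$, say $z=\partial w$ with $w\in\omega'(C,D)_{n+1}$. Writing $w=e+\partial e'$ with $e\in D_{n+1}$ and $e'\in D_{n+2}$, the relation $\partial^2=0$ gives $z=\partial e$. But $z\in\omega(C,D)_n\subseteq D_n$, so $\partial e\in D_n$, i.e.\ $e\in D_{n+1}\cap\partial^{-1}(D_n)=\omega(C,D)_{n+1}$. Hence $z$ is already a boundary in $\omega(C,D)$, proving injectivity.

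I do not anticipate a serious obstacle: the argument is a direct chain-level manipulation, and the only place one has to be careful is not to confuse the two roles of elements of $\partial D_{n+1}$, namely that they lie in $\omega'_n$ but need not lie in $D_n$ — precisely the reason the inclusion $\omega\subseteq\omega'$ can be strict even while inducing an isomorphism on homology. An alternative, slightly slicker write-up would be to analyse the quotient complex $\omega'(C,D)/\omega(C,D)$ and show that it is acyclic, but the cycle/boundary argument above seems the most transparent and avoids introducing any additional machinery beyond what is already in the excerpt.
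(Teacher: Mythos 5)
Your proof is correct. Both the surjectivity and injectivity arguments are sound: writing a cycle (resp.\ a bounding chain) of $\omega'$ as $d+\partial d'$ and observing that $\partial^2=0$ pushes the problem back into $D$, which lands you in $\omega$ by the defining membership condition.

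Your route differs from the paper's in presentation rather than in substance. The paper computes $H_n(\omega(C,D))$ and $H_n(\omega'(C,D))$ explicitly as quotients of submodules of $K_n=\Ker\partial_n$: it uses the identity $\partial(D_{n+1}\cap\partial^{-1}(D_n))=\partial(D_{n+1})\cap D_n$ to write $H_n(\omega)=(D_n\cap K_n)/(\partial(D_{n+1})\cap D_n)$, and the modular law to write $H_n(\omega')=((D_n\cap K_n)+\partial(D_{n+1}))/\partial(D_{n+1})$, then invokes the second isomorphism theorem to match them. Your argument instead works element-by-element on cycles and boundaries, never naming those quotients. The underlying algebraic facts are identical --- your surjectivity step is the modular-law decomposition in disguise, and your injectivity step is the identity $\partial(D_{n+1}\cap\partial^{-1}(D_n))=\partial(D_{n+1})\cap D_n$ read from right to left --- but the chain-level phrasing is arguably more transparent about \emph{why} the map is an isomorphism, whereas the paper's version makes the two homology groups visible as concrete subquotients, which can be convenient for later computations.
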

\begin{proof}
Set $K_n=\Ker(\partial_n:C_n\to C_{n+1}).$ Using that $K_n\subseteq \partial^{-1}(D_{n-1})$ and $\partial(D_{n+1} \cap \partial^{-1}(D_{n}))=\partial(D_{n+1}) \cap D_{n},$ we obtain \begin{equation}
H_n( \omega(C,D) ) = \frac{D_n \cap K_n}{\partial(D_{n+1}) \cap D_{n}}.
\end{equation}
Using the modular law, we obtain
$(D_n+\partial(D_{n+1})) \cap K_n = (D_n\cap K_n)+\partial(D_{n+1}),$ and hence 
\begin{equation}
H_n(\omega'(C,D)) = \frac{(D_n\cap K_n) + \partial(D_{n+1})}{\partial(D_{n+1})}
\end{equation}
Then the second isomorphism theorem ($(X+Y)/Y\cong X/(Y\cap X)$) and the inclusion  $\partial(D_{n+1})\subseteq K_n$ imply the assertion.  
\end{proof}

Further, we denote by $\psi(C,D)$ the maximal quotient complex of $C$ such that the map $D\hookrightarrow C\epi \psi(C,D)$ is trivial. It is easy to see that 
\begin{equation}
\psi(C,D)=C/\omega'(C,D).
\end{equation}
Further we can define path homology and copath homology of the couple $(C,D)$ as follows 
\begin{equation}
H_*(C,D):=  H_*(\omega(C,D)), \hspace{1cm}   H_*^{\sf c}(C,D):=H_*( \psi(C,D) ). 
\end{equation}

\begin{corollary}\label{cor:long} For any graded submodule $D$ of a chain complex $C,$ there is a long exact sequence 
\begin{equation}
  \dots  \to \PH_n(C,D)\to H_n(C) \to \PH^{\sf c}_n(C,D) \to \PH_{n-1}(C,D) \to \dots.
\end{equation}
\end{corollary}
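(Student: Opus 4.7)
The plan is to apply the standard long exact sequence in homology to an appropriate short exact sequence of chain complexes, and then use the previously established quasi-isomorphism to rewrite one of the terms.

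First, by definition $\psi(C,D) = C/\omega'(C,D)$, so we have a short exact sequence of chain complexes
\begin{equation}
0 \to \omega'(C,D) \to C \to \psi(C,D) \to 0.
\end{equation}
The standard long exact sequence in homology then gives
\begin{equation}
\dots \to H_n(\omega'(C,D)) \to H_n(C) \to H_n(\psi(C,D)) \to H_{n-1}(\omega'(C,D)) \to \dots.
\end{equation}

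By the previous proposition, the inclusion $\omega(C,D)\hookrightarrow \omega'(C,D)$ is a quasi-isomorphism, so we may replace $H_n(\omega'(C,D))$ with $H_n(\omega(C,D)) = H_n(C,D)$. By definition $H_n(\psi(C,D)) = H_n^{\sf a}(C,D)$, yielding the desired long exact sequence. The only step that requires any care is checking that the replacement respects the connecting morphism, which is automatic since the inclusion $\omega(C,D) \hookrightarrow \omega'(C,D) \hookrightarrow C$ is a morphism of chain complexes into the first term of the short exact sequence and thus induces a morphism of long exact sequences.

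There is no real obstacle here; the corollary is essentially an immediate consequence of the previous proposition combined with the definition of $\psi(C,D)$ as $C/\omega'(C,D)$.
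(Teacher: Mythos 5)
Your proof is correct and is exactly the intended argument: the paper does not spell out a proof, since the corollary follows immediately from the short exact sequence $0 \to \omega'(C,D) \to C \to \psi(C,D) \to 0$ together with the quasi-isomorphism $\omega(C,D)\hookrightarrow \omega'(C,D)$ established in the preceding proposition. Your extra remark about the connecting morphism is a reasonable sanity check and matches the implicit reasoning.
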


\begin{lemma}
For any chain complex with graded submodule $(C,D)$ and any $n$ there is an exact sequence 
\begin{equation}\label{eq:exact_sequence_omega}
0 \to    \omega_n(C,D) \to C_n \overset{f}\to C_n/D_n \oplus C_{n-1}/D_{n-1},
\end{equation}
where $f(c)=(c+D_n,\partial(c)+D_{n-1}).$
\end{lemma}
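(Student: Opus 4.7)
The plan is to verify exactness by directly identifying the kernel of $f$ with $\omega_n(C,D)$. The sequence has only two nontrivial spots to check: injectivity of the first arrow, and exactness at $C_n$. Since the first arrow is by construction the inclusion of the submodule $\omega_n(C,D) \subseteq C_n$, injectivity is immediate and requires no argument.

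For exactness at $C_n$, I would unpack the definition of $f$. An element $c \in C_n$ lies in $\ker f$ precisely when $c + D_n = 0$ in $C_n/D_n$ and $\partial(c) + D_{n-1} = 0$ in $C_{n-1}/D_{n-1}$, i.e., when $c \in D_n$ and $\partial(c) \in D_{n-1}$. This is exactly the condition $c \in D_n \cap \partial^{-1}(D_{n-1})$, which is the defining description of $\omega_n(C,D)$ recalled earlier in the section. Hence $\ker f$ equals the image of the inclusion $\omega_n(C,D)\hookrightarrow C_n$, yielding exactness.

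There is no real obstacle here; the statement is essentially a reformulation of the definition of $\omega_n(C,D)$ as a pullback of $D_{n-1}$ along $\partial$ intersected with $D_n$. The only subtlety worth noting is that one need not assume $\partial(D_n) \subseteq D_{n-1}$ (this is why $D$ is only a graded submodule, not a subcomplex): both conditions encoded by $f$ are tested independently, and $\omega_n(C,D)$ is precisely the locus where both hold.
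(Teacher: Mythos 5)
Your proof is correct and matches what the paper intends: the paper's own proof is simply the word ``Obvious,'' and your unpacking of $\ker f = D_n \cap \partial^{-1}(D_{n-1}) = \omega_n(C,D)$ is precisely the computation that makes it obvious.
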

\begin{proof}
Obvious.
\end{proof}

\subsection{Functorial properties of \texorpdfstring{$\omega$}{ω}}

A morphism of chain complexes with graded submodules $f:(C,D)\to (C',D')$ is a morphism of chain complexes $f:C\to C'$ such that $f(D)\subseteq D'.$ It is easy to see that $\omega$ and $\psi$ define functors
\begin{equation}
\omega, \psi : \{\text{complexes with graded submodules} \} \longrightarrow \{ \text{complexes}\}.    
\end{equation}

\begin{proposition}\label{prop:restrictions}
Let $f,g:(C,D)\to (C',D')$ be two morphisms of chain complexes with graded submodules such that the restrictions coincide $f\mid_{D} = g\mid_{D}.$ Then 
\begin{equation}
\omega(f)=\omega(g): \omega(C,D) \longrightarrow \omega(C',D').    
\end{equation}
In particular, if $f:(C,D) \to (C,D)$ is an endomorphism such that $f$ is identical on $D,$ then $\omega(f)={\sf id}_{\omega(C,D)}.$
\end{proposition}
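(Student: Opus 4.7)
The plan is to unpack the definition of $\omega$ and observe that the whole statement is forced. By definition, $\omega(C,D)_n = D_n \cap \partial^{-1}(D_{n-1})$, so in particular $\omega(C,D)_n \subseteq D_n$ for every $n$. Any element $c \in \omega(C,D)_n$ is therefore an element of $D_n$, and the hypothesis $f\mid_D = g\mid_D$ gives $f(c) = g(c)$ directly.

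The morphism $\omega(f)\colon \omega(C,D) \to \omega(C',D')$ is, by the functoriality of $\omega$, just the restriction of $f$ to $\omega(C,D)$, and similarly for $\omega(g)$. Since the two restrictions agree on every component, I conclude $\omega(f) = \omega(g)$. (One should check, to justify the definition of $\omega(f)$, that $f$ sends $\omega(C,D)$ into $\omega(C',D')$, but this is automatic: if $c \in D_n$ with $\partial c \in D_{n-1}$, then $f(c) \in D'_n$ because $f(D) \subseteq D'$, and $\partial f(c) = f(\partial c) \in D'_{n-1}$ because $f$ is a chain map.)

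For the ``in particular'' clause, I set $g = \mathrm{id}_{(C,D)}$ in the first assertion. The hypothesis that $f$ is identical on $D$ becomes precisely $f\mid_D = g\mid_D$, and the conclusion becomes $\omega(f) = \omega(\mathrm{id}_{(C,D)}) = \mathrm{id}_{\omega(C,D)}$ by functoriality of $\omega$.

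There is essentially no obstacle here; the statement is a direct consequence of the containment $\omega(C,D)_n \subseteq D_n$. The only thing to be careful about is writing out explicitly why the restriction of $f$ to $\omega(C,D)$ factors through $\omega(C',D')$, which I mention above for completeness.
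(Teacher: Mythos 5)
Your proof is correct and is exactly the ``obvious'' argument the paper alludes to (the paper's proof is just the sentence ``The proof is obvious''): since $\omega(C,D)_n \subseteq D_n$, the maps $\omega(f)$ and $\omega(g)$ are restrictions of $f$ and $g$ to a subset of $D$ where they agree, together with the routine check that $f$ carries $\omega(C,D)$ into $\omega(C',D')$.
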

\begin{proof}
The proof is obvious.
\end{proof}

\begin{proposition}[Isomorphism-lemma for chain complexes]
\label{prop:E-iso}
Let $f:(C,D)\to (C',D')$ be a morphism of chain complexes with graded submodules and let  $E\subseteq C$ and $E'\subseteq C'$ be graded submodules such that  $\partial(D)\subseteq E$ and $\partial(D')\subseteq E'.$ Assume that $f$ induces isomorphisms $D\cong D'$ and $E\cong E'.$ Then $f$ induces  isomorphisms 
\begin{equation}
\omega(C,D)\cong \omega(C',D'), \hspace{1cm} \omega'(C,D)\cong \omega'(C',D').    
\end{equation}
\end{proposition}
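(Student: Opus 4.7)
The plan is to prove each isomorphism degreewise by an element chase, handling $\omega$ first and then the parallel statement for $\omega'$.

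For $\omega(C,D)\cong \omega(C',D')$, injectivity of $\omega(f)_n$ is immediate: $\omega(C,D)_n\subseteq D_n$ and $f|_D$ is injective. For surjectivity, I would take $y\in \omega(C',D')_n$, so $y\in D'_n$ and $\partial(y)\in D'_{n-1}$, and use the isomorphism $D_n\cong D'_n$ to lift $y$ uniquely to $x\in D_n$ with $f(x)=y$. To conclude $x\in \omega(C,D)_n$, I must verify $\partial(x)\in D_{n-1}$. From $\partial(D)\subseteq E$ we have $\partial(x)\in E_{n-1}$, while $f(\partial(x))=\partial(y)\in D'_{n-1}\cap E'_{n-1}$.

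The crucial step is the following lift-matching argument. Using $D_{n-1}\cong D'_{n-1}$, let $z\in D_{n-1}$ be the unique element with $f(z)=\partial(y)$; using $E_{n-1}\cong E'_{n-1}$, the element $\partial(x)\in E_{n-1}$ is the unique $E$-preimage of $\partial(y)$. The two isomorphism hypotheses combined with the compatibility of $f$ with the differential force these two descriptions to coincide, yielding $\partial(x)=z\in D_{n-1}$, which places $x$ in $\omega(C,D)_n$.

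For $\omega'(C,D)\cong \omega'(C',D')$, surjectivity follows from the identity $f(D_n+\partial(D_{n+1}))=D'_n+\partial(D'_{n+1})$, which is a direct consequence of $f|_D$ being surjective and commuting with $\partial$. For injectivity, if $x=d+\partial(d'')\in D_n+\partial(D_{n+1})$ maps to zero then $f(d)=-\partial(f(d''))\in D'_n\cap E'_n$, and the same lift-matching in degree $n$ forces $d=-\partial(d'')$ in $C_n$, hence $x=0$.

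The main obstacle is precisely this lift-matching step in both cases: an element of $D'\cap E'$ must be shown to have a single $C$-preimage agreeing with both its $D$-lift (through $f|_D^{-1}$) and its $E$-lift (through $f|_E^{-1}$). This is where the two isomorphism hypotheses $D\cong D'$ and $E\cong E'$ are genuinely combined with the chain-map property of $f$; neither assumption alone is enough.
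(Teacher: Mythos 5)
Your argument follows the same route as the paper's, reducing everything to the commutative square formed by $\partial\colon D_n\to E_{n-1}$, $\partial\colon D'_n\to E'_{n-1}$, and the vertical isomorphisms given by $f$. You correctly flag the ``lift-matching'' step as the crux, but you \emph{assert} rather than prove that the $D$-lift $z$ and the $E$-lift $\partial(x)$ of an element of $D'_{n-1}\cap E'_{n-1}$ must coincide, and this does not follow from the stated hypotheses: injectivity of $f$ on $D$ and on $E$ separately says nothing about $f$ on $D+E$. Indeed the proposition as written is false. Take $\KK$ a field, $C_1=\KK e$, $C_0=\KK f_1\oplus\KK f_2$ with $\partial e=f_1$; $C'_1=\KK e'$, $C'_0=\KK f'$ with $\partial e'=f'$; and the chain map $g\colon C\to C'$ with $e\mapsto e'$, $f_1\mapsto f'$, $f_2\mapsto f'$. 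Put $D_1=\KK e$, $D_0=\KK f_2$, $E_1=0$, $E_0=\KK f_1$; and $D'_1=\KK e'$, $D'_0=\KK f'=E'_0$, $E'_1=0$. All the stated hypotheses hold, yet $\omega(C,D)_1=0$ while $\omega(C',D')_1=\KK e'$, and $\omega'(C,D)_0=C_0\cong\KK^2$ maps non-injectively onto $\omega'(C',D')_0=\KK f'$.

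The missing hypothesis is $D\subseteq E$ (whence $D'\subseteq E'$); it holds silently in every application in the paper, since $Q\subseteq Q^2$ in Proposition~\ref{prop:isomorphism-lemma} and $V\subseteq V^2$ in the Hochschild case. With $D\subseteq E$, your $D$-lift $z$ already lies in $E_{n-1}$, so injectivity of $f|_{E_{n-1}}$ forces $z=\partial(x)$; the same point closes the injectivity half of the $\omega'$ argument. The paper's own proof is equally terse at this step and relies tacitly on the same inclusion, so your instinct that this was ``the main obstacle'' was exactly right: resolving it needs an extra hypothesis, not just the two isomorphisms and the chain-map property.
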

\begin{proof}
The commutative square with vertical isomorphisms 
\begin{equation}
\begin{tikzcd}
 D_n \ar[r,"\partial"] \ar[d,"f","\cong"'] & E_{n-1} \ar[d,"f","\cong"'] \\
 D'_n \ar[r,"\partial"] & E'_{n-1}
\end{tikzcd}
\end{equation}
proofs that $f$ induces an isomorphism between $\partial^{-1}(D)\cong \partial^{-1}(D')$ and $\partial(D)\cong \partial(D').$ The assertion follows.
\end{proof}

\subsection{Homotopy invariance of \texorpdfstring{$\omega$}{ω} and \texorpdfstring{$\psi$}{ψ}}

For a chain complex $C$ we denote by ${\sf Cyl}(C)$ the chain complex, whose $n$-th homogeneous component is $C_n\oplus C_{n-1} \oplus C_n$ and the differential is given by the matrix
\begin{equation}
{\sf cyl}(C)_n=C_n\oplus C_{n-1} \oplus C_n, \hspace{1cm} d^{{\sf cyl}(C)}=\left(\begin{smallmatrix} 
d & 1 & 0 \\ 
0 & -d & 0 \\ 
0 & -1 & d
\end{smallmatrix}\right).
\end{equation}
Denote by $I^{\sf c}$ the cylinder of the chain complex $\KK[0]$ concentrated in zero degree
$
I^{\sf c} = {\sf cyl}(\KK[0]).
$
This complex concentrated in  degrees $0,1$  
\begin{equation}
I^{\sf c}: \hspace{1cm} {\dots} \to 0\to \KK \overset{ \binom{1}{-1} }\to \KK^2 \to 0 \to \dots 
\end{equation}
It is easy to check that there is an isomorphism
\begin{equation}
    {\sf cyl}(C) \cong C\otimes I^{\sf c}.
\end{equation}
There are two natural transformations
$i^0,i^1: {\sf Id} \to {\sf cyl}$
defined by 
$i^0=\left(\begin{smallmatrix} 1 \\ 0 \\ 0 \end{smallmatrix}\right)$ and $i^1=\left(\begin{smallmatrix} 0 \\ 0 \\ 1 \end{smallmatrix}\right),$ that 
make ${\sf cyl}$ a weak cylinder functor. It is well known that $f$ and 
$g$ are homotopic with respect to this weak cylinder functor if and only if there exists a morphism of degree -1 of underlying graded modules $h:C\to C'$ 
such that $f-g= dh + hd.$

For the category of chain complexes with graded submodules we define a weak cylinder functor ${\sf cyl}$ by the formulas
\begin{equation}
    {\sf cyl}(C,D) = ( {\sf cyl}(C),{\sf cyl}(D)), \hspace{1cm} {\sf cyl}(D)_n= D_n \oplus D_{n-1} \oplus D_n,
\end{equation}
$i^0=\left(\begin{smallmatrix} 1 \\ 0 \\ 0 \end{smallmatrix}\right)$ and $i^1=\left(\begin{smallmatrix} 0 \\ 0 \\ 1 \end{smallmatrix}\right).$
Homotopic morphisms of chain complexes with graded submodules are defined via this cylinder functor. Note that 
\begin{equation}\label{eq:cyl_I}
{\sf cyl}(C,D) = (C\otimes I^{\sf c},D\otimes I^{\sf g}),  
\end{equation}
where $I^{\sf g}$ is the underlined graded vector space of $I^{\sf c}.$

\begin{proposition}
Two morphisms of chain complexes with graded submodules $f,g: (C,D)\to (C',D')$ are homotopic if and only if there exist a chain homotopy $h_n:C_n\to C'_{n+1}$  such that 
$f-g=hd+dh$ and  $h(D)\subseteq D'$.
\end{proposition}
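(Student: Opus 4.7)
The plan is to unpack what a single ``one-step'' homotopy $H : {\sf cyl}(C,D) \to (C',D')$ concretely amounts to, check that it is exactly the data of a submodule-preserving chain homotopy, and then observe that this relation is already an equivalence relation, so the equivalence closure from Section 2 adds nothing.

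First I would write the degree-$n$ component of such an $H$ as a row $(a_n,\,h_n,\,b_n)$ on $C_n\oplus C_{n-1}\oplus C_n$, with $a_n,b_n : C_n\to C'_n$ and $h_n : C_{n-1}\to C'_n$. The relations $H i^0 = f$ and $H i^1 = g$ force $a=f$ and $b=g$. Expanding the chain-map condition $d^{C'}\circ H = H\circ d^{{\sf cyl}(C)}$ using the explicit matrix
\[
d^{{\sf cyl}(C)}=\left(\begin{smallmatrix} d & 1 & 0 \\ 0 & -d & 0 \\ 0 & -1 & d\end{smallmatrix}\right)
\]
given in the text, the top-left and bottom-right diagonals reproduce that $f$ and $g$ are chain maps, while the middle column yields precisely $f - g = dh + hd$. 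This is the standard computation for ordinary chain complexes, and it supplies the ``if and only if'' at the level of the underlying complexes.

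Next I would handle the submodule condition. Using the decomposition ${\sf cyl}(D)_n = D_n \oplus D_{n-1} \oplus D_n$, the inclusion $H({\sf cyl}(D))\subseteq D'$ is equivalent to the three conditions $f(D)\subseteq D'$, $h(D)\subseteq D'$, $g(D)\subseteq D'$. The first and third hold automatically because $f$ and $g$ are morphisms of pairs, so the only extra input carried by the one-step homotopy $H$ is exactly $h(D)\subseteq D'$. Conversely, from any $h$ with $f - g = dh + hd$ and $h(D)\subseteq D'$ one reassembles such an $H$ by the same formulas. Thus one-step homotopy of $f$ and $g$ coincides with the relation in the statement.

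Finally, I would note that this relation is already an equivalence relation: it is reflexive (take $h=0$), symmetric (replace $h$ by $-h$), and transitive (add two homotopies), and all three operations preserve the condition $h(D)\subseteq D'$ since such $h$ form a $\KK$-submodule of the degree-$1$ maps $C\to C'$. Hence the equivalence closure defining ``homotopic'' in Section~2 agrees with the one-step relation, and the proposition follows. The only step with genuine content is the matrix calculation identifying $H$ with a chain homotopy; the rest is bookkeeping.
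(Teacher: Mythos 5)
Your proof is correct and follows the same route as the paper: decompose a one-step homotopy $H$ as the row $(f,h,g)$, expand the chain-map condition $dH = Hd$ against the given matrix for $d^{{\sf cyl}(C)}$ to extract $f - g = dh + hd$, and observe that the submodule condition $H({\sf cyl}(D))\subseteq D'$ reduces to $h(D)\subseteq D'$. The paper's proof is a one-line version of exactly this computation. The one thing you do beyond the paper is spell out why the one-step relation is already reflexive, symmetric and transitive, so that passing to the equivalence closure from Section 2 changes nothing; the paper leaves this implicit. That extra remark is genuinely needed for the statement as literally written (which speaks of ``homotopic,'' not ``one-step homotopic''), so it is a welcome addition rather than padding.
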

\begin{proof}
Let $H=(f,h,g):{\sf cyl}(C,D) \to (C',D')$ be a morphism.  The equation $Hd=dH$ is equivalent to $f-g=hd+dh$ and the inclusion $H( {\sf cyl}(D) )\subseteq D'$ is equivalent to $h(D)\subseteq D'.$ 
\end{proof}

\begin{proposition}\label{prop:homotopy_grad}
Let $f\sim g:(C,D)\to (C',D')$ be homotopic morphisms of chain complexes with graded submodules. Then the induced morphisms on $\omega,$ $\omega'$ and $\psi$ are homotopic 
\begin{equation}
\begin{split}
\omega(f) \sim \omega(g) : \omega(C,D) &\longrightarrow \omega(C',D'), \\
\omega'(f) \sim \omega'(g) : \omega'(C,D) &\longrightarrow \omega'(C',D'), \\
\psi(f) \sim \psi(g) : \psi(C,D) &\longrightarrow \psi(C',D'). 
\end{split}
\end{equation}
\end{proposition}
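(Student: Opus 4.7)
The plan is to invoke the characterization just proven: $f \sim g$ means there exists a degree-$1$ map $h : C \to C'$ with $f - g = dh + hd$ and $h(D) \subseteq D'$. I would show that any such $h$ automatically restricts to $\omega$ and to $\omega'$, and descends to $\psi$, while the homotopy identity $f - g = dh + hd$ passes through these constructions. That immediately gives the three asserted chain homotopies.

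For $\omega$, take $c \in \omega(C,D)_n$, so $c \in D_n$ and $\partial c \in D_{n-1}$. Then $h(c) \in D'_{n+1}$ because $h(D) \subseteq D'$, and
\begin{equation*}
\partial h(c) \;=\; (f - g)(c) - h(\partial c) \;\in\; D'_n + h(D_{n-1}) \;\subseteq\; D'_n
\end{equation*}
since $f,g$ are morphisms of pairs. Hence $h(c) \in \omega(C',D')_{n+1}$, and the restriction $\omega(h) := h|_{\omega(C,D)}$ is a chain homotopy from $\omega(f)$ to $\omega(g)$.

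For $\omega'$, an arbitrary element of $\omega'(C,D)_n$ has the form $c = d_0 + \partial e$ with $d_0 \in D_n$ and $e \in D_{n+1}$. Rewriting the homotopy identity as $h\partial = (f-g) - \partial h$ and applying it to $e$ yields
\begin{equation*}
h(c) \;=\; h(d_0) + (f-g)(e) - \partial h(e),
\end{equation*}
whose three summands lie respectively in $D'_{n+1}$, $D'_{n+1}$, and $\partial(D'_{n+2})$, so $h(c) \in \omega'(C',D')_{n+1}$. For $\psi$, since $h$ now sends $\omega'(C,D)$ into $\omega'(C',D')$, it descends to a degree-$1$ map between the quotient complexes $\psi(C,D) = C/\omega'(C,D)$ and $\psi(C',D') = C'/\omega'(C',D')$, carrying the identity $f - g = dh + hd$ along.

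The only mild obstacle is the $\omega'$ case, which unlike $\omega$ requires using the homotopy formula itself (not merely $h(D) \subseteq D'$) to re-express the boundary contribution $h(\partial e)$ in the form $D' + \partial D'$. Everything else is formal; in particular, the $\psi$ case follows from the $\omega'$ case by passing to the quotient.
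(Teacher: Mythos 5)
Your proposal is correct and follows essentially the same route as the paper: extract the chain homotopy $h$ with $f-g = dh + hd$ and $h(D) \subseteq D'$, verify directly that $h$ carries $\omega(C,D)$ into $\omega(C',D')$ and $\omega'(C,D)$ into $\omega'(C',D')$ using the homotopy identity, and then pass to the quotient for $\psi$. The paper's argument is identical in substance, just organized slightly differently (it first notes $h(\partial D) \subseteq D' + \partial D'$ as an intermediate claim).
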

\begin{proof}
Let $h:C\to C'$ be the homotopy  such that $f-g=hd+dh$ and $h(D)\subseteq D'.$  We claim that $h(d(D))\subseteq D'+d(D').$ Indeed, for $x\in D$ we have $hd(x)=f(x)-g(x)-dh(x)$ and $f(x),g(x)\in D'$ and $dh(x)\in d(D').$  Therefore, $h(\omega'(C,D))\subseteq \omega'(C',D').$ It follows that $h$ induces a chain homotopy  $\omega'(f)\sim \omega'(g)$ and $\psi(f)\sim \psi(g).$ We also claim that $h(D \cap d^{-1}(D) )\subseteq D'\cap d^{-1}(D').$ Indeed, if $x\in D \cap d^{-1}(D),$ then $h(x)\in D'$ and $d(h(x))=f(x)-g(x)-hd(x) \in D'.$ Thus $h(\omega(C,D))\subseteq \omega(C',D')$ and $h$ induces a chain homotopy $\omega(f)\sim \omega(g).$
\end{proof}

\subsection{Relation of \texorpdfstring{$\omega$}{ω} and \texorpdfstring{$\psi$}{ψ} to the tensor product}

For any modules $M,M'$ and their submodules $N\subseteq M$ and $N'\subseteq M'$ we set 
\begin{equation}
N\bar \otimes N' = \Im(N\otimes N' \to M\otimes M').
\end{equation} Note that there is an isomorphism
\begin{equation}\label{eq:tensor_of_quo}
M/N \otimes M'/N' \cong (M \otimes M')/ (M\bar \otimes N' + N\bar \otimes M').
\end{equation}

\begin{proposition}
Let $\KK$ be a commutative ring, $C,C'$ be chain complexes over $\KK$ and $D,D'$ be their graded submodules. Then there is an isomorphism 
\begin{equation}
\psi(C,D) \otimes \psi(C',D') \cong \psi(C\otimes C', C\bar \otimes D' + D \bar \otimes C').    
\end{equation}
\end{proposition}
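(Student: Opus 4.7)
The plan is to reduce the claimed isomorphism to an equality of graded submodules of $C\otimes C'$ and then verify that equality by a direct computation using the Leibniz rule for the tensor product differential.

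First I would apply the isomorphism \eqref{eq:tensor_of_quo} to the left-hand side. Since $\psi(C,D)=C/\omega'(C,D)$ and $\psi(C',D')=C'/\omega'(C',D')$, this immediately gives
\begin{equation}
\psi(C,D)\otimes \psi(C',D') \cong (C\otimes C')/\bigl(C\bar\otimes \omega'(C',D') + \omega'(C,D)\bar\otimes C'\bigr).
\end{equation}
On the other hand, by definition $\psi(C\otimes C', C\bar\otimes D' + D\bar\otimes C') = (C\otimes C')/\omega'(C\otimes C', E)$, where $E := C\bar\otimes D' + D\bar\otimes C'$. So the proposition reduces to the claim that, as graded submodules of $C\otimes C'$,
\begin{equation}
C\bar\otimes \omega'(C',D') + \omega'(C,D)\bar\otimes C' \;=\; \omega'(C\otimes C', E).
\end{equation}

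Next I would compute both sides degree-wise. Expanding the left-hand side using $\omega'(C,D)_n = D_n + \partial D_{n+1}$, degree $n$ becomes the sum over $i+j=n$ of four types of terms: $C_i\bar\otimes D'_j$, $C_i\bar\otimes \partial D'_{j+1}$, $D_i\bar\otimes C'_j$, and $\partial D_{i+1}\bar\otimes C'_j$. For the right-hand side, $E_n = \sum_{i+j=n}(C_i\bar\otimes D'_j + D_i\bar\otimes C'_j)$ contributes the first and third types. Applying the Leibniz rule $\partial(x\otimes y) = \partial x \otimes y + (\pm 1) x\otimes \partial y$ to generators of $E_{n+1}$, the pieces $\partial c\otimes d'$ and $d\otimes \partial c'$ fall back into $E_n$, while the remaining pieces $c\otimes \partial d'$ and $\partial d \otimes c'$ produce exactly the second and fourth types. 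Conversely, each of the missing terms $C_i\bar\otimes \partial D'_{j+1}$ and $\partial D_{i+1}\bar\otimes C'_j$ is visibly a $\partial$-image of an element of $E_{n+1}$ modulo $E_n$. Hence both sides coincide.

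The only substantive point is the bookkeeping in the Leibniz calculation: one must check that the ``unwanted'' summands $C_i\bar\otimes \partial D'_{j+1}$ and $\partial D_{i+1}\bar\otimes C'_j$ can be isolated from $\partial E_{n+1}$ modulo $E_n$, which is clear since each such element is the boundary of a generator of $E_{n+1}$ whose other Leibniz term already lies in $E_n$. Once the equality of denominators is established, the proposition follows directly from \eqref{eq:tensor_of_quo}.
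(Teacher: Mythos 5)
Your proof is correct and takes essentially the same route as the paper: both apply \eqref{eq:tensor_of_quo} to reduce the claimed isomorphism to an equality of denominators inside $C\otimes C'$, and both verify that equality degree-wise via the Leibniz rule. Your packaging — stating the equality $C\bar\otimes\omega'(C',D')+\omega'(C,D)\bar\otimes C'=\omega'(C\otimes C',E)$ up front before unwinding it — is a minor stylistic reorganization of the same computation.
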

\begin{proof}
By \eqref{eq:tensor_of_quo} the $n$-th component of $\psi(C,D) \otimes \psi(C',D')$ is isomorphic to the quotient of  $\bigoplus_{i+j=n} C_i\otimes C'_j$ by 
\begin{equation}\label{eq_psi_prod1}
\bigoplus_{i+j=n} \Big( C_i \bar \otimes (D'_j+\partial(D'_{j+1})) + (D_i+\partial(D_{i+1})) \bar \otimes C_j' \Big).
\end{equation} 
By the definition of $\psi$ we obtain that the $n$-th component of $\psi(C\otimes C', C\bar \otimes D' + D \bar \otimes C')$ is equal to the quotinet of  $(C \otimes C')_n$ by
\begin{equation}
(C\bar \otimes D' + D \bar \otimes C')_n + \partial^{C\otimes C'}( (C\bar  \otimes D' + D\bar \otimes C')_{n+1}).
\end{equation}
It is easy to see that 
\begin{equation}\label{eq_psi_prod2}
(C \bar \otimes D' + D \bar \otimes C')_{n}  = \bigoplus_{i+j=n} \Big( C_i \bar \otimes D'_j + D_i \bar \otimes C'_j  \Big)
\end{equation}
and 
\begin{equation}\label{eq_psi_prod3}
\begin{split}
&\partial^{C\otimes C'}( (C \bar \otimes D' + D \bar \otimes C')_{n+1}) = \\ 
&= \bigoplus_{i+j=n} \Big( \partial(C_{i-1}) \bar \otimes D'_j + C_{i}  \bar\otimes \partial(D'_j)+\partial (D_{i+1}) \bar \otimes C'_j + D_i \bar \otimes \partial(C'_{j+1})  \Big)
\end{split}
\end{equation}
Using that $\partial(C_{i-1}) \bar \otimes D_j' \subseteq C_i \bar \otimes D_j'$ and $D_i \bar \otimes \partial(C'_{j+1}) \subseteq D_i \bar \otimes C_j',$ we obtain that the sum of \eqref{eq_psi_prod3} and \eqref{eq_psi_prod2} equals to \eqref{eq_psi_prod1}. The assertion follows. 
\end{proof}

\begin{lemma}\label{lemma_direct_summand}
Let $\mathbb{K}$ be a principal ideal domain and $D$ be a graded submodule of a chain complex $C$ over $\KK.$ Assume that $C_n$ is free and $D_n$ is a direct summand of $C_n$ for any $n.$ Then $\omega_n(C,D)$ is a direct summand of $D_n$ for any $n.$ 
\end{lemma}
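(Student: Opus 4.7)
The key observation is that $\omega_n(C,D)=D_n\cap \partial^{-1}(D_{n-1})$ can be described as the kernel of a single map into a free module. Specifically, consider the composition
\begin{equation}
\varphi_n : D_n \hookrightarrow C_n \overset{\partial}\longrightarrow C_{n-1} \twoheadrightarrow C_{n-1}/D_{n-1}.
\end{equation}
By definition $\Ker(\varphi_n) = D_n\cap \partial^{-1}(D_{n-1}) = \omega_n(C,D)$, so we obtain a short exact sequence
\begin{equation}
0 \longrightarrow \omega_n(C,D) \longrightarrow D_n \longrightarrow D_n/\omega_n(C,D) \longrightarrow 0,
\end{equation}
with $D_n/\omega_n(C,D)$ identified with the image of $\varphi_n$ inside $C_{n-1}/D_{n-1}$.

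Now I would invoke the hypothesis that $D_{n-1}$ is a direct summand of $C_{n-1}$: writing $C_{n-1} = D_{n-1}\oplus E_{n-1}$, we get $C_{n-1}/D_{n-1}\cong E_{n-1}$. Since $C_{n-1}$ is free over the PID $\KK$ and $E_{n-1}$ is a direct summand of it, $E_{n-1}$ is a submodule of a free module over a PID, hence is itself free. Therefore $D_n/\omega_n(C,D)$ embeds into a free $\KK$-module, and by the same structure theorem it is again free, in particular projective.

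Projectivity of $D_n/\omega_n(C,D)$ means the short exact sequence above splits, so $\omega_n(C,D)$ is a direct summand of $D_n$, which is what we wanted. The only mildly delicate point is ensuring freeness of $D_n/\omega_n(C,D)$, which rests on the two applications of the fact that submodules of free modules over a PID are free; apart from this, the argument is purely diagrammatic and I expect no real obstacle.
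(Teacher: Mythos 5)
Your argument is correct and is essentially the paper's own proof: both identify $\omega_n(C,D)$ as the kernel of the induced map $D_n\to C_{n-1}/D_{n-1}$, observe that $C_{n-1}/D_{n-1}$ is free over the PID because $D_{n-1}$ splits off, deduce that the image (hence the cokernel $D_n/\omega_n(C,D)$) is free, and split the resulting short exact sequence. The only cosmetic difference is that you realize $C_{n-1}/D_{n-1}$ as the complementary summand $E_{n-1}$ before invoking freeness of submodules of free modules, whereas the paper states the freeness of the quotient directly.
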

\begin{proof}
A submodule of a free module over a principal ideal domain is free. Hence for any homomorphism $f:M\to M',$ if $M'$ is free, then $\Ker(f)$ is a direct summand of $M$ (because $\Im(f)$ is free and the short exact sequence $\Ker(f)\mono M \epi \Im(f)$ splits). Note also that the module $C_n/D_n$ is free for any $n$ because $D_n$ is a direct summand of $C_n.$ Then the equation 
\begin{equation}
\label{eq:omega}
\omega(C,D)_n=\Ker\left(f:D_n \to C_{n-1}/D_{n-1} \right),
\end{equation}
where $f(x)=\partial x +D_{n-1},$
implies that $\omega(C,D)_n$ is a direct summand of $D_n.$
\end{proof}

\begin{proposition}\label{prop_omega_direct}
Let $\mathbb{K}$ be an principal ideal domain and let $C,C'$ be chain complexes over $\KK$ and $D,D'$ be their graded submodules. Assume that $C_n,C_n'$ are free modules and $D_n,D'_n$ are their direct summands respectively for any $n$. Then the tensor product of the canonical embeddings $\iota : \omega(C,D)\to C$ and $\iota': \omega(C',D')\to C'$ induces an isomorphism
\begin{equation}
    \omega(C,D) \otimes \omega(C',D') \cong \omega(C\otimes C', D \otimes D').
\end{equation}
\end{proposition}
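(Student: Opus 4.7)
The plan is to reduce the assertion to a statement about kernels of linear maps and then exploit flatness. First I would verify that the embeddings $\omega(C,D)\hookrightarrow C$ and $\omega(C',D')\hookrightarrow C'$ do induce a chain map into $\omega(C\otimes C',D\otimes D')$. Since $D\hookrightarrow C$ and $D'\hookrightarrow C'$ split, the map $D\otimes D'\to C\otimes C'$ is injective and $D\bar\otimes D'=D\otimes D'$; so for $x\in \omega_i(C,D)$ and $y\in \omega_j(C',D')$ the element $x\otimes y$ lies in $D_i\otimes D'_j\subseteq (D\otimes D')_n$, and $\partial(x\otimes y)=\partial x\otimes y+(-1)^i x\otimes \partial y$ lies in $D_{i-1}\otimes D'_j+D_i\otimes D'_{j-1}\subseteq (D\otimes D')_{n-1}$. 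Injectivity of the induced map is clear from flatness of everything in sight.

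Next I would pick splittings $C_n=D_n\oplus E_n$ and $C'_n=D'_n\oplus E'_n$ and decompose the differentials on $D$ and $D'$ as $\partial_C|_{D_n}=\alpha_n+\beta_n$ with $\alpha_n\colon D_n\to D_{n-1}$, $\beta_n\colon D_n\to E_{n-1}$, and similarly $\partial_{C'}|_{D'_n}=\alpha'_n+\beta'_n$; from the description $\omega_n(C,D)=D_n\cap \partial^{-1}(D_{n-1})$ one reads off $\omega_n(C,D)=\Ker\beta_n$ and $\omega_n(C',D')=\Ker\beta'_n$. The ambient $(C\otimes C')_{n-1}$ decomposes into four types of summands according to which factor lies in $D$ or in $E$, and for $z_{ij}\in D_i\otimes D'_j$ the parts of $\partial z_{ij}$ outside $D\otimes D'$ are $(\beta_i\otimes 1)(z_{ij})\in E_{i-1}\otimes D'_j$ and $(-1)^i(1\otimes \beta'_j)(z_{ij})\in D_i\otimes E'_{j-1}$. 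Crucially, as $(i,j)$ ranges over pairs with $i+j=n$, these ambient summands are pairwise distinct direct summands of $(C\otimes C')_{n-1}$, so the condition $\partial z\in (D\otimes D')_{n-1}$ for $z=\sum_{i+j=n} z_{ij}$ forces each $z_{ij}$ to lie in $\Ker(\beta_i\otimes 1_{D'_j})\cap \Ker(1_{D_i}\otimes \beta'_j)$.

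Finally, since $D_i$ and $D'_j$ are direct summands of free modules over a principal ideal domain, they are free and hence flat. Applying $-\otimes D'_j$ to the exact sequence $0\to \omega_i(C,D)\to D_i\to \Im\beta_i\to 0$ (with $\Im\beta_i\hookrightarrow E_{i-1}$ remaining injective after tensoring, by flatness of $D'_j$) yields $\Ker(\beta_i\otimes 1_{D'_j})=\omega_i(C,D)\otimes D'_j$; symmetrically $\Ker(1_{D_i}\otimes \beta'_j)=D_i\otimes \omega_j(C',D')$. By Lemma \ref{lemma_direct_summand}, $\omega_i(C,D)$ and $\omega_j(C',D')$ are direct summands of $D_i$ and $D'_j$, so choosing complements $D_i=\omega_i(C,D)\oplus F_i$ and $D'_j=\omega_j(C',D')\oplus F'_j$ and expanding $D_i\otimes D'_j$ as the direct sum of four tensor blocks shows that $(\omega_i(C,D)\otimes D'_j)\cap (D_i\otimes \omega_j(C',D'))=\omega_i(C,D)\otimes \omega_j(C',D')$. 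Summing over $i+j=n$ yields the reverse inclusion, hence the desired isomorphism. The main technical point I expect is the middle paragraph: one must verify that the ``$E$-valued'' error terms from different bidegrees live in genuinely independent summands and therefore cannot cancel one another, which is precisely where the direct-summand hypothesis (rather than mere inclusion $D\subseteq C$) is essential.
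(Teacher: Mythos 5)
Your proof is correct and follows essentially the same strategy as the paper: decompose $z\in\omega_n(C\otimes C',D\otimes D')$ by bidegree, observe that the constraint $\partial z\in D\otimes D'$ decouples because the off-$D$ parts of each $\partial z_{ij}$ land in independent summands of the tensor decomposition and hence cannot cancel, and then intersect $\omega_i(C,D)\otimes D'_j$ with $D_i\otimes\omega_j(C',D')$ via Lemma \ref{lemma_direct_summand} and freeness over the PID. The only difference is cosmetic: you phrase the key step through a chosen splitting $C=D\oplus E$ and the component $\beta$ of $\partial|_D$, whereas the paper picks a basis of $D_k$ and projects onto $C\otimes(C'/D')$ --- the same flatness argument with the roles of the two tensor factors exchanged.
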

\begin{proof}
Since $\mathbb{K}$ is a principal integral domain and $C_n,C'_n$ are free, we obtain that the modules $D_n,D_n',\omega_n(C,D),\omega_n(C',D')$ are also free and the map $\omega(C,D)\otimes \omega(C',D')\to C\otimes C'$ is injective. We will identify $\omega(C,D)\otimes \omega(C',D')$ with its image in $C\otimes C'.$ So we need to prove that $\omega(C,D)\otimes \omega(C',D')=\omega(C\otimes C', D'\otimes D').$ Since $\omega(C,D)\otimes \omega(C',D')$ is a subcomplex of $C\otimes C,$ whose components lie in $D\otimes D',$ we have $\omega(C,D)\otimes \omega(C',D')\subseteq \omega(C\otimes C',D\otimes D').$ So, it is enough to prove the opposite inclusion.

Take $x\in \omega_n(C\otimes C',D\otimes D')$ and prove that $x\in (\omega(C,D)\otimes \omega(C',D'))_n.$ Decompose $x$ as $x=\sum_{k+l=n} x_{k,l},$ where $x_{k,l}\in D_k\otimes D'_l.$
Take a basis $(b_{i,k})_{i\in I_k}$ of $D_k.$ Then $x_{k,l}=\sum  b_{i,k} \otimes y_{i,k,l}$ for some $y_{i,k,l}\in D'_l.$ The component of $\partial x$ in the summand $C_{k}\otimes C_{l-1}$ is
\begin{equation}\label{eq_sum_diff}
\begin{split}
(\partial\otimes 1)(x_{k+1,l-1}) + 
(1\otimes \partial)(x_{k,l})
=\\
=\sum_{i\in I_{k+1}} \partial b_{i,k+1} \otimes y_{i,k+1,l-1}  +(-1)^k\sum_{i\in I_k} b_{i,k}\otimes  \partial y_{i,k,l}  \end{split}    
\end{equation}
Since $\partial x \in D\otimes D'$ its image under the map $1\otimes {\sf pr}:C\otimes C' \epi C\otimes C'/D'$ is  trivial, where ${\sf pr}:C'\epi C'/D'$ is the canonical projection. The image of the left hand summand of 
\eqref{eq_sum_diff} is also trivial in $C\otimes (C'/D')$ because $y_{i,k+1,l-1}\in D'_{l-1}.$ Then $\sum_{i\in I_{k}} b_{i,k}\otimes {\sf pr}(\partial y_{i,k,l})=0.$ Since $D'_{l-1}$ is a direct summand of $C'_{l-1}$ and $\mathbb{K}$ is a principal ideal domain, we obtain that $C'_{l-1}/D'_{l-1}$ is also a free module. Therefore the equation $\sum_{i\in I_{k}} b_{i,k}\otimes {\sf pr}(\partial y_{i,k,l})=0$ implies 
${\sf pr}(\partial y_{i,k,l})=0$ for any $i\in I_k.$ Then $\partial y_{i,k,l}\in D'_{l-1}$ and $y_{i,k,l}\in \omega_l(C',D').$ Thus $x_{k,l}\in D_k\otimes \omega_l(C',D').$ Similarly we prove that $x_{k,l}\in \omega_k(C,D)\otimes D'_l.$ By Lemma \ref{lemma_direct_summand} the modules $\omega_k(C,D)$ and $\omega_l(C',D')$ are direct summands of $D_k$ and $D_l'$ respectively. Then $(\omega_k(C,D)\otimes D'_l)\cap (D_k\otimes \omega_l(C',D'))=\omega_k(C,D)\otimes \omega_l(C'D').$ Therefore $x_{k,l}\in \omega_k(C,D)\otimes \omega_l(C'D'),$ and hence, $x\in \omega(C,D)\otimes \omega(C',D').$
\end{proof}

\begin{corollary}\label{cor:omega_direct}
Under the assumption of Proposition  \ref{prop_omega_direct} the map 
\begin{equation}
 \iota\otimes \iota ' : \omega(C,D) \otimes \omega(C',D) \longrightarrow C\otimes C' 
\end{equation}
is injective. 
\end{corollary}

\begin{remark}
The assumption that $\KK$ is a principal ideal domain in Proposition \ref{prop_omega_direct} and Corollary \ref{cor:omega_direct} is essential. For a example take $\KK=\ZZ/4$ and the chain complex of length one
\begin{equation}
 C: \hspace{1cm} 0 \to \ZZ/4 \overset{\cdot 2} \to \ZZ/4 \to 0     
\end{equation}
concentrated in degrees $0$ and $1.$ Consider the graded submodule $D\subseteq C$ defined by the equations $D_1=C_1=\ZZ/4$ and $D_0=0.$ Then $\omega_1:=\omega_1(C,D)= 2\ZZ/4\ZZ.$ The embedding $\iota_1 : \omega_1 \to C_1$ is isomorphic to the embedding $\cdot 2:\ZZ/2 \mono \ZZ/4.$ Since $2\cdot 2=4,$ it follows that $\iota_1 \otimes \iota_1 : \omega_1 \otimes \omega_1 \to C_1 \otimes C_1$ is isomorphic to the zero map $0:\ZZ/2 \to \ZZ/4,$ and hence, it is equal to zero:
\begin{equation}
    \ZZ/2\cong \omega_1 \otimes \omega_1  \overset{0}\longrightarrow C_1 \otimes C_1\cong \ZZ/4.
\end{equation}
\end{remark}

\subsection{DG-(co)algebras} 

\begin{proposition}\label{prop:psi-dg}
Let $C$ be a dg-algebra and $D$ be a graded ideal of the underlying graded algebra of $C.$ Then $\omega'(C,D)$ is a dg-ideal of $C$ and $\psi(C,D)$ inherits a structure of dg-algebra.  
\end{proposition}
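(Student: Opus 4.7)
The plan is to verify two things: that $\omega'(C,D)$ is closed under $\partial$ (so it is a sub-dg-module), and that it is a two-sided graded ideal of $C$. Once both hold, $\psi(C,D)=C/\omega'(C,D)$ automatically inherits a dg-algebra structure from $C$, proving the proposition.

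First I would check the differential closure. Since $\omega'(C,D)_n=D_n+\partial(D_{n+1})$, we have
\begin{equation}
\partial(\omega'(C,D)_n)=\partial(D_n)+\partial^2(D_{n+1})=\partial(D_n)\subseteq \omega'(C,D)_{n-1},
\end{equation}
using $\partial^2=0$ and the very definition of $\omega'$. Hence $\omega'(C,D)$ is a subcomplex of $C$.

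Next I would verify the ideal property using the Leibniz rule. Take $c\in C_m$ and a typical generator $d+\partial(d')\in \omega'(C,D)_n$ with $d\in D_n$, $d'\in D_{n+1}$. Since $D$ is a graded ideal of the underlying graded algebra, $cd\in D_{n+m}\subseteq \omega'(C,D)_{n+m}$ at once. For the second summand, the Leibniz rule gives
\begin{equation}
c\cdot \partial(d')=(-1)^m\bigl(\partial(c\cdot d')-\partial(c)\cdot d'\bigr).
\end{equation}
Here $c\cdot d'\in D_{n+m+1}$, so $\partial(c\cdot d')\in \partial(D_{n+m+1})\subseteq \omega'(C,D)_{n+m}$; and $\partial(c)\cdot d'\in D_{n+m}\subseteq \omega'(C,D)_{n+m}$. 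Therefore $c\cdot(d+\partial(d'))\in \omega'(C,D)_{n+m}$. The same computation with sides swapped handles right multiplication, so $\omega'(C,D)$ is a two-sided graded ideal, hence a dg-ideal.

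The main (and essentially only) obstacle is the second step: the naïve ``$D\cdot C\subseteq D$'' does not directly control the term $c\cdot\partial(d')$, because $\partial(d')$ itself need not lie in $D$. The trick is to rewrite this product via Leibniz as a sum of $\partial$ of something in $D$ plus something in $D$, so that both pieces visibly land in $\omega'(C,D)$. Once the ideal and differential closures are established, the multiplication on $\psi(C,D)=C/\omega'(C,D)$ and the induced differential are well defined, Leibniz descends from $C$, and so $\psi(C,D)$ is a dg-algebra. \qed
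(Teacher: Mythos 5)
Your proof is correct and matches the paper's argument: both proofs hinge on the same Leibniz-rule rewrite $c\cdot\partial(d')=\pm\partial(cd')\mp\partial(c)d'$ to show the product lands in $D+\partial(D)=\omega'(C,D)$. Your write-up is somewhat more explicit than the paper's (which elides the subcomplex check and has a couple of notational slips in the final membership), but the underlying argument is identical.
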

\begin{proof}
By the definition $\omega'(C,D)$ is subcomplex. So we only need to prove that $\omega'(C,D)$ is an ideal. Indeed, for any $a\in D_n, b\in D_{n+1}$ and $x\in C_m$ we have $x(a+\partial(b))=xa+ x\partial(b) = xa \pm  \partial(x)b  \pm xb \in D_{n+m}$ and similarly $(a+\partial(b))x\in D_{n+m}.$
\end{proof}

For a coalgebra $C$ we say that $D$ is a split sub-coalgebra of $C,$ if $D$ is a submodule of $C,$ which is a direct summand, and such that $\nu(D)\subseteq D\otimes D,$ where $\nu:C\to C\otimes C$ is a comultiplication. Since, $D$ is a direct summand, we can identify $D^{\otimes n}$ with a submodule of $C^{\otimes n}.$  This defines a structure of coalgebra on $D.$ The same definition can be generalised to graded coalgebras and dg-coalgebras.

\begin{proposition}\label{prop:coalg_omega}
Let $\KK$ be a principal ideal domain, $C$ be a dg-coalgebra and $D$ be its graded split sub-coalgebra. Assume that $C_n$ is a free module for any $n$. Then $\omega(C,D)$ is a split sub-dg-coalgebra. 
\end{proposition}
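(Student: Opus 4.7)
The plan is to build the coalgebra structure on $\omega(C,D)$ by applying the functor $\omega$ to the comultiplication $\nu: C\to C\otimes C$, using Proposition \ref{prop_omega_direct} to identify the target with $\omega(C,D)\otimes \omega(C,D)$.

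First I would verify the hypotheses needed to invoke the earlier results. Since $D$ is a split sub-coalgebra, each $D_n$ is a direct summand of the free module $C_n$; in particular, $D_n$ is free (subs of free over a PID) and $C_n/D_n$ is free. By Lemma \ref{lemma_direct_summand}, $\omega_n(C,D)$ is a direct summand of $D_n$, and therefore a direct summand of $C_n$. This already gives the ``split submodule'' half of what we need.

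Next I would set up the comultiplication. Because $D$ is a split sub-coalgebra, the inclusion induces an injection $D\otimes D \hookrightarrow C\otimes C$, and we may regard $D\otimes D$ as a graded submodule of $C\otimes C$. The comultiplication $\nu:C\to C\otimes C$ is a chain map with $\nu(D)\subseteq D\otimes D$, so it is a morphism of chain complexes with graded submodules $\nu:(C,D)\to (C\otimes C, D\otimes D)$. Applying the functor $\omega$ and using Proposition \ref{prop_omega_direct} (whose hypotheses hold: $C_n$ is free and $D_n$ is a direct summand), I obtain
\begin{equation}
    \omega(\nu): \omega(C,D) \longrightarrow \omega(C\otimes C, D\otimes D) \cong \omega(C,D)\otimes \omega(C,D).
\end{equation}
Similarly, the counit $\varepsilon:C\to \KK$ is a morphism $(C,D)\to (\KK,\KK)$ (where $\KK$ sits in degree $0$) and restricts to a counit on $\omega(C,D)$.

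Finally I would check the coalgebra axioms. Coassociativity for $\nu$ translates to equality of two morphisms of chain complexes with graded submodules $(C,D)\to (C\otimes C\otimes C, D\otimes D\otimes D)$; applying $\omega$ (which is a functor) and using Proposition \ref{prop_omega_direct} twice to identify the target with $\omega(C,D)^{\otimes 3}$ yields coassociativity for $\omega(\nu)$. The counit axiom is treated the same way. I do not expect a real obstacle here: once Proposition \ref{prop_omega_direct} is available, everything is formal functoriality together with the PID/freeness bookkeeping. The one point that needs care is the implicit identification of $D\otimes D$ as a submodule of $C\otimes C$, and more generally $D^{\otimes n}\subseteq C^{\otimes n}$, which requires the splitness assumption; without it the map $D^{\otimes n}\to C^{\otimes n}$ might not be injective and the reduction to Proposition \ref{prop_omega_direct} would collapse.
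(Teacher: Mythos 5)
Your proposal is correct and follows essentially the same route as the paper: use Lemma \ref{lemma_direct_summand} for the splitness of $\omega(C,D)\hookrightarrow C$, view $\nu$ as a morphism of chain complexes with graded submodules $(C,D)\to(C\otimes C,D\otimes D)$, apply functoriality of $\omega$, and invoke Proposition \ref{prop_omega_direct} to identify $\omega(C\otimes C,D\otimes D)$ with $\omega(C,D)\otimes\omega(C,D)$. The paper leaves the coassociativity and counit axioms implicit (they are inherited from $C$ once the comultiplication is known to restrict), whereas you spell them out; otherwise the arguments coincide.
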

\begin{proof} By Lemma \ref{lemma_direct_summand} the embedding $\omega(C,D)\to C$ splits. So 
we just need to prove that $\nu(\omega(C,D))\subseteq \omega(C,D) \otimes \omega(C,D),$ where $\nu:C\to C\otimes C$ is the comultiplication. Since the map $\nu :(C,D) \to (C\otimes C, D\otimes D)$ is a morphism of chain complexes with graded submodules, we obtain $\nu( \omega(C,D)) \subseteq \omega(C\otimes C,D\otimes D).$ Then the assertion follows from Proposition \ref{prop_omega_direct}
\end{proof}

\subsection{Duality over fields}

For any $\KK$-module $M$ we set 
\begin{equation}
    M^\vee = {\sf Hom}_\KK(M,\KK).
\end{equation}
If $C$ is a chain complex, then  $C^\vee$ is a cochain complex such that $(C^\vee)^n=C_{n}^{\vee}.$ 

\begin{proposition}\label{prop:duality}
Let $\KK$ be a field and $(C,D)$ be a chain complex with graded submodule over $\KK.$ Then 
\begin{equation}
\omega(C,D)^\vee \cong \psi(C^\vee,{\sf Ker}(C^\vee \to D^\vee)).    
\end{equation}
\end{proposition}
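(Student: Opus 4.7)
The plan is to dualize the short exact sequence \eqref{eq:exact_sequence_omega} and identify the resulting presentation of $\omega(C,D)_n^\vee$ with the $n$-th component of $\psi(C^\vee, E)$, where $E := \Ker(C^\vee \to D^\vee)$.

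First I would unwind notation. Since $\KK$ is a field, the restriction map $C_n^\vee \to D_n^\vee$ is surjective, so $E_n \subseteq C_n^\vee$ is precisely the annihilator $D_n^\perp$. By the definitions of $\omega'$ and $\psi$, the $n$-th component of $\psi(C^\vee, E)$ equals $C_n^\vee/(E_n + d(E_{n-1}))$, where $d : C_{n-1}^\vee \to C_n^\vee$ denotes the differential of $C^\vee$.

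The main step is to apply the dual functor to the exact sequence \eqref{eq:exact_sequence_omega},
\[
0 \to \omega(C,D)_n \to C_n \xrightarrow{f} (C_n/D_n) \oplus (C_{n-1}/D_{n-1}),
\]
with $f(c) = (c + D_n, \partial c + D_{n-1})$. Since $\KK$ is a field, the duality functor is exact, and the induced sequence
\[
(C_n/D_n)^\vee \oplus (C_{n-1}/D_{n-1})^\vee \xrightarrow{f^\vee} C_n^\vee \twoheadrightarrow \omega(C,D)_n^\vee \to 0
\]
is exact. Under the canonical identifications $(C_n/D_n)^\vee \cong E_n$ and $(C_{n-1}/D_{n-1})^\vee \cong E_{n-1}$ (as subspaces of $C_n^\vee$ and $C_{n-1}^\vee$ respectively), a direct unwinding of the formula for $f$ gives $f^\vee(\alpha, \beta) = \alpha + d(\beta)$, so $\Im(f^\vee) = E_n + d(E_{n-1})$. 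Consequently $\omega(C,D)_n^\vee \cong C_n^\vee/(E_n + d(E_{n-1})) = \psi(C^\vee, E)^n$ in each degree.

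To finish I would verify that these degree-wise isomorphisms assemble into an isomorphism of cochain complexes. The isomorphism is induced by dualizing the inclusion $\omega(C,D) \hookrightarrow C$; since the inclusion is a chain map, its dual $C^\vee \to \omega(C,D)^\vee$ is a cochain map, and the identification above shows it factors as $C^\vee \twoheadrightarrow \psi(C^\vee, E) \xrightarrow{\cong} \omega(C,D)^\vee$, which is what we want. The main obstacle, though minor, is keeping the sign conventions for the dual differential consistent with the identifications $(C_k/D_k)^\vee \cong E_k$; this is routine but easy to botch if one is careless with the two components of $f^\vee$.
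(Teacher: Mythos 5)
Your proof is correct and is essentially the same argument as the paper's: both set $E = \Ker(C^\vee \to D^\vee)$, dualize the exact sequence \eqref{eq:exact_sequence_omega} using exactness of $(-)^\vee$ over a field, identify $(C_n/D_n)^\vee$ with $E_n$, and recognize the image of the dual map as $\omega'_n(C^\vee, E)$. The only difference is that you flesh out the computation of $f^\vee$ and the chain-map compatibility, which the paper leaves implicit; the worry about signs is harmless here since the image $E_n + d(E_{n-1})$ is insensitive to a sign on $d$.
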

\begin{proof} Set $K^{n}={\sf Ker}(C_n^\vee \to D_n^\vee).$ Note that $K^n\cong (C_n/D_n)^\vee.$
Applying the duality to the exact sequence \eqref{eq:exact_sequence_omega} we obtain an exact sequence
\begin{equation}
K^n \oplus K^{n-1} \overset{f'}\to C_n^\vee \to \omega_n(C,D)^\vee \to 0.
\end{equation}
It is easy to see that the image of $f'$ equals to $\omega'_n(C^\vee,K).$ The assertion follows. 
\end{proof}

\section{Quivers}
\subsection{Definition of a quiver}
\label{subsection:definition_of_quiver} A quiver is usually defined as a couple of sets $Q_0,Q_1$ together with a couple of maps $h,t:Q_1\to Q_0,$ however, we prefer another definition, which allows to define morphisms in a more appropriate way for our reasons. For each vertex $v\in Q_0$ we can add a ``degenerated loop'' $s(v)$ to the set of edges and consider a new set $\tilde Q_1 = Q_1 \sqcup s(Q_0).$ Such degenerate loops are included in the set of edges in our definition. So, our definition is the following.

\begin{definition}
A {\it quiver} $Q$ is a couple of sets $Q_0,Q_1$ together with three maps $h,t:Q_1\to Q_0$ and $s:Q_0\to Q_1$ satisfying $hs={\sf id}=ts.$ 
\end{definition}

The elements of $s(Q_0)$ are called {\it degenerated arrows} (and when we draw pictures of quivers we don't draw them, we identify them with vertices), and elements of $Q_1\setminus s(Q_0)$ are called non-degenerated arrows. We also set 
\begin{equation}
    Q_1^{\sf D}=s(Q_0), \hspace{1cm} Q_1^{\sf N}=Q_1\setminus Q_1^{\sf D}.
\end{equation}

We often think about quivers as about categories without compositions but with identity morphisms, which are the degenerated arrows. Sometimes we will use notation $1_v=s(v).$ Moreover, for any two vertices $u,v\in Q_0$ we will also use the following notation
\begin{equation}
Q(u,v)=\{\alpha\in Q_1\mid t(\alpha)=u, h(\alpha)=v\}.
\end{equation}

\begin{definition}[Morphism of quivers]
A {\it morphism of quivers} $f:Q\to R$ is a couple of maps $f_0:Q_0\to R_0$ and $f_1:Q_1\to R_1$ such that $hf_1=f_0h, tf_1=f_0t$ and $sf_0=f_1s.$ Note that in this definition of a morphism we allow the situation when a non-degenerate edge maps to a degenerate map. Intuitively this means that ``an edge can be mapped to a vertex''.  The need to consider such morphisms has led us to define the quiver in this way. The category of quivers is denoted by ${\sf Quiv}.$ One can note that ${\sf Quiv}$ is equivalent to the full subcategory of $1$-dimensional simplicial sets; or to the category of $1$-truncated simplicial sets. 
\end{definition}

\subsection{Paths in a quiver}
For any $n\geq 0$ we define a quiver ${\sf q}^n$ such that ${\sf q}^n_0=\{0,1,\dots,n\}$ and 
\begin{equation}
{\sf q}^n_1=\{(0,0),(0,1),(1,1),(1,2),(2,2),\dots,(n-1,n),(n,n)\},    
\end{equation}
where $h(n,m)=m,$  $t(n,m)=n$ and $1_i=(i,i)$ \begin{equation}
\qq^n: \hspace{1cm} 0 \to 1 \to {\dots} \to n.
\end{equation}
In particular, $\qq^0$ is the one-point quiver. Note that 
\begin{equation}
(\qq^n)_1^{\sf N}=\{(0,1),(1,2),\dots,(n-1,n)\}, \hspace{5mm} (\qq^n)_1^{\sf D} = \{(0,0),(1,1),\dots,(n,n)\}.
\end{equation}
For a quiver $Q$ a morphism  $\alpha:\qq^n\to Q$ is defined by a sequence of arrows $\alpha_0,\dots,\alpha_{n-1}$ $\in Q_1$ such that $h(\alpha_i)=t(\alpha_{i+1}),$ where $\alpha_i=\alpha((i,i+1)).$ Such a morphism $\qq^n\to Q$ is called $n$-path of $Q$. The set of $n$-paths is denoted by 
\begin{equation}
{\sf P}_nQ=\Quiv(\qq^n,Q).
\end{equation}
If at least one of the edges $\alpha_i$ is degenerated, the path $\alpha$ is called degenerated, otherwise it is called non-degenerated. This gives us a partition of the set of all paths in two subsets, non-degenerated and degenerated paths:
\begin{equation}
{\sf P}_nQ={\sf P}_n^{\sf N}Q\sqcup {\sf P}_n^{\sf D}Q.
\end{equation}

\subsection{Box product of quivers} 
The category of quivers has obvious product $Q\times R$  which is defined component-wise $(Q\times R)_1=Q_1\times R_1$ and $(Q\times R)_0=Q_0\times R_0.$ In the graph theory this categorical product is known as the strong product. However, we will be interested in another monoidal structure on the category of quivers that we call box product, which is also known as  ``Cartesian product'' in the graph theory. The box product of two quivers $Q$ and $Q'$ is defined as a subquiver in the product
\begin{equation}
    Q\square R \subseteq Q\times R
\end{equation}
such that 
\begin{equation}
 (Q\square R)_0=Q_0\times R_0, \hspace{1cm} (Q\square R)_1 = (Q_1\times R_1^{\sf D})\cup (Q_1^{\sf D} \times R_1).   
\end{equation}
If we treat $Q_0$ as discrete quivers, then the degeneracy map can be treated as a morphism of quivers $Q_0\to Q$ and the box product can be defined as the pushout in the category of quivers
\begin{equation}
\begin{tikzcd}
Q_0\times R_0\ar[r] \ar[d] & Q_1 \times R_0 \ar[d] \\ 
Q_0\times R_1 \ar[r] & Q\square R.
\end{tikzcd}
\end{equation}

For example, the quiver $\qq^4\times \qq^2$ can be drawn as 
\begin{equation}
\begin{tikzcd}[scale cd=0.3]
\bullet\ar[r]\ar[d] \ar[rd] & \bullet \ar[r]\ar[d] \ar[rd] & \bullet \ar[r]\ar[d]\ar[rd] & \bullet\ar[r]\ar[d]\ar[rd] & \bullet \ar[d] \\
\bullet\ar[r]\ar[d]\ar[rd] & \bullet\ar[r]\ar[d]\ar[rd] & \bullet\ar[r]\ar[d]\ar[rd] & \bullet\ar[r]\ar[d]\ar[rd] & \bullet\ar[d] \\
\bullet\ar[r] & \bullet\ar[r] & \bullet\ar[r] & \bullet\ar[r] & \bullet 
\end{tikzcd}    
\end{equation}
and $\qq^4\square \qq^2$ can be drawn as follows.
\begin{equation}
\begin{tikzcd}[scale cd=0.3]
\bullet\ar[r]\ar[d]  & \bullet \ar[r]\ar[d]  & \bullet \ar[r]\ar[d] & \bullet\ar[r]\ar[d] & \bullet \ar[d] \\
\bullet\ar[r]\ar[d] & \bullet\ar[r]\ar[d] & \bullet\ar[r]\ar[d] & \bullet\ar[r]\ar[d] & \bullet\ar[d] \\
\bullet\ar[r] & \bullet\ar[r] & \bullet\ar[r] & \bullet\ar[r] & \bullet
\end{tikzcd}    
\end{equation}

\section{Path objects} 

\subsection{Simplicial indexing category \texorpdfstring{$\Delta$}{Δ}}

We denote by $\Delta$ the {\it simplicial indexing category}, whose objects are non-empty finite ordinals $[n]=\{0,\dots,n\}, n\geq 0$ and morphisms are order-preserving maps. It is well known that it is generated by two types of morphisms: coface maps $d^{(i,n)}:[n-1]\to [n]$ and codegeneracy maps $s^{(i,n)}:[n+1]\to [n]$ for $0\leq i\leq n$. The coface map $d^{(i,n)}$
is the only injective order-preserving map whose image does not contain $i$, and codegeneracy map $s^{(i,n)}$ is the only surjective order-preserving map such that $i$ has two preimages. When $n$ is obvious from the context, these maps are denoted by $d^{(i)}$ and $s^{(i)}.$ 
The category $\Delta$ can be also described as the category generated by the coface and codegeneracy maps modulo  relations:
\begin{align} \label{eq_simp_ident_1}
    d^{(j,n)} d^{(i,n-1)} &= d^{(i,n)} d^{(j-1,n-1)}, \hspace{1cm} \text{if } i<j;\\ \label{eq_simp_ident_2}
     s^{(j,n)}s^{(i,n+1)}&=s^{(i,n)}s^{(j+1,n+1)} , \hspace{1cm} \text{if } i\leq j;\\
    \label{eq_simp_ident_3}
    s^{(j,n)}d^{(i,n+1)} &= \begin{cases}
    d^{(i,n)} s^{(j-1,n-1)},& \text{if } i<j; \\ 
    {\sf id},&\text{if } i\in \{ j,j+1\}; \\ 
    d^{(i-1,n)}s^{(j,n-1)}, & \text{if } j+1<i.
    \end{cases}
\end{align}
(see \cite[\S 2.2]{gabriel2012calculus}).
Moreover, any morphism $f: [n]\to [m]$ in $\Delta$ can be uniquely presented as  
\begin{equation}\label{eq_decomp_delta}
f=d^{(i_0,m)}d^{(i_1,m-1)} \dots d^{(i_{k},m-k)} s^{(j_0,n-l)} s^{(j_2,n-l+1)} \dots s^{(j_l,n)}     
\end{equation}
for $m\geq i_1 > \dots > i_k \geq 0$ and $0\leq j_1<\dots <j_l\leq n-1.$

\subsection{Path indexing category \texorpdfstring{$\Pi$}{Π}} \label{subsection:path_indexing_cat}

A subset of $[n]$ is said to be {\it connected}, if it has the form $\{k,k+1,\dots,l-1,l\}$ for some $0\leq k\leq l\leq  n.$ An order preserving map $f: [n]\to [m]$ is called connected, if its image is connected. Equivalently, a connected order preserving map $f:[n]\to [m]$ is an order preserving map such that for any $0\leq i<n$ either $f(i+1)=f(i)+1$ or $f(i+1)=f(i).$ We denote by $\Pi$ a wide subcategory of $\Delta,$ whose morphisms are connected order preserving maps.  It is easy to check that the composition of connected order preserving maps is connected, so this subcategory is well defined. The category $\Pi$ will be refereed as {\it the path indexing category}. For example all codegeneracy maps are in $\Pi$ and the {\it exterior} coface maps $d^{(0,n)},d^{(n,n)}:[n-1]\to [n]$ are also in $\Pi.$ The exterior coface maps are denoted by  
\begin{equation}
t^{(n)}=d^{(0,n)}, \hspace{1cm} h^{(n)}=d^{(n,n)}.    
\end{equation}
It is easy to check that all morphisms in $\Pi$ are compositions of codegeneracy maps and exterior coface maps.  

It is well known that the category $\Delta$ is equivalent to the full subcategory of the category of small categories ${\sf Cat},$ whose objects are free categories generated by the quivers  $\qq^n.$ A similar statement holds for the category $\Pi,$ it is equivalent to a full subcategory of $\Quiv$ whose objects are the quivers $\qq^n.$ To be more precise we formulate it as follows.

\begin{proposition}\label{prop:q_embedding}
There is a fully faithful functor
\begin{equation}
    {\sf q} : \Pi \to {\sf Quiv}, \hspace{1cm} [n] \mapsto {\sf q}^n
\end{equation}
such that ${\sf q}(f)_0=f$ and ${\sf q}(f)_1(i,j)=(f(i),f(j)).$ 
\end{proposition}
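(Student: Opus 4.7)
The plan is to verify the four claims in sequence: well-definedness of $\mathsf{q}$ on morphisms, functoriality, faithfulness, and fullness. The first three are essentially formal; the fullness step is where the real content lies.

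First I would check that $\mathsf{q}(f)$ is a morphism of quivers whenever $f:[n]\to[m]$ is connected and order-preserving. Recall that the arrows of $\mathsf{q}^n$ are pairs $(i,j)$ with $j\in\{i,i+1\}$. For $j=i$ the pair $(f(i),f(i))$ is the degenerate loop $1_{f(i)}$; for $j=i+1$ the connectedness condition gives $f(j)\in\{f(i),f(i)+1\}$, so $(f(i),f(j))$ is a legitimate arrow of $\mathsf{q}^m$. The compatibilities $h\mathsf{q}(f)_1=\mathsf{q}(f)_0 h$, $t\mathsf{q}(f)_1=\mathsf{q}(f)_0 t$, and $s\mathsf{q}(f)_0=\mathsf{q}(f)_1 s$ are then immediate from the definition. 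Functoriality ($\mathsf{q}(\mathrm{id})=\mathrm{id}$ and $\mathsf{q}(gf)=\mathsf{q}(g)\mathsf{q}(f)$) is similarly obvious, since both the vertex and edge components are defined by applying $f$ (resp.\ $g$) coordinatewise.

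Faithfulness is trivial: if $\mathsf{q}(f)=\mathsf{q}(g)$ then $f=\mathsf{q}(f)_0=\mathsf{q}(g)_0=g$.

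For fullness, let $\varphi:\mathsf{q}^n\to\mathsf{q}^m$ be any morphism of quivers and set $f:=\varphi_0:[n]\to[m]$. I would first show that $f$ is order-preserving and connected by inspecting $\varphi_1$ on a non-degenerate arrow $(i,i+1)$. Any arrow of $\mathsf{q}^m$ has the form $(k,k)$ or $(k,k+1)$, and the identities $t\varphi_1=f t$, $h\varphi_1=f h$ force its tail and head to equal $f(i)$ and $f(i+1)$ respectively. Hence $f(i+1)\in\{f(i),f(i)+1\}$, which says exactly that $f$ is a morphism in $\Pi$. The same observation shows that in $\mathsf{q}^m$ there is at most one arrow with prescribed tail and head, so $\varphi_1(i,i+1)$ is forced to equal $(f(i),f(i+1))=\mathsf{q}(f)_1(i,i+1)$. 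On degenerate arrows, $\varphi_1(i,i)=\varphi_1 s(i)=s f(i)=(f(i),f(i))=\mathsf{q}(f)_1(i,i)$. Thus $\varphi=\mathsf{q}(f)$, establishing fullness.

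The main obstacle, such as it is, is the fullness step, and specifically the observation that in $\mathsf{q}^m$ arrows are determined by their endpoints; once that is in hand, both the extraction of $f$ from $\varphi_0$ and the identification $\varphi_1=\mathsf{q}(f)_1$ fall out formally. Nothing here requires a careful normal-form argument for morphisms in $\Pi$ like \eqref{eq_decomp_delta}; the proof goes through directly on the level of vertex-maps.
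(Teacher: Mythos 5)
Your proof is correct and supplies exactly the argument the paper elides behind ``the proof is standard.'' The key observation you isolate — that arrows of $\qq^m$ are uniquely determined by their endpoints, so both the recovery of a connected order-preserving map from $\varphi_0$ and the identity $\varphi_1=\qq(f)_1$ follow formally — is the intended one, and your checks of well-definedness, functoriality, and faithfulness are complete.
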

\begin{proof}
The proof is standard.
\end{proof}

\begin{corollary}\label{cor_Pi_Quiv}
The category $\Pi$ is isomorphic to the full subcategory of ${\sf Quiv},$ whose objects are ${\sf q}^n.$
\end{corollary}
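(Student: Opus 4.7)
The plan is to reduce Corollary \ref{cor_Pi_Quiv} to Proposition \ref{prop:q_embedding}, which does almost all the work. A functor between categories is an isomorphism (not merely an equivalence) onto a full subcategory precisely when it is fully faithful and injective on objects, and the full subcategory in question is its image on objects. So I need only verify these two conditions for $\mathsf{q}$.

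First, fully faithfulness is exactly the content of Proposition \ref{prop:q_embedding}. Second, I would check that $\mathsf{q}$ is injective on objects: the quiver $\mathsf{q}^n$ has $n+1$ vertices, so the assignment $[n]\mapsto \mathsf{q}^n$ distinguishes different objects of $\Pi$. Combining the two, $\mathsf{q}$ factors as a bijection on objects followed by a bijection on every hom-set, hence is an isomorphism of categories onto the full subcategory of $\mathsf{Quiv}$ spanned by $\{\mathsf{q}^n \mid n\geq 0\}$.

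There is essentially no obstacle here: the corollary is just the observation that a fully faithful functor which is also injective on objects is an isomorphism of categories onto its full image. The only minor bookkeeping is making explicit that $\mathsf{q}^n$ and $\mathsf{q}^m$ are literally non-equal quivers when $n\neq m$, which is visible from $|\mathsf{q}^n_0|=n+1$.
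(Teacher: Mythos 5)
Your proof is correct and matches the paper's (implicit) argument: the corollary is stated as an immediate consequence of Proposition \ref{prop:q_embedding}, and you have simply spelled out the routine observation that a fully faithful functor which is injective on objects (here because $|\mathsf{q}^n_0|=n+1$) is an isomorphism onto its full image.
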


 Note that the equations \eqref{eq_simp_ident_1}, \eqref{eq_simp_ident_2}, \eqref{eq_simp_ident_3} imply the following relations 
\begin{alignat}{2} \label{eq_path_ident_1}
    t^{(n)} h^{(n-1)} &= h^{(n)}t^{(n-1)};\\
    \label{eq_path_ident_2}
     s^{(j,n)}s^{(i,n+1)}&=s^{(i,n)}s^{(j+1,n+1)}, &  \text{ if } i&\leq j;\\
    \label{eq_path_ident_3}
    s^{(i,n)}h^{(n+1)} &= 
    h^{(n)} s^{(i-1,n-1)}, &  \text{ if } i&>0; 
    \\
    \label{eq_path_ident_4}
    s^{(i,n)}t^{(n+1)} &= 
    t^{(n)}s^{(i,n-1)}, &  \text{ if } i&<n, \\
    \label{eq_path_ident_5}
    s^{(n,n)}t^{(n+1)}&={\sf id}=s^{(0,n)}h^{(n+1)}
\end{alignat}
    
\begin{lemma} \label{lemma:relations}
The category $\Pi$ is generated by the morphisms $h,t,s^{(i)}$ modulo relations \eqref{eq_path_ident_1}, \eqref{eq_path_ident_2}, \eqref{eq_path_ident_3}, \eqref{eq_path_ident_4}. Moreover, any morphism in $\Pi$ can be uniquely presented as
\[h^kt^l s^{(i_1)}s^{(i_2)} \dots s^{(i_m)},\]
where $i_1<i_2<\dots <i_m$ and $m\geq 0.$
\end{lemma}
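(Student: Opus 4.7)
My plan is to prove the normal form statement directly from the description of $\Pi$ as the wide subcategory of $\Delta$ of connected order-preserving maps, and then deduce the presentation as a formal consequence. Given $f:[n]\to [m]$ in $\Pi$, its image is a connected subset $\{a,a+1,\dots,b\}\subseteq [m]$. Set $k := m-b$ and $l := a$, and factor $f=\iota\circ\bar{f}$, where $\bar{f}:[n]\twoheadrightarrow[b-a]$ is the induced order-preserving surjection (hence in $\Pi$) and $\iota:[b-a]\hookrightarrow[m]$ is the order-preserving injection $j\mapsto j+a$. Since $t^a$ shifts indices up by $a$ and $h^{m-b}$ is the standard inclusion $[b]\hookrightarrow[m]$, a direct computation gives $\iota=h^{m-b}\circ t^a = h^k t^l$. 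The surjection $\bar{f}$ is a morphism in $\Delta$, so the classical canonical decomposition \eqref{eq_decomp_delta} yields a unique expression $\bar{f}=s^{(i_1)}\cdots s^{(i_s)}$ with $i_1<\cdots<i_s$, producing the claimed normal form for $f$. For uniqueness, from $f$ one recovers $l=\min(\mathrm{image}(f))$ and $k=m-\max(\mathrm{image}(f))$, hence $(k,l)$; the surjection $\bar{f}$ is then determined, and classical uniqueness of the codegeneracy decomposition fixes the tuple $(i_1,\dots,i_s)$.

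For the presentation, each of the relations \eqref{eq_path_ident_1}--\eqref{eq_path_ident_4} is a direct instance of the simplicial identities \eqref{eq_simp_ident_1}--\eqref{eq_simp_ident_3} applied to $h=d^{(n,n)}$, $t=d^{(0,n)}$, and the codegeneracies, so they hold in $\Pi$; this gives a natural functor $\mathcal{F}\to\Pi$ from the free category on $h,t,s^{(i)}$ modulo these relations. To show this functor is an isomorphism it suffices to reduce any word in the generators to the normal-form shape using the relations: sort codegeneracies into increasing superscript order via \eqref{eq_path_ident_2}; push each $s^{(i)}$ to the right past any $h$ or $t$ using \eqref{eq_path_ident_3} and \eqref{eq_path_ident_4}; and commute $h$'s in front of $t$'s using \eqref{eq_path_ident_1}. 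Matching the resulting normal-form expressions against the unique normal forms in $\Pi$ established above then gives bijectivity on hom-sets.

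The main technical obstacle is the boundary behaviour in the reduction step. Relation \eqref{eq_path_ident_3} requires $i>0$ and \eqref{eq_path_ident_4} requires $i<n$, so subwords of the form $s^{(0)}h$ or $s^{(n)}t$ cannot be rewritten by a pure pushing move; in $\Pi$ these compositions reflect the fact that the codegeneracy cancels part of the exterior coface. Handling these boundary cases carefully — either by observing that the resulting subword already fits the normal-form template after a dimension drop, or by iterating the pushing moves on the remaining part of the word — is the only step that requires real care. Everything else is routine bookkeeping with the simplicial identities.
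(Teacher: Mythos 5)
Your overall strategy — epi-mono factorization to get the normal form, then a rewriting argument to show the relations suffice — is the same as the paper's. But there is a genuine gap, and you have located it without closing it: the boundary subwords $s^{(0)}h^{(n+1)}$ and $s^{(n,n)}t^{(n+1)}$ cannot be brought to normal form using relations \eqref{eq_path_ident_1}--\eqref{eq_path_ident_4} alone. Each of those four relations replaces a word of length two by a word of length two; in particular they preserve the number of generators in a word. But in $\Pi$ one has $s^{(0,n)}h^{(n+1)}={\sf id}_{[n]}$ and $s^{(n,n)}t^{(n+1)}={\sf id}_{[n]}$, so a word of length two must collapse to a word of length zero. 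No length-preserving rewriting can achieve this, and neither of your two suggested escape routes (``dimension drop'' or ``iterating the pushing moves'') produces a length-two-to-length-zero reduction. Concretely, the category generated by $h,t,s^{(i)}$ modulo relations \eqref{eq_path_ident_1}--\eqref{eq_path_ident_4} has $s^{(0)}h$ as a morphism not equal to the identity, so the comparison functor to $\Pi$ fails to be faithful.

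What is actually needed is the extra identity $s^{(n,n)}t^{(n+1)}={\sf id}=s^{(0,n)}h^{(n+1)}$, which is \eqref{eq_path_ident_5} in the paper. The paper's own proof does impose it when defining the quotient category $\Pi'$ — the lemma statement's list of relations is simply an editing slip, omitting the fifth. Your proof should either add relation \eqref{eq_path_ident_5} to the generating set of relations, or (better) flag that the statement as written is incomplete. Once \eqref{eq_path_ident_5} is available, the boundary cases reduce in one step and the rest of your rewriting argument goes through.
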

\begin{proof}
It is easy to see that any morphism $f$ in $\Delta$ can be uniquely decomposed as $\alpha \sigma,$ where $\alpha$ is injective and $\sigma$ is surjective. Moreover, the image of $\alpha\sigma$ is equal to the image of $\alpha,$ and hence, $\alpha\sigma$ is in $\Pi$ if and only if $\alpha$ is in $\Pi.$ It is easy to see that any injective map $\alpha$ in $\Pi$ can be uniquely presented as $h^kt^l$ and it is well known that any surjective map in $\Delta$ can be uniquely presented as a composition $s^{(i_1)}s^{(i_2)} \dots s^{(i_m)},$ where $i_1<i_2< \dots<i_m$ \cite[\S 2.2]{gabriel2012calculus}. In particular, $\Pi$ is generated by the maps $h,t,s^{(i)}.$ 

Denote by $\Pi'$ the category with the same objects as in $\Pi$ generated by the morphisms $t,h,s^{(i)}$  modulo relations \eqref{eq_path_ident_1}, \eqref{eq_path_ident_2}, \eqref{eq_path_ident_3}, \eqref{eq_path_ident_4}, \eqref{eq_path_ident_5}. 
Since these relations hold in $\Pi$ and $\Pi$ is generated by $h,t,s^{(i)}$ we obtain a full functor $\Pi'\to \Pi.$ Analysing the relations it is easy to check that any morphism in $\Pi'$ can be also presented in the form $h^kt^ls^{(i_1)}s^{(i_2)} \dots s^{(i_m)},$ where $i_1<i_2< \dots<i_m.$ It follows that this functor is also faithful, and hence, it is an isomorphism. 
\end{proof}

\subsection{Path sets}

If $\mathcal{C}$ is a category, a path object in $\mathcal{C}$ is a functor $P:\Pi^{\sf op}\to \mathcal{C}.$  The morphism $s_i=P(s^{(i)})$ are called degeneracy maps of $P$ and the maps $t_n=P(t^{(n)})$ and $h_n=P(h^{(n)})$ are called {\it exterior face maps}. The morphism  $t_n:P_n\to P_{n-1}$ will be also called {\it the tail map} and the map $h_n:P_n\to P_{n-1}$ will be called {\it the head map}. 
The category of path objects will be denoted by ${\sf p}\CCC.$ Lemma \ref{lemma:relations} implies that a path object can be defined as a sequence of objects $P_0,P_1,\dots  $ together with morphisms $h_n,t_n:P_n\to P_{n-1}$ and $s_i:P_n\to P_{n+1}$ for $0\leq i\leq n$ satisfying the following relations
\begin{alignat}{2}
     h_{n-1}t_{n} &= t_{n-1}h_{n},\\
     s_{i}s_{j}&=s_{j+1}s_{i} , & \text{if } i&\leq j;\\
    h_{n+1}s_{i} &= s_{i-1} h_{n}, & \text{if } i&>0; \\
    t_{n+1} s_{i} &=s_{i}t_{n}, & \text{if } i&<n, \\   
    t_{n+1} s_n &={\sf id}=h_{n+1}s_0.
\end{alignat}
And a morphism $f:P\to Q$ of path objects is a collection of morphisms $f_n:P_n\to Q_n$ commuting with these structure morphisms.

A path object in the category of sets is called path set. The category of path sets will be denoted by ${
\sf pSets}.$

\begin{example}
Any simplicial set $X$ defines a path set via the composition with $\Pi\hookrightarrow \Delta.$ By abuse of notation we denote the path set by the same letter $X.$  Since any morphism of $\Pi$ is the composition of codegeneracy maps and exterior coface maps, a collection of subsets $P_n\subseteq X_n$ is a path subset if and only if it is closed with respect to the degeneracy and exterior face maps. Many examples of path sets arise naturally as path subsets of simplicial sets. However, not any path set can be embedded into a simplicial set (Proposition \ref{prop:nonemb}).
\end{example}

\begin{example}\label{example:p}
For any $m$ we consider the path set ${\sf p}^m$ defined  by the formula
\begin{equation}
{\sf p}^m := \Pi(-,[m]).
\end{equation}
Then $n$th component of ${\sf p}^m$ consists of connected order preserving maps $[n]\to [m].$
This defines a functor
\begin{equation}
{\sf p} : \Pi \longrightarrow {\sf pSets}, \hspace{1cm} [m]\mapsto {\sf p}^{m}.
\end{equation}
In particular, for any connected order preserving map $f:[m]\to [l]$ we have a morphism of path sets ${\sf p}^f:{\sf p}^m \to {\sf p}^l.$
\end{example}

\begin{example}\label{ex:PQ}
Any quiver $Q$ defines a path set
\begin{equation}
{\sf P}Q=\Quiv(\qq^{(-)},Q),
\end{equation}
whose $n$-th component is the set of $n$-paths ${\sf P}_nQ.$ This construction is similar to the construction of nerve of a category.
\end{example}

The degeneracy maps for ${\sf P}_nQ$ act as follows 
\begin{equation}
s_i(\alpha_0,\dots,\alpha_{n-1})=(\alpha_0,\dots,\alpha_{i-1},1_{v_i},\alpha_i,\dots,\alpha_{n-1}),
\end{equation}
where $v_i=t(\alpha_{i-1})$ for $i<n$ and $v_n=h(\alpha_{n-1}).$ This description implies the following lemma.
\begin{lemma}\label{lemma:degenerated_path}
Let $\alpha\in {\sf P}_nQ$ and $\mu=(\mu_0,\dots,\mu_{k-1}),$ where $0\leq \mu_0<\dots < \mu_{k-1}\leq n-1.$ Then the following conditions are equivalent
\begin{itemize}
\item $\alpha_{\mu_i}\in Q_1^{\sf D}$ for any $i$;
\item $\alpha=s_\mu(\alpha')$ for some $\alpha'\in {\sf P}_{n-k}Q.$
\end{itemize}
\end{lemma}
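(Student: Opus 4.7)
The plan is to prove the two implications separately, with the forward direction being a direct unfolding of the degeneracy formula and the converse proceeding by induction on $k$.

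For the implication $(\alpha=s_\mu(\alpha'))\Rightarrow(\alpha_{\mu_i}\in Q_1^{\sf D})$, I interpret $s_\mu$ as the composition $s_{\mu_{k-1}}\circ s_{\mu_{k-2}}\circ\cdots\circ s_{\mu_0}$ and iterate the given formula
\[
s_i(\beta_0,\dots,\beta_{m-1})=(\beta_0,\dots,\beta_{i-1},1_{v_i},\beta_i,\dots,\beta_{m-1}).
\]
I claim, by induction on $j$, that after applying $s_{\mu_0},\dots,s_{\mu_{j-1}}$ to $\alpha'$ the resulting $(n-k+j)$-path has degenerate arrows at positions $\mu_0,\dots,\mu_{j-1}$. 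The inductive step is immediate: because $\mu_j>\mu_i$ for every $i<j$, the map $s_{\mu_j}$ only shifts entries at positions $\geq \mu_j$, so the previously inserted identities stay in place and a new $1_v$ appears at position $\mu_j$. Taking $j=k$ yields $\alpha_{\mu_i}\in Q_1^{\sf D}$ for all $i$.

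For the converse, I argue by induction on $k$. The case $k=0$ is trivial with $\alpha'=\alpha$. Assuming the statement for $k-1$, suppose $\alpha_{\mu_0},\dots,\alpha_{\mu_{k-1}}$ are all degenerate and define
\[
\beta:=(\alpha_0,\dots,\alpha_{\mu_{k-1}-1},\alpha_{\mu_{k-1}+1},\dots,\alpha_{n-1})\in\ner(Q)_{n-1}.
\]
This is a valid $(n-1)$-path: since $\alpha_{\mu_{k-1}}=1_v$ satisfies $h(\alpha_{\mu_{k-1}})=t(\alpha_{\mu_{k-1}})=v$, the adjacent arrows $\alpha_{\mu_{k-1}-1}$ and $\alpha_{\mu_{k-1}+1}$ (when they exist) are already head-tail compatible. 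Because $\mu_i<\mu_{k-1}$ for $i<k-1$, the entries of $\beta$ at positions $\mu_0,\dots,\mu_{k-2}$ remain the unchanged degenerate arrows $\alpha_{\mu_0},\dots,\alpha_{\mu_{k-2}}$. By the induction hypothesis there exists $\alpha'\in\ner(Q)_{n-k}$ with $\beta=s_{\mu_{k-2}}\cdots s_{\mu_0}(\alpha')$. Reinserting $1_v$ at position $\mu_{k-1}$ via the insertion formula gives $s_{\mu_{k-1}}(\beta)=\alpha$, hence $\alpha=s_\mu(\alpha')$.

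The only real obstacle is notational bookkeeping of how the pre-existing degenerate positions transform under each successive $s_{\mu_j}$. The strict ordering $\mu_0<\mu_1<\cdots<\mu_{k-1}$ hypothesized by the lemma is exactly what eliminates any reindexing: every earlier degenerate position lies strictly below the next insertion index, so its position is preserved. Without this ordering one would have to first apply the simplicial relation $s_is_j=s_{j+1}s_i$ (for $i\leq j$) from Lemma~\ref{lemma:relations} to put the sequence of indices into strictly increasing form; the hypothesis makes that step unnecessary and keeps the computation transparent.
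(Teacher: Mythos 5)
Your proposal is correct. The paper dispenses with this lemma as ``straightforward'' and gives no argument; your write-up supplies the two implications in full. Both directions are handled properly: you correctly read $s_\mu$ as $s_{\mu_{k-1}}\circ\cdots\circ s_{\mu_0}$ (matching $s^\mu=s^{\mu_0}\cdots s^{\mu_{k-1}}$ acted on contravariantly), the forward induction uses the strict ordering $\mu_0<\cdots<\mu_{k-1}$ exactly where it is needed to keep earlier insertion positions fixed, and the converse correctly checks that deleting the degenerate arrow at position $\mu_{k-1}$ yields a composable $(n-1)$-path (note this deletion is an ad hoc construction, not an application of a face map, since interior faces are not structure maps of $\ner(Q)$ — but your explicit verification of composability makes this rigorous). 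The only typographical remark worth making is that the paper's formula assigns $v_i=t(\alpha_{i-1})$, which should read $v_i=h(\alpha_{i-1})$ (equivalently $t(\alpha_i)$) for $0<i<n$; your argument uses the correct vertex, so this does not affect the proof.
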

\begin{proof}
The proof is straightforward. 
\end{proof}

\subsection{A path set non-embeddable to a simplicial set}

Consider a path set $E$ defined as the pushout in ${\sf pSets}:$ 
\begin{equation}
\begin{tikzcd}
{\sf p}^3 \ar[r,"{\sf p}^{s^1}"] \ar[d,"{\sf p}^{s^1}"'] & {\sf p}^2 \ar[d,"i_1"] \\
{\sf p}^2 \ar[r,"i_2"] & E
\end{tikzcd}    
\end{equation}
(for the definition of ${\sf p}^n$ see Example \ref{example:p}).

\begin{lemma}\label{lemma:s_1_not_inj}
The map $s_1:E_2 \to E_3$ is not injective.
\end{lemma}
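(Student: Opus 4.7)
The plan is to exhibit a single element $\alpha \in {\sf p}^2_2$ whose two images in $E_2$ are distinct but whose degeneracy $s_1(\alpha)$ becomes identified in $E_3$.

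Since path sets form a presheaf category ${\sf Set}^{\Pi^{\sf op}}$, colimits are computed pointwise, so
\[E_n \;\cong\; {\sf p}^2_n \;\sqcup_{{\sf p}^3_n}\; {\sf p}^2_n.\]
Consequently, for $x \in {\sf p}^2_n$ the two classes $i_1(x), i_2(x) \in E_n$ coincide iff $x$ lies in the image of ${\sf p}^{s^1}_n: {\sf p}^3_n \to {\sf p}^2_n$. I will use the identifications ${\sf p}^m_n = \Pi([n],[m])$, that ${\sf p}^{s^1}_n$ acts by \emph{post}composition with $s^{(1,2)}\colon [3]\to [2]$ (covariance in the superscript), and that $s_1$ acts by \emph{pre}composition with $s^{(1,2)}$ (contravariance of the path-set structure).

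The natural candidate is $\alpha = {\sf id}_{[2]} \in {\sf p}^2_2$. First I would check that $\alpha$ is \emph{not} in the image of ${\sf p}^{s^1}_2$: if $s^{(1,2)} \circ \phi = {\sf id}_{[2]}$ for some connected order-preserving $\phi\colon [2] \to [3]$, then $\phi$ would have to hit both $0$ and $3$ (so that $s^{(1,2)} \circ \phi$ hits $0$ and $2$); being connected, its image would then equal all of $[3]$; but no map from a $3$-element set can be surjective onto a $4$-element set. Hence $i_1(\alpha) \neq i_2(\alpha)$ in $E_2$. Next I would observe that
\[s_1(\alpha) \;=\; {\sf id}_{[2]} \circ s^{(1,2)} \;=\; s^{(1,2)} \;=\; s^{(1,2)} \circ {\sf id}_{[3]},\]
so $s_1(\alpha)$ \emph{does} lie in the image of ${\sf p}^{s^1}_3$. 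Consequently $i_1(s_1(\alpha)) = i_2(s_1(\alpha))$ in $E_3$, and by naturality of $i_1, i_2$ with respect to $s_1$ we get $s_1(i_1(\alpha)) = s_1(i_2(\alpha))$ while $i_1(\alpha) \neq i_2(\alpha)$, which is exactly the required non-injectivity.

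I expect no real obstacle. The only point requiring care is the direction of composition (post- vs.\ pre-), because the two maps ${\sf p}^{s^1}$ and $s_1$ are both built out of the single morphism $s^{(1,2)}$ but use it in opposite variances. The identity $s^{(1,2)} = {\sf id}_{[2]} \circ s^{(1,2)} = s^{(1,2)} \circ {\sf id}_{[3]}$ is what simultaneously witnesses the non-vanishing in $E_2$ and the collapse in $E_3$.
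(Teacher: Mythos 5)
Your proof is correct and follows essentially the same strategy as the paper's: use the pointwise computation of pushouts in $\mathrm{pSets}$ to reduce everything to the identity $1_{[2]}$, show it is not hit by $({\sf p}^{s^1})_2$, and observe that $s_1(1_{[2]}) = s^{(1,2)}$ is hit by $({\sf p}^{s^1})_3$. The only cosmetic difference is in the middle step: the paper enumerates the two order-preserving maps $f:[2]\to[3]$ with $s^1 f = 1_{[2]}$ (namely $d^1$, $d^2$) and notes they are not connected, whereas you give a cardinality-plus-connectedness argument that avoids the enumeration; both are fine, and both reach the same conclusion.
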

\begin{proof} Set $e_1=i_1(1_{[2]})\in E_2$ and $e_2=i_2(1_{[2]})\in E_2.$ It is sufficient to prove that $e_1 \ne e_2$ and $s_1(e_1) = s_1(e_2).$

Prove that $s_1(e_1)=s_1(e_2).$ It follows from the computation:
\begin{equation}
\begin{split}
    s_1(e_1) &= s_1(i_1(1_{[2]})) = i_1(s_1(1_{[2]})) = i_1(s^1) \\
    & = i_1( {\sf p}^{s^1}(1_{[3]})) = i_2({\sf p}^{s^1}(1_{[3]}) ) \\
    & = i_2(s^1) = i_2(s_1(1_{[2]}))=s_1(i_1(1_{[2]}))=s_1(e_2).
\end{split}    
\end{equation}

Prove that $e_1\ne e_2.$ Note that in the category of sets for any pushout diagram 
\begin{equation}
\begin{tikzcd}
S_0\ar[r,"f_1"] \ar[d,"f_2"] & S_1  \ar[d,"i_1"] \\ 
S_2 \ar[r,"i_2"] & S,
\end{tikzcd}
\end{equation}
if $s_1\in S_1$ and $s_2\in S_2$ are elements such that $s_1\notin f_1(S_0)$ and $s_2\notin f_2(S_0),$ then $i_1(s_1)\ne i_2(s_2).$ Since the pushout in the category functors ${\sf pSets}={\sf Funct}(\Pi^{\sf op},{\sf Sets})$ is defined object-wise, we just need to prove that $1_{[2]} $ not in the image of  $ ({\sf p}^{s^1})_2:({\sf p}^{3})_2\to ({\sf p}^2)_2.$ In other words, we need to prove that $1_{[2]}$ can't be presented as $1_{[2]}=s^1f,$ where $f:[2]\to [3]$ is a connected order-preserving map. Indeed, it is easy to check that the only two order preserving maps $f:[2]\to [3]$ satisfying $1_{[2]}=s^1f$ are the maps $f=d^1:[2]\to [3]$ and $f=d^2:[2]\to [3],$ and they are not connected.  
\end{proof}

\begin{proposition}\label{prop:nonemb}
The path set $E$ can't be embedded to a simplicial set. 
\end{proposition}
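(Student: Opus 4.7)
The plan is to derive a contradiction from the assumption that $E$ embeds into a simplicial set, using Lemma \ref{lemma:s_1_not_inj} combined with the elementary fact that degeneracy maps in any simplicial set are injective.

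First I would clarify what ``embedding'' means here: a path set $P$ embeds into a simplicial set $X$ if there is a monomorphism $P\hookrightarrow X$ in the category of path sets, where $X$ is regarded as a path set by restriction along $\Pi\hookrightarrow\Delta$. Concretely, this means injective component maps $P_n\hookrightarrow X_n$ that commute with all degeneracy maps $s_i$ and with the exterior face maps $t_n,h_n$.

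Next I would invoke the standard fact that in any simplicial set $X$ each codegeneracy $s_i:X_n\to X_{n+1}$ is injective, since the simplicial identity $d^{(i)}s^{(i)}={\sf id}=d^{(i+1)}s^{(i)}$ exhibits $s_i$ as a split monomorphism. Suppose now for contradiction that there exists an embedding $j:E\hookrightarrow X$ with $X$ a simplicial set. Naturality gives a commutative square
\begin{equation}
\begin{tikzcd}
E_2 \ar[r,hook,"j_2"] \ar[d,"s_1"'] & X_2 \ar[d,"s_1"] \\
E_3 \ar[r,hook,"j_3"] & X_3.
\end{tikzcd}
\end{equation}
The composition $j_3\circ s_1 = s_1\circ j_2:E_2\to X_3$ is injective as a composition of two injections, so the leftmost map $s_1:E_2\to E_3$ must itself be injective. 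This contradicts Lemma \ref{lemma:s_1_not_inj}, completing the proof.

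There is essentially no obstacle here: all the real work has already been done in constructing $E$ as a pushout whose critical degeneracy $s_1$ fails to be injective (Lemma \ref{lemma:s_1_not_inj}). The point of the present proposition is simply to observe that such failure is incompatible with any simplicial ambient, because degeneracies in simplicial sets are always split monomorphisms via adjacent face maps.
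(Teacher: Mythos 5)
Your proof is essentially identical to the paper's: both rely on the fact that $s_1$ is a split monomorphism in any simplicial set (via the simplicial identity $d_1 s_1 = \mathrm{id}$), that a path-set embedding would commute with $s_1$, and that Lemma \ref{lemma:s_1_not_inj} shows $s_1 : E_2 \to E_3$ is not injective. You spell out the naturality square explicitly, which the paper leaves implicit, but there is no substantive difference in approach.
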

\begin{proof}
For any simplicial set $X$ the maps $s_1:X_2\to X_3$ and $d_1:X_3\to X_2$ satisfy the relation $d_1s_1=1_{X_2}.$ It follows that $s_1:X_2 \to X_3$ is injective. Then the assertion follows from Lemma \ref{lemma:s_1_not_inj}.
 \end{proof}

\section{Combinatorics of pairs of connected maps}

Further in the discussion of path pairs of modules, we will need some combinatorics of pairs of maps from $\Pi$ and shuffles. We decided to make a separate section about this combinatorics. 

\subsection{Kernel and image of an order-preserving map}

For an order preserving map  $f:[n]\to X$ to a poset $X$ we set \begin{equation}
    \Im(f):=f([n]), \hspace{1cm} \Ker(f):=\{i\in [n-1]\mid f(i)=f(i+1) \}.
\end{equation}
If $f$ is an order-preserving map $f:[n]\to [m]$ is decomposed as
\begin{equation}
f=d^{(i_0)}d^{(i_1)} \dots d^{(i_{k})} s^{(j_0)} s^{(j_2)} \dots s^{(j_l)}.
\end{equation}
(see \eqref{eq_decomp_delta}), then it is easy to check that 
$\Ker(f)=\{j_0,j_1,\dots,j_l\}.$
Any order preserving map 
$f:[n]\to X$ can be uniquely presented as 
\begin{equation}\label{eq_epi-mono}
f=f'\sigma,    
\end{equation}
where $\sigma:[n]\epi [n']$ is a surjective order-preserving map and $f':[n']\mono X $ is injective order preserving map so that $\Ker(f)=\Ker(\sigma)$.

\subsection{Pairs of connected maps}
Further we will need to consider various sets of pairs of maps from $\Pi.$ In this section we introduce notations for them and explain their meaning on the language of paths in quivers $\qq^k\times \qq^l$ and $\qq^k\square \qq^l.$

For any $n,k,l\geq 0$ we set
\begin{equation}
\PPi(n;k,l)=\Pi([n],[k])\times \Pi([n],[l])    
\end{equation}
Since $\Pi([n],[m])\cong \Quiv(\qq^n,\qq^m),$ we see that 
\begin{equation}\label{eq_PPi_iso}
\PPi(n;k,l) \cong 
{\sf P}_n(\qq^k\times \qq^l).  \end{equation}

Further we set
\begin{equation}\label{eq:PPi_square}
\PPi_\square(n;k, l)= \{(f,g)\in \PPi(n;k,l) \mid \Ker(f) \cup \Ker(g)=[n-1] \}.    
\end{equation}

\begin{proposition}\label{proposition:square_nerve} The isomorphism \eqref{eq_PPi_iso} induces an isomorphism 
\[ \PPi_\square(n;k, l)\cong 
{\sf P}_n(\qq^k\square \qq^l). \]
\end{proposition}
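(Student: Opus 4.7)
The plan is to unwind the definitions: the isomorphism $\ner(Q)_n = \Quiv(\qq^n,Q)$ together with the fact that the box product $\qq^k\square \qq^l$ is a subquiver of $\qq^k\times \qq^l$ means that an $n$-path in $\qq^k\square\qq^l$ is precisely an $n$-path in $\qq^k\times \qq^l$ whose edges all land in $(\qq^k\square \qq^l)_1$. Under the isomorphism \eqref{eq_PPi_iso} such a path corresponds to a pair $(f,g)\in \PPi(n;k,l)$, and the condition ``every edge lands in the box product'' must translate exactly into the kernel condition defining $\PPi_\square(n;k,l)$.

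First I would fix $(f,g)\in \PPi(n;k,l)$ and describe the corresponding morphism $(\qq f, \qq g):\qq^n\to \qq^k\times \qq^l$ given by Proposition \ref{prop:q_embedding}: on vertices $i\in [n]$ it sends $i\mapsto (f(i),g(i))$, and on the non-degenerate edge $(i,i{+}1)\in (\qq^n)_1^{\sf N}$ it sends
\begin{equation}
(i,i{+}1)\longmapsto \bigl((f(i),f(i{+}1)),(g(i),g(i{+}1))\bigr).
\end{equation}
The degenerate edges $(i,i)$ of $\qq^n$ automatically land on degenerate edges of $\qq^k\times \qq^l$, which belong to $(\qq^k\square \qq^l)_1$. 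So the only condition for this morphism to factor through $\qq^k\square\qq^l$ is that for every $i\in [n-1]$ the pair $((f(i),f(i{+}1)),(g(i),g(i{+}1)))$ lies in $((\qq^k)_1\times (\qq^l)_1^{\sf D})\cup ((\qq^k)_1^{\sf D}\times (\qq^l)_1)$, i.e.\ either $f(i)=f(i{+}1)$ or $g(i)=g(i{+}1)$.

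Next I would observe that, since $f,g$ are connected order preserving maps, $f(i)=f(i{+}1)$ is equivalent to $i\in \Ker(f)$ (and similarly for $g$). Hence the factoring condition reads: for every $i\in [n-1]$, $i\in \Ker(f)\cup \Ker(g)$, which is exactly $\Ker(f)\cup \Ker(g)=[n-1]$, i.e.\ $(f,g)\in \PPi_\square(n;k,l)$. Combining this with the fact that $\Quiv(\qq^n,\qq^k\square \qq^l)$ embeds into $\Quiv(\qq^n,\qq^k\times \qq^l)$ as the subset of morphisms factoring through $\qq^k\square \qq^l$ (which it does since $\qq^k\square \qq^l$ is a subquiver), we obtain the desired bijection.

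There is no serious obstacle here; the only point requiring a moment of care is the equivalence $f(i)=f(i{+}1)\Leftrightarrow i\in \Ker(f)$, which follows from the definition of $\Ker$ together with the fact that $f$ is order preserving, and the routine check that naturality of $\qq$ makes this bijection compatible with restriction/corestriction so that it is indeed induced by \eqref{eq_PPi_iso}.
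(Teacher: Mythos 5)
Your proof is correct and follows essentially the same route as the paper's: both translate an element $(f,g)\in\PPi(n;k,l)$ into a path in $\qq^k\times\qq^l$, check edge-by-edge that landing in $\qq^k\square\qq^l$ amounts to "$f(i)=f(i+1)$ or $g(i)=g(i+1)$ for every $i$", and identify this with the kernel condition $\Ker(f)\cup\Ker(g)=[n-1]$. Your version is merely a bit more explicit about degenerate edges and the equivalence $f(i)=f(i+1)\Leftrightarrow i\in\Ker(f)$, but there is no substantive difference.
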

\begin{proof}
Any pair 
$(f:[n]\to [k] , g:[n]\to [l])$ of morphisms from $\Pi$ 
defines an $n$-path in $\qq^k\times \qq^l$ given by \[( ((f(0),f(1)),(g(0),g(1))), \dots ((f(n-1),f(n)),(g(n-1),g(n))) ).\] This path lies in $\qq^k\square \qq^l$ iff for each $0\leq i\leq n-1$ either $f(i)=f(i+1)$ or $g(i)=g(i+1).$ In other words, this path in $\qq^k\square \qq^l$ iff for any $i\in [n-1] $ either $i\in\Ker(f)$ or $i\in \Ker(g).$
\end{proof}

The Proposition \ref{proposition:square_nerve} implies that $\PPi_\square$ defines a functor
\begin{equation}
 \PPi_\square : \Pi^{op} \times \Pi \times \Pi \longrightarrow {\sf Set},
\end{equation}
that sends $([n],[k],[l])$ to $\PPi_\square(n;k,l)$. 

\begin{remark} In Appendix (Section \ref{Appendix}) we show that the functor  $\PPi_\square$ has some categorical meaning. Namely it defines a structure of promonoidal category on $\Pi.$
\end{remark}

Let $n=k+l,$ where $k,l\geq 0.$ We define an $(k,l)$-shuffle as a pair $(\mu,\nu),$ where $\mu=(\mu_0,\dots,\mu_{k-1})$ and $\nu=(\nu_0,\dots,\nu_{l-1})$ are strictly increasing sequences of numbers from $\{0,\dots,n-1\}$ such that $\{\mu_0,\dots,\mu_{k-1}\}\cup \{\nu_0,\dots,\nu_{l-1}\}=\{0,\dots,n-1\}.$ The set of $(k,l)$-shuffles is denoted by ${\sf Sh}(k,l).$ For any $(\mu,\nu)\in {\sf Sh}(k,l)$ we consider a couple  $(s^\nu:[n]\to [k],s^\mu:[n]\to [l])$ given by 
\begin{equation}
    s^\nu = s^{\nu_0} \dots s^{\nu_{l-1}}, \hspace{1cm} s^\mu = s^{\mu_0}\dots s^{\mu_{k-1}}.
\end{equation}
Note that $\Ker(s^{\nu})=\{\nu_0,\dots,\nu_{l-1}\}$ and $\Ker(s^\mu)=\{\mu_0,\dots, \mu_{k-1}\}.$ Hence $(s^\nu,s^\mu)\in \PPi_\square^{\sf N}(n;k,l).$

\begin{lemma}
For any $(f,g)\in \PPi_\square(n;k,l)$ there exists a unique data set consisting of 
\begin{itemize}
    \item natural numbers $k',l',k'';$
    \item a shuffle $(\mu,\nu)\in {\sf Sh}(k',l');$
    \item a surjective order preserving map $\sigma:[k']\epi [k''];$
    \item injective order preserving maps $\alpha:[k'']\mono [k] $ and $\beta:[l']\mono [l]$
\end{itemize}
such that $n=k'+l'$ and 
\begin{equation}\label{eq:standard_decomposition}
    (f,g)=(\alpha \sigma s^\nu, \beta s^\mu).
\end{equation}
\begin{equation}
\begin{tikzcd}
& & & {[n]}\ar[ld,"s^\nu"',twoheadrightarrow] \ar[rd,"s^\mu",twoheadrightarrow]  \ar[llld,bend right=5mm,"f"'] \ar[rrrd,bend left=5mm,"g"] & & & \\
{[k]} & {[k'']}\ar[l,"\alpha",rightarrowtail] & {[k']} \ar[l,"\sigma",twoheadrightarrow] &  & {[l']}\ar[rr,"\beta"',rightarrowtail] & \phantom{[k'']} & {[l]}
\end{tikzcd}    
\end{equation}
The decomposition \eqref{eq:standard_decomposition} will be called the standard decomposition of $(f,g).$
\end{lemma}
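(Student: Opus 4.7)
My plan is to build the decomposition from the right-hand end of the diagram: first epi--mono factor $g$, then use the defining condition of $\PPi_\square(n;k,l)$ to push $f$ through the degeneracy piece that arises, and finally epi--mono factor what remains of $f$.

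\emph{Existence.} Write $g=\beta\circ \sigma_g$ for the (unique) epi--mono factorization in $\Delta$, with $\sigma_g\colon [n]\epi [l']$ surjective and $\beta\colon [l']\mono [l]$ injective. Both factors actually lie in $\Pi$: every surjection in $\Delta$ is a composition of codegeneracies and so lies in $\Pi$, while $\beta$ is a connected injection because $\Im(\beta)=\Im(g)$ is connected. Write $\sigma_g=s^\nu$ in the normal form for a unique strictly increasing sequence $\nu=(\nu_0,\dots,\nu_{k'-1})$ with $\{\nu_0,\dots,\nu_{k'-1}\}=\Ker(g)$, where $k'=n-l'$. Let $\mu=(\mu_0,\dots,\mu_{l'-1})$ be the sorted enumeration of $[n-1]\setminus\{\nu_0,\dots,\nu_{k'-1}\}$; this makes $(\mu,\nu)$ a shuffle with $k'+l'=n$. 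The hypothesis $\Ker(f)\cup\Ker(g)=[n-1]$ forces $\mu\subseteq \Ker(f)$, so $f$ is constant on every fibre of $s^\mu\colon[n]\epi[k']$ and factors uniquely as $f=\tilde f\circ s^\mu$ with $\tilde f\colon[k']\to[k]$ order-preserving. A one-line check shows $\tilde f$ is connected: for $j\in[k'-1]$ pick the largest $i$ with $s^\mu(i)=j$; then $s^\mu(i+1)=j+1$, so $\tilde f(j+1)-\tilde f(j)=f(i+1)-f(i)\in\{0,1\}$. Epi--mono factoring $\tilde f$ in $\Delta$ yields $\tilde f=\alpha\circ\sigma$ with $\sigma\colon[k']\epi[k'']$ and $\alpha\colon[k'']\mono[k]$; both lie in $\Pi$ by the same surjection/connected-image reasoning, which gives $f=\alpha\sigma s^\mu$.

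\emph{Uniqueness.} Given any decomposition $(f,g)=(\alpha\sigma s^\mu,\beta s^\nu)$ as in the statement, the pair $(\beta,s^\nu)$ is automatically an epi--mono factorization of $g$, and uniqueness in $\Delta$ pins down $l'$, the sequence $\nu$ (hence $s^\nu$), and $\beta$. The shuffle condition $\mu\sqcup\nu=[n-1]$ then determines $\mu$, and with it $k'$ and $s^\mu$. Since $s^\mu$ is surjective, the composite $\tilde f:=\alpha\sigma$ is the unique map with $\tilde f\circ s^\mu=f$, and uniqueness of the epi--mono factorization of $\tilde f$ fixes $k''$, $\sigma$ and $\alpha$.

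\emph{Main obstacle.} The step requiring the most care is verifying that the factors $\tilde f$, $\alpha$, and $\sigma$ actually live in $\Pi$ rather than escaping into $\Delta$; this reduces to the observations that surjective order-preserving maps automatically lie in $\Pi$ and that a connected injection is characterised by having a connected image, both of which are used several times above.
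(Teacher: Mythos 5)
Your proof is correct and takes essentially the same route as the paper's: epi-mono factor $g=\beta s^\nu$, form the complementary sequence $\mu$ so that $(\mu,\nu)$ is a shuffle, use $\Ker(f)\cup\Ker(g)=[n-1]$ to pull $s^\mu$ out of $f$, then epi-mono factor what remains, with uniqueness read off from the uniqueness of epi-mono factorizations and of $\mu$ given $\nu$. The only difference is that you also verify the resulting factors are connected maps (i.e., lie in $\Pi$), which is a true and useful observation but not demanded by the statement, which only asks for order-preserving maps.
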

\begin{proof}
Take the epi-mono decomposition of $g=\beta s^\mu,$ where $\mu=(\mu_0,\dots,\mu_{k'-1})$ is a strictly increasing sequence and $s^\mu = s^{\mu_0}\dots s^{\mu_{k'-1}}.$ Then there is a unique shuffle $(\mu,\nu)\in {\sf Sh}(k',l'),$ where $l'=n-k'.$ Since $ \Ker(g)=\Ker(s^\mu)=\{\mu_0,\dots,\mu_{k-1}\}$ and $\Ker(f)\cup \Ker(g)=[n-1],$ we obtain $\{\nu_0,\dots,\nu_{l'-1}\}\subseteq \Ker(f).$ It follows that $f=f' s^\nu$ for some order-preserving map $f'$. Then we take the epi-mono decomposition of $f'$ and obtain $f'= \alpha \sigma.$  Note that $k',$ $\mu$ and $\beta$ are uniquely defined by $g;$ $f=\alpha (\sigma s^\nu)$ is the epi-mono decomposition of $f,$ so $\alpha$ and the composition $\sigma s^\nu$ are uniquely defined by $f;$ $ \nu$ is uniquely defined by $\mu;$ $\sigma$ is uniquely defined by $\nu$ and the composition $\sigma s^\nu.$   
\end{proof}

We will also need not only pairs of maps but also pairs of surjections. Denote by $\Pi^-([n],[k])$ the set of surjective order preseving maps $[n]\epi [k]$. Then we set  
\[\PS(n;k, l)= \Pi^-([n],[k])\times \Pi^-([n],[l]).\]
It is easy to see that the elements of this set correspond to $n$-paths in $ \qq^k\times \qq^l$ starting in $(0,0)$ and ending in $(k,l).$
We also consider the following set 
\begin{equation}
\PS_\square(n,k,l)=\PS(n;k,l) \cap \PPi_\square(n;k,l),
\end{equation}
that corresponds to the set of $n$-paths in $\qq^k\square \qq^l$ starting in starting in $(0,0)$ and ending in $(k,l).$ Since all paths can be degenerated and non-degenerated, for all these sets we can also consider the corresponding degenerated and non-degenerated versions. For example:
\begin{equation}
\begin{split}
\PS^{\sf D}(n;k,l)&=\{(\sigma,\tau)\in \PS(n;k,l) \mid \Ker(\sigma)\cap \Ker(\tau)\ne \emptyset \}, \\ 
 \PS^{\sf N}(n;k,l)&=\{(\sigma,\tau)\in \PS(n;k,l) \mid \Ker(\sigma)\cap \Ker(\tau)= \emptyset \}, \\ 
 \PS_\square^{\sf D}(n;k,l)&=\PS_{\square}(n;k,l) \cap \PS^{\sf D}(n;k,l),
 \\
 \PS_\square^{\sf N}(n;k,l)&=\PS_\square(n;k,l) \cap \PS^{\sf N}(n;k,l).\\
\end{split}    
\end{equation}

We will also need notations for the unions of all these sets by $(k,l).$ For example 
\begin{equation}
\PPi_\square(n)=\coprod_{k,l} \PPi_\square(n;k,l), \hspace{1cm} \PS_\square(n)=\coprod_{k,l} \PS_\square(n;k,l).
\end{equation}

\begin{lemma}\label{lemma_shuffles} Let $(f,g)\in \PPi_\square(n)$ and let $(f,g)=(\alpha \sigma s^\nu, \beta s^\mu )$ be its standard decomposition \eqref{eq:standard_decomposition}. Then
\begin{itemize}
    \item $(f,g) \in \PS_\square(n)$ if and only if $\alpha={\sf id}$ and $\beta={\sf id};$
    \item $(f,g)\in \PS_\square^{\sf N}(n)$ if and only if $\alpha={\sf id},$  $\beta={\sf id}$ and $\sigma={\sf id}.$
\end{itemize}
In particular, $\PS_\square^{\sf N}(k+l;k,l)=\{(s^\nu,s^\mu)\mid (\mu,\nu)\in {\sf Sh}(k,l)\};$  and $\PS_\square^{\sf N}(n;k,l)=\emptyset,$ if $n\neq k+l.$
\end{lemma}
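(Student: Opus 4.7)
The plan is to read off the claims directly from the standard decomposition $(f,g)=(\alpha\sigma s^{\mu},\beta s^{\nu})$, using that $\alpha,\beta$ are injective order-preserving maps into $[k]$ and $[l]$ respectively while $\sigma$, $s^{\mu}$, $s^{\nu}$ are surjective.

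First I would settle the characterization of $\PS_\square(n)$. A composition of maps in $\Pi$ is surjective iff each factor is surjective, so $f=\alpha\sigma s^{\mu}$ is surjective iff $\alpha$ is surjective, and likewise $g=\beta s^{\nu}$ is surjective iff $\beta$ is surjective. An injective order-preserving map $\alpha\colon [k'']\mono [k]$ that is surjective must be the identity (forcing $k''=k$); the same applies to $\beta$. This gives the first bullet.

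Next I would handle $\PS_\square^{\sf N}(n)$. Assuming already $\alpha=\mathrm{id}$ and $\beta=\mathrm{id}$, I need to determine when $\mathrm{Ker}(f)\cap\mathrm{Ker}(g)=\emptyset$. Since $\mathrm{Ker}(g)=\mathrm{Ker}(s^{\nu})=\{\nu_0,\dots,\nu_{k'-1}\}$ and $(\mu,\nu)\in\mathsf{Sh}(l',k')$ partitions $[n-1]$, the condition becomes $\mathrm{Ker}(f)\subseteq\{\mu_0,\dots,\mu_{l'-1}\}=\mathrm{Ker}(s^{\mu})$, and the reverse inclusion is automatic. So I must compare $\mathrm{Ker}(\sigma s^{\mu})$ with $\mathrm{Ker}(s^{\mu})$. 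The key observation is that $s^{\mu}$ increases by at most $1$ at each step, so for $i\notin\mathrm{Ker}(s^{\mu})$ we have $s^{\mu}(i+1)=s^{\mu}(i)+1$; hence such an $i$ lies in $\mathrm{Ker}(\sigma s^{\mu})$ precisely when $s^{\mu}(i)\in\mathrm{Ker}(\sigma)$. Since $s^{\mu}$ is surjective, every element of $[k'-1]$ is attained as $s^{\mu}(i)$ for some $i\notin\mathrm{Ker}(s^{\mu})$, and so $\mathrm{Ker}(\sigma s^{\mu})=\mathrm{Ker}(s^{\mu})$ holds iff $\mathrm{Ker}(\sigma)=\emptyset$, i.e.\ iff $\sigma=\mathrm{id}$.

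Finally, the ``in particular'' statement is immediate: if $\alpha=\beta=\sigma=\mathrm{id}$ then the domains line up as $k''=k'=k$ and $l'=l$, so $n=k+l$ and $(f,g)=(s^{\mu},s^{\nu})$ with $(\mu,\nu)\in\mathsf{Sh}(l,k)$; if $n\ne k+l$ the dimension count fails and $\PS_\square^{\sf N}(n;k,l)=\emptyset$. No serious obstacle is anticipated; the only point requiring care is the elementary computation of $\mathrm{Ker}(\sigma s^{\mu})$ outlined above, which rests on the fact that $s^{\mu}$ is a composition of codegeneracies (equivalently, a connected surjection) and therefore has step size at most one.
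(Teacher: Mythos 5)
Your proof is correct and supplies exactly the kind of routine bookkeeping the paper dismisses as ``The proof is straightforward,'' so the approach matches the paper's intent. One small imprecision: the blanket statement that ``a composition of maps in $\Pi$ is surjective iff each factor is surjective'' is false (e.g.\ $s^{(0)}d^{(0)}=\mathrm{id}_{[0]}$ is surjective while $d^{(0)}$ is not); what you actually need, and what holds, is that for an epi-mono decomposition $f=\alpha\cdot(\sigma s^{\mu})$ with $\sigma s^{\mu}$ surjective, $f$ is surjective iff the injection $\alpha$ is surjective, i.e.\ iff $\alpha=\mathrm{id}$. Your computation of $\mathrm{Ker}(\sigma s^{\mu})$ in the second bullet is fine; one could also argue more quickly by cardinalities, noting $|\mathrm{Ker}(\sigma s^{\mu})|=n-k''\ge n-k'=|\mathrm{Ker}(s^{\mu})|$ with equality iff $k''=k'$ iff $\sigma=\mathrm{id}$.
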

\begin{proof}
The proof is straightforward. 
\end{proof}

\subsection{Paths in the box product}

Since $Q\times R$ is the  product of $Q$ and $R$ in the sense of category theory we have a bijection for the sets of paths
\begin{equation}\label{eq:path_product}
    {\sf P}_n(Q\times R)\cong {\sf P}_nQ\times {\sf P}_nR.
\end{equation}

\begin{proposition}\label{prop:paths_in_product}  The bijection \eqref{eq:path_product}  induces a bijection
\begin{equation} \label{eq:X1}
{\sf P}_n(Q\square R) \cong \bigcup_{k+l=n}   \bigcup_{(\mu,\nu)\in {\sf Sh}(k,l)} s_\nu({\sf P}_kQ) \times s_\mu({\sf P}_lR).
\end{equation}
The right hand part can be also rewritten as 
\begin{equation} \label{eq:X2}
    {\sf P}_n(Q\square R) \cong   \bigcup_{(\sigma,\tau)\in \PS_\square(n)} \sigma^*({\sf P}_{|\sigma|}Q) \times \tau^*({\sf P}_{|\tau|}R).
\end{equation}
and as
\begin{equation}\label{eq:X3}
    {\sf P}(Q\square R)_n \cong   \bigcup_{(f,g)\in \PPi_\square(n)} f^*({\sf P}_{|f|}Q) \times g^*({\sf P}_{|g|}R).
\end{equation} 
\end{proposition}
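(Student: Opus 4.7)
The plan is to reduce everything to (\ref{eq:X1}) and then obtain (\ref{eq:X2}) and (\ref{eq:X3}) by chains of inclusions. The starting point is that $Q\times R$ is the categorical product in $\Quiv$, so the bijection \eqref{eq:path_product} identifies $\ner(Q\times R)_n$ with $\ner(Q)_n\times \ner(R)_n$. Under this identification, the subquiver inclusion $Q\square R\hookrightarrow Q\times R$ translates to the condition that $(\alpha,\beta)\in \ner(Q)_n\times \ner(R)_n$ lies in $\ner(Q\square R)_n$ iff for every $i\in\{0,\dots,n-1\}$ at least one of the arrows $\alpha_i\in Q_1,\beta_i\in R_1$ is degenerated. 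This is precisely the path-level analogue of Proposition \ref{proposition:square_nerve}.

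To establish \eqref{eq:X1}, I prove the two inclusions. For ``$\supseteq$'' it is enough to note that if $(\mu,\nu)\in{\sf Sh}(l,k)$ with $k+l=n$, then by the very description of the degeneracy operators on $\ner(Q)$ the path $s_\mu(\alpha')\in\ner(Q)_n$ has a degenerated arrow at every position of $\mu$, and symmetrically for $s_\nu(\beta')$; since $\mu\cup\nu=\{0,\dots,n-1\}$, the box-product condition is satisfied at every position. For ``$\subseteq$'', given $(\alpha,\beta)\in\ner(Q\square R)_n$, set $\mu$ to be the strictly increasing sequence of positions $i$ at which $\alpha_i\in Q_1^{\sf D}$, and let $\nu$ be the complement in $\{0,\dots,n-1\}$. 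The box-product condition forces $\beta_i\in R_1^{\sf D}$ for all $i\in\nu$. Putting $l=|\mu|$ and $k=n-l$ we get $(\mu,\nu)\in{\sf Sh}(l,k)$, and Lemma \ref{lemma:degenerated_path} gives $\alpha'\in\ner(Q)_k$ and $\beta'\in\ner(R)_l$ with $\alpha=s_\mu(\alpha')$ and $\beta=s_\nu(\beta')$.

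For \eqref{eq:X2} and \eqref{eq:X3} the argument is assembled from \eqref{eq:X1} via the chain of inclusions
\begin{equation*}
 \bigcup_{(\mu,\nu)} s_\mu(\ner(Q)_{k})\times s_\nu(\ner(R)_{l}) \;\subseteq\;
 \bigcup_{(\sigma,\tau)\in \PS_\square(n)} \sigma^*(\ner(Q)_{|\sigma|})\times \tau^*(\ner(R)_{|\tau|})\;\subseteq\;
 \bigcup_{(f,g)\in \PPi_\square(n)} f^*(\ner(Q)_{|f|})\times g^*(\ner(R)_{|g|}).
\end{equation*}
The first inclusion is immediate from Lemma \ref{lemma_shuffles}, which identifies shuffle pairs $(s^\mu,s^\nu)$ with elements of $\PS_\square^{\sf N}(k+l;k,l)\subseteq \PS_\square(n)$, noting that $(s^\mu)^*=s_\mu$ and $(s^\nu)^*=s_\nu$ on $\ner(Q),\ner(R)$. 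The second inclusion is obvious since $\PS_\square(n)\subseteq \PPi_\square(n)$. It remains to check that the right-most union is contained in $\ner(Q\square R)_n$; this is a direct computation: for $f\in \Pi([n],[k])$ and $\alpha''\in\ner(Q)_k$ one has $f^*(\alpha'')_i=\alpha''(f(i),f(i+1))$, which equals $1_{f(i)}$ whenever $i\in\Ker(f)$, and analogously for $g$. Since $(f,g)\in \PPi_\square(n)$ means $\Ker(f)\cup\Ker(g)=[n-1]$ by \eqref{eq:PPi_square}, the box-product condition holds at every position, giving the final inclusion. The three unions are therefore squeezed between \eqref{eq:X1} and $\ner(Q\square R)_n$ which coincide, so all three agree.

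There is no substantial obstacle; the only point that requires care is the bookkeeping of the two conventions for degeneracies (sequences $\mu$ versus surjective maps $\sigma=s^\mu$), and making explicit that being degenerated at position $i$ in a path corresponds exactly to $i\in\Ker$ of the underlying order-preserving map. Once this dictionary is set up, the proof reduces to matching indexing sets.
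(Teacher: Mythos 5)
Your proof is correct and follows essentially the same cyclic-inclusion strategy as the paper: establish $X_1\subseteq X_2\subseteq X_3$, show $X_3\subseteq\ner(Q\square R)_n$, and close the cycle with $\ner(Q\square R)_n\subseteq X_1$ via Lemma \ref{lemma:degenerated_path}. The only minor differences are stylistic — you prove $X_3\subseteq\ner(Q\square R)_n$ by a direct kernel computation rather than via the standard decomposition as the paper does, and you include a small redundancy in proving $X_1\subseteq\ner(Q\square R)_n$ separately when it already follows from the chain.
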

\begin{proof} Denote by ${\sf P}'_n(Q\square R)$ the image of $ {\sf P}_n(Q\square R)$ in ${\sf P}_nQ\times {\sf P}_nR.$ Denote by $X_1,X_2,X_3\subseteq {\sf P}_nQ\times {\sf P}_nR$ the right hand parts of the equations \eqref{eq:X1}, \eqref{eq:X2}, \eqref{eq:X3}. So we need to prove that ${\sf P}'_n(Q\square R)=X_1=X_2=X_3.$ The inclusions $X_1\subseteq X_2\subseteq X_3 $ are obvious. Hence, it is sufficient to prove that $ X_3\subseteq {\sf P}'_n(Q\square R)$  and ${\sf P}'_n(Q\square R)\subseteq X_1.$

Let $(\alpha,\beta)\in {\sf P}_nQ\times {\sf P}_nR.$ Then $(\alpha,\beta)\in {\sf P}'_n(Q\square R)$ if and only if for each $i$ we have either $\alpha_i\in Q_1^{\sf D}$ or $\beta_i\in R_1^{\sf D}.$ 

If $(f,g)\in \PPi_\square(n),$ then we take the standard decomposition $(f,g)=(\alpha \sigma s^\mu,\beta s^\nu)$ (see \eqref{eq:standard_decomposition}).  Therefore, by Lemma \ref{lemma:degenerated_path}, we obtain 
$X_3\subseteq {\sf P}'_n(Q\square R).$ Let $(\alpha,\beta)\in {\sf P}'_n(Q\square R).$ 
Then there exists a shuffle $(\mu,\nu)\in {\sf Sh}(k,l),$ for some $k+l=n,$ such that $\alpha_{\nu_i}\in Q_1^{\sf D}$ for any $i$ and $\beta_{\mu_j}\in R_1^{\sf D}$ for any $j.$  By Lemma \ref{lemma:degenerated_path} it follows that $\alpha=s_\nu(\alpha')$ and $\beta=s_\mu(\beta')$ for some $\alpha'\in {\sf P}_kQ$ and $\beta'\in {\sf P}_lR.$ It follows that 
${\sf P}'_n(Q\square R)
\subseteq X_1.$
\end{proof}

\subsection{Graph of shuffles}
Now we are going to define a structure of weighted digraph on the set of shuffles ${\sf Sh}(k,l).$ Recall that we define an $(k,l)$-shuffle as a pair $(\mu,\nu),$ where $\mu=(\mu_0,\dots,\mu_{k-1})$ and $\nu=(\nu_0,\dots,\nu_{l-1})$ are strictly increasing sequences of numbers from $\{0,\dots,k+l-1\}$ such that $\{\mu_0,\dots,\mu_{k-1}\}\cup \{\nu_0,\dots,\nu_{l-1}\}=\{0,\dots,k+l-1\}.$

A good intuitive treatment of a shuffle is a path on a lattice.  For any $(k,l)$-shuffle $(\mu,\nu)$ we can consider the couple $(s^\nu,s^\mu)\in \PS_\square(k+l,k,l).$ Since, elements of  $\PS_\square(k+l,k,l)$  correspond to paths in $\qq^k\square \qq^l$ starting in $(0,0)$ end ending in $(k,l),$ each $(k,l)$-shuffle corresponds to a path, whose $i$-th point is $( \nu^{<i}, \mu^{<i}),$ where $\nu^{<i}$ is the number of indexes $j$ such that   $\nu_j<i$ and $\nu^{<i}$ is the number of indexes $j$ such that $\mu_j<i:$ 
\begin{equation}
\nu^{<i}= |\{0\leq j \leq k-1 \mid \nu_j<i\}|, \hspace{1cm} \mu^{<i}=|\{0 \leq j\leq l-1 \mid \mu_j<i\}|. 
\end{equation}

For example, the $(4,3)$-shuffle $((0,2,3,5),(1,4,6))$ corresponds to the following path in $\qq^3\times \qq^4$:

\begin{equation}
\begin{tikzcd}[scale cd=0.3] 
 0 \arrow{r}\arrow[color1]{d}  & 1 \arrow[color1]{r}\arrow{d}  & \bullet \arrow[color1]{r}\arrow[color1]{d} & \bullet\arrow[color1]{r}\arrow[color1]{d} & \bullet \arrow[color1]{d} 
\\
 \bullet \arrow[color1]{r}\arrow[color1]{d} & 
2 \arrow{r}\arrow[color1]{d} & 3 \arrow{r}\arrow[color1]{d} & 4 \arrow[color1]{r}\arrow{d} & \bullet\arrow[color1]{d} 
\\
 \bullet\arrow[color1]{r}\arrow[color1]{d} & \bullet\arrow[color1]{r}\arrow[color1]{d} & \bullet\arrow[color1]{r}\arrow[color1]{d} & 5\arrow{r}\arrow[color1]{d} & 6\arrow{d} 
\\
 \bullet\arrow[color1]{r} & \bullet\arrow[color1]{r} & \bullet\arrow[color1]{r} & \bullet\arrow[color1]{r} & 7
\end{tikzcd}    
\end{equation}

\

An {\it elementary inversion} of a $(k,l)$-shuffle $(\mu,\nu)$ is a element $1\leq i\leq n-1$ such that $i-1\in \{ \nu_0,\dots,\nu_{l-1}\}$ and $i\in  \{\mu_0,\dots,\mu_{k-1}\}.$ In other words, $i$ is an elementary inversion of $(\mu,\nu)$ if it has the form 
\begin{equation}
(\mu,\nu)=((\mu_0,\dots,\mu_{r-1}, i,\mu_{r+1}, \dots,\mu_{k-1}),(\nu_0,\dots,\nu_{t-1},i-1,\nu_{t+1},\dots,\nu_{l-1})).
\end{equation}
For example, $5$ is an elementary inversion of of the shuffle $((0,2,3,5),(1,4,6)).$

Let $(\mu,\nu)$ and $(\mu',\nu')$ be two $(k,l)$-shuffles. We say that there is an edge
\begin{equation}
(\mu,\nu)\overset{i}\longrightarrow (\mu',\nu') 
\end{equation}
of weight $1\leq i\leq k+l-1$ if $i$ is an elementary inversion of $(\mu,\nu)$ and 
\begin{align}
(\mu,\nu)&=((\mu_0,\dots,\mu_{r-1},\ \ i\ \ ,\mu_{r+1}, \dots,\mu_{k-1}),(\nu_0,\dots,\nu_{t-1},i-1,\nu_{t+1},\dots,\nu_{l-1})),\\
(\mu',\nu')&=((\mu_0,\dots,\mu_{r-1}, i-1,\mu_{r+1}, \dots,\mu_{k-1}),(\nu_0,\dots,\nu_{t-1},\ \ i\ \  ,\nu_{t+1},\dots,\nu_{l-1})).
\end{align}
Note that in this case $i$ is not an elementary inversion of $(\mu',\nu').$ It is easy to see that the paths corresponding to the shuffles $(\mu,\nu)$ and $(\mu',\nu')$ deffer only in one vertex: in the $i$-th vertex. 

For example, we have the edge
\begin{equation}\label{eq:example_arrow}
((0,2,3,5),(1,4,6)) \overset{5}\longrightarrow  ((0,2,3,4),(1,5,6)).  
\end{equation}
On the level of paths it looks as follows. 
\begin{equation}
\begin{tikzcd}[scale cd=0.3]
 0 \arrow{r}\arrow[color1]{d}  & 1 \arrow[color1]{r}\arrow{d}  & \bullet \arrow[color1]{r}\arrow[color1]{d} & \bullet\arrow[color1]{r}\arrow[color1]{d} & \bullet \arrow[color1]{d} 
\\
 \bullet \arrow[color1]{r}\arrow[color1]{d} & 
2 \arrow{r}\arrow[color1]{d} & 3 \arrow{r}\arrow[color1]{d} & 4 \arrow[color1]{r}\arrow{d} & \bullet\arrow[color1]{d} 
\\
 \bullet\arrow[color1]{r}\arrow[color1]{d} & \bullet\arrow[color1]{r}\arrow[color1]{d} & \bullet\arrow[color1]{r}\arrow[color1]{d} & 5\arrow{r}\arrow[color1]{d} & 6\arrow{d} 
\\
 \bullet\arrow[color1]{r} & \bullet\arrow[color1]{r} & \bullet\arrow[color1]{r} & \bullet\arrow[color1]{r} & 7
\end{tikzcd}  
\hspace{5mm}
\overset{5}\longrightarrow
\hspace{5mm}
\begin{tikzcd}[scale cd=0.3]
 0 \arrow{r}\arrow[color1]{d}  & 1 \arrow[color1]{r}\arrow{d}  & \bullet \arrow[color1]{r}\arrow[color1]{d} & \bullet\arrow[color1]{r}\arrow[color1]{d} & \bullet \arrow[color1]{d} 
\\
 \bullet \arrow[color1]{r}\arrow[color1]{d} & 
2 \arrow{r}\arrow[color1]{d} & 3 \arrow{r}\arrow[color1]{d} & 4 \arrow{r}\arrow[color1]{d} & 5\arrow{d} 
\\
 \bullet\arrow[color1]{r}\arrow[color1]{d} & \bullet\arrow[color1]{r}\arrow[color1]{d} & \bullet\arrow[color1]{r}\arrow[color1]{d} & \bullet\arrow[color1]{r}\arrow[color1]{d} & 6\arrow{d} 
\\
 \bullet\arrow[color1]{r} & \bullet\arrow[color1]{r} & \bullet\arrow[color1]{r} & \bullet\arrow[color1]{r} & 7
\end{tikzcd}    
\end{equation}

The obtained weighted digraph is denoted by ${\bf Sh}(k,l).$ Note that for any edge $(\mu,\nu)\to (\mu',\nu')$ we have 
\begin{equation}
    {\sf sgn}(\mu,\nu)= - {\sf sgn}(\mu',\nu').
\end{equation}

\begin{lemma}\label{lemma_graph_sh}
The digraph ${\bf Sh}(k,l)$ is weakly connected. 
\end{lemma}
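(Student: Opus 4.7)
The plan is to exhibit a canonical ``sink'' shuffle and show that every other shuffle admits a directed walk to it; since directed connectedness implies weak connectedness, this will suffice.

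First I would single out the shuffle
\[
(\mu_*,\nu_*):=\bigl((0,1,\dots,l-1),\;(l,l+1,\dots,l+k-1)\bigr),
\]
whose corresponding lattice path goes all-up-then-all-right. I claim this is the unique $(l,k)$-shuffle with no elementary inversion. Indeed, if $(\mu,\nu)\neq(\mu_*,\nu_*)$ then there is some $j\in\{0,\dots,k-1\}$ with $\nu_j<l+j$; taking the smallest such $j$, the value $i:=\nu_j+1$ satisfies $i-1\in\{\nu_0,\dots,\nu_{k-1}\}$ and (by minimality of $j$) $i\in\{\mu_0,\dots,\mu_{l-1}\}$, so $i$ is an elementary inversion of $(\mu,\nu)$.

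Next I would introduce the potential
\[
\phi(\mu,\nu):=\sum_{j=0}^{k-1}\nu_j.
\]
Along any edge $(\mu,\nu)\overset{i}{\longrightarrow}(\mu',\nu')$, the sequence $\nu'$ is obtained from $\nu$ by replacing the entry $i-1$ with the entry $i$, so $\phi(\mu',\nu')=\phi(\mu,\nu)+1$. Since $\phi$ takes values in the finite set $\{0,1,\dots,kl+\tfrac{k(k-1)}{2}\}$ bounded above by $\phi(\mu_*,\nu_*)$, every maximal directed walk starting at an arbitrary vertex is finite, and its terminal vertex admits no outgoing edge, i.e.\ has no elementary inversion. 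By the uniqueness claim above, that terminal vertex must be $(\mu_*,\nu_*)$. Thus every vertex is connected to $(\mu_*,\nu_*)$ by a directed path, and in particular by an undirected path, so ${\bf Sh}(l,k)$ is weakly connected.

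The only real step requiring a little care is the uniqueness observation in the first paragraph; the rest is bookkeeping with a strictly monotone integer-valued potential. I would therefore expect the main (still mild) obstacle to be formulating and verifying that uniqueness cleanly; once it is in hand, the termination-by-potential argument concludes the proof immediately.
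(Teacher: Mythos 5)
Your proof takes essentially the same route as the paper's: single out the unique shuffle $(\mu_*,\nu_*)$ without elementary inversions, and show every vertex admits a directed walk to it by a termination argument (the paper descends on the maximal inversion index $i_{\max}$, you ascend on the bounded potential $\phi=\sum_j\nu_j$; these are interchangeable). However, your argument for the uniqueness claim has a slip: the index $j$ should be the \emph{largest} with $\nu_j<l+j$, not the smallest. Taking the smallest (which, unless $\nu=\nu_*$, is always $j=0$) does not force $\nu_j+1\in\mu$: for $k=l=2$ and the shuffle $(\mu,\nu)=\bigl((2,3),(0,1)\bigr)$, one has $\nu_0+1=1\in\nu$, not in $\mu$. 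With the \emph{largest} such $j$ the argument goes through: maximality forces $\nu_{j''}=l+j''$ for all $j''>j$, so either $j=k-1$ and $\nu_j+1$ exceeds $\max\nu$, or $\nu_{j+1}=l+j+1>\nu_j+1$; in both cases $\nu_j+1\notin\nu$, hence $\nu_j+1\in\mu$ and $i=\nu_j+1$ is an elementary inversion.
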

\begin{proof}
Note that the only $(k,l)$-shuffle that has no elementary inversions is $(\mu_0,\nu_0)=(\{0,\dots,k-1\},\{k,\dots,l+k-1\}).$ 
We prove that for any $(k,l)$-shuffle $(\mu,\nu)$ there is a path to this particular shuffle $(\mu_0,\nu_0)$. 
Denote by $i_{\sf max}(\mu,\nu)$ the maximal elementary inversion of $(\mu,\nu).$ 
Then there is an edge $(\mu,\nu)\to (\mu',\nu')$ such that either $(\mu',\nu')=(\mu_0,\nu_0)$ or $i_{\sf max}(\mu',\nu')<i_{\sf max}(\mu,\nu).$ 
The by induction on $i_{\sf max}$ we prove the assertion. 
\end{proof}

For our example \eqref{eq:example_arrow} of an arrow in ${\bf Sh}(4,3)$ it is easy to check that  $(s^\nu d^5,s^\mu d^5)=(s^{\nu'} d^5,s^{\mu'} d^5)\in \PS(6;3,4)$ and the corresponding path in the product $\qq^3\times \qq^4$  is the following path with the diagonal arrow:
\begin{equation}
\begin{tikzcd}[scale cd=0.3] 
 0 \arrow{r}\arrow[color1]{d}  & 1 \arrow[color1]{r}\arrow{d}  & \bullet \arrow[color1]{r}\arrow[color1]{d} & \bullet\arrow[color1]{r}\arrow[color1]{d} & \bullet \arrow[color1]{d} 
\\
 \bullet \arrow[color1]{r}\arrow[color1]{d} & 
2 \arrow{r}\arrow[color1]{d} & 3 \arrow{r}\arrow[color1]{d} & 4 \arrow{rd} \arrow[dashed]{r}\arrow[dashed]{d} & 5\arrow[dashed]{d} 
\\
 \bullet\arrow[color1]{r}\arrow[color1]{d} & \bullet\arrow[color1]{r}\arrow[color1]{d} & \bullet\arrow[color1]{r}\arrow[color1]{d} & 5\arrow[dashed]{r}\arrow[color1]{d} & 6\arrow{d} 
\\
 \bullet\arrow[color1]{r} & \bullet\arrow[color1]{r} & \bullet\arrow[color1]{r} & \bullet\arrow[color1]{r} & 7
\end{tikzcd}    
\end{equation}
This gives a geometric intuition for the following lemma.

\begin{lemma} \label{lemma_mu_nu_edge}
Let $(\mu,\nu)\overset{i}\to (\mu',\nu')$ be an edge of weight $1\leq i\leq n-1$ in ${\bf Sh}(k,l),$ where $n=k+l.$ Set
\begin{equation}
(\sigma,\tau)=(s^\nu d^i, s^\mu d^i),\hspace{1cm}  (\sigma',\tau')=(s^{\nu'}d^i, s^{\mu'}d^i).  
\end{equation}
Then
\begin{itemize}    
\item $(\sigma,\tau) = (\sigma',\tau');$

\item $\sigma,\tau$ are surjections;

\item $\Ker(\sigma) \cap \Ker(\tau)=\emptyset;$
\item $ \Ker(\sigma) \cup \Ker(\tau)= [n-2]\setminus \{ i-1\};$
\item In particular, $ (\sigma,\tau) \in  \PS^{\sf N}(n-1,k,l)\setminus \PS^{\sf N}_\square(n-1;k,l);$
\item if $(s^{\nu''} d^j,s^{\mu''} d^j)=(\sigma,\tau)$ for some $(\mu'',\nu'')\in {\sf Sh}(k,l),$ and $0\leq j\leq n$ then $i=j$ and either $(\nu'',\mu'')=(\nu,\mu)$ or $( \nu'',\mu'')=(\nu',\mu').$
\end{itemize}
\end{lemma}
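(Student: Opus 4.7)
\emph{Strategy.} I would reduce everything to kernel computations, using the standard fact that an order-preserving surjection is determined by its kernel together with its domain and codomain. For any subset $\lambda\subseteq[n-1]$ (encoding the surjection $s^\lambda$) and any $0\le j\le n$, a case analysis on $k\in[n-2]$, using $d^j(k)=k$ for $k<j$ and $d^j(k)=k+1$ for $k\ge j$, yields
\[
\Ker(s^\lambda d^j)=\{k<j-1\mid k\in\lambda\}\cup\{k\ge j\mid k+1\in\lambda\}\cup\{j-1\mid \{j-1,j\}\subseteq\lambda\}.
\]
This single formula drives the entire argument; the three cases correspond to ``both endpoints left of the gap'', ``both right of the gap'' and ``straddling the gap at $j$''.

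\emph{First two bullets.} Applied to $\mu$ and $\mu'$, which differ only by swapping the entries $i$ and $i-1$ and for which neither contains both $i-1$ and $i$, the formula gives $\Ker(s^\mu d^i)=\Ker(s^{\mu'} d^i)$, and analogously for $\nu,\nu'$. Surjectivity of $s^\mu d^i$ follows from $i\in\mu$, which forces $s^\mu(i)=s^\mu(i+1)$ so that deleting $i$ via $d^i$ does not shrink the image; the other three cases are similar. Since surjective order-preserving maps with equal codomain and kernel coincide, $(\sigma,\tau)=(\sigma',\tau')$ and both components are surjections.

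\emph{Bullets 3--5.} Using $\mu\cap\nu=\emptyset$ and $\mu\cup\nu=[n-1]$, the kernel formula immediately yields
\[
\Ker(\sigma)\cap\Ker(\tau)=\emptyset,\qquad \Ker(\sigma)\cup\Ker(\tau)=[n-2]\setminus\{i-1\};
\]
the ``$k=i-1$'' term contributes to neither kernel, because $i\notin\nu$ and $i-1\notin\mu$. The fifth bullet then follows at once: $(\sigma,\tau)\in\PS^{\sf N}$ from the third bullet, but the union is not all of $[n-2]$ so $(\sigma,\tau)\notin\PS^{\sf N}_\square$.

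\emph{Uniqueness (bullet 6) and the main obstacle.} Suppose $(s^{\mu''}d^j,s^{\nu''}d^j)=(\sigma,\tau)$ for some shuffle $(\mu'',\nu'')\in{\sf Sh}(l,k)$ and $0\le j\le n$. Applying the kernel formula to $\mu''$ and $\nu''$, which partition $[n-1]$, shows $\Ker(\sigma)\cup\Ker(\tau)$ is either $[n-2]$ (e.g.\ whenever $j\in\{0,n\}$, or when $\{j-1,j\}$ is contained in one of $\mu'',\nu''$) or $[n-2]\setminus\{j-1\}$. Comparing with the fourth bullet forces $j=i$ and excludes the ``straddling'' case. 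Once $j=i$ is fixed, $s^{\mu''}$ is determined by $\sigma$ at every argument in $[n]\setminus\{i\}$, and the remaining value $s^{\mu''}(i)$ must lie in $[\sigma(i-1),\sigma(i)]=\{\sigma(i-1),\sigma(i)\}$ (since $\sigma(i-1)+1=\sigma(i)$, because $i-1\notin\Ker(\sigma)$). The two possibilities correspond to $i-1\in\mu''$ (giving $\mu''=\mu'$) or $i\in\mu''$ (giving $\mu''=\mu$); independently, $\nu''$ has two analogous choices. The crux of the proof, and its main subtlety, is the observation that among the four resulting pairs only $(\mu,\nu)$ and $(\mu',\nu')$ satisfy the disjointness $\mu''\cap\nu''=\emptyset$ required of a shuffle, so these are the only valid lifts.
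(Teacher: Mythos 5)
Your proof is correct and takes a slightly different, cleaner route than the paper's. The paper manipulates the simplicial identities $s^{i-1}d^i=\mathrm{id}=s^id^i$ and $s^jd^i=d^is^{j-1}$ (for $j>i$) to rewrite $s^\mu d^i$ explicitly as a composite of codegeneracies with shifted indices (eq.\ \eqref{eq_smudi}), and then reads off the kernel and surjectivity from that explicit presentation. You instead propose a single closed-form description of $\Ker(s^\lambda d^j)$ via a three-way case analysis (below, at, and above the deleted index) and derive every bullet from it; I have checked the formula and it is correct, including at the boundary values $j\in\{0,n\}$. The two arguments carry the same combinatorial content, but yours is more uniform: in particular, in the uniqueness step you rule out $j\in\{0,n\}$ and the ``straddling'' case in one stroke through the kernel-union computation, whereas the paper disposes of $j\in\{0,n\}$ separately by a surjectivity argument before invoking the kernel constraint. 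One small simplification is available at your final step: since $(\mu'',\nu'')$ is a shuffle, $\nu''$ is forced to be the complement of $\mu''$ in $[n-1]$, so once you have shown $\mu''\in\{\mu,\mu'\}$, the pair is already determined; there is no need to enumerate four candidate pairs and filter them by disjointness. This is cosmetic and does not affect correctness.
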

\begin{proof}  Assume that $ \nu_r=i.$ Then $\nu'_r=i-1,$  $\nu'_s=\nu_s$ for $s\ne r$ and 
\begin{equation}
    s^\nu = s^{\nu_{0}} \dots  s^{\nu_{r-1}} s^{i} s^{\nu_{r+1}} \dots s^{\nu_l}, \hspace{1cm} s^{\nu'}=s^{\nu_{0}} \dots  s^{\nu_{r-1}} s^{i-1} s^{\nu_{r+1}} \dots s^{\nu_l}
\end{equation}
Therefore  by the formula $s^{i-1}d^i={\sf id}=s^id^i$ and the formula $s^jd^i=d^is^{j-1}$ for $j>i$ we obtain
\begin{equation}\label{eq_smudi}
    s^\nu d^i = s^{\nu_{0}} \dots  s^{\nu_{r-1}} s^{\nu_{r+1} -1} \dots s^{\nu_l-1} = s^{\nu'}d^i.
\end{equation}
This formula also implies that $ s^\nu d^i$ is surjective. 
Similarly we prove that $s^\mu d^i = s^{\mu'}d^i$ and that $s^\mu d^i $ is surjective. 
It is easy to see that 
\begin{equation}
\begin{split}
&\Ker(s^\nu d^i)\cup \Ker(s^\mu d^i)= \\
&\{ \nu_0,\dots, \nu_{r-1}\} \cup  \{\nu_{r+1}-1,\dots,\nu_{l}-1 \} \cup \{ \mu_0,\dots, \mu_{t-1}\}\cup \{\mu_{t+1}-1,\dots,\mu_k-1\}=\\
&(\{ \nu_0,\dots, \nu_{r-1}\}  \cup \{ \mu_0,\dots, \mu_{t-1}\}) \cup  (\{\nu_{r+1}-1,\dots,\nu_{l}-1 \} \cup \{\mu_{t+1}-1,\dots,\mu_k-1\})=\\
&\{0,\dots, i-2\}\cup \{i,\dots,n-2\}.    
\end{split}
\end{equation}
Assume $(s^{ \nu''}d^j,s^{ \mu''}d^j)=(s^{\nu}d^i,s^{\mu}d^i).$ If $j=0$ or $j=n,$ then either $s^{\nu''}d^j$ is not a surjection or $s^{\mu''}d^j$ is not a surjection. So we can assume $1\leq j\leq n-1.$  Therefore, as we already proved, we have $\Ker(s^{ \nu''}d^j) \cup \Ker( s^{ \nu''}d^j )=[n-2]\setminus\{j-1\}$ and $\Ker(\sigma) \cup \Ker( \tau )=[n-2]\setminus\{i-1\}.$  Then $i=j.$ Since $s^{\nu''}d^i$ is a surjection and its image equals to $\{s^{\nu''}(0), \dots,s^{\nu''}(i-1),s^{\nu''}(i+1),\dots, s^{\nu''}(n)  \}$, we obtain either $s^{\nu''}(i-1)=s^{\nu''}(i)$ or $s^{\nu''}(i)=s^{\nu''}(i+1).$ It follows that there is $r''$ such that either $\nu''_{r''}=i-1,$ or $\nu''=i.$ Hence 
\begin{equation}
    s^\nu d^i=s^{\nu''}d^i = s^{\nu''_0} \dots s^{\nu''_{r''-1}}s^{\nu''_{r''+1}-1} \dots s^{\nu''_{n}-1}
\end{equation}
Combining this with equation \eqref{eq_smudi}, we obtain that either $\nu''=\nu$ or $\nu''=\nu'.$
\end{proof}

\begin{lemma}\label{lemma_PS_deg_square}
Let $(\sigma,\tau)\in \PS_\square^{\sf D}(n)$ and $0\leq i\leq n.$ Then either  $\Ker(\sigma d^i)\cap \Ker(\tau d^i)\ne \emptyset $ or $\Ker(\sigma d^i)\cup \Ker(\tau d^i)=[n-2].$ 
\end{lemma}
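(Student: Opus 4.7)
The strategy is to translate the statement into a combinatorial identity about the sets $\Ker(\sigma),\Ker(\tau)\subseteq [n-1]$ via an explicit description of $\Ker(\sigma d^i)$, and then run a small Boolean case analysis using the two defining properties of $\PS_\square^{\sf D}$, namely $\Ker(\sigma)\cup\Ker(\tau)=[n-1]$ and $\Ker(\sigma)\cap\Ker(\tau)\ne\emptyset$.

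Dispose of $i=0$ and $i=n$ first: here $d^i$ is, respectively, the shift $j\mapsto j+1$ on $[n-1]$ or the inclusion $j\mapsto j$ on $[n-1]$, so $j\in\Ker(\sigma d^i)$ iff the corresponding element of $[n-1]$ lies in $\Ker(\sigma)$, and likewise for $\tau$; hence $\Ker(\sigma d^i)\cup\Ker(\tau d^i)=[n-2]$ is immediate from $\Ker(\sigma)\cup\Ker(\tau)=[n-1]$. For $0<i<n$, inspection of $d^i(j)$ and $d^i(j+1)$ for $j\in[n-2]$ shows that $j\in\Ker(\sigma d^i)$ iff either (a) $j\ne i-1$ and $d^i(j)\in\Ker(\sigma)$, or (b) $j=i-1$ and $\{i-1,i\}\subseteq \Ker(\sigma)$ (which encodes $\sigma(i-1)=\sigma(i+1)$, using that $\sigma$ is order-preserving). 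Since $d^i$ restricts to a bijection $[n-2]\setminus\{i-1\}\xrightarrow{\sim}[n-1]\setminus\{i-1,i\}$, one obtains, via clause (a),
\begin{equation*}
\Ker(\sigma d^i)\cap\Ker(\tau d^i)\cap([n-2]\setminus\{i-1\}) \;\cong\; \Ker(\sigma)\cap\Ker(\tau)\cap([n-1]\setminus\{i-1,i\}),
\end{equation*}
and the analogous identity with each intersection replaced by a union.

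Now suppose $\Ker(\sigma d^i)\cap\Ker(\tau d^i)=\emptyset$. The displayed identity forces $\Ker(\sigma)\cap\Ker(\tau)\subseteq\{i-1,i\}$; combined with $\Ker(\sigma)\cap\Ker(\tau)\ne\emptyset$, there exists $m\in\{i-1,i\}\cap\Ker(\sigma)\cap\Ker(\tau)$. Swapping $\sigma$ and $\tau$ and, if needed, the roles of $i-1$ and $i$, assume $i-1\in\Ker(\sigma)\cap\Ker(\tau)$. Since $\Ker(\sigma)\cup\Ker(\tau)=[n-1]$ and $0<i<n$ gives $i\in[n-1]$, the element $i$ lies in at least one of the two kernels; therefore $\{i-1,i\}\subseteq\Ker(\sigma)$ or $\{i-1,i\}\subseteq\Ker(\tau)$. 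By clause (b) this yields $i-1\in\Ker(\sigma d^i)\cup\Ker(\tau d^i)$, and by the union version of the displayed identity every other element of $[n-2]$ lies in the union as well, so $\Ker(\sigma d^i)\cup\Ker(\tau d^i)=[n-2]$.

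The only delicate point is the singular behavior of $d^i$ at the index $j=i-1$, where $d^i$ skips over $i$; this is exactly what forces clauses (a) and (b) of the kernel description to be treated separately. Once this split is isolated, the remainder is a short Boolean bookkeeping on the four possibilities for $(i-1\in\Ker(\sigma),\ i\in\Ker(\sigma))$ and the four for $\tau$, and I expect no further obstacle.
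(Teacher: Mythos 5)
Your proof is correct and follows essentially the same strategy as the paper's: reduce to $\emptyset \ne \Ker(\sigma)\cap\Ker(\tau) \subseteq \{i-1,i\}$, argue that $\{i-1,i\}\subseteq\Ker(\sigma)$ or $\{i-1,i\}\subseteq\Ker(\tau)$, and deduce that the union of the kernels after $d^i$ is all of $[n-2]$. Your clauses (a)/(b) give a cleaner, index-free description of $\Ker(\sigma d^i)$ than the paper's explicit manipulation of codegeneracy words, and your treatment of $i\in\{0,n\}$ is slightly more economical since it uses only $\Ker(\sigma)\cup\Ker(\tau)=[n-1]$. One caveat: the phrase ``swapping $\sigma$ and $\tau$ and, if needed, the roles of $i-1$ and $i$'' does not literally justify the WLOG reduction to $i-1\in\Ker(\sigma)\cap\Ker(\tau)$ --- interchanging $\sigma$ and $\tau$ leaves $\Ker(\sigma)\cap\Ker(\tau)$ unchanged, and exchanging $i-1$ with $i$ is not a symmetry of the data (it would require a reflection of $[n]$ carrying $d^i$ to $d^{n-i}$). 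This is easy to repair: if instead $i\in\Ker(\sigma)\cap\Ker(\tau)$, note that $i-1\in[n-1]$ since $i\geq 1$, so $i-1$ lies in at least one of the kernels, again giving $\{i-1,i\}\subseteq\Ker(\sigma)$ or $\{i-1,i\}\subseteq\Ker(\tau)$; the rest of your argument then runs verbatim.
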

\begin{proof} 
Let 
\[\sigma=s^{\nu_0} s^{\nu_1}\dots s^{\nu_{n-k}}, \hspace{1cm}   
\tau=s^{\mu_0} s^{\mu_1} \dots s^{\mu_{n-l}},\]
where $0\leq \nu_0<\dots <\nu_{n-l}\leq n-1$ and $0\leq \mu_0<\dots <\mu_{n-k}\leq n-1.$ 
If $(\Ker(\sigma)\cap \Ker(\tau))\setminus \{i-1,i\} \ne \emptyset,$ then $\Ker(\sigma d^i)\cap \Ker(\tau d^i)\ne \emptyset.$ So we can assume   $\emptyset\ne \Ker(\sigma)\cap \Ker(\tau)\subseteq \{i-1,i\}.$ 
If $i=0$ we have $0\in \Ker(\sigma)\cap \Ker(\tau),$ and hence 
\[
\sigma=s^0 s^{\nu_1}\dots s^{\nu_l}, \hspace{1cm}   
\tau=s^0 s^{\mu_1} \dots s^{\mu_k}.
\]
Then 
\[\sigma d^0 = s^{\nu_1-1}\dots s^{\nu_l-1}, \hspace{1cm} \tau d^0=s^{\mu_1-1} \dots s^{\mu_k-1}.\]
Therefore $\Ker(\sigma d^0)\cup \Ker(\tau d^0)=[n-2].$
If $i=n,$ then $\nu_l=\mu_k=n-1 \in \Ker(\sigma)\cap \Ker(\tau)$ and we can prove this similarly.

Now we assume that $1\leq i\leq n-1.$
Since $\Ker(\sigma)\cup \Ker(\tau)=[n-1],$ we have either  $i-1,i\in \Ker(\sigma)$ or $i-1,i\in \Ker(\tau).$ Without loss of generality we can assume that 
$i-1,i\in \Ker(\sigma).$ Then we have 
\[ 
\sigma = s^{\nu_0} \dots s^{\nu_{r-1}} s^{i-1}s^i s^{\nu_{r+2}} \dots s^{\nu_l}
\]
and
\[\tau = s^{\mu_0} \dots s^{\mu_{t-1}} s^{\mu_t} s^{\mu_{t+1}}  \dots s^{\mu_k}, \]
where either $\mu_t=i-1$ or $\nu_t=i.$ If $i-1,i\in \Ker(\tau)$ we assume $\mu_t=i$.
Then 
\[ 
\sigma d^i = s^{\nu_0} \dots s^{\nu_{r-1}} s^{i-1} s^{\nu_{r+2}-1} \dots s^{\nu_l-1},
\]
\[\tau d^i=s^{\mu_0} \dots s^{\mu_{t-1}}  s^{\mu_{t+1}-1}  \dots s^{\mu_k-1}.\]
Therefore
\begin{equation}
\begin{split}
&\Ker(\sigma d^i)\cup \Ker(\tau d^i)= \\
&=\{\nu_0,\dots,\nu_{r-1}\} \cup \{\mu_0,\dots, \mu_{t-1} \} \cup \{i-1\} \cup \{ \nu_{r+2}-1,\dots \nu_{l}-1\} \cup \{\mu_{t+1}-1,\dots,\mu_k-1\}\\
&=\{0,\dots,i-2\}\cup \{i-1\} \cup \{i,\dots,n-2\}=\\
&=[n-2].
\end{split}    
\end{equation}
\end{proof}

\section{Simplicial modules}

We denote by $\mathbb{K}$ a commutative ring. All modules, algebras and tensor products are assumed to be over $\mathbb{K}.$

\subsection{Dold-Kan decomposition}

Here we recall some aspects of the theory of simplicial modules that can be found in \cite[Ch.8]{weibel1995introduction}, \cite[\S 22]{may1992simplicial}.

Recall that a simplicial module is a functor $A: \Delta^{\rm op}\to {\sf Mod},$ where $\Delta$ is the simplicial indexing category.  Equivalently it can be defined as a sequence of modules $A_0,A_1,\dots$ together with two collections of maps $d_i:A_n \to A_{n-1}$ and $s_i:A_n\to A_{n+1}$ for $0\leq i\leq n,$ called face maps and degeneracy maps, satisfying the simplicial identities. For an order preserving map $f:[m]\to [n]$ we set $f^*=A(f):A_n\to A_m.$

For a simplicial module $A$ one considers tree chain complexes $\CC A, \NN A, \DD A.$ The $n$th component of $\CC A$ is $A_n$ and the differential is defined by the formula $\partial_n=\sum_{i=0}^n (-1)^i d_i.$ The complex $\DD A$ is a subcomplex of $\CC A,$ whose $n$-th component is $\DD_n A=\sum_{i=0}^{n-1} s_i(A_{n-1}).$ Finally, the Moore complex  $\NN A$  is a complex whose components are
\begin{equation}
\NN_nA=\bigcap_{i\neq 0} {\sf Ker}(d_i:A_n\to A_{n-1})
\end{equation}
and the differential is induced by $d_0.$ Then $\NN A$ and $\DD A$ are subcomplexes of $\CC A$ and it is well known that $\CC A$ can be naturally decomposed as a direct sum of these subcomplexes
\begin{equation}
    \CC A=\NN A\oplus \DD A.
\end{equation}
Moreover, $\DD A$ is contractible and $\NN A$ is homotopy equivalent to $\CC A.$ The projection from $\CC A$ to $\NN A$ is denoted by 
\begin{equation}
    \rho : \CC A\epi \NN A.
\end{equation}
Then $\rho$ induces the isomorphism 
\begin{equation}
    \NN A \cong \CC A/\DD A.
\end{equation}
We will often identify $\NN A$ with $\CC A/\DD A.$

For any order preserving map $f:[n]\to [m]$ we set 
\begin{equation}
|f|:=m.    
\end{equation}
Then for any simplicial module $A$ any its component can be decomposed as
\begin{equation}\label{eq:DK-decomposition}
A_n = \bigoplus_{\sigma} \sigma^*(\NN_{|\sigma|}A),
\end{equation}
where the summation is taken by all surjective order-preserving maps $\sigma:[n]\epi [k],$ where $0\leq k\leq n$. This decomposition follows from the Dold-Kan correspondence and we call it the Dold-Kan decomposition. 
Since $\sigma^*:A_k\to A_n$ is injective, this decomposition implies that any element $a\in A_n$ can be uniquely presented as 
\begin{equation}
    a=\sum_{\sigma} \sigma^*(a_\sigma), \hspace{1cm} a_\sigma\in \NN_{|\sigma|} A. 
\end{equation}
Then the projection $\rho_n:A_n\epi  \NN_nA$  can be defined as 
\begin{equation}
    \rho_n(a)=a_{\sf id}.
\end{equation}

If $f:[m]\to [n]$ is an order preserving map, then the restriction of the map $f^*:A_n\to A_m$ on the summand 
$\sigma^*(\NN_{|\sigma|}A)$ is defined by the map to the summand
\begin{equation}\label{eq:f_on_summand}
\sigma^*(\NN_{|\sigma|}A) \longrightarrow \tau^*(\NN_{|\tau|} A), \hspace{1cm}  \sigma^*(a)\mapsto \tau^*( \alpha^*(a) ),
\end{equation}
where $\sigma f= \alpha \tau$ is the epi-mono decomposition of $\sigma f.$ Note that if $\alpha\notin\{{\sf id},d^0\},$ the element $\alpha^*(a)$ is trivial.

\subsection{Tensor product of simplicial modules}
For two simplicial modules $A$ and $A'$ their tensor product $A\otimes A'$ is defined dimension-wise $(A\otimes A')_n=A_n\otimes A_n'$ so that $(A\otimes A')(f)=A(f)\otimes A'(f).$ Then the Dold-Kan decomposition implies that 
\begin{equation}\label{eq:aotimesa'}
    (A\otimes A')_n =  \bigoplus_{(\sigma,\tau)\in \PS(n)} \sigma^*(\NN_{|\sigma|} A) \otimes \tau^*(\NN_{|\tau|} A'),   
\end{equation}
where the summation runs over couples of surjective order-preserving maps. 
Hence any element of $x\in (A\otimes A')_n$ can be uniquely presented as
\begin{equation}
x=\sum_{(\sigma,\tau)\in \PS(n)} (\sigma^*\otimes \tau^*)(x_{\sigma,\tau}), \hspace{1cm} x_{\sigma,\tau}\in \NN_{|\sigma|} A \otimes \NN_{|\tau|} A'.  
\end{equation}

\begin{lemma}
\label{lemma_deg_tensor}
For any simplicial modules $A$ and $A'$ we have
\begin{equation}
\DD_n(A\otimes A') =  \bigoplus_{(\sigma,\tau)\in \PS^\DD(n)} \sigma^*(\NN_{|\sigma|} A) \otimes \tau^*(\NN_{|\tau|} A').
\end{equation}
In other words, an element $x\in A_n\otimes A_n'$ is in $\DD_n(A\otimes A') $ if and only if $ x_{\sigma,\tau}=0$ for $(\sigma,\tau)\in \PS^\NN(n).$
\end{lemma}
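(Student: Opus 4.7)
The plan is to prove the two inclusions separately, using the Dold-Kan decomposition \eqref{eq:aotimesa'} and the action formula \eqref{eq:f_on_summand}. Throughout, the key observation is that $\DD_n(A\otimes A') = \sum_{i=0}^{n-1} s_i((A\otimes A')_{n-1})$ and that $s_i$ acts diagonally: $s_i(y\otimes y')=s_i(y)\otimes s_i(y')$, since $(A\otimes A')(f)=A(f)\otimes A'(f)$.

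For the inclusion $\supseteq$, start with $(\sigma,\tau)\in \PS^{\sf D}(n)$ and pick some $i\in \Ker(\sigma)\cap \Ker(\tau)$. Since $\sigma(i)=\sigma(i+1)$, the surjection $\sigma:[n]\epi[k]$ factors as $\sigma=\sigma' s^{(i)}$ where $\sigma':[n-1]\epi [k]$ is defined by $\sigma'(j)=\sigma(j)$ for $j\leq i$ and $\sigma'(j)=\sigma(j+1)$ for $j>i$. Similarly $\tau=\tau's^{(i)}$ for some surjective $\tau'$. Then for $a\in \NN_{|\sigma|}A$ and $a'\in \NN_{|\tau|}A'$ we have
\begin{equation}
\sigma^*(a)\otimes \tau^*(a')=s_i(\sigma'^*(a))\otimes s_i(\tau'^*(a'))=s_i\bigl(\sigma'^*(a)\otimes \tau'^*(a')\bigr)\in \DD_n(A\otimes A').
\end{equation}
By linearity, every summand with $(\sigma,\tau)\in \PS^{\sf D}(n)$ lies in $\DD_n(A\otimes A')$.

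For the inclusion $\subseteq$, it suffices to show that for each $0\leq i\leq n-1$, the image of $s_i:(A\otimes A')_{n-1}\to (A\otimes A')_n$ lies in $\bigoplus_{(\sigma,\tau)\in \PS^{\sf D}(n)} \sigma^*(\NN_{|\sigma|}A)\otimes \tau^*(\NN_{|\tau|}A')$. Take a generator $\sigma'^*(a)\otimes \tau'^*(a')$ of the $(\sigma',\tau')$-summand of $(A\otimes A')_{n-1}$, where $(\sigma',\tau')\in \PS(n-1)$. Then
\begin{equation}
s_i\bigl(\sigma'^*(a)\otimes \tau'^*(a')\bigr)=(\sigma's^{(i)})^*(a)\otimes (\tau's^{(i)})^*(a').
\end{equation}
Since a composition of surjections is a surjection, $\sigma=\sigma's^{(i)}$ and $\tau=\tau's^{(i)}$ are already surjective; their epi-mono decompositions are trivial, so by \eqref{eq:f_on_summand} this element sits inside the $(\sigma,\tau)$-summand. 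Finally $s^{(i)}(i)=s^{(i)}(i+1)=i$ implies $\sigma(i)=\sigma(i+1)$ and $\tau(i)=\tau(i+1)$, that is $i\in \Ker(\sigma)\cap \Ker(\tau)$, so $(\sigma,\tau)\in \PS^{\sf D}(n)$.

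The only subtle point is verifying that factoring $\sigma=\sigma's^{(i)}$ preserves surjectivity and that $|\sigma|=|\sigma'|$, so the normalized summand $\NN_{|\sigma|}A$ matches; this is a routine check. The uniqueness part of the Dold-Kan decomposition then yields the ``in other words'' reformulation. I expect no substantial obstacle here, as everything reduces to bookkeeping with codegeneracies and the standard epi-mono factorisation.
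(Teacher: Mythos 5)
Your proof is correct and takes essentially the same approach as the paper: both arguments rest on the diagonal action of $s_i$ on $A\otimes A'$, the Dold--Kan decomposition of $(A\otimes A')_{n-1}$, and the observation that a surjection $\sigma:[n]\epi[k]$ factors through $s^{(i)}$ if and only if $i\in\Ker(\sigma)$, which identifies the image of $s_i$ with the summands indexed by degenerate pairs. The only difference is presentational: you spell out the two inclusions separately, while the paper states the matching of indexing sets in one pass; the detour through \eqref{eq:f_on_summand} in your $\subseteq$ direction is harmless but unnecessary, since $(\sigma's^{(i)})^*(a)\otimes(\tau's^{(i)})^*(a')$ lies in the $(\sigma's^{(i)},\tau's^{(i)})$-summand by definition.
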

\begin{proof}  Note that $\DD_n(A\otimes A')$ is the sum of submodules $s_i( \tilde \sigma^*(\NN_{|\tilde \sigma|}A) \otimes \tilde\tau^*(\NN_{|\tilde \tau|}A) )$ over all indexes $0\leq i\leq n-1,$ $(\tilde \sigma,\tilde \tau )\in \PS(n-1).$ For any $\sigma:[n]\epi [k]$ we have $i\in \Ker(\sigma:[n]\epi [k])$ iff $\sigma=\tilde \sigma s^i$ for some $\tilde \sigma\in [n-1]\to [k].$  Hence, $ \Ker(\sigma) \cap \Ker(\tau)\neq \emptyset$ iff $\sigma = \tilde \sigma s^i$ and $\tau=\tilde\tau s^i$ for some $i$ and some $\tilde \sigma:[n-1]\to [k]$ and $\tilde \tau:[n-1]\to [l],$ and in this case we have $\sigma^*(\NN_kA) \otimes \tau^*(\NN_lA') = s_i(\tilde \sigma^*(\NN_kA) \otimes \tilde \tau^*(\NN_lA')).$ The  equation follows. 
\end{proof}

\begin{corollary}\label{cor_N_tensor} For any simplicial modules $A$ and $A'$ there is an isomorphism
\begin{equation}\label{eq_N_iso_sum}
 \NN_n(A\otimes A') \cong     \bigoplus_{(\sigma,\tau)\in \PS^\NN(n)} \NN_{|\sigma|} A \otimes \NN_{|\tau|} A', \hspace{1cm} x\mapsto (x_{\sigma,\tau})_{(\sigma,\tau)\in \PS^\NN(n)}.
\end{equation}

\end{corollary}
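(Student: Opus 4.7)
The plan is to deduce this corollary directly from Lemma \ref{lemma_deg_tensor} together with the Dold--Kan decomposition \eqref{eq:aotimesa'} and the identification $\NN(A\otimes A') \cong \CC(A\otimes A')/\DD(A\otimes A')$ recalled just before.

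First, I would observe that the set $\PS(n)$ of pairs of surjective order-preserving maps out of $[n]$ splits as a disjoint union
\begin{equation}
\PS(n) = \PS^{\sf N}(n) \sqcup \PS^{\sf D}(n),
\end{equation}
according to whether $\Ker(\sigma)\cap \Ker(\tau)$ is empty or not. Consequently the Dold--Kan decomposition \eqref{eq:aotimesa'} of $(A\otimes A')_n$ splits as a direct sum of two subspaces indexed by $\PS^{\sf N}(n)$ and $\PS^{\sf D}(n)$ respectively.

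Next I would invoke Lemma \ref{lemma_deg_tensor}, which identifies the $\PS^{\sf D}(n)$-summand with $\DD_n(A\otimes A')$. Taking the quotient $(A\otimes A')_n / \DD_n(A\otimes A')$ therefore kills exactly the $\PS^{\sf D}(n)$-piece and leaves the $\PS^{\sf N}(n)$-piece intact, giving
\begin{equation}
\NN_n(A\otimes A') \; \cong \; \bigoplus_{(\sigma,\tau)\in \PS^{\sf N}(n)} \sigma^*(\NN_{|\sigma|}A) \otimes \tau^*(\NN_{|\tau|}A').
\end{equation}
Since each $\sigma^*:\NN_{|\sigma|}A \to A_n$ and $\tau^*: \NN_{|\tau|}A' \to A'_n$ is injective (they are direct summand inclusions by Dold--Kan), the summand $\sigma^*(\NN_{|\sigma|}A)\otimes \tau^*(\NN_{|\tau|}A')$ is canonically identified with $\NN_{|\sigma|}A \otimes \NN_{|\tau|}A'$, yielding the stated isomorphism. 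The map $x\mapsto (x_{\sigma,\tau})$ is then just the composition of the projection $(A\otimes A')_n \epi \NN_n(A\otimes A')$ with the projection onto each summand of the Dold--Kan decomposition.

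There is no real obstacle here beyond carefully tracking the bijection between summands and the fact that $\sigma^*\otimes \tau^*$ is a monomorphism of modules; essentially all of the work has been done in Lemma \ref{lemma_deg_tensor}, and the corollary is a bookkeeping consequence.
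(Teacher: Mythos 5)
Your proof is correct and is essentially the argument the paper intends (the paper states the corollary directly after Lemma \ref{lemma_deg_tensor} with no separate proof, precisely because it follows from that lemma and the Dold--Kan decomposition \eqref{eq:aotimesa'} by passing to the quotient $\NN_n(A\otimes A')\cong (A\otimes A')_n/\DD_n(A\otimes A')$). Your remark that $\sigma^*\otimes\tau^*$ is a split monomorphism, not merely injective, is exactly the right point to make the identification of the surviving summands with $\NN_{|\sigma|}A\otimes\NN_{|\tau|}A'$ canonical.
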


\begin{remark}
Note that we do not claim that $\NN_n(A\otimes A')$ equals to the sum of the modules $ \sigma^*(\NN_{|\sigma|} A)\otimes \tau_*(\NN_{|\tau|} A)$ for $(\sigma,\tau)\in \PS^{\sf N}(n)$ as submodule of $A_n\otimes A_n'.$ There is only an isomorphism, not equation. Generally for $x\in \NN_n(A\otimes A')$ the coordinate $x_{\sigma,\tau}$ can be nontrivial for a degenerated pair of surjections $(\sigma,\tau)\in \PS^\DD(n).$
\end{remark}
\begin{corollary}\label{cor_rho}
Let $x,y\in A_n\otimes A'_n.$ Then $\rho(x)=\rho(y)$ if and only if $x_{\sigma,\tau}=y_{\sigma,\tau}$ for all non-degenerated pairs of surjections $(\sigma,\tau)\in \PS^\NN(n)$.  In particular, $\rho(x)_{\sigma,\tau}=x_{\sigma,\tau}$ for $\sigma,\tau\in \PS^\NN(n).$
\end{corollary}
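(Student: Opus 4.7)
The plan is to deduce both statements directly from Lemma \ref{lemma_deg_tensor} together with the fact that $\rho$ is the projection associated to the direct sum decomposition $\CC A = \NN A \oplus \DD A$ applied to the simplicial module $A\otimes A'$.

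First I would unpack the meaning of $\rho_n(x) = \rho_n(y)$. Because $\CC(A\otimes A') = \NN(A\otimes A') \oplus \DD(A\otimes A')$ and $\rho$ is precisely the projection onto the first summand, the equality $\rho_n(x)=\rho_n(y)$ is equivalent to $x - y \in \DD_n(A\otimes A')$.

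Next I would apply Lemma \ref{lemma_deg_tensor} to convert membership in $\DD_n(A\otimes A')$ into a statement about coordinates in the Dold--Kan decomposition \eqref{eq:aotimesa'}: an element $z \in (A\otimes A')_n$ lies in $\DD_n(A\otimes A')$ if and only if $z_{\sigma,\tau} = 0$ for every $(\sigma,\tau) \in \PS^{\sf N}(n)$. Applied to $z = x-y$, this gives $\rho(x)=\rho(y)$ iff $x_{\sigma,\tau} = y_{\sigma,\tau}$ for all $(\sigma,\tau)\in\PS^{\sf N}(n)$, which is the first assertion.

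For the "in particular" clause I would take $y=\rho(x)$. Since the direct sum decomposition shows that $\rho$ restricted to $\NN(A\otimes A')$ is the identity, we have $\rho(\rho(x))=\rho(x)$, so the hypothesis $\rho(x)=\rho(y)$ is satisfied. The first part then yields $x_{\sigma,\tau} = \rho(x)_{\sigma,\tau}$ for all $(\sigma,\tau)\in \PS^{\sf N}(n)$. No real obstacle is expected: all the heavy lifting has already been done in Lemma \ref{lemma_deg_tensor} and in the Dold--Kan splitting, and this corollary is simply a translation of those results into the language of coordinates.
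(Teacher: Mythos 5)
Your proof is correct and follows the implied approach of the paper: since no explicit proof is given for the corollary, the natural argument is exactly the one you present — use that $\rho$ is the projection for $\CC(A\otimes A')=\NN(A\otimes A')\oplus\DD(A\otimes A')$, so $\rho(x)=\rho(y)$ iff $x-y\in\DD_n(A\otimes A')$, then apply Lemma \ref{lemma_deg_tensor} and linearity of the coordinates $(\cdot)_{\sigma,\tau}$.
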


\subsection{Eilenberg-Zilber and Alexander-Whitney maps}
Here we remind some information about the Eilenberg-Zilber theorem that can be found in \cite[\S 8]{weibel1995introduction}, \cite[\S 29]{may1992simplicial}.

For two simplicial modules $A,A'$
the Eilenberg-Zilber map is a morphism of chain complexes 
\begin{equation}
    \mathcal{E} :\CC A\otimes \CC A' \longrightarrow \CC (A\otimes A')
\end{equation}
given by 
\begin{equation}
  \mathcal{E}(a\otimes a') = \sum_{(\mu,\nu)\in {\sf Sh}(k,l)} {\sf sgn}(\mu,\nu)  s_\nu a \otimes s_\mu a' 
\end{equation}
for $a\in A_k$ and $a'\in A_l'.$ 
The Alexander-Whitney map is the morphism of complexes 
\begin{equation}
\mathcal{A} : \CC (A\otimes A') \longrightarrow \CC A\otimes \CC A' \end{equation}
defined by 
\begin{equation}
\mathcal{A}( a\otimes a' ) = \sum_{k+l=n} h^l a \otimes t^k a'.
\end{equation}
The Eilenberg-Zilber theorem says that they satisfy $\mathcal{E}\mathcal{A}\sim {\sf id}$ and $\mathcal{A}\mathcal{E}\sim {\sf id},$ and hence, $\mathcal{A},\mathcal{E}$ are homotopy equivalences.

It is well-known that these maps send degenerated elements to degenerated elements in the following sense $\mathcal{E}(\DD A\otimes \CC A' + \CC A \otimes \DD A') \subseteq \DD (A\otimes A')$ and $\mathcal{A}( \DD (A\otimes A') ) \subseteq \DD A \otimes \CC A' + \CC A \otimes \DD A';$ and induce maps
\begin{equation} \label{eq_eilenberg-zilber}
\varepsilon : \NN A\otimes \NN A' \longrightarrow \NN (A\otimes A'), \hspace{1cm} \alpha :   \NN (A\otimes A') \longrightarrow  \NN A\otimes \NN A'
\end{equation} 
defined by the formulas
\begin{equation}
    \varepsilon(x) = \rho \mathcal{E}(x), \hspace{1cm} \alpha(x)=(\rho\otimes \rho)\mathcal{A}(x),
\end{equation}
such that the diagram  
\begin{equation}\label{diag:zeta}
\begin{tikzcd}
\CC A \otimes \CC A' \ar[r,"\mathcal{E}"] \ar[d,twoheadrightarrow,"\rho\otimes \rho"] & \CC (A\otimes A') \ar[d,twoheadrightarrow,"\rho"] \ar[r,"\mathcal{A}"] & \CC A\otimes \CC A'\ar[d,twoheadrightarrow,"\rho\otimes \rho"] \\ 
 \NN A \otimes \NN A' \ar[r,"\varepsilon"]
\arrow[bend right=5mm,"{\sf id}"']{rr} 
&  \NN (A\otimes A') \ar[r,"\alpha"] & \NN A\otimes \NN A'
\end{tikzcd}
\end{equation}
is commutative. In particular, $\alpha \varepsilon={\sf id}.$ 

\begin{lemma}\label{lemma:Imepsilon}
Let $x\in {\sf N}_n(A\otimes A').$  Then $x\in \Im(\varepsilon : \NN A\otimes \NN A' \longrightarrow \NN (A\otimes A'))$ if and only if the following conditions are satisfied
\begin{enumerate}
    \item for any $0\leq k,l\leq n$ such that $k+l=n$ and any two shuffles $(\mu,\nu),(\mu',\nu')\in {\sf Sh}(k,l)$ there is an equation
\[
{\sf sgn}(\mu,\nu)x_{s^\nu,s^\mu}= {\sf sgn}(\mu',\nu')x_{s^{\nu'},s^{\mu'}};
\]
\item $x_{\sigma,\tau}=0$ for any $(\sigma,\tau)\in \PS^{\sf N}(n)\setminus \PS^{\sf N}_\square(n).$
\end{enumerate}
\end{lemma}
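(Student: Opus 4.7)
The plan is to unwind the definition $\varepsilon=\rho\circ \mathcal{E}$ in coordinates relative to the Dold-Kan decomposition, read off exactly which $\PS^{\sf N}(n)$-coordinates can appear in $\Im(\varepsilon)$, and then invert the description.

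First I fix $z=\sum_{k+l=n} z^{(k,l)} \in (\NN A\otimes \NN A')_n$ with $z^{(k,l)}\in \NN_kA\otimes \NN_lA'$ and rewrite the Eilenberg-Zilber formula as
\begin{equation*}
\mathcal{E}(z^{(k,l)}) = \sum_{(\mu,\nu)\in {\sf Sh}(l,k)} {\sf sgn}(\mu,\nu)\, \bigl((s^\mu)^*\otimes (s^\nu)^*\bigr)(z^{(k,l)}) ,
\end{equation*}
an expansion of $\mathcal{E}(z^{(k,l)})$ inside the Dold-Kan decomposition \eqref{eq:aotimesa'} of $(A\otimes A')_n$. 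Since distinct $(l,k)$-shuffles yield distinct pairs $(s^\mu,s^\nu)\in \PS(n)$ (the shuffle is recovered from $\Ker(s^\mu)$ and $\Ker(s^\nu)$), the $\PS(n)$-coordinates of $\mathcal{E}(z^{(k,l)})$ vanish outside these pairs, and the coordinate at $(s^\mu,s^\nu)$ is precisely ${\sf sgn}(\mu,\nu)\, z^{(k,l)}\in \NN_kA\otimes \NN_lA'$.

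By Corollary \ref{cor_rho}, $\rho$ preserves $\PS^{\sf N}(n)$-coordinates, and by Lemma \ref{lemma_shuffles} the pairs $(s^\mu,s^\nu)$ for $(\mu,\nu)\in {\sf Sh}(l,k)$ are exactly the elements of $\PS_\square^{\sf N}(n;k,l)\subseteq \PS^{\sf N}(n)$. Consequently the $\PS^{\sf N}(n)$-coordinates of $\varepsilon(z)=\rho \mathcal{E}(z)$ satisfy
\begin{align*}
\varepsilon(z)_{s^\mu,s^\nu} &= {\sf sgn}(\mu,\nu)\, z^{(k,l)} \quad \text{for } (\mu,\nu)\in {\sf Sh}(l,k), \\
\varepsilon(z)_{\sigma,\tau} &= 0 \quad \text{for } (\sigma,\tau)\in \PS^{\sf N}(n)\setminus \PS_\square^{\sf N}(n).
\end{align*}
Multiplying the first identity by ${\sf sgn}(\mu,\nu)$ recovers $z^{(k,l)}$ independently of the chosen shuffle; the second is exactly condition (2). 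This gives necessity.

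For sufficiency, given $x$ obeying (1) and (2), I define $z^{(k,l)}:={\sf sgn}(\mu,\nu)\, x_{s^\mu,s^\nu}$ for any choice of $(\mu,\nu)\in {\sf Sh}(l,k)$ (well-defined by (1)) and set $z=\sum_{k+l=n} z^{(k,l)}$; the coordinate computation above then gives $\varepsilon(z)_{\sigma,\tau}=x_{\sigma,\tau}$ for every $(\sigma,\tau)\in \PS^{\sf N}(n)$, hence $\varepsilon(z)=x$ by Corollary \ref{cor_N_tensor}. The main source of friction is keeping the three indexing sets $\PS$, $\PS^{\sf N}$ and $\PS_\square^{\sf N}$ straight when passing between $\CC(A\otimes A')$ and $\NN(A\otimes A')$, but each reduction is already supplied by the previous lemmas of this section.
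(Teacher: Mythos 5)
Your proof is correct and follows essentially the same route as the paper's: compute $\mathcal{E}$ on the Dold--Kan summands, observe that its nondegenerate coordinates sit exactly on the shuffle pairs $(s^\mu,s^\nu)$ with value ${\sf sgn}(\mu,\nu)\,z^{(k,l)}$, pass through $\rho$ via Corollary~\ref{cor_rho}, and invert the description by picking one shuffle per bidegree. The only cosmetic difference is that you make the bidegree decomposition $z=\sum z^{(k,l)}$ and condition~(2) fully explicit, where the paper treats them more tersely (and fixes a specific reference shuffle $(\mu_0,\nu_0)$ rather than appealing to well-definedness), but the argument is the same.
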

\begin{proof}
If $a\in \NN_kA$ and $a'\in \NN_lA'$ we have ${\sf sgn}(\mu,\nu) s_\nu(a)\otimes s_\mu(a') \in s_\nu(\NN_kA)\otimes s_\mu(\NN_lA'),$ and hence $ \varepsilon(a\otimes a')_{s^\nu,s^\mu}= \rho \mathcal{E}(a\otimes a')_{s^\nu,s^\mu} ={\sf sgn}(\mu,\nu) a\otimes a'$ (see Corollary \ref{cor_rho}). This equation shows that for any $y\in \NN A \otimes \NN A'$ we have
\[
\varepsilon(y)_{s^\nu,s^\mu}={\sf sgn}(\mu,\nu) y.
\]
This follows that the properties (1),(2) are satisfied for elements from $\Im(\varepsilon).$

Assume (1) and (2) are satisfied. For any fixed $k,l$ such that $k+l=n$ we consider the $(k,l)$-shuffle $(\mu_0,\nu_0)=((0,\dots,k-1),(k,\dots,n-1))$ and set \[y_{k,l}=x_{s^{\nu_0},s^{\mu_0}}\in \NN_kA\otimes \NN_lA'.\]
Then 
\[
\varepsilon(y_{k,l})_{s^\nu,s^\mu}= {\sf sgn}(\mu,\nu) y_{k,l}=x_{s^{\nu},s^\mu}
.\] 
If we take $y=\sum_{k,l} y_{k,l},$ we obtain that $\varepsilon(y)_{\sigma,\tau}=x_{\sigma,\tau}$ for all $(\sigma,\tau)\in \PS^{\sf N}(n)$.
This implies that $\varepsilon(y)=x.$
\end{proof}

\section{Path pairs of modules}

\subsection{Definition} In this section we denote by $\mathbb{K}$ a commutative ring. A {\it path pair (of modules)} is a pair $(A,B),$ where $A$ is a simplicial module and $B$ its path submodule. In other words $B$ is a graded submodule of a simplicial module $A$ closed with respect to degenerasy maps $s_i(B_n)\subseteq B_{n+1}$ and exterior face maps $d_0(B_n)\subseteq B_{n-1}$ and $d_n(B_n)\subseteq B_{n-1}.$  A morphism of path pairs  $f:(A,B)\to (A',B')$ is a morphism of simplicial modules $f:A\to A'$ such that $f(B)\subseteq B'.$

Generally on can define a path pair of objects in a category as a simplicial object together with its path ``subobject'' with an appropriate definition of a subobject. But in this section by a path pair we will always mean a path pair of modules. 

\subsection{Homology of a path pair} 

For a path pair $\PPP=(A,B)$ we denote by 
\begin{equation}
 \overline{B}_n := \rho(B_n) \subseteq \NN_nA    
\end{equation}
the image of $B_n$ in $\NN A$ with respect to the projection $\rho: \CC A\epi \NN A.$ Then $\overline{B}$ is a graded submodule of $\NN A$ which is not necessarily a subcomplex. We also set 
\begin{equation}
\NN \PPP = (\NN A, \overline{B}).
\end{equation}
and 
\begin{equation}
    \Omega  \PPP=\omega(\NN \PPP), \hspace{1cm} \Psi\PPP = \psi(\NN \PPP ).
\end{equation}

The homology of these complexes are called the path homology and copath homology of the path pair
\begin{equation}
    H_n\PPP=H_n(\Omega  \PPP), \hspace{1cm} H^{\sf c}_n \PPP=H_n(\Psi \PPP).
\end{equation}

Corollary \ref{cor:long} implies that there is a long exact sequence 
\begin{equation}
{\dots} \to H_n\PPP \to H_n(\NN A) \to H_n^{\sf c} \PPP \to H_{n-1}\PPP \to {\dots}.
\end{equation}

\subsection{Box product of path pairs}
Let $(A,B)$ and $(A',B')$ are pairs of modules. Motivated by Proposition \ref{prop:paths_in_product} their box product is defined as
\begin{equation}\label{eq:def_box-product}
    (A,B)\square (A',B') = (A\otimes A',B\diamond B'),
\end{equation}
where
\begin{equation}
(B\diamond B')_n= \sum_{k+l=n}\:  \sum_{(\mu,\nu)\in {\sf Sh}(k,l)} s_\nu( B_k) \:\bar \otimes\: s_\mu(B'_l).
\end{equation}
Note that this formula is similar to the formula of the Eilenberg-Zilber map. In order to prove that $B\diamond B'$ is a path submodule of $A\otimes A',$ we need a lemma.

\begin{lemma}\label{lemma_box_eq}
The graded module $B\diamond B'$ can be defined as
\begin{equation}
(B\diamond B')_n= \sum_{(f,g)\in \PPi_\square(n)} f^*( B_{|f|})\:\bar\otimes\: g^*(B'_{|g|}).
\end{equation}
and as 
\begin{equation}
(B\diamond B')_n= \sum_{(\sigma,\tau)\in \PS_\square(n)} \sigma^*( B_{|\sigma|})\:\bar\otimes\: \tau^*(B'_{|\tau|}).
\end{equation}
\end{lemma}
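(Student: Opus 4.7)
My plan is to show equality of the three submodules of $(A\otimes A')_n$ by establishing a cycle of inclusions. Write $X_1$ for the right-hand side of the definition~\eqref{eq:def_box-product} (the shuffle sum), $X_2$ for the sum over $\PPi_\square(n)$, and $X_3$ for the sum over $\PS_\square(n)$. I will prove $X_1 \subseteq X_3 \subseteq X_2 \subseteq X_1$, after which all three coincide.

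The inclusions $X_1 \subseteq X_3 \subseteq X_2$ are essentially set-theoretic. By Lemma~\ref{lemma_shuffles}, an $(l,k)$-shuffle $(\mu,\nu)$ produces the pair $(s^\mu, s^\nu) \in \PS_\square^{\sf N}(n;k,l) \subseteq \PS_\square(n;k,l)$, so each summand of $X_1$ reappears as a summand of $X_3$; and the obvious inclusion $\PS_\square(n)\subseteq \PPi_\square(n)$ places each summand of $X_3$ inside $X_2$.

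The substantive step is $X_2 \subseteq X_1$, for which I would invoke the standard decomposition of pairs in $\PPi_\square(n)$ recorded just above Lemma~\ref{lemma_shuffles}. Given $(f,g)\in \PPi_\square(n)$, write it as $(f,g) = (\alpha\sigma s^\mu,\; \beta s^\nu)$ with $n=k'+l'$, a shuffle $(\mu,\nu)\in {\sf Sh}(l',k')$, a surjective order-preserving map $\sigma:[k']\epi[k'']$, and injective order-preserving maps $\alpha:[k'']\mono[k]$, $\beta:[l']\mono[l]$ in $\Pi$. Every injective morphism of $\Pi$ is a composition of the exterior coface maps $h,t$, while $\sigma$ is a composition of codegeneracy maps $s^{(i)}$. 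Since $B\subseteq A$ and $B'\subseteq A'$ are path submodules (closed under degeneracies and exterior face maps), this gives
\[
\alpha^*(B_k)\subseteq B_{k''}, \qquad \sigma^*(B_{k''})\subseteq B_{k'}, \qquad \beta^*(B'_l)\subseteq B'_{l'},
\]
hence
\[
f^*(B_{|f|}) \;=\; (s^\mu)^*\sigma^*\alpha^*(B_k) \;\subseteq\; (s^\mu)^*(B_{k'}) \;=\; s_\mu(B_{k'}),
\]
and similarly $g^*(B'_{|g|}) \subseteq s_\nu(B'_{l'})$. Monotonicity of $\bar\otimes$ on submodules then forces
\[
f^*(B_{|f|})\,\bar\otimes\,g^*(B'_{|g|}) \;\subseteq\; s_\mu(B_{k'})\,\bar\otimes\,s_\nu(B'_{l'}),
\]
which is a summand of $X_1$. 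Summing over all $(f,g)$ completes the inclusion.

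I do not foresee any real obstacle. The only point requiring care is the contravariant convention $f^* = A(f)$, so that $s_\mu = (s^\mu)^*$, together with the observation that the standard decomposition packages an arbitrary $(f,g)\in\PPi_\square(n)$ into exactly the three factor types for which the path-submodule closure axioms are designed: exterior face maps for the injective parts $\alpha^*$ and $\beta^*$, and codegeneracies for the surjective part $\sigma^*$.
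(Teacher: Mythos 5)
Your proof is correct and follows essentially the same route as the paper: both rest on the standard decomposition $(f,g)=(\alpha\sigma s^\mu,\beta s^\nu)$ and the closure of path submodules under degeneracies and exterior face maps to show that each $\PPi_\square(n)$-summand lands inside a shuffle summand. The paper's proof is terser, recording only the crucial inclusion $X_2 \subseteq X_1$ and treating the chain $X_1 \subseteq X_3 \subseteq X_2$ as immediate; you spell all three inclusions out explicitly, which is sound and a reasonable choice of exposition.
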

\begin{proof}
Take $(f,g)\in \PPi_\square(n;k,l)$ and consider its standard decomposition $(f,g)=(\alpha \sigma s^\nu \sigma, \beta s^\mu),$ where $(\mu,\nu)\in {\sf Sh}(k',l')$ (see \eqref{eq:standard_decomposition}). Then $f^*(B_{|f|}) \: \bar \otimes \: g^*(B'_{|g|}) \subseteq s_\nu( B_{k'} )\: \bar \otimes \: s_\mu(B_{l'}).$
\end{proof}

\begin{corollary}
$B\diamond B'$ is a path submodule of $A\otimes A'.$
\end{corollary}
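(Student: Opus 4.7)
My plan is to use the second description of $B\diamond B'$ given in Lemma \ref{lemma_box_eq}, namely
\[
(B\diamond B')_n= \sum_{(f,g)\in \PPi_\square(n)} f^*(B_{|f|})\:\bar\otimes\: g^*(B'_{|g|}),
\]
together with the observation that $\PPi_\square$ is stable under precomposition with connected order-preserving maps. Since path submodules of $A\otimes A'$ are exactly those graded submodules closed under the pullbacks $h^*$ for $h\in\Pi,$ and since the generators $s^i,\, t,\, h$ of $\Pi$ are particular cases, it suffices to prove that $h^*\bigl((B\diamond B')_n\bigr)\subseteq (B\diamond B')_m$ for every $h\in\Pi([m],[n])$.

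First I would reduce the problem to a pointwise calculation. For $(f,g)\in\PPi_\square(n)$ and $h\in\Pi([m],[n])$ the map $h^*=h^*\otimes h^*$ satisfies
\[
h^*\bigl(f^*(B_{|f|})\:\bar\otimes\: g^*(B'_{|g|})\bigr)\subseteq (fh)^*(B_{|f|})\:\bar\otimes\:(gh)^*(B'_{|g|}),
\]
simply because $h^*f^*=(fh)^*$ on $A$ and similarly for $A'.$ Since $|fh|=|f|$ and $|gh|=|g|,$ the right-hand side has the shape of a summand appearing in the description of $(B\diamond B')_m,$ \emph{provided} that $(fh,gh)\in\PPi_\square(m).$ That $fh$ and $gh$ lie in $\Pi$ is immediate from $\Pi$ being a subcategory of $\Delta.$

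The substantive step—and the one that most needs care—is checking the square condition $\Ker(fh)\cup\Ker(gh)=[m-1].$ Fix $i\in[m-1].$ Because $h$ is a connected order-preserving map, either $h(i)=h(i+1)$ or $h(i+1)=h(i)+1.$ In the first case both $f(h(i))=f(h(i+1))$ and $g(h(i))=g(h(i+1)),$ so $i\in\Ker(fh)\cap\Ker(gh).$ In the second case $h(i)\in[n-1],$ and the hypothesis $\Ker(f)\cup\Ker(g)=[n-1]$ gives $h(i)\in\Ker(f)$ or $h(i)\in\Ker(g);$ in either case, using $h(i+1)=h(i)+1,$ we conclude that $i\in\Ker(fh)$ or $i\in\Ker(gh).$ This verifies $(fh,gh)\in\PPi_\square(m).$

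Combining the two steps yields
\[
h^*\bigl(f^*(B_{|f|})\:\bar\otimes\: g^*(B'_{|g|})\bigr)\subseteq (fh)^*(B_{|fh|})\:\bar\otimes\:(gh)^*(B'_{|gh|})\subseteq (B\diamond B')_m,
\]
and summing over $(f,g)\in\PPi_\square(n)$ gives $h^*\bigl((B\diamond B')_n\bigr)\subseteq (B\diamond B')_m,$ completing the verification. No serious obstacle arises; the only point requiring attention is the dichotomy in the behavior of a connected map at consecutive integers, which is exactly what makes the square condition propagate from $n$ back to $m.$
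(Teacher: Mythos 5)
Your proof is correct and follows the same essential route as the paper: use the second description from Lemma \ref{lemma_box_eq} and the functoriality of $\PPi_\square$ with respect to $\Pi$ in the first variable. The paper simply cites this functoriality (established via the identification $\PPi_\square(n;k,l)\cong\ner(\qq^k\square\qq^l)_n$ from Proposition \ref{proposition:square_nerve}), whereas you verify it directly by checking that if $\Ker(f)\cup\Ker(g)=[n-1]$ and $h\in\Pi([m],[n])$ then $\Ker(fh)\cup\Ker(gh)=[m-1]$ via the dichotomy $h(i+1)=h(i)$ or $h(i+1)=h(i)+1$ for connected maps; this is just unpacking the same naturality fact.
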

\begin{proof} It follows from Lemma \ref{lemma_box_eq} and the fact that $\PPi_\square$ is natural by $[n],[k],[l]\in \Pi.$ 
\end{proof}

\begin{lemma} The following inclusion holds.
\label{lemma_box_sub}
\begin{equation}
(B\diamond B')_n \subseteq \bigoplus_{(\sigma,\tau)\in \PS_\square(n)} \sigma^*(\NN_{|\sigma|} A)  \otimes \tau^*(\NN_{|\tau|} A').    
\end{equation}
In other words, for any $x\in (B\diamond B')_n$ and any $(\sigma,\tau)\in {\sf PS}(n)\setminus {\sf PS}_\square(n)$
 we have $ x_{\sigma,\tau}=0.$
\end{lemma}
\begin{proof} Note that $B_k\subseteq A_k=\bigoplus_{\psi: [k]\epi [k']} \psi^*(\NN_{k'}A).$  Then for any $(\sigma,\tau)\in\PS_\square(n;k,l)$ we have that $  \sigma^*(B_k)\otimes \tau^*(B'_l )$ is included into the sum of  modules 
$(\psi\sigma)^*(\NN_kA)\otimes (\phi\tau)^*(\NN_lA')$ over all pairs of 
surjections $\psi:[k]\epi [k']$ and $\phi:[l]\epi [l'].$ 
Since $(\sigma,\tau)\in \PS_\square(n;k,l),$ we have $ (\psi\sigma,\phi\tau)\in \PS_\square(n;k',l').$ 
Then the assertion follows from Lemma \ref{lemma_box_eq}.
\end{proof}

For two complexes with graded submodules we set 
\[(C,D)\otimes (C',D')=(C\otimes C',D\bar \otimes D').\]

\begin{lemma} \label{lemma:EZ-for-pairs}
Let $\PPP=(A,B)$ and $\PPP'=(A',B')$ be two path pairs. Then the Eilenberg-Zilber and Alexander-Whitney maps induce  morphisms of complexes with graded submodules 
\begin{equation}
\varepsilon: \NN \PPP \otimes \NN \PPP' 
\longrightarrow \NN(\PPP\square \PPP'),
\end{equation}
\begin{equation}
\alpha : \NN(\PPP\square \PPP') \longrightarrow  \NN \PPP \otimes \NN \PPP'.
\end{equation}
\end{lemma}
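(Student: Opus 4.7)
The plan is to check the two containments $\varepsilon(\overline{B} \bar\otimes \overline{B'}) \subseteq \overline{B\diamond B'}$ and $\alpha(\overline{B\diamond B'}) \subseteq \overline{B} \bar\otimes \overline{B'}$. Both $\varepsilon$ and $\alpha$ are already chain maps by the classical Eilenberg-Zilber theorem, so only compatibility with the graded submodules is new. In both directions the strategy is the same: lift representatives back to $\CC A \otimes \CC A'$ or to $\CC(A\otimes A')$, apply the explicit shuffle/face-product formulas for $\mathcal{E}$ and $\mathcal{A}$, use that $B$ and $B'$ are closed under the operations ($s_i, h, t$) appearing in those formulas, and project back via $\rho$ using the commutativity of diagram (\ref{diag:zeta}).

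For $\varepsilon$: a generator of $(\overline{B}\bar\otimes \overline{B'})_n$ has the form $\rho(b)\otimes \rho(b')$ with $b\in B_k$, $b'\in B'_l$, $k+l=n$. Commutativity of (\ref{diag:zeta}) gives
\[
\varepsilon(\rho(b)\otimes \rho(b')) \;=\; \rho\,\mathcal{E}(b\otimes b') \;=\; \rho\Big(\sum_{(\mu,\nu)\in {\sf Sh}(l,k)} {\sf sgn}(\mu,\nu)\, s_\mu b\otimes s_\nu b'\Big).
\]
Each summand belongs to $s_\mu(B_k)\bar\otimes s_\nu(B'_l)$, which is precisely the type of summand appearing in the definition (\ref{eq:def_box-product}) of $(B\diamond B')_n$. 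Hence $\mathcal{E}(b\otimes b')\in (B\diamond B')_n$, and applying $\rho$ lands in $\overline{B\diamond B'}_n$.

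For $\alpha$: any element of $\overline{B\diamond B'}_n$ lifts to some $w\in (B\diamond B')_n$, which by definition is a sum of terms $s_\mu(b)\otimes s_\nu(b')$ with $(\mu,\nu)\in {\sf Sh}(l,k)$, $b\in B_k$, $b'\in B'_l$, $k+l=n$. Commutativity of (\ref{diag:zeta}) gives $\alpha(\rho(w)) = (\rho\otimes \rho)\mathcal{A}(w)$, and the Alexander-Whitney formula yields
\[
\mathcal{A}(s_\mu(b)\otimes s_\nu(b')) \;=\; \sum_{i+j=n} h^j s_\mu(b)\otimes t^i s_\nu(b').
\]
Since $B$ is a path submodule, it is closed under all degeneracies and under the exterior face maps $h,t$, so $h^j s_\mu(b)\in B_{n-j}$ and, symmetrically, $t^i s_\nu(b')\in B'_{n-i}$. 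Applying $\rho\otimes \rho$ sends each summand into $\overline{B}_{n-j}\bar\otimes \overline{B'}_{n-i}\subseteq (\overline{B}\bar\otimes \overline{B'})_n$, establishing the required inclusion.

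The step one might fear would be the main obstacle—ensuring that the Alexander-Whitney map respects the $\diamond$-structure on the left—is in fact immediate from the definition of a path pair, since both $\mathcal{E}$ and $\mathcal{A}$ are built only from morphisms in the path indexing category $\Pi$ (shuffle products of degeneracies, and powers of the exterior coface maps $h,t$), which are precisely the morphisms required to preserve any path submodule. So the real content of the lemma is that the combinatorics of the definition of $\diamond$ via shuffles was chosen exactly to match the Eilenberg-Zilber formula.
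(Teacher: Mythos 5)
Your proof is correct and matches the paper's argument in approach and detail: in both directions you reduce to the top row of diagram \eqref{diag:zeta}, note that the shuffle terms of $\mathcal{E}$ land in $B\diamond B'$ by the definition of $\diamond$ (or, for $\mathcal{A}$, that $h,t$ and degeneracies preserve the path submodules), and then descend along $\rho$. The closing observation—that $\mathcal{E}$ and $\mathcal{A}$ are built from $\Pi$-morphisms, which is exactly what makes the definition of $\diamond$ work—is the same intuition the paper encodes in Lemma \ref{lemma_box_eq}.
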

\begin{proof}
Since, for any shuffle $(\mu,\nu)$ we have $(s^\mu,s^\nu)\in \PPi_\square(n),$ we obtain $ \mathcal{E}( B \bar \otimes B' )\subseteq B\diamond B'.$ 
Using that the diagram \eqref{diag:zeta} is commutative, we get 
$
\varepsilon (\overline{B}\bar \otimes  \overline{B}')  \subseteq \overline{B\diamond B'}.
$ Then the morphism $\varepsilon: \NN \PPP \otimes \NN \PPP' 
\to \NN(\PPP\square \PPP')$ is well defined. Since $B$ and $B'$ are closed with respect to exterior faces, we obtain $\mathcal{A}( B\diamond B' )\subseteq B\bar \otimes B'.$ Using that the diagram \eqref{diag:zeta} is commutative, we get $\alpha( \overline{B\diamond B'} )\subseteq \overline{B} \bar \otimes \overline{B}'.$ Then the map $\alpha: \NN (\PPP \square \PPP') \to \NN \PPP \otimes \NN \PPP'$ is well defined.
\end{proof}

\subsection{Homotopy invariance}
\label{sec:homotopy}

We denote by $\Delta^n$ the standard $n$-simplex and by $d^0,d^1:\Delta^0\to \Delta^1$ the two embeddings of $0$-simplex to the $1$-simplex. Consider the path pair of modules given by 
\begin{equation}
    I^{\sf p}=(\mathbb{K}[\Delta^1], \mathbb{K}[\Delta^1]), \hspace{1cm} {\sf pt}^{\sf p}=(\mathbb{K},\mathbb{K})
\end{equation}
and two morphisms between them induces by $d^0,d^1$
\begin{equation}
    i_0,i_1:{\sf pt}^{\sf p}\longrightarrow I^{\sf p}.
\end{equation} 
Note that ${\sf pt}^{\sf p} \square \PPP\cong \PPP.$ Then we obtain a weak cylinder functor 
\begin{equation}
    {\sf cyl}(\PPP)=\PPP \square I^{\sf p}
\end{equation}
and define homotopic morphisms of path pairs via this weak cylinder functor. 

\begin{theorem}\label{th:homotopy}
Any homotopic morphisms of path pairs  $f\sim g:\PPP\to \PPP'$ induce homotopic maps 
\begin{equation}
\Omega  f  \sim \Omega  g : \Omega  \PPP \longrightarrow \Omega  \PPP', \hspace{1cm} 
\Psi f \sim \Psi g : \Psi \PPP \longrightarrow \Psi \PPP'.
\end{equation}
\end{theorem}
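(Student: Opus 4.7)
The plan is to factor the construction as $\Omega = \omega \circ \NN$ and $\Psi = \psi \circ \NN$, where $\NN$ is viewed as a functor from path pairs to chain complexes with graded submodules sending $(A,B)$ to $(\NN A, \overline{B})$. By Proposition \ref{prop:homotopy_grad} it then suffices to show that $\NN$ takes homotopic morphisms of path pairs to homotopic morphisms of chain complexes with graded submodules, which I would deduce from the abstract Proposition \ref{prop:cyl}.

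To apply Proposition \ref{prop:cyl} with $F = \NN$, I need a natural transformation $\varphi: \widetilde{\sf cyl}\circ \NN \to \NN\circ {\sf cyl}$ compatible with the cylinder inclusions. The key point is the identification $\NN I^{\sf p} \cong (I^{\sf c},I^{\sf g})$: the normalized complex $\NN\KK[\Delta^1]$ is concentrated in degrees $0,1$ with components $\KK^2$ and $\KK$ (matching $I^{\sf c}$), while $\overline{\KK[\Delta^1]} = \NN\KK[\Delta^1]$ is the whole underlying graded module, matching $I^{\sf g}$. Under this identification and using \eqref{eq:cyl_I},
\begin{equation}
\widetilde{\sf cyl}(\NN\PPP) = \NN\PPP \otimes (I^{\sf c},I^{\sf g}) \cong \NN\PPP \otimes \NN I^{\sf p},
\end{equation}
and Lemma \ref{lemma:EZ-for-pairs} supplies the desired $\varphi_\PPP$ as the Eilenberg--Zilber morphism of pairs
\begin{equation}
\varepsilon : \NN\PPP \otimes \NN I^{\sf p} \longrightarrow \NN(\PPP\square I^{\sf p}) = \NN({\sf cyl}(\PPP)),
\end{equation}
which is natural in $\PPP$ by naturality of $\varepsilon$.

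The main technical step is verifying the compatibility $\varphi_\PPP \circ \tilde{i}^k = \NN(i^k)$ for $k=0,1$. Under the identifications above $\tilde i^k$ corresponds to $\mathrm{id}\otimes \NN(i_k)$, where $i_k:{\sf pt}^{\sf p}\to I^{\sf p}$ is the vertex inclusion, so the compatibility reduces to the equation $\varepsilon\circ (\mathrm{id}\otimes \NN i_k) = \NN(\mathrm{id}\square i_k)$. On an element $a\in \NN_n A$, both sides produce $a\otimes s_0^n(e_k)$ in $\NN(A\otimes \KK[\Delta^1])$: for the right-hand side this is the definition of $i_k$, and for the left-hand side it follows from the Eilenberg--Zilber formula, whose sum over $(n,0)$-shuffles collapses to a single term because the second factor has degree zero. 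This bookkeeping is the only nontrivial step. With $\varphi$ in hand, Proposition \ref{prop:cyl} delivers $\NN f \sim \NN g$, and Proposition \ref{prop:homotopy_grad} then yields $\Omega f \sim \Omega g$ and $\Psi f \sim \Psi g$, as claimed.
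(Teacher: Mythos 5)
Your proposal is correct and follows essentially the same route as the paper: both identify $\mathsf{cyl}(\NN\PPP)$ with $\NN\PPP\otimes\NN I^{\sf p}$ via \eqref{eq:cyl_I} and $I^{\sf c}=\NN(\KK[\Delta^1])$, take $\varphi$ to be the Eilenberg--Zilber map of Lemma \ref{lemma:EZ-for-pairs}, verify the triangle by noting that the shuffle sum degenerates when one tensor factor has degree zero, and conclude via Proposition \ref{prop:cyl}. You are slightly more explicit than the paper in spelling out the final reduction through Proposition \ref{prop:homotopy_grad} (the paper leaves that step implicit), which is a small clarity gain rather than a different argument.
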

\begin{proof}By \eqref{eq:cyl_I} we have
$
{\sf cyl}( \NN \PPP )\cong (\NN A \otimes I^{\sf c}, \overline{B} \otimes I^{\sf g} ).
$ On the other hand 
$I^{\sf c}=\NN (\KK[\Delta^1]).$ Therefore, $\NN A \otimes I^{\sf c}= \NN A \otimes \NN(\KK[\Delta^1])$ and ${\sf cyl}(\NN \PPP) = \NN \PPP \otimes \NN I^{\sf p}.$ 
Then the we have the Eilenberg-Zilber map (Lemma \ref{lemma:EZ-for-pairs})
\begin{equation}
\varepsilon: {\sf Cyl}(\NN \PPP) \to \NN( {\sf cyl}( \PPP)).    
\end{equation}
We claim that the triangle 
\begin{equation}
\begin{tikzcd} 
& \NN \PPP \ar[dl,"i^n_{\NN \PPP}"'] \ar[dr," \NN(i^n_\PPP)"] & 
\\
{\sf cyl}(\NN \PPP) \ar[rr,"\varepsilon"]
 & &  \NN( {\sf cyl}(\PPP)) 
\end{tikzcd}    
\end{equation}
is commutative for $n=0,1$. Indeed, for any $a\in (\NN A)_m$ we have $i_n(a)=(-1)^n a\otimes d^n,$ where $d^0,d^1$ are corresponding elements from $(\Delta^1)_0.$ There is only one $(m,0)$-shuffle, and hence, $\varepsilon(i^n_{\NN \PPP}(a))= (-1)^n a \otimes d^n = \NN(i^n_\PPP)(a).$ So the triangle is commutative. The assertion follows from Proposition \ref{prop:cyl}.
\end{proof}

\subsection{Eilenberg-Zilber theorem for \texorpdfstring{$\Omega$}{Ω}}

\begin{theorem}[{cf. \cite[Th.7.6]{grigor2012homologies}}]\label{th:EZ} Let $\mathbb{K}$ be a principal ideal domain and let $\PPP=(A,B)$ and $\PPP'=(A',B')$ be two path pairs of modules such that:
\begin{itemize}
    \item $A_n$ and $A_n'$ are free modules for any $n\geq 0;$
    \item $\overline{B}_n$ and $\overline{B}'_n$ are direct summands of $\NN_nB$ and $\NN_nB'$ respectively.
\end{itemize}
Then the Eilenberg-Zilber and Alexander-Whitney maps   \eqref{eq_eilenberg-zilber} induce mutually inverse isomorphism of complexes
\begin{equation}\label{eq:EZ}
\Omega \PPP\otimes \Omega \PPP'\cong \Omega (\PPP\square \PPP').
\end{equation}
Moreover, there is a short exact sequence 
\begin{equation}\label{eq:Kunn}
0 \to
\bigoplus_{i+j=n} H_i\PPP \otimes H_j\PPP' \to    H_n( \PPP \square \PPP' ) \to \bigoplus_{i+j=n-1} {\sf Tor}^\KK_1( H_i\PPP,H_j\PPP') \to 0.
\end{equation}
\end{theorem}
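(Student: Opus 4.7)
The plan is to apply the functor $\omega$ to the Eilenberg--Zilber and Alexander--Whitney maps produced in Lemma \ref{lemma:EZ-for-pairs}, and to invoke the tensor-product compatibility of $\omega$ provided by Proposition \ref{prop_omega_direct}.

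To apply Proposition \ref{prop_omega_direct} to the chain complexes with graded submodules $\NN\PPP=(\NN A,\overline B)$ and $\NN\PPP'=(\NN A',\overline B'),$ I first observe that $\NN_n A$ is a direct summand of the free $\KK$-module $A_n$ and hence free over the PID $\KK$ (similarly for $\NN_n A'$), while $\overline B_n,\overline B'_n$ are direct summands by hypothesis. Proposition \ref{prop_omega_direct} then yields
\begin{equation*}
\omega(\NN\PPP \otimes \NN\PPP') \;\cong\; \Omega\PPP \otimes \Omega\PPP'.
\end{equation*}
Combining this with the functoriality of $\omega$ applied to the morphisms $\varepsilon,\alpha$ of Lemma \ref{lemma:EZ-for-pairs}, I obtain chain maps
\begin{equation*}
\omega(\varepsilon):\Omega\PPP\otimes \Omega\PPP' \longrightarrow \Omega(\PPP\square \PPP'),\qquad \omega(\alpha):\Omega(\PPP\square \PPP') \longrightarrow \Omega\PPP\otimes \Omega\PPP',
\end{equation*}
and the identity $\alpha\varepsilon={\sf id}$ passes through functoriality to give $\omega(\alpha)\omega(\varepsilon)={\sf id}.$

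The hard part will be the other composition $\omega(\varepsilon)\omega(\alpha)={\sf id}$ on $\Omega(\PPP\square \PPP').$ I would reduce this to showing that every $y\in \Omega_n(\PPP\square \PPP')$ already lies in the image of $\varepsilon:\NN_n A\otimes \NN_n A'\to \NN_n(A\otimes A');$ then $\varepsilon\alpha(y)=y$ is automatic. By Lemma \ref{lemma:Imepsilon}, membership in $\Im(\varepsilon)$ amounts to two conditions on the Dold--Kan coordinates of $y.$ The support condition (vanishing of $y_{\sigma,\tau}$ outside $\PS^{\sf N}_\square(n)$) is immediate from Lemma \ref{lemma_box_sub} applied to $y\in \overline{B\diamond B'},$ together with the preservation of $\PS^{\sf N}$-coordinates under $\rho$ provided by Corollary \ref{cor_rho}. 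The sign-coherence condition is subtler. For it I would use that $\partial y\in \overline{B\diamond B'},$ so $(\partial y)_{\sigma,\tau}$ vanishes for every $(\sigma,\tau)\in \PS^{\sf N}(n-1)\setminus\PS^{\sf N}_\square(n-1).$ For each edge $(\mu,\nu)\overset{i}{\to}(\mu',\nu')$ in the shuffle graph ${\bf Sh}(l,k),$ Lemma \ref{lemma_mu_nu_edge} produces the pair $(\sigma,\tau):=(s^\mu d^i,s^\nu d^i)=(s^{\mu'}d^i,s^{\nu'}d^i)$ lying in this forbidden subset and controls which shuffle coordinates of $y$ can contribute to $(\partial y)_{\sigma,\tau}$ through the nontrivial clause of \eqref{eq:f_on_summand}. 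Setting $(\partial y)_{\sigma,\tau}=0$ then yields the adjacent sign-coherence relation ${\sf sgn}(\mu,\nu)\, y_{s^\mu,s^\nu}={\sf sgn}(\mu',\nu')\, y_{s^{\mu'},s^{\nu'}}.$ Since the graph ${\bf Sh}(l,k)$ is weakly connected (Lemma \ref{lemma_graph_sh}), this adjacent relation propagates to every pair of shuffles, completing the verification.

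Finally the Künneth sequence \eqref{eq:Kunn} is obtained from the chain-level isomorphism \eqref{eq:EZ} by the classical Künneth formula. The required freeness of the components of $\Omega\PPP$ and $\Omega\PPP'$ over $\KK$ is furnished by Lemma \ref{lemma_direct_summand}, which exhibits them as direct summands of $\overline B_n$ and $\overline B'_n$ respectively, and hence of free modules.
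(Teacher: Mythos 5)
Your strategy matches the paper's own: Lemma \ref{lemma:EZ-for-pairs} together with Proposition \ref{prop_omega_direct} produce the restricted maps $\varepsilon',\alpha'$ with $\alpha'\varepsilon'={\sf id},$ and you reduce the remaining composition $\varepsilon'\alpha'={\sf id}$ to the inclusion $\Omega(\PPP\square\PPP')\subseteq\Im(\varepsilon)$ via Lemma \ref{lemma:Imepsilon}. The identification $\omega(\NN\PPP\otimes\NN\PPP')\cong\Omega\PPP\otimes\Omega\PPP',$ the support condition, and the K\"unneth part are all handled correctly.

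There is, however, a gap in the sign-coherence step. Writing $(\sigma_0,\tau_0):=(s^\mu d^i,s^\nu d^i)\in\PS^{\sf N}(n-1)\setminus\PS_\square^{\sf N}(n-1),$ you compute $(\partial y)_{\sigma_0,\tau_0}$ by saying that Lemma \ref{lemma_mu_nu_edge} controls which \emph{shuffle} coordinates of $y$ contribute — but you never argue that coordinates $y_{\sigma,\tau}$ at degenerate pairs $(\sigma,\tau)\in\PS^{\sf D}(n)$ cannot contribute. Those coordinates can be nonzero, and need not lie in $\PS_\square(n)$: the paper stresses exactly this both in the remark after Corollary \ref{cor_N_tensor} and in the parenthetical aside in the proof of Lemma \ref{lemma_omega_in_image}. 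The paper's remedy is to first replace $y$ by a preimage $\tilde y\in(B\diamond B')_n,$ whose coordinates vanish outside $\PS_\square(n)$ \emph{even at degenerate pairs} (Lemma \ref{lemma_box_sub}), and then rule out the remaining degenerate contributions from $\PS_\square^{\sf D}(n)$ via Lemma \ref{lemma_PS_deg_square}, which your sketch never invokes. Alternatively, one can avoid the lift by observing that the degenerate part $y^{\sf D}=\sum_{(\sigma,\tau)\in\PS^{\sf D}(n)}(\sigma^*\otimes\tau^*)(y_{\sigma,\tau})$ lies in $\DD_n(A\otimes A')$ by Lemma \ref{lemma_deg_tensor}, so $\partial(y^{\sf D})\in\DD_{n-1}(A\otimes A'),$ whose $\PS^{\sf N}(n-1)$-coordinates vanish by the same lemma; this reduces $(\partial y)_{\sigma_0,\tau_0}$ to contributions from the $\PS^{\sf N}(n)$-coordinates of $y,$ where the support condition applies and Lemma \ref{lemma_mu_nu_edge} finishes the argument. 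One of these two routes must be supplied for the proof to be complete.
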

\begin{remark}
Note that in the ordinary Eilenberg-Zilber theorem for simplicial modules, the complexes $\NN A\otimes \NN A'$ and $\NN (A\otimes A')$ are generally not isomorphic, the first one is just a homotopy retract in the second one. But in Theorem \ref{th:EZ}, following \cite[Th.7.6]{grigor2012homologies}, we obtain a stronger result, an isomorphism of complexes.
\end{remark}

Since $\mathbb{K}$ is a principal ideal domain and the modules $A_n,A'_n$ are free, then their submodules $\NN_nA,\NN_nA', \Omega_n\PPP,\Omega_n\PPP'$ are also free, and the map $\Omega \PPP\otimes \Omega \PPP'\to \NN A\otimes \NN A'$ is injective. So we can identify $\Omega \PPP\otimes \Omega \PPP'$ with a submodule of $\NN A\otimes \NN A'.$ By the same reason we identify $B_k\otimes B'_l$ with the submodule of $A_k\otimes A'_l$ and identify $\bar B_k\otimes \bar B'_l$ with the submodule of $\NN_kA\otimes \NN_lA'.$ In order to prove Theorem \ref{th:EZ} we need two lemmas.

\begin{lemma}\label{lemma_restriction}
Under the conditions of Theorem \ref{th:EZ} the Eilenberg-Zilber and Alexander-Whitney maps   can be restricted to the maps of subcomplexes \begin{equation} \varepsilon':
\Omega \PPP \otimes \Omega \PPP'  \rightleftarrows   \Omega(\PPP\square \PPP' ): \alpha'    
\end{equation} 
such that $ \alpha'\varepsilon'={\sf id}.$
More precisely, there are inclusions 
\begin{equation}
     \varepsilon  (\Omega \PPP\otimes  \Omega\PPP')  \subseteq \Omega(\PPP \square \PPP'), \hspace{1cm}
\alpha(\Omega(\PPP\square \PPP')) \subseteq \Omega\PPP\otimes \Omega\PPP'.
\end{equation}
\end{lemma}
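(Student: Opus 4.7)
The plan is to combine two facts established earlier: Proposition~\ref{prop_omega_direct}, which identifies the tensor of $\omega$'s with the $\omega$ of a tensor under a freeness/direct-summand hypothesis; and Lemma~\ref{lemma:EZ-for-pairs}, which asserts that $\varepsilon$ and $\alpha$ are morphisms of chain complexes with graded submodules. The lemma then reduces to the functoriality of $\omega$ on morphisms of chain complexes with graded submodules, together with the identity $\alpha \varepsilon = {\sf id}$ from the Eilenberg--Zilber theorem.

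First I would verify that Proposition~\ref{prop_omega_direct} applies with $C = \NN A$, $D = \overline{B}$ (and similarly for the primed version). The hypothesis of Theorem~\ref{th:EZ} gives that $A_n$ is free and $\overline{B}_n$ is a direct summand of $\NN_n A$; the Dold--Kan decomposition \eqref{eq:DK-decomposition} realises $\NN_n A$ as a direct summand of the free module $A_n$, and hence $\NN_n A$ is free over the PID $\KK$. Consequently Proposition~\ref{prop_omega_direct} yields an identification
\begin{equation}
\Omega \PPP \otimes \Omega \PPP' \;\cong\; \omega\bigl(\NN A \otimes \NN A',\ \overline{B} \otimes \overline{B}'\bigr)
\end{equation}
as submodules of $\NN A \otimes \NN A'$, and since the embeddings $\overline{B} \hookrightarrow \NN A$ and $\overline{B}' \hookrightarrow \NN A'$ split, $\overline{B} \otimes \overline{B}'$ coincides with $\overline{B} \bar\otimes \overline{B}'$.

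Next, by Lemma~\ref{lemma:EZ-for-pairs} the maps $\varepsilon$ and $\alpha$ are morphisms of chain complexes with graded submodules between
$\NN \PPP \otimes \NN \PPP' = (\NN A \otimes \NN A',\, \overline{B} \bar\otimes \overline{B}')$ and $\NN (\PPP \square \PPP')$. Since $\omega$ is a functor on such objects (see the paragraph preceding Proposition~\ref{prop:restrictions}), these morphisms restrict to chain maps
\begin{equation}
\omega(\varepsilon)\colon \omega(\NN \PPP \otimes \NN \PPP') \longrightarrow \omega(\NN(\PPP \square \PPP')), \qquad \omega(\alpha)\colon \omega(\NN(\PPP \square \PPP')) \longrightarrow \omega(\NN \PPP \otimes \NN \PPP').
\end{equation}
Using the identification from the previous step, the left-hand side is $\Omega \PPP \otimes \Omega \PPP'$ and the right-hand side is $\Omega(\PPP \square \PPP')$ by definition, giving the desired inclusions $\varepsilon(\Omega \PPP \otimes \Omega \PPP') \subseteq \Omega(\PPP \square \PPP')$ and $\alpha(\Omega(\PPP \square \PPP')) \subseteq \Omega \PPP \otimes \Omega \PPP'$.

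Finally, the relation $\alpha' \varepsilon' = {\sf id}$ is immediate from the Eilenberg--Zilber identity $\alpha \varepsilon = {\sf id}$ recorded in diagram~\eqref{diag:zeta}, since $\varepsilon'$ and $\alpha'$ are just restrictions of $\varepsilon$ and $\alpha$. The main (mild) obstacle is the verification that the freeness hypothesis in Proposition~\ref{prop_omega_direct} is available for $\NN A$, which is why one uses that $\KK$ is a PID and that the Dold--Kan decomposition exhibits $\NN_n A$ as a direct summand of $A_n$; once this is observed, the rest of the argument is purely formal functoriality.
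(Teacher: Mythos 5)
Your proposal is correct and follows essentially the same route as the paper: Lemma~\ref{lemma:EZ-for-pairs} gives that $\varepsilon,\alpha$ are morphisms of chain complexes with graded submodules, hence restrict under $\omega$; Proposition~\ref{prop_omega_direct} identifies $\omega(\NN A\otimes\NN A',\overline B\otimes\overline B')$ with $\Omega\PPP\otimes\Omega\PPP'$; and $\alpha'\varepsilon'=\mathsf{id}$ is inherited from $\alpha\varepsilon=\mathsf{id}$. Your added verification that the hypotheses of Proposition~\ref{prop_omega_direct} hold for $C=\NN A$, $D=\overline B$ is the same observation the paper records just before stating Theorem~\ref{th:EZ} (there it is phrased via the fact that a submodule of a free module over a PID is free, which is slightly lighter than invoking the Dold--Kan decomposition, but both are valid).
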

\begin{proof}
Lemma \ref{lemma:EZ-for-pairs} implies that these maps can be restricted to the maps 
\begin{equation}
\omega(\NN A \otimes \NN A', \overline{B}\otimes \overline{B}' ) \leftrightarrows \Omega (\PPP \square \PPP').    
\end{equation}
The assertion follows from Proposition \ref{prop_omega_direct}. 
\end{proof}

\begin{lemma}[{cf. \cite[Prop.7.12]{grigor2012homologies}}]
\label{lemma_omega_in_image} Under the assumptions of Theorem \ref{th:EZ} there is an inclusion
\begin{equation}
  \Omega (\PPP\square \PPP' ) \subseteq \Im(\varepsilon : \NN A\otimes \NN A' \longrightarrow \NN (A\otimes A')).  
\end{equation}
\end{lemma}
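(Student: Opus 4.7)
The strategy is to invoke Lemma \ref{lemma:Imepsilon}: we need to verify its two conditions on the Dold--Kan coordinates $x_{\sigma,\tau}$ of any $x\in \Omega_n(\PPP\square \PPP')$ in the decomposition $A_n\otimes A'_n=\bigoplus_{(\sigma,\tau)\in \PS(n)}\sigma^*(\NN_{|\sigma|}A)\otimes \tau^*(\NN_{|\tau|}A')$.

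\textbf{Step 1 (Condition (2)).} Since $x\in \overline{B\diamond B'}_n=\rho((B\diamond B')_n)$, pick $y\in (B\diamond B')_n$ with $x=\rho(y)$. Lemma \ref{lemma_box_sub} forces $y_{\sigma,\tau}=0$ whenever $(\sigma,\tau)\notin \PS_\square(n)$, and by Corollary \ref{cor_rho} one has $x_{\sigma,\tau}=y_{\sigma,\tau}$ for all $(\sigma,\tau)\in \PS^{\sf N}(n)$. Hence $x_{\sigma,\tau}=0$ for $(\sigma,\tau)\in \PS^{\sf N}(n)\setminus \PS^{\sf N}_\square(n)$, which is exactly condition (2).

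\textbf{Step 2 (Condition (1), reduction to an edge).} By Lemma \ref{lemma_shuffles}, $\PS^{\sf N}_\square(n;k,l)=\{(s^\mu,s^\nu)\mid (\mu,\nu)\in {\sf Sh}(l,k)\}$. The graph ${\bf Sh}(l,k)$ is weakly connected by Lemma \ref{lemma_graph_sh}, so the invariance of $\sgn(\mu,\nu)\cdot x_{s^\mu,s^\nu}$ reduces to the single-edge case: given $(\mu,\nu)\overset{i}{\to}(\mu',\nu')$ in ${\bf Sh}(l,k)$, prove $x_{s^\mu,s^\nu}+x_{s^{\mu'},s^{\nu'}}=0$ (the signs flip across an edge).

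\textbf{Step 3 (Feeding in the boundary).} Set $(\sigma,\tau):=(s^\mu d^i,s^\nu d^i)=(s^{\mu'}d^i,s^{\nu'}d^i)$; both the equality and the membership $(\sigma,\tau)\in \PS^{\sf N}(n-1)\setminus \PS^{\sf N}_\square(n-1)$ are furnished by Lemma \ref{lemma_mu_nu_edge}. Since $\partial x\in \overline{B\diamond B'}_{n-1}$, applying Step~1 to $\partial x$ yields $(\partial x)_{\sigma,\tau}=0$.

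\textbf{Step 4 (Computing $(\partial x)_{\sigma,\tau}$).} Because $x\in \NN_n(A\otimes A')$ satisfies $d_jx=0$ for every $j\geq 1$, one has $\partial x=d_0 x$. Expand
\[ d_0 x=\sum_{(\sigma'',\tau'')\in \PS(n)}\bigl((\sigma''d^0)^*\otimes(\tau''d^0)^*\bigr)(x_{\sigma'',\tau''}), \]
and for each term read off its Dold--Kan coordinate via the epi--mono factorisations $\sigma''d^0=\alpha\tilde\sigma$, $\tau''d^0=\beta\tilde\tau$. The summand lands in the $(\tilde\sigma,\tilde\tau)$-component, and its contribution to the $(\sigma,\tau)$-coordinate is non-zero only if (a) $(\tilde\sigma,\tilde\tau)=(\sigma,\tau)$ and (b) $\alpha^*$ and $\beta^*$ act non-trivially on normalised summands, which (since $\NN=\bigcap_{j\neq 0}\Ker d_j$) forces $\alpha$ and $\beta$ to be compositions of $d^0$'s. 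Step~1 already allows us to restrict to $(\sigma'',\tau'')\in \PS^{\sf N}_\square(n)$, i.e.\ to shuffle pairs $(s^{\mu''},s^{\nu''})$; the last bullet of Lemma \ref{lemma_mu_nu_edge} then pins down the contributing shuffles as exactly $(\mu,\nu)$ and $(\mu',\nu')$. A sign check using ${\sf sgn}(\mu,\nu)=-{\sf sgn}(\mu',\nu')$ gives $(\partial x)_{\sigma,\tau}=\pm(x_{s^\mu,s^\nu}+x_{s^{\mu'},s^{\nu'}})$, and Step~3 then delivers the desired relation.

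\textbf{Main obstacle.} The real work is Step~4: the combinatorial bookkeeping of epi--mono factorisations of $\sigma''d^0$, verifying that the vanishing $\alpha^*\!\!\mid_{\NN}\neq 0\Rightarrow \alpha=(d^0)^r$ (combined with $\sigma''(0)=0$) eliminates all but the $\alpha=\beta=\id$ case, and handling the potential contributions from coordinates $(\sigma'',\tau'')\in \PS^{\sf D}(n)$ that are not directly controlled by Step~1 but must be shown to sum to zero, again using the $\NN$-condition and Lemma \ref{lemma_PS_deg_square}. Once this bookkeeping is complete, the combinatorial miracle that only two shuffles contribute (Lemma \ref{lemma_mu_nu_edge}) closes the argument.
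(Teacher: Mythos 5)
Your Steps 1--3 match the paper's structure exactly. The gap is in Step 4, and it is not merely a matter of bookkeeping: the move from $\partial^C\tilde x$ (the paper) to $d_0 x$ (you) changes the combinatorics in a way that makes Lemma~\ref{lemma_mu_nu_edge} unavailable. The paper's argument lifts $x$ to a preimage $\tilde x\in(B\diamond B')_n$. The entire point of that lift is that, by Lemma~\ref{lemma_box_sub}, $\tilde x$ is supported on $\PS_\square(n)$-coordinates \emph{including the degenerate ones}, so one may then expand the full unnormalised boundary $\partial^C\tilde x=\sum_j(-1)^j d_j\tilde x$ and use the face $d_i$ (with $1\leq i\leq n-1$) to produce exactly the two shuffle terms via Lemma~\ref{lemma_mu_nu_edge}. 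By contrast, if you expand $d_0 x$ in Dold--Kan coordinates, the only surjections $(\sigma'',\tau'')$ whose $d_0$-image has epi part $(\sigma_0,\tau_0)=(s^\mu d^i,s^\nu d^i)$ are the four pairs obtained by adjoining $d^0$ or ${\sf id}$ on each factor; in three of these the pair lies in $\PS^{\sf N}(n)\setminus\PS^{\sf N}_\square(n)$, so $x_{\sigma'',\tau''}=0$ by your Step~1, and the sole surviving contribution is the \emph{degenerate} coordinate $x_{\sigma_0 s^0,\tau_0 s^0}$. Step~1 gives no control over it (Corollary~\ref{cor_rho} only pins down non-degenerate coordinates, and the paper explicitly warns that $x_{\sigma,\tau}$ may be nontrivial for degenerate pairs outside $\PS_\square$). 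Meanwhile the shuffle coordinates $x_{s^\mu,s^\nu}$ and $x_{s^{\mu'},s^{\nu'}}$ do not appear in $(d_0 x)_{\sigma_0,\tau_0}$ at all: the last item of Lemma~\ref{lemma_mu_nu_edge} says $(s^{\mu''}d^j,s^{\nu''}d^j)=(\sigma_0,\tau_0)$ forces $j=i\geq 1$, so no shuffle pair contributes through $d^0$.

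Consequently your Step~4, as sketched, would only yield $x_{\sigma_0 s^0,\tau_0 s^0}=0$, not the required relation $x_{s^\mu,s^\nu}+x_{s^{\mu'},s^{\nu'}}=0$. The ``Main obstacle'' paragraph misdiagnoses the situation: the degenerate contributions do not ``sum to zero''; comparing your computation with the paper's shows they are in fact equal to $\pm(x_{s^\mu,s^\nu}+x_{s^{\mu'},s^{\nu'}})$, so establishing this equality (relating a degenerate coordinate of $x$ to two non-degenerate ones) is precisely the hard content, and the only known way to do it is to lift to $\tilde x$ and expand $\partial^C\tilde x$. To repair your proof, replace Steps~3--4 with: choose $\tilde x\in(B\diamond B')_n$ with $\rho(\tilde x)=x$, note $\tilde x_{\sigma'',\tau''}=0$ for $(\sigma'',\tau'')\notin\PS_\square(n)$ and $\tilde x_{\sigma'',\tau''}=x_{\sigma'',\tau''}$ on $\PS^{\sf N}(n)$, then compute $(\partial^C\tilde x)_{\sigma_0,\tau_0}$ using all $d_j$, invoking Lemma~\ref{lemma_PS_deg_square} to exclude contributions from $\PS^{\sf D}_\square(n)$ and Lemma~\ref{lemma_mu_nu_edge} to identify the two surviving shuffle terms (both appearing with face index $j=i$).
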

\begin{proof} 
By Lemma \ref{lemma:Imepsilon} we need to prove that for any $x\in \Omega (\PPP\square \PPP')$ and any $0\leq k,l\leq n$ such that $k+l=n$ we have 
\begin{equation}\label{eq:sngmunu}
    {\sf sgn}(\mu,\nu)x_{s^\nu,s^\mu} = {\sf sgn}(\mu',\nu') x_{s^{\nu'},s^{\mu'}}
\end{equation}
and $x_{\sigma,\tau}=0$ for any   $(\sigma,\tau)\in \PS^{\sf N}(n)\setminus \PS^{\sf N}_\square(n).$

Consider a preimage $\tilde x\in B\diamond B'$ of $x$ i.e. an element such that $\rho(\tilde x)=x.$ Then by Lemma \ref{lemma_box_sub} we have $\tilde x_{\sigma,\tau}=0$ for $(\sigma,\tau)\in \PS(n)\setminus \PS_\square(n).$ Hence, 
\begin{equation}
    \tilde x= \sum_{(\sigma,\tau)\in \PS_\square(n)} (\sigma^* \otimes \tau^*) (\tilde x_{\sigma,\tau}), \hspace{1cm} x_{\sigma,\tau} \in \NN_{|\sigma|} A \otimes \NN_{|\tau|} A' .
\end{equation} 
(The element $\tilde x$ is ``better'' than $x$ because $\tilde x_{\sigma,\tau}=0$ for $(\sigma,\tau)\in \PS^{\sf D}(n) \setminus \PS_\square(n)$, while $x_{\sigma,\tau}$ can be nontrivial).
Note $\rho(\tilde x)=x$ implies  
\begin{equation}\label{eq:x=tildex}
    \tilde x_{\sigma,\tau} = x_{\sigma,\tau}, \hspace{1cm} \text{if} \  (\sigma,\tau)\in \PS^{\sf N}(n).
\end{equation}
In particular, we obtain  that $x_{\sigma,\tau}=0$ if $(\sigma,\tau)\in \PS^{\sf N}(n)\setminus \PS^{\sf N}_\square(n).$  So we only need to prove \eqref{eq:sngmunu}.

Fix some $k,l$ such that $k+l=n.$ 
Since the graph ${\bf Sh}(k,l)$ is connected (Lemma \ref{lemma_graph_sh}), it is enough to check  \eqref{eq:sngmunu} for two shuffles with an edge $(\mu,\nu)\to (\mu',\nu')$ in ${\bf Sh}(k,l).$ Assume that the weight of the edge $(\mu,\nu)\to (\mu',\nu')$ is $1\leq i\leq n-1.$ By Lemma \ref{lemma_mu_nu_edge} we have that  $(s^\nu d^i,s^\mu d^i)=(s^{\nu'} d^i,s^{\mu'} d^i)\in \PS^{\sf N}(n)\setminus \PS^{\sf N}_\square(n).$ Set  $\sigma_0 =s^{\nu} d^i$ and $\tau_0=s^\mu d^i.$

Since $x\in \Omega (\PPP\square \PPP')$ we see that $\rho(\partial^C(\tilde x))\in \overline{B\diamond B'}.$ 
Combining the fact that $(\sigma_0,\tau_0)\in {\sf PS}^{\sf N}(n) \setminus {\sf PS}^{\sf N}_\square(n),$ Lemma \ref{lemma_box_sub}, and Corollary \ref{cor_rho}
we obtain
\[\partial^C(\tilde x)_{\sigma_0,\tau_0}=\rho(\partial^C(\tilde x))_{\sigma_0,\tau_0}=0.\]
On the other hand 
\begin{equation}\label{eq:d^Ctildex}
\partial^C(\tilde x) = \sum_{j=0}^n \sum_{(\sigma,\tau)\in \PS_\square(n)}  (-1)^j d_j( (\sigma^* \otimes \tau^*) (\tilde x_{\sigma,\tau})).    
\end{equation}

We claim that only non-trivial summands of the sum \eqref{eq:d^Ctildex} that can lie in $\sigma_0^*(\NN_kA)\otimes \tau_0^*(\NN_lA')$ are $(-1)^id_i( (s_\nu \otimes s_\mu) (\tilde x_{s^\nu,s^\mu}))$ and $(-1)^i d_i((s_\nu \otimes s_\mu)(\tilde x_{s^\nu,s^\mu})).$ 
Let us prove it. Take $(\sigma,\tau)\in {\sf PS}_\square(n).$
By \eqref{eq:f_on_summand} the summand $d_j( (\sigma^*\otimes \tau^*) (\tilde x_{\sigma,\tau}))$ is in $\varphi^*(\NN_{|\varphi|}A)\otimes \psi^*(\NN_{|\psi|}A'),$ where $\sigma d^j=\alpha\varphi$ and $\tau d^j = \beta \psi$ are epi-mono decompositions of $\sigma d^j,\tau d^j.$ Assume that the summand $(-1)^jd_j( (\sigma \otimes \tau) (\tilde x_{\sigma,\tau}))$ is non-trivial an lies in $\sigma_0^*(\NN_kA)\otimes \tau_0^*(\NN_kA').$ Then $(\varphi,\psi)=(\sigma_0,\tau_0)$ and $|\varphi|=k,$  $|\psi|=l.$
We have two cases: $(\sigma,\tau)\in \PS_\square^{\sf D}(n)$ and $(\sigma,\tau)\in \PS_\square^{\sf N}(n).$ Consider them separately.  

First assume $(\sigma,\tau)\in \PS_\square^{\sf D}(n).$ By Lemma \ref{lemma_PS_deg_square}, using that $\Ker(\sigma d^j)=\Ker(\varphi)=\Ker(\sigma_0)$ and $\Ker(\tau d^j)=\Ker(\psi)=\Ker(\tau_0),$ we obtain either $\Ker(\sigma_0) \cap \Ker(\tau_0)\ne \emptyset$ or $\Ker(\sigma_0)\cup \Ker(\tau_0)=[n-2].$ However, this contradicts to Lemma \ref{lemma_mu_nu_edge}, because $\Ker(\sigma_0) \cap \Ker(\tau_0)= \emptyset$ and $\Ker(\sigma_0)\cup \Ker(\tau_0)=[n-2]\setminus \{i-1\}.$

Now assume that $(\sigma,\tau)\in {\sf PS}_\square^{\sf N}(n).$ Since $|\varphi|=k$ and $|\psi|=l,$ we have $\alpha:[k]\mono [|\sigma|]$ and $\beta:[l]\mono [|\tau|].$ 
Then $k\leq |\sigma|, l\leq |\tau|$ and $k+l=n.$ 
It follows $|\sigma|=k$ and $|\tau|=l.$ 
Therefore $\alpha={\sf id},$  $\beta={\sf id}$ and  $(\sigma,\tau)=(s^\lambda,s^\kappa)$ for some shuffle $(\lambda,\kappa)\in {\sf Sh}(k,l)$ (Lemma \ref{lemma_shuffles}).
By Lemma \ref{lemma_mu_nu_edge} we obtain that the equation $(\sigma d^j,\tau d^j)=(\sigma_0,\tau_0)$ implies that $i=j$ and either $(\sigma,\tau)=(s^\nu,s^\mu)$
or $(\sigma,\tau)=(s^{\nu'},s^{\mu'}).$ So, the only nontrivial summands lying in $\sigma_0^*(\NN_k A)\otimes \tau_0^*(\NN_l A')$ are $(-1)^id_i((s_\nu \otimes s_\mu)(\tilde x_{s^\nu,s^\mu}))$ and $ (-1)^id_i( (s_\nu \otimes s_\mu) (\tilde x_{s^{\nu'},s^{\mu'}}))$. 

Therefore by \eqref{eq:f_on_summand} we have
 \[0=\partial^C(\tilde x)_{\sigma_0,\tau_0}=(-1)^i(\tilde x_{s^\nu,s^{\mu}}+ \tilde x_{s^{\nu'},s^{\mu'}}).\]
 By \eqref{eq:x=tildex} we have $\tilde x_{s^\nu,s^\mu}=x_{s^\nu,s^\mu}$ and $\tilde x_{s^{\nu'},s^{\mu'}}=x_{s^{\nu'},s^{\mu'}}.$
 The assertion follows. 
\end{proof}

\begin{proof}[{Proof of Theorem \ref{th:EZ}}] Since $\Omega_n \PPP$ and $\Omega_n \PPP'$ are direct summands of $A_n$ and $A'_n,$ we obtain that they are also free modules. Then the short exact sequence \eqref{eq:Kunn} follows from the isomorphism \eqref{eq:EZ} and the K\"{u}nneth theorem for chain complexes. So we only need to prove the isomorphism \eqref{eq:EZ}.  Lemma \ref{lemma_restriction} implies that the restrictions  $\varepsilon'$ and $\alpha'$ are well defined and $\alpha'\varepsilon'={\sf id}.$ Lemma \ref{lemma_omega_in_image} implies that any element $x\in \Omega (\PPP\square\PPP')$ can be presented as $x=\varepsilon(y)$ for some $y\in \NN A\otimes \NN A'.$ Then $\varepsilon\alpha(x)=\varepsilon \alpha \varepsilon(y)= \varepsilon (y)=x$ because $ \alpha\varepsilon={\sf id}.$ Hence $ \varepsilon'\alpha'={\sf id}.$
\end{proof}

\begin{corollary}\label{cor:EZ}
If $\mathbb{K}$ is a field, for any path pairs of vector spaces $\PPP$ and $\PPP'$ there is an isomorphism 
\begin{equation}
\Omega \PPP\otimes \Omega  \PPP'\cong \Omega (\PPP\square \PPP').
\end{equation} 
\end{corollary}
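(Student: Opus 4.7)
The plan is to deduce this corollary directly from Theorem \ref{th:EZ} by verifying that over a field both hypotheses of that theorem are automatic. Since $\KK$ is a field, every $\KK$-module is a vector space and hence free, so in particular $A_n$ and $A'_n$ are free for all $n$. This gives the first hypothesis of Theorem \ref{th:EZ}.

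For the second hypothesis, I need to check that $\overline{B}_n$ is a direct summand of $\NN_n A$ and similarly for $\overline{B}'_n$. Again, because $\KK$ is a field, any subspace of a vector space splits off as a direct summand (choose a complementary subspace using a basis extension). The submodule $\overline{B}_n = \rho(B_n) \subseteq \NN_n A$ is a subspace of the vector space $\NN_n A$, hence a direct summand. The same holds for $\overline{B}'_n \subseteq \NN_n A'$.

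Having verified both bullet points in the hypothesis of Theorem \ref{th:EZ}, that theorem provides the required isomorphism
\begin{equation}
\Omega \PPP \otimes \Omega \PPP' \cong \Omega (\PPP \square \PPP'),
\end{equation}
induced by the Eilenberg--Zilber and Alexander--Whitney maps. There is no main obstacle here — the work has already been done in Theorem \ref{th:EZ}; the content of the corollary is simply that the freeness and splitting assumptions, which were the only reasons to require a PID, are trivially satisfied in the field case.
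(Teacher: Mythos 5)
Your proof is correct and is essentially the paper's intended argument: the corollary is stated immediately after Theorem \ref{th:EZ} with no separate proof precisely because, over a field, every module is free and every subspace is a direct summand, so both hypotheses of the theorem hold automatically. (You have also silently and correctly read $\NN_n A$, $\NN_n A'$ for what is almost certainly a typo in the second bullet of Theorem \ref{th:EZ}.)
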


\section{Path pairs of sets and path complexes}\label{sec:path_pairs_of_sets} 
\subsection{Path pairs of sets}

A path pair of sets is a pair $\SSS=(X,Y),$ where $X$ is a simplicial set and $Y$ is its path subset. The associated path pair of modules is given by $\KK[\SSS]=(\KK[X],\KK[Y]),$ where $\KK[-]:{\sf Set}\to {\sf Mod}$ is the functor of free module applied level-wise. The complexes $\Omega(\SSS,\KK)$ and $\Psi(\SSS,\KK)$ are defined as 
\begin{equation}
 \Omega(\mathpzc{S},\mathbb{K}) = \Omega(\mathbb{K}[\mathpzc{S}]), \hspace{1cm} \Psi(\mathpzc{S},\mathbb{K}) = \Psi(\mathbb{K}[\mathpzc{S}] ) 
\end{equation}
If $\mathbb{K}$ is fixed, we will omit it in the notation $\Omega\mathpzc{S}=\Omega(\mathpzc{S},\mathbb{K})$ and $\Psi \SSS=\Psi(\SSS,\KK).$ The path homology and copath homology of a path pair of sets are defined by $H_*\SSS=H_*(\Omega \SSS)$ and $H_*^{\sf c} \SSS.$ Note that there is a long exact sequence
\begin{equation}\label{eq:long_set}
{\dots} \to H_n \SSS \to H_n X \to H_n^{\sf c} \SSS \to H_{n-1} \SSS \to {\dots}.  
\end{equation}

As in the case of path pairs of modules, motivated by Proposition \ref{prop:paths_in_product}, we define the box product of two path pairs of sets  $\SSS=(X,Y)$ and $\SSS'=(X',Y')$ as
\begin{equation}
    \SSS \square \SSS' = (X\times X',Y\diamond Y'),
\end{equation}
where 
\begin{equation}
  (Y\diamond Y')_n = \bigcup_{k+l=n} \bigcup_{(\mu,\nu)\in {\sf Sh}(k,l)} s_\nu(Y_{k})\times s_\mu(Y_{l}').
\end{equation}

\begin{lemma} \label{lemma:diamond_for_sets}
The subset $(Y\diamond Y')_n$ can be defined as
\begin{equation}
(Y\diamond Y')_n= \bigcup_{(f,g)\in \PPi_\square(n)} f^*( Y_{|f|})\times g^*(Y'_{|g|}).
\end{equation}
and as 
\begin{equation}
(B\diamond B')_n= \bigcup_{(\sigma,\tau)\in \PS_\square(n)} \sigma^*( B_{|\sigma|})\times \tau^*(B'_{|\tau|}).
\end{equation}
\end{lemma}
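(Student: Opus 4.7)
The plan is to follow the same strategy as in Lemma \ref{lemma_box_eq} (the path-pair-of-modules analogue), replacing sums with unions and tensor products with Cartesian products. Denote the three sets in the lemma by $U_1$ (indexed by shuffles), $U_2$ (indexed by $\PPi_\square(n)$) and $U_3$ (indexed by $\PS_\square(n)$). I will establish the cyclic chain of inclusions $U_1 \subseteq U_3 \subseteq U_2 \subseteq U_1$.

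The inclusion $U_3 \subseteq U_2$ is trivial since $\PS_\square(n)\subseteq \PPi_\square(n)$. For $U_1 \subseteq U_3$, I would invoke Lemma \ref{lemma_shuffles}: for any $(\mu,\nu)\in {\sf Sh}(l,k)$ with $k+l=n$, the pair $(s^\mu,s^\nu)$ lies in $\PS_\square^{\sf N}(n) \subseteq \PS_\square(n)$, with $|s^\mu|=k$ and $|s^\nu|=l$, so the block $s_\mu(Y_k)\times s_\nu(Y'_l)=(s^\mu)^*(Y_k)\times (s^\nu)^*(Y'_l)$ appears as one of the terms of $U_3$.

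The main step, and the only one with real content, is the reverse inclusion $U_2 \subseteq U_1$. The plan is to use the standard decomposition \eqref{eq:standard_decomposition}: given any $(f,g)\in \PPi_\square(n;k,l)$, write $(f,g)=(\alpha\sigma s^\mu,\beta s^\nu)$ with $(\mu,\nu)\in {\sf Sh}(l',k')$, $n=k'+l'$. Since $\alpha$, $\sigma$, $\beta$ are morphisms in $\Pi$ (the injective $\alpha,\beta$ are compositions of exterior cofaces $t^{(-)},h^{(-)}$ and $\sigma$ is a composition of codegeneracies) and since $Y, Y'$ are path subsets of simplicial sets, they are closed under the actions $\alpha^*,\sigma^*,\beta^*$. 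Therefore
\begin{equation}
f^*(Y_k)=(s^\mu)^*\sigma^*\alpha^*(Y_k)\subseteq (s^\mu)^*(Y_{k'})=s_\mu(Y_{k'}),
\end{equation}
and analogously $g^*(Y'_l)\subseteq s_\nu(Y'_{l'})$. Taking Cartesian products yields $f^*(Y_{|f|})\times g^*(Y'_{|g|})\subseteq s_\mu(Y_{k'})\times s_\nu(Y'_{l'})\subseteq (Y\diamond Y')_n$, which is precisely the required inclusion $U_2\subseteq U_1$.

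The only potential obstacle is verifying that every morphism appearing in the standard decomposition truly lies in $\Pi$ (so that the path-subset closure applies); this is where the identification of $\Pi$ as the subcategory of connected order-preserving maps, generated by codegeneracies and the two exterior cofaces, is essential. Once this is in hand, the argument is purely formal and parallels Lemma \ref{lemma_box_eq} verbatim, with no use of linear structure.
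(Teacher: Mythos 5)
Your proof is correct and follows essentially the same route as the paper, which simply refers to the proof of Lemma \ref{lemma_box_eq}: the standard decomposition \eqref{eq:standard_decomposition} together with closure of $Y,Y'$ under the $\Pi$-actions gives the key inclusion, and the other two inclusions in your cyclic chain are immediate. The only difference is that you spell out the chain $U_1\subseteq U_3\subseteq U_2\subseteq U_1$ explicitly, which is what the paper's terse argument leaves implicit.
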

\begin{proof}
The proof is similar to the proof of Lemma \ref{lemma_box_eq}. 
\end{proof}

\begin{remark}
A more conceptual and categorical definition of the box product can be given via Day convolution (see Section \ref{Appendix}).
\end{remark}

It is easy to see that 
\begin{equation}\label{eq:square}
    \KK[\SSS\square \SSS' ] \cong \KK[\SSS] \square \KK[\SSS'].
\end{equation}

Consider the path pair of sets given by 
\begin{equation}
    I^{\sf s}=(\Delta^1,\Delta^1), \hspace{1cm} {\sf pt}^{\sf s}=(\Delta^0,\Delta^0)
\end{equation}
and two morphisms between 
$i_0,i_1:{\sf pt}^{\sf s} \to I^{\sf s}$ them induced by $d^0,d^1.$ 
Since $\SSS \square {\sf pt}^{\sf s}\cong \SSS,$ we obtain that 
\begin{equation}
{\sf cyl}(\SSS) = \SSS \square I^{\sf s}
\end{equation}
is a weak cylinder functor, and we define homotopic morphisms of path sets via this weak cylinder functor.

\begin{proposition}\label{prop:homotopy_set} Any homotopic morphisms of path sets $f\sim g:\SSS\to \SSS'$ induce homotopic maps on $\Omega$ and $\Psi$
\begin{equation}
\Omega f \sim \Omega g: \Omega \SSS \longrightarrow \Omega \SSS', \hspace{1cm} \Psi f \sim \Psi g: \Psi \SSS \longrightarrow \Psi \SSS'. 
\end{equation}
\end{proposition}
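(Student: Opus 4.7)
The plan is to reduce the claim to the already-established homotopy invariance theorem for path pairs of modules (Theorem \ref{th:homotopy}) by transporting cylinder structures through the free-module functor $\KK[-]$, and to execute this reduction via the abstract mechanism of Proposition \ref{prop:cyl}.

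First I would verify that the functor $\KK[-]:\{\text{path pairs of sets}\}\to \{\text{path pairs of modules}\}$ is compatible with the two cylinder functors ${\sf cyl}(\SSS)=\SSS\square I^{\sf s}$ and ${\sf cyl}(\PPP)=\PPP\square I^{\sf p}$. The key observation is that $\KK[I^{\sf s}]=I^{\sf p}$ by definition, and that the isomorphism \eqref{eq:square} gives a natural isomorphism
\begin{equation}
\varphi_\SSS\colon \KK[{\sf cyl}(\SSS)] = \KK[\SSS\square I^{\sf s}] \xrightarrow{\ \cong\ } \KK[\SSS] \square I^{\sf p} = {\sf cyl}(\KK[\SSS]).
\end{equation}
I would then check that this $\varphi$ intertwines the two inclusions $i^0,i^1$, i.e.\ that $\varphi_\SSS \circ \KK[i^n_\SSS] = i^n_{\KK[\SSS]}$ for $n=0,1$. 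This is a straightforward diagram chase, since on each side the map $i^n$ is induced by the set-level inclusion ${\sf pt}^{\sf s}\hookrightarrow I^{\sf s}$ together with the canonical identifications $\SSS\square {\sf pt}^{\sf s}\cong \SSS$ and $\PPP\square {\sf pt}^{\sf p}\cong \PPP$.

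Having these compatibilities, Proposition \ref{prop:cyl} applied to $F=\KK[-]$ (with the inverse of $\varphi$ playing the role of the required natural transformation, which is allowed since $\varphi$ is an isomorphism) immediately yields that $\KK[-]$ takes homotopic morphisms of path pairs of sets to homotopic morphisms of path pairs of modules. Composing with $\Omega$ and $\Psi$ and invoking Theorem \ref{th:homotopy}, which states that $\Omega$ and $\Psi$ take homotopic morphisms of path pairs of modules to chain homotopic morphisms, I obtain the desired conclusion: $\Omega f \sim \Omega g$ and $\Psi f \sim \Psi g$, since by definition $\Omega\SSS=\Omega(\KK[\SSS])$ and $\Psi\SSS=\Psi(\KK[\SSS])$.

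The only non-routine point is the verification of the compatibility $\varphi\circ \KK[i^n] = i^n \circ \KK[-]$, but this really reduces to checking it on generators of the free modules, where both sides are literally the same set-level inclusion of ${\sf pt}^{\sf s}$ into $I^{\sf s}$ smashed with $\SSS$. There is no genuine obstacle here; the structure of the argument is entirely formal once one recognizes \eqref{eq:square} as the bridge that makes $\KK[-]$ a morphism of the two weak cylinder structures.
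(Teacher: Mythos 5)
Your proof is correct and takes essentially the same route as the paper's: both reduce to Theorem \ref{th:homotopy} via the isomorphism \eqref{eq:square}, which identifies $\KK[{\sf cyl}(\SSS)]$ with ${\sf cyl}(\KK[\SSS])$. The paper states this reduction in one sentence without explicitly invoking Proposition \ref{prop:cyl}, whereas you spell out that mechanism (including the correct direction of the natural transformation and the $i^n$-compatibility), which is exactly what the terse proof leaves implicit.
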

\begin{proof}
It follows from \eqref{eq:square}, which implies the isomorphism $\KK[{\sf cyl}(\SSS)]\cong {\sf cyl}(\KK[\SSS]),$ and Theorem \ref{th:homotopy}.
\end{proof}

\begin{lemma}\label{lemma_path_pair_sets}
For any path pair of sets $\SSS=(X,Y),$ if we set $(A,B):=(\KK[X],\KK[Y]),$ then $A_n$ is a free module and $\overline{B}_n$ is a direct summand in $A_n.$ 
\end{lemma}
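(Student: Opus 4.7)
The plan is to identify the normalised complex $\NN A$ as a free module on non-degenerate simplices and then observe that $\overline{B}_n$ corresponds to the free submodule generated by the non-degenerate simplices that happen to lie in $Y$.

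Since $X_n$ is a set, $A_n = \KK[X_n]$ is free, settling the first claim. For the second, I would first unpack $\DD_n A$: by definition $\DD_n A = \sum_{i=0}^{n-1} s_i(A_{n-1})$, and since the degeneracy operators come from maps of sets, $s_i(A_{n-1}) = \KK[s_i(X_{n-1})]$. The sum of these free submodules inside $A_n$ is $\DD_n A = \KK[X_n^{\sf deg}]$, where $X_n^{\sf deg} = \bigcup_{i} s_i(X_{n-1})$ denotes the set of degenerate $n$-simplices. Writing $X_n = X_n^{\sf nd} \sqcup X_n^{\sf deg}$ for the disjoint decomposition into non-degenerate and degenerate simplices, this yields the splitting
\begin{equation}
A_n = \KK[X_n^{\sf nd}] \oplus \DD_n A.
\end{equation}
Comparing with the Dold--Kan splitting $A_n = \NN_n A \oplus \DD_n A$, the restriction of $\rho$ provides an isomorphism $\rho|_{\KK[X_n^{\sf nd}]} : \KK[X_n^{\sf nd}] \overset{\cong}{\longrightarrow} \NN_n A$.

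Next, set $Y_n^{\sf nd} = Y_n \cap X_n^{\sf nd}$ and $Y_n^{\sf deg} = Y_n \cap X_n^{\sf deg}$, so that $B_n = \KK[Y_n] = \KK[Y_n^{\sf nd}] \oplus \KK[Y_n^{\sf deg}]$. Since $\rho$ annihilates $\KK[Y_n^{\sf deg}] \subseteq \DD_n A$, one obtains $\overline{B}_n = \rho(B_n) = \rho(\KK[Y_n^{\sf nd}])$, which via the isomorphism above is identified with $\KK[Y_n^{\sf nd}] \subseteq \KK[X_n^{\sf nd}]$. As $Y_n^{\sf nd}$ is a subset of $X_n^{\sf nd}$, the free submodule $\KK[Y_n^{\sf nd}]$ is a direct summand of $\KK[X_n^{\sf nd}]$ with explicit complement $\KK[X_n^{\sf nd} \setminus Y_n^{\sf nd}]$, and this transports to the desired direct summand decomposition of $\NN_n A$.

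There is no real obstacle: the statement reduces to the classical fact that for a free simplicial module $\KK[X]$ on a simplicial set, the Moore complex admits a preferred basis indexed by non-degenerate simplices, together with the trivial observation that a path subset $Y \subseteq X$ respects the partition of simplices into non-degenerate and degenerate ones. The only subtlety worth flagging is that $\KK[X_n^{\sf nd}]$ and $\NN_n A$ are in general distinct submodules of $A_n$; they only become identified after passing through the projection $\rho$.
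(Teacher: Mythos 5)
Your proof is correct and takes essentially the same route as the paper's: decompose $X_n$ into non-degenerate and degenerate simplices, observe $\DD_n A = \KK[X_n^{\sf deg}]$, note that $Y_n$ respects this partition, and conclude that $\overline{B}_n$ is carried by $\rho$ to the free submodule spanned by the non-degenerate simplices in $Y_n$. You are in fact slightly more careful than the paper on the point you flag at the end — the paper writes ``${\sf N}_nA=\KK[{\sf N}_nX]$'' and proceeds as if $\rho$ were the identity on non-degenerate basis elements, which is only literally true after identifying $\NN A$ with the quotient $\CC A/\DD A$ rather than with the submodule $\bigcap_{i\ne 0}\Ker d_i$ (the two complements of $\DD_n A$ are distinct in general); you handle this by working with $\rho|_{\KK[X_n^{\sf nd}]}$ as an explicit isomorphism, which is cleaner. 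The only small thing worth adding is the final step that a direct summand of $\NN_n A$ is a direct summand of $A_n$ (because $\NN_n A$ is itself a direct summand of $A_n$ by Dold--Kan), matching the phrasing of the lemma; but this is immediate.
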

\begin{proof} The module $A_n=\KK[X_n]$ is free by the definition. Prove that $\overline{B}_n$ is a direct summand in $A_n.$ 
We set ${\sf D}_nX=\bigcup_{i=0}^{n-1} s_i(X_{n-1})$ and ${\sf N}_nX=X_n \setminus {\sf D}_nX.$ Then ${\sf N}_nA=\KK[{\sf N}_nX]$ and the map $\rho:A_n \to {\sf N}_nA$ is identical on elements of ${\sf N}_nX$ and trivial on elements of ${\sf D}_nX.$ Therefore, $\overline{B}_n=\KK[ ({\sf N}_nX) \cap Y_n].$ The assertion follows.
\end{proof}

\begin{theorem}\label{th:kunneth_sets} If $\KK$ is a principal ideal domain, for any two path pairs of sets $\SSS$ and $\SSS'$ there is an isomorphism 
\begin{equation}
    \Omega (\SSS \square \SSS) \cong \Omega \SSS \otimes \Omega \SSS'. 
\end{equation}
Moreover, there is a short exact sequence 
\begin{equation} 0 \to
\bigoplus_{i+j=n} H_i(\SSS) \otimes H_j(\SSS') \to    H_n( \SSS \square \SSS' ) \to \bigoplus_{i+j=n-1} {\sf Tor}^\KK_1( H_i(\SSS),H_j(\SSS')) \to 0.
\end{equation}
\end{theorem}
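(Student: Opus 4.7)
The plan is to reduce the statement entirely to the already-established Eilenberg--Zilber theorem for path pairs of modules (Theorem \ref{th:EZ}) by passing through the free-module functor $\KK[-]$. First I would observe that by the very definition $\Omega\SSS = \Omega(\KK[\SSS])$ and $\Omega\SSS' = \Omega(\KK[\SSS'])$, so the left--hand side of the desired isomorphism is $\Omega(\KK[\SSS])\otimes \Omega(\KK[\SSS'])$. For the right--hand side I would invoke the isomorphism \eqref{eq:square}, namely $\KK[\SSS\square \SSS']\cong \KK[\SSS]\square \KK[\SSS']$ of path pairs of modules, which gives $\Omega(\SSS\square \SSS')\cong \Omega(\KK[\SSS]\square \KK[\SSS'])$.

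Next I would verify that the hypotheses of Theorem \ref{th:EZ} are satisfied for the path pairs of modules $\PPP=\KK[\SSS]$ and $\PPP'=\KK[\SSS']$. The $n$-th component of $\KK[X]$ is $\KK[X_n]$, which is a free $\KK$-module by construction; and by Lemma \ref{lemma_path_pair_sets} the submodules $\overline{B}_n$ and $\overline{B}'_n$ are direct summands of $\NN_nA$ and $\NN_nA'$ respectively (in fact, they are themselves free on the non-degenerate elements of $Y_n$ and $Y'_n$). Having verified these hypotheses, Theorem \ref{th:EZ} gives the isomorphism of chain complexes
\begin{equation}
\Omega(\KK[\SSS])\otimes \Omega(\KK[\SSS']) \;\cong\; \Omega(\KK[\SSS]\square \KK[\SSS']),
\end{equation}
which combined with the two identifications above yields $\Omega\SSS\otimes \Omega\SSS'\cong \Omega(\SSS\square \SSS')$.

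For the short exact sequence I would appeal to the classical K\"unneth formula for chain complexes over a principal ideal domain. The point is that both $\Omega\SSS$ and $\Omega\SSS'$ are termwise free $\KK$-modules: by Lemma \ref{lemma_path_pair_sets} the submodule $\overline{B}_n\subseteq \NN_n(\KK[X])$ is a direct summand (in fact generated by non-degenerate elements of $Y_n$), and by Lemma \ref{lemma_direct_summand} $\Omega_n\SSS$ is then a direct summand of a free module over a PID, hence free. The same applies to $\Omega\SSS'$, and the classical K\"unneth short exact sequence for tensor products of complexes of free modules over a PID gives the desired sequence once we substitute the isomorphism $\Omega\SSS\otimes \Omega\SSS'\cong \Omega(\SSS\square \SSS')$ into $H_n$.

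There is no serious obstacle here; the whole argument is a bookkeeping step that packages Theorem \ref{th:EZ}, Lemma \ref{lemma_path_pair_sets}, and \eqref{eq:square} together. The one minor point to be careful about is to record why $\Omega\SSS$ is termwise free so that the K\"unneth formula applies in its strong (short exact sequence) form rather than only in its spectral-sequence form.
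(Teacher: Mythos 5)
Your proof is correct and matches the paper's argument exactly: the paper's proof is literally ``It follows from Lemma \ref{lemma_path_pair_sets}, Theorem \ref{th:EZ} and an isomorphism \eqref{eq:square}.'' The only slight redundancy in your write-up is re-deriving the K\"unneth short exact sequence at the end, since sequence \eqref{eq:Kunn} is already part of the conclusion of Theorem \ref{th:EZ} and transports directly through the isomorphisms you established.
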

\begin{proof}
It follows from Lemma \ref{lemma_path_pair_sets}, Theorem \ref{th:EZ} and an isomorphism \eqref{eq:square}.
\end{proof}

\subsection{Regular path complexes}
In this subsection we remind the definition of a regular path complex $P$ and its regular complex of $\delta$-invariant paths $\Omega(P)$ given in \cite{grigor2012homologies}. Further we show that this complex can be defined on the language of path sets. 

For any set $V$ we denote by ${\sf cosk}_0(V)$ the simplicial set with components ${\sf cosk}_0(V)_n=V^{n+1},$ whose face and degeneracy maps are defined by formulas 
\begin{equation}
\begin{split}
d_i(v_0,\dots,v_n)&=(v_0,\dots, v_{i-1},v_{i+1},\dots,v_n),\\
s_i(v_0,\dots,v_n)&=(v_0,\dots, v_i,v_i,\dots,v_n).
\end{split}    
\end{equation}
It is easy to check that this simplicial set is the $0$-coskeleton of $V$ treated as a $0$-truncated simplicial set. Note that degenerated elements of ${\sf cosk}_0(V)$ are sequences with repetitions, which are called irregular paths in \cite{grigor2012homologies}. For a sequence $(v_0,\dots,v_n)\in V^{n+1}$ we set 
\begin{equation}
\Ker(v_0,\dots,v_n)= \{0\leq i\leq n-1\mid v_i=v_{i+1}\}.   
\end{equation}
Then $(v_0,\dots,v_n)$ is \emph{regular} if and only if $\Ker(v_0,\dots,v_n)=\emptyset.$ For arbitrary sequence $(v_0,\dots,v_n)$ we denote by $(v_0,\dots,v_n)_{\sf reg}$ the regular sequence with deleted repetitions. Then
\begin{equation}\label{eq:kerker2}
(v_0,\dots,v_n) = s_\mu((v_0,\dots,v_n)_{\sf reg}),   
\end{equation}
where $\Ker(v_0,\dots,v_n)=\{\mu_0 < \dots < \mu_{l-1}\}.$ On the other hand for any $f:[n]\to [k]$ and any $(u_0,\dots,u_k)\in V^{k+1}$ we have 
\begin{equation}\label{eq:kerker}
\Ker(f) \subseteq  \Ker(f^*(u_0,\dots,u_k)).     
\end{equation}

If $\KK$ is a commutative ring, then, following \cite{grigor2012homologies}, we set 
\begin{equation}
\Lambda(V)=\KK[{\sf cosk}_0(V)]
\end{equation}
Degenerated elements $D(\Lambda(V))$ of $\Lambda(V)$ are linear combinations of irregular paths. The Moore complex $\NN(\Lambda(V)) $ of this simplicial module  is denoted by
\begin{equation}
\mathcal{R}(V)=\NN(\Lambda(V).
\end{equation}
Here we identify $\NN(\Lambda (V))$ with $\CC(\Lambda (V))/\DD(\Lambda (V)).$ This complex is called the complex of regular paths. The set of all sequences  $(v_0,\dots,v_n)\in V^{n+1}, v_i\ne v_{i+1}$ forms a basis of $\mathcal{R}_n(V).$

A \emph{path complex} is a couple $P=(V,(P_n)_{n=0}^\infty)$ where $V$ is a set and $P_n\subseteq V^{n+1}$ such that if $(v_0,\dots,v_n)\in P_n,$ then $(v_0,\dots,v_{n-1}),$  $(v_1,\dots,v_n)\in P_{n-1}.$ A sequence $(v_0,\dots,v_n)$ is called \emph{regular}, if $v_i\ne v_{i+1}.$ A path complex is called {\it regular} if $P_n$ consists of regular sequences for any $n.$ Equivalently, $P$ is regular, if $(v,v)\notin P_1$ for any $v\in V.$ 

For a regular path complex $P$ we define $\mathcal{A}_n(P)\subseteq \mathcal{R}_n(V)$ as the submodule generated by the images of elements from $P_n.$ So $\mathcal{A}(P)$ is a graded submodule of the chain complex $\mathcal{R}(V).$ Then the complex of $\partial$-invariant forms $\Omega P$ is defined as 
\begin{equation}
\Omega P = \omega(\mathcal{R}(V), \mathcal{A}(P))  
\end{equation}
The homology of $P$ is defined as the homology of $\Omega P.$

\subsection{Complete path complexes}

A path complex $P$ is called \emph{complete}, if
\begin{equation}
 (v_0,\dots,v_n)\in P_n \hspace{5mm} \Rightarrow \hspace{5mm} (v_0,\dots, v_i,v_i,\dots, v_n)\in P_{n+1}.
\end{equation}
For any path complex $P$ we can consider the minimal complete path complex containing $P$ with the same vertex set and denote it by $\widehat{P}.$ The path complex $\widehat{P}$ is called the completion of $P.$

It is easy to see that a complete path complex can be defined as a path subset of ${\sf cosk}_0(V).$ So any complete path complex $P$ defines a path pair of sets 
\begin{equation}
    \SSS P=({\sf cosk}_0(V),P).
\end{equation}

Since ${\sf cosk}_0(V)$ is contractible, the long exact sequence \eqref{eq:long_set} implies that for $n\geq 1$ we have an isomorphism
\begin{equation}
H_n(\SSS P) \cong H_{n+1}^{\sf c}(\SSS P).
\end{equation}

\begin{proposition}
Let $P$ be a regular path complex and let $\widehat{P}$ be its completion. Then there is an isomorphism 
\begin{equation}
    \Omega P \cong \Omega (\SSS(\widehat{P})).
\end{equation}
\end{proposition}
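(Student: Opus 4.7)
The plan is to unfold both sides down to explicit graded submodules of the complex of regular paths $\mathcal{R}(V) = \NN(\Lambda(V))$ and show they coincide; the isomorphism of $\omega$-complexes then follows at once. By definition $\Omega P = \omega(\mathcal{R}(V), \mathcal{A}(P))$, whereas $\Omega(\SSS(\widehat{P})) = \omega(\NN\Lambda(V), \overline{\KK[\widehat{P}]}) = \omega(\mathcal{R}(V), \overline{\KK[\widehat{P}]})$, so the problem reduces to establishing the equality of graded submodules
\[
\mathcal{A}(P) = \overline{\KK[\widehat{P}]} \subseteq \mathcal{R}(V).
\]
I aim to show that both sides equal $\KK[P_n]$ in each degree $n$.

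For the left hand side this is immediate from regularity of $P$: every $(v_0,\dots,v_n) \in P_n$ is a regular sequence, hence its image under $\rho \colon \Lambda_n(V) \epi \mathcal{R}_n(V)$ is a basis element of $\mathcal{R}_n(V)$, and distinct elements of $P_n$ give distinct basis elements. So $\mathcal{A}(P)_n = \KK[P_n]$.

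For the right hand side the key step is to describe $\widehat{P}$ explicitly. Since complete path complexes coincide with path subsets of $\cosk_0(V)$, $\widehat{P}$ is the smallest path subset of $\cosk_0(V)$ containing $P$, and I claim
\[
\widehat{P}_n = \bigl\{\, s_\mu(p) : 0 \leq k \leq n,\ p \in P_k,\ \mu \colon [n] \twoheadrightarrow [k] \,\bigr\}.
\]
The right hand side contains $P$ and is clearly closed under degeneracies; closure under the exterior face maps $d_0$ and $d_n$ follows from the standard simplicial identities relating $d_i$ to $s_j$, together with the fact that $P$, being a path complex, is closed under the truncations $d_0$ and $d_n$. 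Conversely any path subset containing $P$ must contain every $s_\mu(p)$, so the candidate is forced. Applying Lemma \ref{lemma_path_pair_sets} to $(X,Y)=(\cosk_0(V),\widehat{P})$ yields $\overline{\KK[\widehat{P}]}_n = \KK[(\NN_n\cosk_0(V)) \cap \widehat{P}_n]$, and the non-degenerate elements of $\cosk_0(V)$ are precisely the regular sequences. By regularity of $P$, a degeneracy $s_\mu(p)$ with $p \in P$ is regular if and only if $\mu$ is the identity, so the intersection equals $P_n$ and $\overline{\KK[\widehat{P}]}_n = \KK[P_n]$, matching $\mathcal{A}(P)_n$.

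The only substantive obstacle is the explicit description of $\widehat{P}_n$; this is a routine application of the simplicial identities, but it is the combinatorial bookkeeping that bridges the path-complex and path-pair-of-sets formulations.
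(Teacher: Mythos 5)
Your proof is correct and follows the same route as the paper's: reduce to the equality $\overline{\KK[\widehat{P}]}_n = \mathcal{A}_n(P)$ of graded submodules of $\mathcal{R}(V)$ and deduce the isomorphism of $\omega$-complexes. The only difference is that the paper leaves this equality as "easy to check" while you fill in the details via the explicit description of $\widehat{P}$ and Lemma \ref{lemma_path_pair_sets}.
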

\begin{proof}
Let $(A,B)=(\KK[{\sf cosk}_0(V)],\KK[ \widehat{P}]).$ Then 
$\NN A=\mathcal{R}(V)$ and it is easy to check that $\overline{B}_n= \mathcal{A}_n(P).$ Then  we have  $\Omega(\SSS(\widehat{P}))=  \omega(\mathcal{R}(V),\mathcal{A}( P))\cong \Omega P.$ 
\end{proof}

\subsection{Box product of path complexes} 

We denote by 
\begin{equation}
\theta:(V\times V')^{n+1} \to V^{n+1}\times (V')^{n+1}    
\end{equation}
the obvious bijection and  $\theta_1:(V\times V')^{n+1} \to V^{n+1}$ and $\theta_2:(V\times V')^{n+1} \to (V')^{n+1}$ are its components. For two complete path complexes $P=(V,(P_n)), P'=(V',(P_n')),$ we define the box product $P\square P'$ as a path complex on the set $V\times V'$ such that $(P\square P')_n$ consists of sequences $(w_0,\dots,w_n)$ such that \begin{equation}
\theta(w_0,\dots,w_n)= (f^*(v_0,\dots,v_k), g^*(v'_0,\dots,v'_k))    
\end{equation}
for some $(f,g)\in \PPi_\square(n).$ Then $\theta$ restricts to a bijection
\begin{equation}
(P \square P')_n \cong
\bigcup_{(f,g)\in \PPi_\square(n)}    f^*(P_{|f|}) \times g^*(P'_{|g|}). 
\end{equation}

By definition we obtain
\begin{equation}\label{eq:path_box}
\SSS(P\square P') \cong \SSS P\square \SSS P'.    
\end{equation}

A sequence $(w_0,\dots,w_n)$ of pairs $w_i=(v_i,v'_i)\in  W$  is called \emph{step-like} if for any $0\leq i\leq n-1$ either $v_i=v_{i+1}$ or $v'_i=v'_{i+1}$ (or both). In other words, a sequence $(w_0,\dots,w_n)$ is step-like, if $\Ker(v_0,\dots,v_n) \cup \Ker(v'_0,\dots,v'_n)=[n-1].$ 
Using \eqref{eq:kerker}, it is easy to see that  all sequences from $(P \square P')_n$ are step-like. 

For regular path complexes $P,P'$ we define their regular box product $P\square_{\sf reg} P'$ such that $(P\square_{\sf reg} P')_n$ consists of regular step-like sequences $(w_0,\dots,w_n)$ of pairs $w_i=(v_i,v'_i)$ such that $(v_0,\dots,v_n)_{\sf reg}\in P_k$ and $(v'_0,\dots,v'_n)_{\sf reg}\in P_l$ for some $k,l\leq n.$

\begin{proposition}\label{prop:box_reg} For any regular path complexes $P,P'$ we have
\begin{equation}
    (P\square_{\sf reg} P')^\wedge= \widehat{P} \square \widehat{P'}.
\end{equation}
\end{proposition}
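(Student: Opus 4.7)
The plan is to prove the two inclusions $(P\square_{\sf reg}P')^\wedge \subseteq \widehat{P}\square\widehat{P'}$ and $\widehat{P}\square\widehat{P'}\subseteq (P\square_{\sf reg}P')^\wedge$ separately, using the characterization that, for a regular path complex $R$, a sequence lies in $\widehat{R}$ if and only if its regularization lies in $R$ (in degree equal to the length of the regular reduction). Throughout I will write $\theta_1(w),\theta_2(w)$ for the two coordinate projections of $\theta(w_0,\dots,w_n)$.

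For the first inclusion, I would start by observing that $\widehat{P}\square\widehat{P'}$ is already complete: this is immediate from its definition via $\PPi_\square(n)$, which is stable under the full action of codegeneracies on $[n]$ (alternatively, via \eqref{eq:path_box}). It therefore suffices to show the set-theoretic inclusion $P\square_{\sf reg}P'\subseteq \widehat{P}\square\widehat{P'}$. Given a regular step-like $(w_0,\dots,w_n)$ with $\theta_1(w)_{\sf reg}\in P_{k'}$ and $\theta_2(w)_{\sf reg}\in P'_{l'}$, apply \eqref{eq:kerker2} to each coordinate to write $\theta_1(w)=s^\mu(\theta_1(w)_{\sf reg})$ and $\theta_2(w)=s^\nu(\theta_2(w)_{\sf reg})$, where $\mu$ and $\nu$ enumerate $\Ker\theta_1(w)$ and $\Ker\theta_2(w)$ respectively. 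The step-like hypothesis on $w$ says exactly that $\Ker\theta_1(w)\cup\Ker\theta_2(w)=[n-1]$, so $(s^\mu,s^\nu)\in\PPi_\square(n)$ by \eqref{eq:PPi_square}, and hence $(w_0,\dots,w_n)\in(\widehat{P}\square\widehat{P'})_n$ by definition.

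For the reverse inclusion, take $(w_0,\dots,w_n)\in\widehat{P}\square\widehat{P'}$ so that $\theta(w)=(f^*y,g^*y')$ for some $(f,g)\in\PPi_\square(n)$, $y\in\widehat{P}_k$, $y'\in\widehat{P'}_l$, and set $u=(w_0,\dots,w_n)_{\sf reg}$, so that $w=s^\lambda(u)$ with $\lambda$ enumerating $\Ker(w)$. It is then enough to check $u\in P\square_{\sf reg}P'$. Regularity of $u$ holds by construction. The step-likeness of $w$ follows from \eqref{eq:kerker} applied to $f$ and to $g$, combined with $(f,g)\in\PPi_\square(n)$, and the step-likeness of $u$ at position $j\in[m-1]$ is then inherited from the step-likeness of $w$ at the unique transition index between the $j$-th and $(j{+}1)$-st constant runs of $w$. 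Finally, one verifies $\theta_1(u)_{\sf reg}=\theta_1(w)_{\sf reg}=(f^*y)_{\sf reg}=y_{\sf reg}\in P$, and analogously for the second coordinate, so $u\in(P\square_{\sf reg}P')_m$ and $w\in(P\square_{\sf reg}P')^\wedge$.

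The main obstacle is the careful combinatorial bookkeeping in the second inclusion: verifying the identity $(f^*y)_{\sf reg}=y_{\sf reg}$ for arbitrary $y$ and $f\in\Pi$ (which uses that a connected order-preserving map acts by inserting duplicates between consecutive entries but cannot merge non-adjacent ones), and checking that step-likeness of $w$ propagates to $u$ at every surviving transition. Both are essentially elementary statements about connected order-preserving maps, but they require the precise matching between $\Ker$ on the indexing and $\Ker$ on the path to be written out.
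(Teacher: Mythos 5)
Your overall strategy coincides with the paper's: prove $P\square_{\sf reg}P' \subseteq \widehat{P}\square\widehat{P'}$ and use completeness of $\widehat{P}\square\widehat{P'}$ for the first inclusion, and for the second inclusion reduce to showing $w_{\sf reg}\in P\square_{\sf reg}P'$ by checking regularity, step-likeness, and membership of the regularized coordinates in $P$ and $P'$. The first inclusion and the two preparatory observations ($\theta_i(w_{\sf reg})_{\sf reg}=\theta_i(w)_{\sf reg}$, step-likeness descends to $w_{\sf reg}$) match the paper exactly.

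However, the chain of equalities $\theta_1(w)_{\sf reg}=(f^*y)_{\sf reg}=y_{\sf reg}$ contains a false step: for $f\in\Pi$ not surjective, $(f^*y)_{\sf reg}\neq y_{\sf reg}$ in general. For instance, take $y=(a,b,c)$ with $a,b,c$ distinct and $f\colon[1]\to[2]$ the inclusion $f(0)=0,f(1)=1$ (a connected map); then $f^*y=(a,b)$ so $(f^*y)_{\sf reg}=(a,b)$, whereas $y_{\sf reg}=(a,b,c)$. Your parenthetical remark that a connected order-preserving map ``acts by inserting duplicates'' only covers the surjective case; a general $f\in\Pi$ also truncates, and truncation changes the regularization. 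Elements of $\PPi_\square(n)$ are not required to have surjective components, so this identity cannot be assumed.

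The conclusion $(f^*y)_{\sf reg}\in P$ is nonetheless correct, and the repair is straightforward: since $\widehat{P}$ is a path subset of $\cosk_0(V)$, it is closed under the $\Pi$-action, so $f^*y\in\widehat{P}$; and $\widehat{P}$ is precisely the set of sequences whose regularization lies in $P$, so $(f^*y)_{\sf reg}\in P$. This is in fact the argument the paper uses (``obvious, because they are regular sequences of $\widehat{P}$''). It needs no combinatorial analysis of how $f^*$ interacts with regularization, so the ``main obstacle'' you identify dissolves once you invoke the characterization of the completion rather than the incorrect identity.
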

\begin{proof} Let $(w_0,\dots,w_n)\in (P\square_{\sf reg} P')_n$ and $w_i=(v_i,v'_i).$ Then by  \eqref{eq:kerker2} we obtain that $\theta(w_0,\dots,w_n)= (s_\mu((v_0,\dots,v_n)_{\sf reg}), s_\nu((v'_0,\dots,v'_n)_{\sf reg})),$ where $\{\mu_0,\dots,\mu_{l-1}\} \cup \{\nu_0,\dots,\nu_{k-1} \}=[n-1].$ Therefore $(w_0,\dots,w_n)\in (\widehat{P} \square \widehat{P'})_n.$ So we proved $(P\square_{\sf reg} P')^\wedge \subseteq \widehat{P} \square \widehat{P'}.$ 

Now assume that $(w_0,\dots,w_n)\in (\widehat{P} \square \widehat{P'})_n.$
Then 
\begin{equation}
\theta(w_0,\dots,w_n)=(f^*(u_0,\dots,u_k), g^*(u'_0,\dots,u'_l)),    
\end{equation}
where $(u_0,\dots,u_k)\in \widehat{P}_k,$  $(u'_0,\dots,u'_l)\in \widehat{P'}_l$ and $(f,g)\in \PPi_\square(n).$
We need to prove that $(w_0,\dots,w_n)_{\sf reg}\in (P \square_{\sf reg} P')_n.$ Note that $\theta_i((w_0,\dots,w_n)_{\sf reg})_{\sf reg}=\theta_i(w_0,\dots,w_n)_{\sf reg}$ for $i=1,2.$ Also note that if $(w_0,\dots,w_n)$ is step-like, then $(w_0,\dots,w_n)_{\sf reg}$ is also step-like. Then we only need to prove that $(f^*(u_0,\dots,u_k))_{\sf reg} \in P_{k'}$ and $(g^*(u'_0,\dots,u'_l))_{\sf reg} \in P'_{l'}$ for some $k',l',$ which is obvious, because they are regular sequences of $\widehat{P}$ and  $\widehat{P}'$ respectively. Hence $ \widehat{P} \square \widehat{P'}\subseteq (P\square P')^\wedge.$
\end{proof}

It is proved in  \cite[Th.7.6]{grigor2012homologies} that for any regular path complexes $P,P'$ and any field $\KK$  there is an isomorphism
\begin{equation}
    \Omega( P\square_{\sf reg} P' ) \cong \Omega P \otimes \Omega P'.
\end{equation}
This isomorphism follows from Proposition \ref{prop:box_reg}, the isomorphism \eqref{eq:path_box} and Theorem \ref{th:kunneth_sets}, which is a corollary of Theorem \ref{th:EZ}. So Theorem \ref{th:EZ} can be regarded as a generalization of \cite[Th.7.6]{grigor2012homologies}.

\section{Marked categories}\label{sec:embedded_quivers}
 
\subsection{Marked categories}
Mimicking the definition of marked simplicial sets \cite[\S 3.1]{lurie2009higher} we define marked categories as couples $\MMM = (\CCC, M),$ where $\CCC$ is a (small) category and $M\subseteq {\sf Mor}(\CCC)$ is a set of morphisms, called marked morphisms, containing all the identity morphisms. A morphism of marked categories $(\CCC,M)\to (\CCC',M')$ is a functor $\CCC\to \CCC'$ that takes $M$ to $M'.$ We denote by $Q_\MMM$ the associated quiver, whose vertices are objects of $\CCC$ and arrows are morphisms from $M.$ 

It is easy to check that a marked category $\MMM=(\CCC,M)$ defines a path pair of sets 
\begin{equation}
\SSS \MMM =(\Nrv(\CCC),{\sf P}Q_\MMM),
\end{equation}
where $\Nrv(\CCC)$ is the nerve of $\CCC$ and  ${\sf P}Q_\MMM$ is the path set associated with the quiver $Q_\MMM,$ which consists of composable sequences of marked morphisms. We also set $\Omega\MMM = \Omega(\SSS\MMM),$ $\Psi \MMM = \Psi(\SSS\MMM).$ The path and copath homology of $\MMM$ are defined as
\begin{equation}
\PH_*(\MMM) = H_*(\Omega \MMM), \hspace{1cm} \PH_*^{\sf c}(\MMM) = H_*(\Psi\MMM).
\end{equation}
Then  \eqref{eq:long_set} implies that there is a long exact sequence
\begin{equation}\label{eq:long_emb}
\dots \to \PH_n(\MMM) \to H_n(\CCC) \to \PH_n^{\sf c}(\MMM) \to \PH_{n-1}(\MMM) \to \dots 
\end{equation}

\subsection{Detailed description and low dimensions.} 
The nerve of a small category $\CCC$ is a simplicial set whose $n$-th component $\Nrv(\CCC)_n$ consists of composable $n$-sequences of morphisms $(\gamma_1,\dots,\gamma_n)$ if $n\geq 1,$ and  $\Nrv(\CCC)_0={\sf ob}(\CCC).$ The Moore complex of the corresponding simplicial module is denoted by $\NN \CCC =\NN(\KK[\Nrv(\CCC)]).$ The image of $(\gamma_1,\dots,\gamma_n)$ in $\NN \CCC $ is denoted by $\langle \gamma_1,\dots,\gamma_n \rangle.$
Then $(\NN\CCC)_n$ is a free module freely generated by all elements $\langle \gamma_1,\dots,\gamma_n \rangle,$ where $\gamma_i\ne 1_{v}$ for any object $v.$  If $\gamma_i=1_v$ for some $i,$  then $\langle \gamma_1,\dots,\gamma_n
\rangle=0.$

We also consider the maps 
\begin{equation}
    \tilde d_i : (\NN \CCC)_n \longrightarrow (\NN \CCC)_{n-1}
\end{equation}
defined on the basis by the formulas
\begin{equation}
\begin{split}
\tilde d_0 \langle \gamma_1,\dots,\gamma_n\rangle, &=\langle \gamma_2,\dots,\gamma_n\rangle \\     
\tilde d_i \langle \gamma_1,\dots,\gamma_n\rangle &= \langle \gamma_1,\dots, \gamma_{i+1}\gamma_i, \dots, \gamma_n \rangle, \hspace{1cm} 1\leq i \leq n-1, \\ 
\tilde d_n \langle \gamma_1,\dots,\gamma_n \rangle & = 
\langle \gamma_1,\dots,\gamma_{n-1} \rangle,
\end{split}
\end{equation}
if $n\geq 2,$ and $\tilde d_0(\alpha)={\sf codom}(\alpha)$ and $\tilde d_1(\alpha)={\sf dom}(\alpha)$ for $n=1.$

By the definition the 
differential $\partial^{\NN \CCC}$ on $\NN \CCC$ is induced by the differential 
on $\KK[\Nrv(\CCC)],$ which is defined as $ \sum (-1)^i d_i.$ 
It is not difficult to check that the differential on $\NN \CCC$ satisfies 
\begin{equation}
\partial^{\NN \CCC} = \sum_{i=0}^n (-1)^i \tilde d_i.    
\end{equation}

For a marked category $\MMM=(\CCC,M)$ we set
\begin{equation}
\overline{\KK[{\sf P}Q_\MMM]} = \rho(\KK[{\sf P}Q_\MMM]).    
\end{equation}

Generalising the definition of $\DD A$ and $\NN A$ for a simplicial module $A$ to a path module $B,$ we consider a graded modules $\DD B$ and $\NN B$ defined by the formulas
\begin{equation}
(\DD B)_n = \sum_i s_i(B_{n-1}), \hspace{1cm} \NN B = B/\DD B.    
\end{equation}

\begin{lemma}\label{lemma:DB}
Let $\MMM=(\CCC,M)$ be a marked category and we set $A=\KK[\Nrv(\CCC)]$ and $B=\KK[{\sf P}Q_\MMM].$  Then 
\begin{equation}
(\DD B)_n = (\DD A)_n \cap B_n.
\end{equation} 
\end{lemma}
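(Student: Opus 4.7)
The plan is to prove both inclusions by reducing to an explicit basis computation. The inclusion $(\DD B)_n \subseteq (\DD A)_n \cap B_n$ is immediate: each $s_i \colon B_{n-1} \to B_n$ is the restriction of $s_i \colon A_{n-1} \to A_n$, so $\sum_i s_i(B_{n-1}) \subseteq \sum_i s_i(A_{n-1}) = (\DD A)_n$, and by definition this lies in $B_n$. The content is the opposite inclusion.

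My strategy is to identify both modules with the span of a single, explicit set of basis elements, namely the degenerate paths in $\ner(Q)_n$. First I would observe that since $A_n = \KK[\ner(\CCC)_n]$ is free on $\ner(\CCC)_n$ and each degeneracy $s_i$ sends a basis element to a (degenerate) basis element, we have $s_i(A_{n-1}) = \KK[s_i(\ner(\CCC)_{n-1})]$, and consequently
\[
(\DD A)_n = \KK\!\left[\bigcup_i s_i(\ner(\CCC)_{n-1})\right] = \KK[\ner^{\sf D}(\CCC)_n],
\]
where the second equality uses Lemma~\ref{lemma:degenerated_path} applied to $\CCC$ viewed as a quiver: every degenerate path is the image under some $s_i$ of a shorter path. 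The same argument, applied to $Q$, gives $(\DD B)_n = \KK[\ner^{\sf D}(Q)_n]$.

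Next I would compute $(\DD A)_n \cap B_n$. Because the basis $\ner(\CCC)_n$ of $A_n$ partitions into $\ner^{\sf N}(\CCC)_n \sqcup \ner^{\sf D}(\CCC)_n$, and $B_n = \KK[\ner(Q)_n]$ is spanned by the subset $\ner(Q)_n \subseteq \ner(\CCC)_n$, we get
\[
(\DD A)_n \cap B_n = \KK[\ner^{\sf D}(\CCC)_n \cap \ner(Q)_n].
\]
It then remains to check the set-theoretic equality $\ner^{\sf D}(\CCC)_n \cap \ner(Q)_n = \ner^{\sf D}(Q)_n$. This is essentially tautological: a composable sequence of arrows of $Q$ is degenerate as a path in $Q$ iff one of its arrows is an identity of $\CCC$, which is the same condition as being degenerate in $\CCC$. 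Combining these identifications yields $(\DD A)_n \cap B_n = \KK[\ner^{\sf D}(Q)_n] = (\DD B)_n$.

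I do not anticipate a serious obstacle here; the only subtle point is the appeal to Lemma~\ref{lemma:degenerated_path}, which is what allows us to pass from the inclusion $(\DD A)_n \subseteq \KK[\ner^{\sf D}(\CCC)_n]$ to an equality. Everything else is a bookkeeping argument using that $A$ and $B$ are free modules on disjoint-union bases and that $B_n \hookrightarrow A_n$ is inclusion of a basis subset.
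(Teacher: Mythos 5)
Your proposal is correct and takes essentially the same approach as the paper's proof: both reduce the statement to the observation that $A_n$ is free on the set of paths, that $(\DD A)_n$ and $B_n$ are each spanned by subsets of that basis, and that the intersection is therefore spanned by the intersection of those subsets, namely the degenerate paths in $Q$. The only cosmetic difference is that you explicitly invoke Lemma~\ref{lemma:degenerated_path} to identify $(\DD A)_n$ with $\KK[\ner^{\sf D}(\CCC)_n]$ and compute $(\DD B)_n$ exactly rather than merely exhibiting the needed inclusion.
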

\begin{proof}  The inclusion  $\subseteq $ is obvious. Prove $\supseteq.$
The set of all $n$-sequences of 
composable morphisms is a basis of $A_n.$ The set of $n$-sequences containing an 
identity morphism is a basis of $(\DD A)_n.$ The set of all $n$-sequences of composable morphisms from $M$ is a basis of 
$B_n.$ So $(\DD A)_n$ and $B_n$ are generated by subsets of the basis of $A_n.$ 
So, the intersection of these subsets is a basis of the 
intersection $(\DD A)_n \cap B_n.$ Then the set of all $n$-sequences of composable marked
morphisms containing an identity morphism is a basis of the intersection $(\DD A)_n \cap B_n.$ All elements of this basis 
are in $(\DD B)_n,$ which implies the inclusion $\supseteq.$
\end{proof}

If we set $\NN M:=\NN(\KK[{\sf P}Q_\MMM]),$ we obtain that Lemma \ref{lemma:DB} implies that $\rho$ induces an isomorphism
\begin{equation}
\NN M \cong \overline{\KK[{\sf P}Q_\MMM]}.
\end{equation}
We will identify $\NN M$ with its image in $\NN \CCC.$ Then 
\begin{equation}
\Omega\MMM = \omega(\NN \CCC,\NN M), \hspace{1cm} \Psi\MMM = \psi(\NN \CCC,\NN M).
\end{equation}

\begin{proposition}[{cf. \cite[Prop. 4.2]{grigor2012homologies}}] \label{prop:low_dim}
For any marked category $\MMM$ we have 
$(\Omega \MMM)_n=(\NN M)_n$ for $n=0,1$ and 
\begin{equation}\label{eq:omega_2}
(\Omega \MMM)_2= (\NN M)_2\cap   \tilde d_1^{-1}((\NN M)_1).
\end{equation}
Moreover, $(\Omega\MMM)_2$ is generated by  differences of composable pairs $\langle \alpha_1,\beta_1\rangle - \langle \alpha_2,\beta_2 \rangle$ such that $\beta_1\alpha_1=\beta_2\alpha_2$ and $\alpha_i,\beta_i\in M.$
\end{proposition}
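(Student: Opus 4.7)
The plan is to compute $(\Omega\EEE)_n = (\NN Q)_n \cap \partial^{-1}((\NN Q)_{n-1})$ directly from the definition of $\omega$ for $n = 0, 1, 2$, using the explicit formula $\partial^{\NN \CCC} = \sum_{i=0}^n (-1)^i \tilde d_i$ and the description of $\NN Q$ as the submodule generated by the classes $\langle \alpha_1, \dots, \alpha_n\rangle$ of composable sequences of non-identity arrows in $Q$.

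For $n=0$, since $\NN \CCC$ is non-negatively graded, we have $(\NN Q)_{-1} = 0$ and $\partial_0 = 0$, so the constraint is vacuous and $(\Omega\EEE)_0 = (\NN Q)_0$. For $n=1$, I would observe that for any non-identity $\alpha \in Q_1$,
\begin{equation}
\partial \langle \alpha \rangle = \tilde d_0(\alpha) - \tilde d_1(\alpha) = {\sf codom}(\alpha) - {\sf dom}(\alpha),
\end{equation}
which lies in $(\NN Q)_0$ because the endpoints of any arrow of $Q$ are vertices of $Q$. Hence the constraint is again automatic and $(\Omega\EEE)_1 = (\NN Q)_1$.

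For $n=2$, the key observation is that on a basis element $\langle \alpha, \beta\rangle$ with $\alpha, \beta \in Q_1$, the maps $\tilde d_0\langle\alpha,\beta\rangle = \langle\beta\rangle$ and $\tilde d_2\langle\alpha,\beta\rangle = \langle\alpha\rangle$ automatically produce elements of $(\NN Q)_1$, while $\tilde d_1\langle\alpha,\beta\rangle = \langle \beta\alpha \rangle$ may escape $(\NN Q)_1$ since $\beta\alpha$ need not be an arrow of $Q$. By linearity, for any $x \in (\NN Q)_2$ the containment $\partial(x) \in (\NN Q)_1$ is equivalent to $\tilde d_1(x) \in (\NN Q)_1$, which proves \eqref{eq:omega_2}.

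For the final claim on generators, I would write an arbitrary $x \in (\Omega\EEE)_2$ in the basis of $(\NN Q)_2$ and group the summands by the value of the composite. Setting
\begin{equation}
x = \sum_\gamma x_\gamma, \qquad x_\gamma = \sum_{\substack{\alpha, \beta \in Q_1 \\ \beta\alpha = \gamma}} c_{\alpha,\beta}\, \langle \alpha, \beta \rangle,
\end{equation}
one computes $\tilde d_1(x) = \sum_\gamma \bigl(\sum_{\beta\alpha = \gamma} c_{\alpha,\beta}\bigr)\langle \gamma \rangle$ in $\NN_1 \CCC$. The constraint $\tilde d_1(x) \in (\NN Q)_1$ therefore forces $\sum_{\beta\alpha = \gamma} c_{\alpha, \beta} = 0$ whenever $\gamma$ is a non-identity morphism of $\CCC$ not lying in $Q_1$; for such $\gamma$, the vanishing of the coefficient sum lets us rewrite $x_\gamma$ as a linear combination of the differences $\langle \alpha_1, \beta_1\rangle - \langle \alpha_2, \beta_2 \rangle$ with $\beta_1\alpha_1 = \beta_2\alpha_2 = \gamma$ (the standard trick: if $\sum c_i = 0$, then $\sum c_i v_i = \sum_{i \geq 2} c_i (v_i - v_1)$). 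For the remaining values of $\gamma$, namely $\gamma \in Q_1$ or $\gamma$ an identity of $\CCC$, each individual $\langle \alpha, \beta\rangle$ already lies in $(\Omega\EEE)_2$ since then $\tilde d_1\langle\alpha,\beta\rangle \in (\NN Q)_1$ (being either $\langle\gamma\rangle \in (\NN Q)_1$ or $0$); these comprise the second type of generator. The only real obstacle is the bookkeeping in this last step, and it is purely formal.
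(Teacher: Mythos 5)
Your proof is correct and follows the same overall approach as the paper's for identifying $(\Omega\EEE)_n$ in degrees $n=0,1,2$: the cases $n=0,1$ are immediate, and for $n=2$ you observe, exactly as the paper does, that $\tilde d_0$ and $\tilde d_2$ automatically preserve $\NN Q$ so the constraint collapses to $\tilde d_1(x)\in(\NN Q)_1$. For the generating-set claim, the paper argues by induction on the number of nonzero basis coefficients of $x$, asserting $\tilde d_1(x)=0$ and pairing off a term $m<n$ with $\beta_m\alpha_m=\beta_n\alpha_n$; you instead group the terms of $x$ by the value of the composite $\gamma=\beta\alpha$ and apply the telescoping trick within each group. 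Your version is actually the more careful one: the constraint is $\tilde d_1(x)\in(\NN Q)_1$, not $\tilde d_1(x)=0$, so the coefficient sum over a fixed composite $\gamma$ is forced to vanish only when $\gamma$ is a non-identity morphism outside $Q_1$. When $\gamma\in Q_1^{\sf N}$ a single $\langle\alpha,\beta\rangle$ already lies in $(\Omega\EEE)_2$ with no cancellation required, and this case is not explicitly treated by the paper's inductive step (it handles $\gamma=1_v$ but then deduces $n\geq 2$ from nonvanishing of $a_n\langle\beta_n\alpha_n\rangle$, which fails if $\beta_n\alpha_n\in Q_1^{\sf N}$). As you note, such elements are the ``second type'' of generator; they are still expressible as differences in the sense of the statement via $\langle\alpha,\beta\rangle=\langle\alpha,\beta\rangle-\langle 1_{{\sf dom}(\gamma)},\gamma\rangle$ with $\gamma\in Q_1$, so the proposition's conclusion holds as stated.
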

\begin{proof} 
The equation for $n=0,1$ are obvious. For $n=2$ we have $\partial_2^{\NN M}=\tilde{d}_0  - \tilde{d}_1 + \tilde{d}_2.$ For any $x\in (\NN M)_2$ we have $\tilde{d}_0(x), \tilde{d}_2(x)\in (\NN M)_1.$ Hence $\partial^{\NN M}_2(x) \in (\NN M)_1$ if and only if $ \tilde d_1(x)\in (\NN M)_1.$ The equation \eqref{eq:omega_2} follows.  

Denote by $D\subseteq (\NN M)_2$ the submodule generated by $\langle \alpha_1,\beta_1\rangle - \langle \alpha_2,\beta_2 \rangle$ for $\beta_1\alpha_1=\beta_2\alpha_2.$ It is easy to see that $D\subseteq (\Omega\MMM)_2.$ Prove that $(\Omega\MMM)_2\subseteq D.$ Let $x \sum_{i=1}^n a_i \langle \alpha_i,\beta_i \rangle \in (\Omega\MMM)_2,$ where $a_i\in \KK\setminus\{0\}$ and $\alpha_i,\beta_i\in M\setminus\{1_c|c\in {\sf ob}(\CCC)\}.$ We want to prove $x\in D$ by induction on $n.$ For $n=0$ this is obvious. Assume that $n\geq 1.$ Then $\tilde d_1(x)=\sum_{i=1}^n a_i \langle \beta_i \alpha_i \rangle=0.$ If  $ \beta_n\alpha_n=1_v$ for some object $v$ then $\langle \alpha_n,\beta_n\rangle = \langle \alpha_n,\beta_n\rangle - \langle 1_v,1_v \rangle \in D$ and by induction hypothesis $x- a_n\langle \alpha_n,\beta_n\rangle\in D.$ Hence $x\in D.$ So we can assume that $\beta_n\alpha_n\ne 1_v.$ Thus $a_n \langle \beta_n\alpha_n \rangle\ne 0,$ and hence, $n\geq 2.$  Then there exists $1\leq m<n$ such that $\beta_m \alpha_m=\beta_n \alpha_n.$ It follows that  $x-a_n(\langle \alpha_n, \beta_n \rangle - \langle \alpha_m, \beta_m \rangle) \in D$ by the induction hypothesis. Hence $x\in D.$
\end{proof}

\subsection{DG-coalgebra structure on  \texorpdfstring{$\Omega$}{Ω}} 

Consider the composition of the map induced by the diadonal $\NN \CCC\to \NN(\CCC\times \CCC)$ and the Alexander-Whitney map $\alpha:\NN(\CCC\times \CCC) \to \NN\CCC \otimes \NN \CCC$ 
\begin{equation}
    \nu : \NN \CCC \longrightarrow \NN \CCC \otimes \NN \CCC.
\end{equation}
This map $\nu_n: \NN_n \CCC \to (\NN \CCC\otimes \NN \CCC)_n$ can be written explicitly on the basis as $\nu_n=\sum_{k+l=n} \nu_{k,l},$ where 
\begin{equation}
\nu_{k,l}:\NN_n \CCC \to \NN_k\CCC \otimes \NN_l \CCC    
\end{equation}
is defined by the formulas
\begin{equation}
\begin{split}
\nu_{0,n}(\langle \alpha_1,\dots, \alpha_n \rangle)& = 1_{v_0}\otimes \langle \alpha_1,\dots, \alpha_n \rangle \\
\nu_{k,l}(\langle \alpha_1,\dots, \alpha_n \rangle)& = \langle \alpha_1,\dots, \alpha_i\rangle \otimes \langle \alpha_{i+1} ,\dots, \alpha_n \rangle, \hspace{1cm} k,l\geq 1 \\
\nu_{n,0}(\langle \alpha_1,\dots, \alpha_n \rangle)& = \langle \alpha_1,\dots, \alpha_n \rangle \otimes 1_{v_n}.
\end{split}    
\end{equation}
We also consider the map $e:\NN\CCC \to (\NN \CCC)_0 \to \KK$ given by $e(1_v)=1.$ It is easy to see that $\nu,e$ define a structure of dg-coalgebra on $\NN \CCC.$

\begin{proposition}\label{prop:coalg_embed}
Let $\KK$ be a principal ideal domain and $\MMM$ be a marked category. Then  $\Omega \MMM$ is a split sub-dg-coalgebra of $\NN \CCC$.
\end{proposition}
\begin{proof}
It is easy to check that $\NN 
M \subseteq \NN \CCC$ is a split subcoalgebra. Then the assertion follows from Proposition \ref{prop:coalg_omega}. 
\end{proof}

\begin{theorem}\label{th:embedding}
Let $\KK$ be a principal ideal domain and $k,l$ are natural numbers. Then for any marked category $\MMM$ the map $\nu_{k,l}$ induces a monomorphism 
\begin{equation}
\Omega_{k+l} \MMM \mono \Omega_k \MMM \otimes \Omega_l \MMM.
\end{equation}
\end{theorem}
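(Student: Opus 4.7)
The plan is to combine the split sub-dg-coalgebra structure from Proposition \ref{prop:coalg_embed} with an elementary observation that the components $\nu_{k,l}$ are already injective at the level of the ambient complex $\NN\CCC$, even before restricting to $\Omega\EEE$.

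First, I would verify directly on the basis that each map $\nu_{k,l} : \NN_{k+l}\CCC \to \NN_k\CCC \otimes \NN_l\CCC$ is injective. A basis of $\NN_n\CCC$ is given by the symbols $\langle \alpha_1,\dots,\alpha_n\rangle$ with $\alpha_i \in \CCC_1^{\sf N}$, and the explicit formulas for $\nu_{k,l}$ show that such a basis element is sent to
\[
\langle \alpha_1,\dots,\alpha_k\rangle \otimes \langle \alpha_{k+1},\dots,\alpha_n\rangle \hspace{5mm} (k,l\geq 1),
\]
or to $1_{v_0}\otimes\langle\alpha_1,\dots,\alpha_n\rangle$ when $k=0$, or to $\langle\alpha_1,\dots,\alpha_n\rangle\otimes 1_{v_n}$ when $l=0$. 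In each case distinct basis elements of $\NN_{k+l}\CCC$ go to distinct elementary tensors of basis elements in the target (the object $v_0$ or $v_n$ is recoverable from $\alpha_1$ or $\alpha_n$), so these images are linearly independent. Hence $\nu_{k,l}$ is a split monomorphism on $\NN\CCC$.

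Next I would invoke Proposition \ref{prop:coalg_embed}: since $\KK$ is a PID, the subcomplex $\Omega\EEE\subseteq \NN\CCC$ is a split sub-dg-coalgebra. In particular each $\Omega_k\EEE$ is a direct summand of $\NN_k\CCC$, so the inclusion $\Omega_k\EEE\otimes \Omega_l\EEE \hookrightarrow \NN_k\CCC\otimes \NN_l\CCC$ is injective, and the comultiplication $\nu$ takes $\Omega\EEE$ into $\Omega\EEE \otimes \Omega\EEE$. Because $\Omega\EEE\otimes \Omega\EEE$ inherits its grading from $\NN\CCC\otimes \NN\CCC$, the decomposition $\nu = \sum_{k+l=n}\nu_{k,l}$ restricts componentwise, giving a well-defined map $\nu_{k,l}:\Omega_{k+l}\EEE \to \Omega_k\EEE\otimes \Omega_l\EEE$.

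Finally, the restricted map is injective because it is the restriction of an injective map to a submodule; this gives the desired monomorphism. There is no real obstacle: the only nontrivial input is Proposition \ref{prop:coalg_embed}, which allows us to promote the ambient injectivity (an elementary basis calculation) to a statement about the subcomplex $\Omega_{k+l}\EEE$ together with the fact that the target sits inside $\NN_k\CCC\otimes \NN_l\CCC$.
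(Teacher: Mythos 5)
Your proposal is correct and follows essentially the same route as the paper's proof: both establish the injectivity of $\nu_{k,l}$ on $\NN\CCC$ by checking on the standard basis and then invoke Proposition \ref{prop:coalg_embed} to restrict to $\Omega\EEE$. Your version just spells out more explicitly the splitting needed to pass to components and to conclude injectivity of the restriction, which the paper leaves implicit.
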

\begin{proof}
The map $\nu_{k,l}: \NN_{k+l} \CCC \to \NN_k \CCC \otimes \NN_l \CCC $ is a monomorphism because it sends different elements of the basis to different elements of the basis. Proposition     \ref{prop:coalg_embed} implies that it can be restricted to the map $\Omega_{k+l} \MMM \mono \Omega_k \MMM \otimes \Omega_l \MMM,$ which is also a monomorphism. 
\end{proof}

\begin{corollary}[{cf. \cite[Prop. 3.23]{grigor2012homologies}}]\label{cor:dimensions}
If $\KK$ is a principal ideal domain, $\MMM$ is a marked category and $\Omega_n \MMM=0$ for some $n,$ then for any $m>n$ we also have $\Omega_m\MMM=0.$ 
\end{corollary}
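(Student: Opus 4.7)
The proof is immediate from Theorem \ref{th:embedding}. The plan is to apply the monomorphism
\[ \Omega_{k+l}\EEE \mono \Omega_k\EEE \otimes \Omega_l\EEE \]
with $k=n$ and $l=m-n$, where $m>n.$ Since $\Omega_n\EEE=0$ by hypothesis, the target module is $0\otimes \Omega_{m-n}\EEE = 0$, and hence $\Omega_m\EEE=0$ as well.

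No induction is actually needed: a single application of Theorem \ref{th:embedding} for each $m>n$ suffices, because the hypothesis $\KK$ is a principal ideal domain is precisely what guarantees the tensor factorisation inequality. The only thing to verify is that the indices $k=n\geq 0$ and $l=m-n\geq 1$ are admissible in the statement of Theorem \ref{th:embedding}, which is clear. There is no real obstacle here; the work has already been done in proving the monomorphism $\nu_{k,l}$ restricts to $\Omega$.
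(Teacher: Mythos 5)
Your proposal is correct and is exactly the argument the paper intends: the corollary is stated immediately after Theorem \ref{th:embedding} with no separate proof, and the one-line deduction---apply the monomorphism $\Omega_m\EEE \mono \Omega_n\EEE \otimes \Omega_{m-n}\EEE$ and note the target is zero---is what ``corollary'' means here. Your remark that no induction is needed is also right.
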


\begin{corollary}\label{cor:dim}
If $\KK$ is a field and $\MMM$ is a marked category, then 
\begin{equation}
{\sf dim}(\Omega_{k+l}\MMM) \leq  {\sf dim}(\Omega_k\MMM)\cdot {\sf dim}(\Omega_l\MMM)    
\end{equation}
for any natural $k,l.$
\end{corollary}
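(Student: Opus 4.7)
The plan is to derive this as a direct consequence of Theorem \ref{th:embedding}, using only elementary linear algebra. Since every field is in particular a principal ideal domain, the hypotheses of Theorem \ref{th:embedding} are satisfied, so we obtain a monomorphism of $\KK$-vector spaces
\begin{equation}
\Omega_{k+l}\EEE \:\mono\: \Omega_k\EEE \otimes \Omega_l\EEE.
\end{equation}

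From here I would just invoke two standard facts about vector spaces over a field: first, a monomorphism $V\mono W$ implies $\dim V \leq \dim W$ (an injective linear map sends a basis of the domain to a linearly independent set in the codomain); and second, for $\KK$ a field we have $\dim(U\otimes_\KK V)=\dim(U)\cdot \dim(V)$. Combining these two gives
\begin{equation}
{\sf dim}(\Omega_{k+l}\EEE) \leq {\sf dim}(\Omega_k\EEE \otimes \Omega_l\EEE) = {\sf dim}(\Omega_k\EEE)\cdot {\sf dim}(\Omega_l\EEE),
\end{equation}
which is the desired inequality.

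There is essentially no obstacle here: all of the work has already been done in Theorem \ref{th:embedding}, whose proof produced the required monomorphism from the comultiplication component $\nu_{k,l}$ restricted via Proposition \ref{prop:coalg_embed}. The corollary is little more than a reformulation of that theorem in the case when $\KK$ is a field, where dimensions of tensor products behave multiplicatively. One should perhaps allow the convention that if any of the dimensions are infinite the inequality is interpreted in the obvious way (with $0\cdot \infty$ not arising, since if $\Omega_k\EEE=0$ then $\Omega_{k+l}\EEE=0$ by Corollary \ref{cor:dimensions}).
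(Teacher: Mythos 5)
Your proof is correct and matches the paper's intent exactly: Corollary \ref{cor:dim} is stated as an immediate consequence of the monomorphism from Theorem \ref{th:embedding}, and your argument (injective linear maps preserve dimension inequalities; dimension is multiplicative under tensor over a field) is precisely the standard linear-algebra step the paper leaves implicit.
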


\subsection{Cohomology of marked categories, cup product}\label{subsec:cohomology_of_embedded_quivers} For a $\KK$-module $V$ we set $M^\vee={\sf Hom}(V,\KK).$ For any category $\CCC$ we consider the cochain complex $(\NN \CCC)^\vee.$ The cochain complex has a natural structure of dg-algebra defined by the composition
\begin{equation}
(\NN \CCC)^\vee \otimes (\NN\CCC)^\vee \longrightarrow ( \NN\CCC \otimes \NN\CCC )^\vee \overset{\nu^\vee}\longrightarrow (\NN \CCC)^\vee. 
\end{equation}
Similarly we can define a graded algebra structure (without a differential) on $(\NN M)^\vee.$ For a marked category  $\MMM=(\CCC,M)$ we obtain a homomorphism of graded algebras $(\NN \CCC)^\vee \to (\NN M)^\vee,$ whose kernel is denoted by $K\MMM.$ Then $K\MMM$ is an graded ideal of $(\NN \CCC)^\vee.$  We set 
\begin{equation}
\Omega^\bullet \MMM = \psi( (\NN C)^\vee,K\MMM)   
\end{equation}
and define the path cohomology of $\MMM$ by the formula
\begin{equation}
\PH^*(\MMM) := H^*(\Omega^\bullet \MMM).
\end{equation}
By Proposition \ref{prop:psi-dg} $\Omega^\bullet \MMM$ inherits a natural structure of a dg-algebra. Hence $\PH^*(\MMM)$ has a natural structure of a graded algebra. Note that this structure is defined for any commutative ring $\KK.$

The construction is natural and any morphism of marked categories $\MMM\to \MMM'$ defines a homomorphism of graded algebras
\begin{equation}
    \PH^*(\MMM') \longrightarrow \PH^*(\MMM).
\end{equation}
In particular, for any marked category $\MMM=(\CCC,M)$ we have a homomorphism from the cohomology algebra of the category to the cohomology of the marked category $H^*(\CCC) \to \PH^*(\MMM).$

Similarly to Theorem \ref{th:embedding} we obtain that the product on $\Omega^\bullet\MMM$ defines an epimorphism 
\begin{equation}
    \Omega^k \MMM \otimes \Omega^l \MMM \epi \Omega^{k+l} \MMM
\end{equation}
for any natural $k,l.$ If $\KK$ is a field, then by Proposition \ref{prop:duality} we obtain that 
\begin{equation}
    \Omega^\bullet \MMM \cong (\Omega \MMM)^\vee, \hspace{1cm} \PH^*(\MMM) \cong (\PH_*(\MMM))^\vee. 
\end{equation}

\subsection{Box product of marked categories}

We define the box product of marked categories $\MMM=(\CCC,M)$ and $\MMM'=(\CCC',M')$  by the formula 
\begin{equation}
\MMM \square \MMM' = (\CCC\times \CCC', \{(\mu,1_{c'})\} \cup \{(1_c,\mu')\} ),
\end{equation}
where $\{(\mu,1_{c'})\}$ stands for 
$\{(\mu,1_{c'})\mid \mu\in M, c\in {\sf ob}(\CCC') \}$ and $\{(1_c,\mu')\}$ stands for $\{(1_c,\mu')\} = \{ (1_c,\mu') \mid c\in {\sf ob}(\CCC), \mu'\in M' \}.$ It is easy to see that 
\begin{equation}
Q_{\MMM\square \MMM'} \cong Q_{\MMM} \Box Q_{\MMM'}.     
\end{equation}
Proposition \ref{prop:paths_in_product} implies that the box product is compatible with the box product
\begin{equation}\label{eq:SE_box}
\SSS(\MMM\square \MMM') \cong \SSS\MMM \square \SSS\MMM'.     
\end{equation}

\begin{proposition}\label{prop:EZ_embedded}
If $\KK$ is a principal ideal domain, for any marked categories $\MMM,\MMM'$ we have
\begin{equation}
\Omega(\MMM\square \MMM')\cong \Omega\MMM \otimes \Omega\MMM'.    
\end{equation}
Moreover, if we set $H_*=\PH_*(\MMM),$ $H'_*=\PH_*(\MMM')$ and $H''_*=\PH(\MMM\times \MMM'),$ we get a K\"unneth-like short exact sequence
\begin{equation}  0 \longrightarrow 
\bigoplus_{i+j=n} H_i\otimes H'_j \longrightarrow H''_n \longrightarrow \bigoplus_{i+j=n-1} {\sf Tor}^\KK_1(H_i,H'_j) \longrightarrow 0.
\end{equation}
\end{proposition}
\begin{proof}
It follows from Theorem \ref{th:kunneth_sets} and \eqref{eq:SE_box}. 
\end{proof}

\subsection{Homotopy invariance for marked categories}

Consider a marked category that models the interval $I^{\sf m}=(\mathpzc{F}(\qq^1),\qq^1_1),$ where $\qq^1$ is the quiver with two vertices $(0\to 1)$ and $\mathpzc{F}(\qq^1)$ is the free category defined by the quiver with only one non-identical morphism. We also consider the marked category that models a point ${\sf pt}^{\sf m}=(\mathpzc{F}(\qq^0),\qq^0_1).$ Then for any marked category $\MMM$ we have $\MMM\square{\sf pt}^{\sf m}\cong \MMM.$ 
Hence two morphisms $i^0,i^1:{\sf pt}^{\sf m}\rightrightarrows I^{\sf m}$
define a weak cylinder functor 
${\sf cyl}(\MMM) = \MMM \square I^{\sf m},$
and we define homotopic morphisms of marked categories via this weak cylinder functor. 

\begin{proposition}\label{prop:nat_tranf}
Two morphisms of marked categories $f,g:(\CCC,M)\to (\CCC',M')$ are one-step homotopic if and only if there is a natural transformation $\varphi:f\to g$ such that $\varphi_c\in M'$ for any object $c$ of $\CCC.$
\end{proposition}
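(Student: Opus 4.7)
The plan is to unpack the definition of a one-step homotopy and match its data with that of a natural transformation. A one-step homotopy between $f$ and $g$ is, by definition, a morphism of embedded quivers $H:\EEE\square I^{\sf e}\to \EEE'$ with $H\circ i^0=f$ and $H\circ i^1=g$. Since the ambient category of $\EEE\square I^{\sf e}$ is $\CCC\times \mathpzc{F}(\qq^1)$, the first task is to analyse arbitrary functors $\CCC\times \mathpzc{F}(\qq^1)\to \CCC'$ and then impose the condition on the subquiver.

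First, I would use the standard universal property of the product of categories together with the fact that $\mathpzc{F}(\qq^1)$ is the ``walking arrow'' category to identify any functor $H:\CCC\times \mathpzc{F}(\qq^1)\to \CCC'$ with the triple consisting of $f_0:=H(-,0)$, $f_1:=H(-,1)$, and the natural transformation $\varphi:f_0\to f_1$ defined by $\varphi_c:=H(1_c,e)$, where $e:0\to 1$ is the unique non-identity arrow of $\qq^1$. Under this bijection, the boundary conditions $H\circ i^0=f$ and $H\circ i^1=g$ become $f_0=f$ and $f_1=g$, so one-step homotopies at the level of ambient categories correspond precisely to natural transformations $\varphi:f\to g$.

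Next, I would translate the constraint that $H$ respects the subquiver structure, i.e., that $H$ sends every arrow of $Q\square \qq^1$ into $Q'_1$. From the explicit description $(Q\square \qq^1)_1=(Q_1\times\{1_0,1_1\})\cup(Q_1^{\sf D}\times\{e\})$, the arrows of $Q\square \qq^1$ are of three kinds: pairs $(\alpha,1_i)$ with $\alpha\in Q_1^{\sf N}$, identities $(1_v,1_i)$, and pairs $(1_c,e)$ with $c\in Q_0$. On arrows of the first kind $H$ acts as $f(\alpha)$ or $g(\alpha)$, which lies in $Q'_1$ automatically because $f,g$ are morphisms of embedded quivers; on identities $(1_v,1_i)$ $H$ produces identities of $\CCC'$, again automatically in $Q'_1$. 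On arrows of the third kind $H(1_c,e)=\varphi_c$, so the only nontrivial condition is exactly $\varphi_c\in Q'_1$ for every $c\in Q_0$.

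Combining the two bijective translations gives both directions of the proposition: from $H$ one extracts $\varphi$ as above, and conversely from $\varphi$ satisfying $\varphi_c\in Q'_1$ one defines $H$ via the universal property and verifies it is a morphism of embedded quivers. I do not expect a real obstacle here; the content is entirely bookkeeping, and the only subtlety is to enumerate all arrows of $Q\square \qq^1$ correctly so as to see that only the arrows of the form $(1_c,e)$ produce a genuine constraint on $\varphi$.
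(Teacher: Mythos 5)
Your proposal is correct and follows essentially the same approach as the paper: identify functors out of $\CCC\times\mathpzc{F}(\qq^1)$ with natural transformations, then check which arrows of $Q\square\qq^1$ impose a genuine constraint. The paper's proof is terser but the argument and its decomposition are the same.
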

\begin{proof}
The set of natural transformations $\varphi:f\to g$ is in bijection with the set of functors $h:\CCC \times \mathpzc{F}(\qq^1) \to \CCC'$ such that $h(\alpha,{\sf id}_0)=f(\alpha),$ $h(\alpha,{\sf id}_1)=g(\alpha)$ for any morphism $\alpha$ of $\CCC.$ The functor $h$ corresponding to $\varphi$ is defined by the formula  $h({\sf id}_c,(0,1))= \varphi_c.$  The assumptions $f(M)\subseteq M', g(M)\subseteq M'$ and $\varphi_c \in M'$ are equivalent to the fact that this morphism defines a morphism of marked categories $h: \MMM \square I^{\sf m} \to \MMM'$ such that $hi^0_\MMM = f$ and $hi^1_\MMM =g.$ The assertion follows. 
\end{proof}

\begin{proposition} \label{prop:homotopy_emb}
Any two homotopic morphisms of marked categories $f\sim g:\MMM\to \MMM'$ induce homotopic morphisms of complexes 
\begin{equation}
\Omega f \sim \Omega g : \Omega \MMM \to \Omega \MMM' \hspace{1cm} \Psi f \sim \Psi g : \Psi \MMM \to \Psi \MMM'.
\end{equation}
\end{proposition}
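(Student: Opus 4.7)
The plan is to reduce the statement to the homotopy invariance for path pairs of sets (Proposition \ref{prop:homotopy_set}) via the transfer principle of Proposition \ref{prop:cyl} applied to the functor $\SSS:\{\text{embedded quivers}\}\to\{\text{path pairs of sets}\}$, $\EEE\mapsto \SSS\EEE$. Since $\Omega\EEE=\Omega(\SSS\EEE)$ and $\Psi\EEE=\Psi(\SSS\EEE)$ by definition, it is enough to show that $\SSS$ carries homotopic morphisms of embedded quivers to homotopic morphisms of path pairs of sets; Proposition \ref{prop:homotopy_set} then yields both conclusions simultaneously.

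To apply Proposition \ref{prop:cyl} I need a natural transformation $\varphi:{\sf cyl}(\SSS-)\Rightarrow\SSS({\sf cyl}(-))$ satisfying $\varphi\circ i^n_{\SSS\EEE}=\SSS(i^n_\EEE)$ for $n=0,1$. The key input is the explicit identification of the interval and point objects. Using Proposition \ref{prop:q_embedding} together with the elementary fact that every order-preserving map $[n]\to[1]$ already has connected image, a direct calculation gives $\ner(\qq^1)_n=\Pi([n],[1])=\Delta([n],[1])$; the usual description of the nerve then gives $\ner(\mathpzc{F}(\qq^1))_n=\Delta([n],[1])$ as well. Consequently $\SSS I^{\sf e}=I^{\sf s}$ as path pairs of sets, and an analogous (trivial) check gives $\SSS{\sf pt}^{\sf e}={\sf pt}^{\sf s}$. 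Combining this with the natural isomorphism $\SSS(\EEE\square\EEE')\cong\SSS\EEE\square\SSS\EEE'$ supplied by \eqref{eq:SE_box}, I obtain the required
\begin{equation}
\varphi_\EEE:\ {\sf cyl}(\SSS\EEE)=\SSS\EEE\square I^{\sf s}=\SSS\EEE\square\SSS I^{\sf e}\cong\SSS(\EEE\square I^{\sf e})=\SSS({\sf cyl}(\EEE)).
\end{equation}

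What remains is the commutativity of the triangle in Proposition \ref{prop:cyl}. Both $\varphi_\EEE\circ i^n_{\SSS\EEE}$ and $\SSS(i^n_\EEE)$ arise, via the box product with $\SSS\EEE$ and the unit isomorphism with ${\sf pt}^{\sf s}$, from the two pairs of inclusions ${\sf pt}^{\sf e}\rightrightarrows I^{\sf e}$ and ${\sf pt}^{\sf s}\rightrightarrows I^{\sf s}$, and these correspond to each other under $\SSS$ by the identifications $\SSS{\sf pt}^{\sf e}={\sf pt}^{\sf s}$ and $\SSS I^{\sf e}=I^{\sf s}$ established above. I expect this diagram chase to be the only slightly delicate point — specifically, tracking the unitor $\SSS\EEE\square{\sf pt}^{\sf s}\cong\SSS\EEE$ through the isomorphism \eqref{eq:SE_box} — but it is routine naturality with no genuine obstacle. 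Once it is verified, Proposition \ref{prop:cyl} gives $\SSS f\sim\SSS g$ for any homotopic $f\sim g:\EEE\to\EEE'$, and Proposition \ref{prop:homotopy_set} then yields the desired homotopies $\Omega f\sim\Omega g$ and $\Psi f\sim\Psi g$.
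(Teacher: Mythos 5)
Your argument is correct and follows the same route as the paper: both reduce to Proposition~\ref{prop:homotopy_set} via the identifications $\SSS(I^{\sf e})=I^{\sf s}$, $\SSS({\sf pt}^{\sf e})={\sf pt}^{\sf s}$, and the compatibility $\SSS(\EEE\square\EEE')\cong\SSS\EEE\square\SSS\EEE'$ from \eqref{eq:SE_box}. Your write-up is actually more explicit than the paper's, spelling out the invocation of Proposition~\ref{prop:cyl} and the commuting triangle that the paper leaves implicit.
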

\begin{proof}

Then the assertion follows from Proposition \ref{prop:homotopy_set},  \eqref{eq:SE_box} and the formulas $ \SSS(I^{\sf m})=I^{\sf s}$ and $\SSS({\sf pt}^{\sf m})= {\sf pt}^{\sf s}.$
\end{proof}

\subsection{Isomorphism-lemma for marked categories}

For a marked category $\MMM=(\CCC,M)$ we denote by $M^2$ the set of compositions $\mu\mu',$ where $\mu$ and $\mu'$ are composable morphisms from $M.$ Since all identity morphism are in $M,$ we obtain $M\subseteq M^2.$

\begin{proposition}[Isomorphism-lemma]\label{prop:isomorphism-lemma}
Let $\MMM=(\CCC,M)$ and $\MMM'=(\CCC',M')$ be marked categories and $f:\MMM \to \MMM'$ be a morphism of marked categories, which induces a bijection on objects and bijections $M\cong M'$ and $M^2\cong (M')^2.$   Then $f$ induces an isomorphism
\begin{equation}
\Omega\MMM\cong \Omega\MMM'.
\end{equation}
\end{proposition}
\begin{proof} 
Set $A=\KK[\Nrv(\CCC)],$ $B=\KK[ {\sf P}Q_\MMM ]$ and $E=\KK[{\sf P} Q_{\MMM^{[2]}}],$ where $\MMM^{[2]}=(\CCC,M^2).$ As usual, we also denote by $\overline{B}$ the image of $B\to \NN A$ and by $\overline{E}$ the image of $E\to \NN A.$ We use similar notation for $(\CCC',M'):$ $A',B', E'$. The fact that $f$ induces isomorphisms $M\cong M'$ and $M^2\cong (M')^2$ implies that $f$ induces isomorphisms $B\cong B'$ and $E\cong E'.$ Therefore, $f$ induces isomorphisms $\NN B \cong \NN B'$ and $\NN E \cong \NN E'.$  By Lemma  \ref{lemma:DB} we see that $\NN B\cong \overline{B},$ $\NN E \cong \overline{E}$ and  $\NN B'\cong \overline{B}',$ $\NN E' \cong \overline{E}'.$ Then $f$ induces isomorphisms $\overline{B}\cong \overline{B}'$ and $\overline{E}\cong \overline{E}'.$ By the definitions of $M^2$ we see that $\partial(\overline{B})\subseteq \overline{E}.$ Similarly $\partial(\overline{B}')\subseteq \overline{E}'.$ Then the assertion follows from Proposition  \ref{prop:E-iso}.
\end{proof}

\begin{corollary}\label{cor:embedding_categories} 
If $\CCC$ is a subcategory of $\CCC',$  then 
\begin{equation}
\Omega(\CCC,M)\cong \Omega(\CCC',M).    
\end{equation}
\end{corollary}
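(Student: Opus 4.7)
The plan is to obtain this as an immediate application of the Isomorphism-lemma (Proposition \ref{prop:isomorphism-lemma}) just proved. First I would observe that the inclusion functor $\iota:\CCC\hookrightarrow \CCC'$ restricts to the identity on the subquiver $Q\subseteq \CCC\subseteq \CCC',$ so it defines a morphism of embedded quivers $\iota:(\CCC,Q)\to (\CCC',Q).$ It acts as the identity on $Q,$ hence trivially induces an isomorphism $Q\cong Q,$ so the first hypothesis of Proposition \ref{prop:isomorphism-lemma} is free.

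The only thing to check is that $\iota$ induces an isomorphism on $Q^2.$ The key point here is that the quiver $Q^2$ is intrinsic: its arrows are all compositions $\beta\gamma$ with $\beta,\gamma\in Q_1,$ and since $\CCC$ is a \emph{sub}category of $\CCC'$ the composition of two arrows of $\CCC$ in $\CCC$ agrees with their composition in $\CCC'.$ Consequently, the subquiver $Q^2\subseteq \CCC$ and the subquiver $Q^2\subseteq \CCC'$ coincide as quivers, and $\iota$ sends the former identically onto the latter. Thus the second hypothesis is also satisfied.

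Applying Proposition \ref{prop:isomorphism-lemma} to $\iota$ then yields the desired isomorphism $\Omega(\CCC,Q)\cong \Omega(\CCC',Q).$ I do not anticipate any real obstacle: the corollary is essentially the special case of the Isomorphism-lemma where the morphism of embedded quivers is an inclusion and $Q$ is held fixed, and the only conceptual ingredient is the triviality that $Q^2$ depends only on compositions internal to $\CCC.$
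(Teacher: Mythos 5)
Your proposal is correct and is exactly the intended application of Proposition \ref{prop:isomorphism-lemma}: the inclusion $\iota:(\CCC,Q)\to(\CCC',Q)$ is the identity on $Q$, and since composition in a subcategory agrees with composition in the ambient category, $Q^2$ computed inside $\CCC$ coincides (identically, not merely isomorphically) with $Q^2$ computed inside $\CCC'$, so both hypotheses of the lemma hold trivially. The paper leaves the corollary without proof precisely because it follows this immediately.
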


\subsection{Categories with ideals and free categories}

If $\CCC$ is a small category, a set $I\subseteq {\sf mor}(\CCC)$ is called ideal, if a composition of a morphism from $I$ with any other morphism, from any side, is also from $I.$ 

\begin{proposition}\label{prop:ideal}
Let $\MMM=(\CCC,M)$ be a marked category and let $I$ be an ideal of $\CCC$ such that $M\cap I=\emptyset$ and the composition of any two non-identical arrows from $M$ are in $I.$ Then 
\begin{equation}
\Omega_n\MMM=\{x\in (\NN M)_n \mid \tilde d_i(x)=0,\  \text{for}\ 1\leq i\leq n-1 \}.
\end{equation}
The differential $\partial:\Omega_n\MMM\to \Omega_{n-1}\MMM$ is given by $\partial=\tilde d_0+(-1)^n\tilde d_n.$
\end{proposition}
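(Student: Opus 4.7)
The plan is to decompose $(\NN\CCC)_{n-1}$ according to which slots of a sequence are permitted to leave $Q_1$, and to use the ideal $I$ to show that each middle face $\tilde d_i(x)$ lands in a distinct summand; this will force the condition $\partial(x)\in(\NN Q)_{n-1}$ to be equivalent to the vanishing of every middle face applied to $x$.

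First I would introduce, for each subset $S\subseteq\{1,\dots,n-1\}$, the submodule $M_S\subseteq(\NN\CCC)_{n-1}$ spanned by basis elements $\langle\beta_1,\dots,\beta_{n-1}\rangle$ (with all $\beta_k$ non-identity) such that $\beta_k\notin Q_1$ exactly when $k\in S$. This gives an internal direct sum $(\NN\CCC)_{n-1}=\bigoplus_S M_S$ with $(\NN Q)_{n-1}=M_\emptyset$. Then for $x=\sum c_{\vec\alpha}\langle\alpha_1,\dots,\alpha_n\rangle\in(\NN Q)_n$, all $\alpha_k\in Q_1^{\sf N}$, the exterior faces $\tilde d_0(x)$ and $\tilde d_n(x)$ merely drop an end arrow and hence lie in $M_\emptyset$, while for $1\leq i\leq n-1$ the face $\tilde d_i$ replaces positions $i,i+1$ by the composite $\alpha_{i+1}\alpha_i$, which by hypothesis lies in $I$ and therefore not in $Q_1$; all other slots remain in $Q_1^{\sf N}$, so $\tilde d_i(x)\in M_{\{i\}}$, and these summands are pairwise distinct and distinct from $M_\emptyset$.

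Writing $\partial(x)=\tilde d_0(x)+(-1)^n\tilde d_n(x)+\sum_{i=1}^{n-1}(-1)^i\tilde d_i(x)$ then spreads the differential across independent summands, so $\partial(x)\in M_\emptyset=(\NN Q)_{n-1}$ if and only if each middle $\tilde d_i(x)$ vanishes. This yields the stated description of $\Omega_n(\CCC,Q)$, and the formula $\partial|_{\Omega_n(\CCC,Q)}=\tilde d_0+(-1)^n\tilde d_n$ drops out immediately.

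The step requiring care is the claim $\tilde d_i(x)\in M_{\{i\}}$ rather than some lower stratum: if some composite $\alpha_{i+1}\alpha_i$ happened to equal an identity $1_v$, the corresponding basis element would collapse to zero in $\NN\CCC$ and the position-by-position separation would break. This is precisely ruled out by the two parts of the hypothesis acting together, namely that composites of non-degenerate $Q$-arrows land in $I$ and that $Q_1\cap I=\emptyset$ forbids identities from $I$; hence $\alpha_{i+1}\alpha_i\in I$ is never an identity, and the accounting above is honest.
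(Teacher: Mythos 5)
Your proof is correct and follows essentially the same strategy as the paper's: stratify $(\NN\CCC)_{n-1}$ so that each middle face $\tilde d_i$ of a $Q$-path lands in its own summand disjoint from $(\NN Q)_{n-1}$, then read off the conclusion from the direct-sum structure. The only cosmetic difference is that you index strata by which positions leave $Q_1$, yielding the full decomposition $(\NN\CCC)_{n-1}=\bigoplus_S M_S$ with $(\NN Q)_{n-1}=M_\emptyset$, whereas the paper indexes by which single position lies in $I$ (the submodules $E_{n-1,i}$) and only needs the partial sum $E_{n-1}=\bigoplus_i E_{n-1,i}$ together with $E_{n-1}\cap(\NN Q)_{n-1}=0$; both hinge on $Q_1\cap I=\emptyset$, just applied in opposite directions.
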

\begin{proof} 
Denote by $E_{n-1,i}$ a submodule of $(\NN \CCC)_{n-1}$ which is generated by composable $(n-1)$-sequences of non-identical morphisms   $ \langle \gamma_1,\dots,\gamma_{n-1} \rangle$  such that $\gamma_i\in I$ and $\gamma_j\notin I$ for $j\ne i.$ 
We also denote by $E_{n-1}$ the sum of the modules $E_{n-1,i}.$
It is easy to see that $E_{n-1}=\bigoplus_{i=1}^{n-1} E_{n-1,i}.$ Also note that $\tilde d_i((\NN M)_n )\subseteq E_{n-1,i}$ for $1\leq i\leq n$ and $\tilde d_0((\NN M)_n) , \tilde d_n((\NN M)_n)\subseteq (\NN M)_{n-1}.$ Moreover, note that $E_{n-1}\cap (\NN M)_{n-1}=0.$ It is easy to see that $\tilde d_0( (\NN M)_n ), \tilde d_n((\NN M)_n) \subseteq (\NN M)_{n-1}.$ So for $b\in (\NN M)_n$ 
\begin{equation}
\partial^{\NN M}(b) = \big(\tilde d_0(b)+(-1)^n \tilde d_n(b)\big)  + \sum_{i=1}^{n-1}(-1)^i \tilde d_i(b).
\end{equation}
Therefore $\partial(b)\in (\NN M)_{n-1}$ if and only if $\tilde d_i(b)=0$ for $1\leq i\leq n-1.$
\end{proof}

\begin{proposition} \label{prop:free}
Let $Q$ be a quiver and $\mathpzc{F}(Q)$ be the free category (the category of paths) generated by $Q.$ Then
\begin{equation}
\Omega_n(\mathpzc{F}(Q),Q_1)=0, \hspace{1cm} n\geq 2   
\end{equation}
and  $\Omega_n(\mathpzc{F}(Q),Q_1)\cong \KK^{Q_n}$ for $n=0,1.$ Moreover, $\PH_*(\mathpzc{F}(Q),Q_1)$ is isomorphic to the homology of the quiver $Q$ treated as $1$-dimensional space. 
\end{proposition}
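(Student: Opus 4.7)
The plan is to recognise that the free category $\mathpzc{F}(Q)$ satisfies the hypothesis of Proposition \ref{prop:ideal} for an obvious choice of ideal, and then to read off the complex directly.

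First I would let $I\subseteq {\sf mor}(\mathpzc{F}(Q))$ be the set of morphisms given by paths in $Q$ of length $\geq 2$. Composition in $\mathpzc{F}(Q)$ is concatenation of paths, so $I$ is stable under pre- and post-composition with any morphism, hence it is an ideal. The morphisms lying in $Q_1$ are identities and length-$1$ paths, so $Q_1\cap I=\emptyset$, and the composite $\beta\alpha$ of two non-degenerate arrows $\alpha,\beta\in Q_1^{\sf N}$ is a length-$2$ path, hence lies in $I$. Proposition \ref{prop:ideal} then gives
\[
\Omega_n(\mathpzc{F}(Q),Q)=\{x\in (\NN Q)_n \mid \tilde d_i(x)=0,\ 1\leq i\leq n-1\},
\]
with differential $\partial=\tilde d_0+(-1)^n\tilde d_n$.

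Next I would prove vanishing for $n\geq 2$ by showing that already $\tilde d_1$ is injective on $(\NN Q)_n$. The module $(\NN Q)_n$ is free on composable sequences $\langle \alpha_1,\dots,\alpha_n\rangle$ with $\alpha_i\in Q_1^{\sf N}$, and $\tilde d_1$ sends such a basis element to $\langle \alpha_2\alpha_1,\alpha_3,\dots,\alpha_n\rangle$ inside $(\NN \mathpzc{F}(Q))_{n-1}$. The composite $\alpha_2\alpha_1$ is a length-$2$ path, hence never an identity, so this is a nonzero basis element. Moreover a path in the free category $\mathpzc{F}(Q)$ admits a unique decomposition into arrows of $Q$, so distinct sequences $\langle \alpha_1,\dots,\alpha_n\rangle$ produce distinct images. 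Thus $\tilde d_1$ is injective on $(\NN Q)_n$ and any $x\in \Omega_n$ with $n\geq 2$ must vanish.

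Finally, for $n\leq 1$ the constraint on $\tilde d_i$ is vacuous, so $\Omega_0=(\NN Q)_0\cong \KK[Q_0]$ and $\Omega_1=(\NN Q)_1\cong \KK[Q_1^{\sf N}]$. The boundary $\partial=\tilde d_0-\tilde d_1$ sends a non-degenerate arrow $\alpha$ to $h(\alpha)-t(\alpha)$, which is exactly the cellular boundary of the $1$-dimensional CW complex associated to $Q$; comparing the two-term complexes identifies $H_*(\mathpzc{F}(Q),Q)$ with the homology of $Q$ viewed as a $1$-dimensional space. The main obstacle is essentially nonexistent once the ideal has been spotted: the length filtration on $\mathpzc{F}(Q)$ makes Proposition \ref{prop:ideal} applicable with almost no verification, and freeness supplies the unique path decomposition needed for injectivity of $\tilde d_1$. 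The one subtlety worth flagging is that $\tilde d_1(x)$ is computed in $(\NN \mathpzc{F}(Q))_{n-1}$ rather than in $(\NN Q)_{n-1}$; it is precisely this escape from $\NN Q$ that kills every $\Omega_n$ with $n\geq 2$.
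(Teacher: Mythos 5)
Your proof follows the paper's route exactly: the ideal $I$ of paths of length $\geq 2$, Proposition \ref{prop:ideal}, and injectivity of $\tilde d_1$ on $(\NN Q)_n$ via unique path decomposition in $\mathpzc{F}(Q)$, with the low-degree and homology statements read off directly. In fact you state the injectivity more carefully than the paper does: the paper asserts that $\tilde d_i: \NN(\mathpzc{F}(Q))_n \to \NN(\mathpzc{F}(Q))_{n-1}$ is injective, which fails already for a quiver with a single loop $a$, since $\tilde d_1(\langle a, aa\rangle) = \langle aaa\rangle = \tilde d_1(\langle aa, a\rangle)$, whereas what is actually needed (and what you prove) is injectivity of $\tilde d_1$ restricted to $(\NN Q)_n$, where all entries are genuine arrows of $Q$ and unique decomposition applies.
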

\begin{proof} We denote by $I$ the ideal of paths of length $\geq 2.$ Then it follows from Proposition \ref{prop:ideal} and the fact that for the free category the maps $\tilde d_i: \NN(\mathpzc{F}(Q))_n \to \NN(\mathpzc{F}(Q))_{n-1} $ are injective for $1\leq i\leq n-1.$
\end{proof}

\subsection{Digraphs as marked categories}
By a digraph $G$ we mean a couple of sets $G=(V,E),$ where $E\subseteq V^2$ such that $(v,v)\in E$ for any $v\in V.$ 
The edges of the form $(v,v)$ are called degenerated. 
A digraph $G$ defines a quiver $Q(G)$ such that $Q(G)_0=V,$  $Q(G)_1=E$ and $t(v,v')=v, h(v,v')=v',s(v)=(v,v).$ We also consider a category ${\sf c}(V)$ such that ${\sf ob}({\sf c}(V))=V$ and ${\sf c}(V)(v,v')=\{(v,v')\}.$ So a digraph $G$ defines a marked category $\MMM G=({\sf c}(V),E),$ which defines a chain complex $\Omega G=\Omega(\MMM G)$ and the homology $\PH_*(G)=H_*(\Omega G).$ It is easy to see that this chain complex coincides with the chain complex defined in \cite{grigor2012homologies}. Proposition \ref{prop:EZ_embedded} implies that $\Omega(G\square G')\cong \Omega G\otimes  \Omega G',$ if $\KK$ is a principal ideal domain. 
Proposition    \ref{prop:homotopy_emb} implies that homotopic morphisms of graphs $f\sim g:G\to G'$ induce homotopic morphisms of complexes $\Omega f\sim \Omega g: \Omega G \to \Omega G'$ and $\Psi f \sim \Psi g: \Psi G \to \Psi G'.$
Since, the category ${\sf c}(V)$ is contractible, we see that $\PH_n(G)\cong \PH^{\sf c}_{n+1}(G)$ for $n\geq 1.$ Also note that if $\KK$ is a field, then Corollary \ref{cor:dim} implies that 
\begin{equation}
{\sf dim}(\Omega_{k+l} G) \leq {\sf dim}( \Omega_k G )\cdot {\sf dim}(\Omega_l G).
\end{equation}

\section{A generalization: marked linear categories}
\label{sec:linearly_embedded_quiver}

A ($\KK$-)\emph{linear category} is a category $\AAA$ enriched over the category of $\KK$-modules i.e. $\AAA$ is a category together with a structure of $\KK$-module on the hom-set $\AAA(a,a')$ for any objects $a,a'$ such that the composition is bilinear. A linear functor between linear categories is a functor which is a homomorphism on any hom-set. 

We denote by $\KK^c$ the linear category with one object whose hom-set is equal to $\KK$ and the composition is defined by multiplication. An \emph{augmented linear category} $\AAA$ is a linear category together with a linear functor $\varepsilon: \AAA \to \KK^c.$  We denote by $\AAA^1$ the wide subcategory of $\AAA$ whose morphisms are all morphisms $\alpha$ satisfying $\varepsilon(\alpha)=1.$

Any category $\CCC$ defines an augmented linear category $\KK[\CCC]$. The category $\KK[\CCC]$ has the same objects and its hom-sets are free modules generated by the hom-sets of $\CCC:$ $\KK[\CCC](c,c')=\KK[\CCC(c,c')].$ The composition on $\KK[\CCC]$ is defined as the bilinear extension of the composition on $\CCC.$ The augmentation $\varepsilon:\KK[\CCC]\to \KK^c$ is defined so that for any morphism $\alpha$ of $\CCC$ we have $\varepsilon(\alpha)=1.$ In particular, we have $\CCC\subseteq \KK[\CCC]^1.$ 

A \emph{linear nerve} of an augmented linear category is a simplicial module ${\sf Lnerve}(\AAA)$ such that 
\begin{equation}
{\sf Lnerve}(\AAA)_0=\KK[{\sf ob}(\AAA)]    
\end{equation}
 and 
\begin{equation}
{\sf Lnerve}(\AAA)_n=\bigoplus_{a_0,\dots,a_n\in {\sf ob}(\AAA)} \AAA(a_0,a_1) \otimes \dots \otimes \AAA(a_{n-1},a_n)    
\end{equation}
for $n\geq 1.$ The face maps $d_i:{\sf Lnerve}(\AAA)_n \to {\sf Lnerve}(\AAA)_{n-1}$ and degeneracy maps $s_i:{\sf Lnerve}(\AAA)_{n-1} \to {\sf Lnerve}(\AAA)_{n}$ for $n\geq 2$ are defined by 
\begin{equation}
\begin{split}
d_0(\alpha_1\otimes \dots \otimes \alpha_n) &= \varepsilon(\alpha_1) \alpha_2 \otimes \dots \otimes \alpha_n, \\ 
d_i(\alpha_1\otimes \dots \otimes \alpha_n) &= \alpha_1 \otimes \dots \otimes \alpha_i\cdot \alpha_{i+1} \otimes \dots \otimes  \alpha_n, \hspace{1cm} 1\leq i\leq n-1, \\
d_n(\alpha_1\otimes \dots \otimes \alpha_n) &= \alpha_1 \otimes \dots \otimes \alpha_{n-1} \varepsilon(\alpha_n),\\
s_i(\alpha_1 \otimes \dots \otimes \alpha_{n-1}) &= \alpha_1 \otimes \dots \otimes \dots \otimes 1_{a_i} \otimes \dots \otimes \alpha_{n-1}, \hspace{1cm} 0\leq i\leq n-1, 
\end{split}
\end{equation}
where $\alpha_i\cdot \alpha_{i+1}=\alpha_{i+1} \circ \alpha_{i}.$
For $n=1$ the face and degeneracy maps are defined by 
\begin{equation}
  d_0(\alpha)= \varepsilon(\alpha) h(\alpha), \hspace{1cm} 
d_1(\alpha)= t(\alpha)\varepsilon(\alpha),\hspace{1cm}
s_0(a)={\sf id}_a.  
\end{equation}
Note that all the maps $d_i$ are homomorphisms (thank to the augmentation $\varepsilon$ in the formulas). The fact that it is a simplicial module is straightforward. 

It is easy to see that for a category $\CCC$ we have
\begin{equation}
{\sf Lnerve}(\KK[\CCC]) \cong \KK[\Nrv (\CCC)].    
\end{equation}

A \emph{marked linear category} is a couple 
\begin{equation}
\LLL=(\AAA,M),
\end{equation}
where $\AAA$ is an small augmented linear category and $M\subseteq \AAA^1$ is a subset containing all identity morphisms, such that for any $a,a'\in {\sf ob}(\AAA)$, if we set $M(a,a')=M\cap \AAA(a,a')$,  the map  $\KK[M(a,a')] \to \AAA(a,a')$ is a split monomorphism (if $\KK$ is a field, this is equivalent to linear independence of $M(a,a')$). We denote by $Q_\LLL$ the quiver whose vertices are objects of $\LLL,$ and the set of arrows is $M.$ Therefore the path set $\KK[{\sf P}Q_\LLL]$ is embedded into ${\sf Lnerve}(\AAA)$ and each map $\KK[{\sf P}Q_\LLL]_n \to {\sf Lnerve}(\AAA)_n$ is a split monomorphism. We identify $\KK[{\sf P}Q_\LLL]$ with its image in ${\sf Lnerve}(\AAA).$ So we can consider a path pair of modules
\begin{equation}
    \PPP \LLL = ({\sf Lnerve}(\AAA), \KK[{\sf P}Q_\LLL])
\end{equation}
and set
\begin{equation}
    \Omega \LLL = \Omega(\PPP\LLL), \hspace{1cm} \Psi \LLL =\Psi(\PPP\LLL).
\end{equation}

The tensor product $\AAA\otimes \AAA'$ of two linear categories $\AAA$ and $\AAA'$ is defined so that ${\sf ob}(\AAA\otimes \AAA')={\sf ob}(\AAA)\times {\sf ob}(\AAA')$ and $(\AAA\otimes \AAA')((a,a'),(b,b'))= \AAA(a,b) \otimes \AAA(a',b').$ If $\AAA$ and $\AAA'$ are augmented, then $\AAA\otimes \AAA'$ inherits an obvious augmentation. The box product of two marked linear categories $\LLL=(\AAA,M)$ and $\LLL'=(\AAA',M')$ is defined by the formula
\begin{equation}
\LLL\square \LLL'=(\AAA \otimes \AAA', \{\mu\otimes 1_{a'}\} \cup \{1_a\otimes \mu'\}),
\end{equation}
where $\{\mu\otimes 1_{a'}\}$ stands for $\{\mu\otimes 1_{a'}\mid \mu\in M, a'\in {\sf ob}(\AAA')\}$ and $\{1_a\otimes \mu'\}$ stands for $\{1_a\otimes \mu' \mid a\in {\sf ob}(\AAA), \mu'\in M'\}.$ It is easy to see that $ {\sf Lnerve}(\AAA\otimes \AAA')\cong {\sf Lnerve}(\AAA) \otimes {\sf Lnerve}(\AAA')$ and $Q_{\LLL\square \LLL'} \cong  Q_\LLL \square Q_{\LLL'}.$ Using this and Proposition \ref{prop:paths_in_product}, we obtain 
\begin{equation}\label{eq:ple_box}
\PPP(\LLL\square \LLL')\cong \PPP \LLL \square \PPP\LLL'.    
\end{equation}

A morphism of marked linear categories $f:(\AAA,M)\to (\AAA',M')$ is a morphism of augmented linear categories $f:\AAA \to \AAA'$ such that $f(M)\subseteq M'.$ As usual we consider a model of an interval $I^{\sf lm}=(\KK[\mathpzc{F}(\qq^1)],\qq^1_1),$ a model of a point ${\sf pt}^{\sf lm}=(\KK[\mathpzc{F}(\qq^0)],\qq^0_1),$ define a weak cylinder functor
\begin{equation}
 {\sf cyl}(\LLL)=\LLL \square I^{\sf lm},   
\end{equation}
and define homotopic morphisms via this weak cylinder functor. The equations \eqref{eq:ple_box} and $\PPP(I^{\sf lm})= I^{\sf p}$ imply that homotopic morphisms $f\sim g:\LLL \to \LLL'$ induce homotopic morphisms of complexes 
\begin{equation}\label{eq:homotopy_le}
\Omega f\sim \Omega g: \Omega\LLL \to \Omega\LLL', \hspace{1cm} \Psi f \sim \Psi g: \Psi \LLL \to \Psi \LLL'.   
\end{equation}

\section{\textit{k}-power  homology of quivers}\label{sec:k-power}

In this section we show an approach to homology of quivers developed in \cite{grigor2018path} via marked linear categories. 

\subsection{Definition via marked linear categories}\label{subsection:def_k-power}

Let $\KK$ be a commutative ring and $k\geq 1$ be a natural number such that $k\cdot 1_\KK$ is invertible in $\KK.$ For a set $V$ and a natural number $k\geq 1$ we denote by ${\sf Q}^k_V$ a quiver with the set of vertices $V$ and with exactly $k$ non-degenerated edges from $v$ to $v'$ for any $v,v'\in V.$ The non-degenerated edges from $v$ to $v'$ are denoted by $\alpha_i^{v,v'}$ for $1\leq i\leq k.$   We denote by $\AAA^k_V$ the augmented linear category such that ${\sf ob}(\AAA^k_V)=V$ and $\AAA^k_V(v,v')=\KK[{\sf Q}^k_V(v,v') ].$ The composition is defined by the formula
\begin{equation}\label{eq:comp}
\alpha^{v',v''}_n \circ \alpha^{v,v'}_m = \zeta_{v,v''},
\end{equation}
where $\zeta_{v,v''}$ is defined as the average of all non-degenerate edges
\begin{equation}
\zeta_{v,v''} = \frac{1}{k}  \sum_{i=1}^k \alpha_i^{v,v''},
\end{equation}
for any $1\leq n,m\leq k.$ It is easy to see that 
\begin{equation}
\alpha^{v'',v'''}_n \circ (\alpha^{v',v''}_m \circ  \alpha^{v,v'}_l) = \zeta_{v,v'''} = (\alpha^{v'',v'''}_n \circ \alpha^{v',v''}_m) \circ  \alpha^{v,v'}_l,
\end{equation}
so the composition is associative. The augmentation $\varepsilon: \AAA^k_V\to \KK^c$ is defined so that $\varepsilon(\alpha_i^{v,v'})=1.$ Note that the composition \eqref{eq:comp} is compatible with the augmentation $\varepsilon(\frac{1}{k}  \sum_{i=1}^k \alpha_i^{v,v''})=1$ i.e. $\varepsilon$ is a functor.

The power of a quiver $Q$ is the supremum of cardinalities of $Q(v,v')$ for all $v,v'\in Q_0.$
Now assume that $Q$ is a quiver such that $Q_0=V$ and for any $v,v'\in V$ the cardinality of $Q(v,v')$ is at most $k.$ So there is an embedding
\begin{equation}
i:Q\mono {\sf Q}_V^k,    
\end{equation}
which induces an embedding  $i^\AAA:Q\mono  (\AAA^k_V)^1.$ Therefore we can consider a marked linear category and the corresponding complex
\begin{equation}
    \LLL(i) = (\AAA^k_V,i^\AAA(Q_1)), \hspace{1cm}
    \Omega(i) = \Omega(\LLL(i)).
\end{equation}

By the definition the homology depends on the embedding. However, they don't really depend on the embedding (up to \emph{canonical} isomorphism). Indeed, for any two embeddings $i_1, i_2 : Q\mono  {\sf Q}_V^k$ there exists an automorphism $\varphi:{\sf Q}_V^k \to {\sf Q}_V^k$ such that the diagram is commutative 
\begin{equation}
\begin{tikzcd}
& Q \ar[rd,rightarrowtail,"i_2"] \ar[ld,rightarrowtail,"i_1"'] & \\
{\sf Q}_V^k \ar[rr,"\cong","\varphi"'] && {\sf Q}_V^k
\end{tikzcd}
\end{equation}
Hence $\varphi$ defines an isomorphism 
\begin{equation}
\varphi_* : \Omega(i_1) \overset{\cong}\longrightarrow 
\Omega(i_2).    
\end{equation}
Moreover, this isomorphism does not depend of $\varphi,$ 
because if we have two different automorphisms $\varphi,\varphi'$ such that $\varphi i_1 =i_2 $ 
and $\varphi' i_1 = i_2,$ the maps $\varphi_*,\varphi'_*$ coincide by Proposition \ref{prop:restrictions}
\begin{equation}
    \varphi_* = \varphi'_* : \Omega(i_1) \to \Omega(i_2).
\end{equation} So the isomorphism is uniquely defined by $i_1$ and $i_2.$
This allows as to define $\Omega^{(k)} Q$ as
\begin{equation}
\Omega^{(k)} Q = \Omega(i)     
\end{equation}
for any chosen embedding $i:Q\mono {\sf Q}^k_V.$ The homology of this complex is called $k$-power homology $H^{(k)}_*(Q)=H_*(\Omega^{(k)} Q).$

\subsection{Definition of  Grigor’yan-Muranov-Vershinin-Yau}

\begin{proposition}
Let $\KK$ be a commutative ring and $k\cdot 1_\KK$ is invertible in $\KK.$ Then the chain complex $\Omega^{(k)} Q$ is isomorphic to the chain complex defined in \cite{grigor2018path}.
\end{proposition}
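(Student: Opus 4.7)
The plan is to identify both chain complexes term-by-term and then check that the differentials agree. First I would spell out $\Omega^{(k)}Q$ explicitly. By Lemma \ref{lemma:DB} (applied in the linear setting) the module $\KK[\ner(Q)]_n$ embeds as a direct summand in $\NN_n({\sf Lnerve}(\AAA_V^k))$, with basis the $n$-tuples $\langle\alpha_1,\dots,\alpha_n\rangle$ of composable non-degenerate arrows of $Q$. The face maps $\tilde d_0,\tilde d_n$ simply delete the outer arrow, while for $1\leq i\leq n-1$ the composition rule \eqref{eq:comp} of $\AAA_V^k$ forces
\begin{equation}
\tilde d_i\langle\alpha_1,\dots,\alpha_n\rangle=\langle\alpha_1,\dots,\alpha_{i-1},\zeta_{t(\alpha_i),h(\alpha_{i+1})},\alpha_{i+2},\dots,\alpha_n\rangle,
\end{equation}
i.e.\ any internal composition of consecutive non-degenerate arrows is replaced by the uniform average $\zeta_{v,v''}=\frac{1}{k}\sum_j\alpha_j^{v,v''}$ of all $k$ parallel non-degenerate arrows between the matching endpoints. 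Then $\Omega^{(k)}_n Q$ is, by the definition of $\omega$, exactly the submodule of $\KK[\ner(Q)]_n$ consisting of those chains whose boundary, computed with this $\zeta$-compressed differential, still lies in $\KK[\ner(Q)]_{n-1}$.

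Next I would recall the definition from \cite{grigor2018path}. There the $k$-power chain complex is built from formal $\KK$-combinations of composable $n$-paths in $Q$, with a differential in which the $i$-th internal face replaces $\alpha_i,\alpha_{i+1}$ by their ``$k$-averaged composition'' (the characteristic feature of the $k$-power theory), and one then passes to the subcomplex of chains on which this differential is $Q$-valued. The verification I propose is to match the two definitions at the level of the free generating set (the non-degenerate composable $n$-paths in $Q$), and then to compare the two differentials face-by-face. The exterior faces manifestly agree. For each interior face one compares the internal rule of \cite{grigor2018path} with $\tilde d_i$ as described above; since both prescriptions replace a pair $(\alpha_i,\alpha_{i+1})$ by the same averaged combination $\zeta_{t(\alpha_i),h(\alpha_{i+1})}$ (which is why invertibility of $k\cdot 1_\KK$ is needed), the two differentials coincide on generators and hence in general. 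The ``$\partial$-invariance'' condition defining $\omega$ and the ``$k$-power closure'' condition defining the complex in \cite{grigor2018path} are then literally the same condition.

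The main obstacle is the dictionary translation in the middle paragraph: matching the exact formulas of \cite{grigor2018path} for interior faces with the composition rule in $\AAA_V^k$. Once one writes out the face map of \cite{grigor2018path} and reads it against \eqref{eq:comp}, the identification is a direct computation; the subtlety is just that one must be careful about independence of the chosen embedding $i:Q\mono{\sf Q}^k_V$, which was already settled in Subsection \ref{subsection:def_k-power} using Proposition \ref{prop:restrictions}, so the isomorphism produced is canonical.
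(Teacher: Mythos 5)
There is a genuine gap in the middle step, and it is precisely the step you flag as "the dictionary translation." You assert that the differential of Grigor'yan--Muranov--Vershinin--Yau replaces a consecutive pair $(\alpha_i,\alpha_{i+1})$ by the \emph{average} $\zeta_{v,v''}=\frac{1}{k}\sum_j\alpha_j^{v,v''}$ and that "the exterior faces manifestly agree," which would make the isomorphism the identity on the canonical basis of non-degenerate $n$-paths. Neither claim is correct. In \cite{grigor2018path} the interior faces use the \emph{sum} $[\alpha_i\alpha_{i+1}]=\sum_{j}\alpha_j^{v,v''}=k\,\zeta_{v,v''}$, and the exterior faces carry an extra factor of $k$: $d_0(a_1\dots a_n)=k\cdot a_2\dots a_n$ and $d_n(a_1\dots a_n)=k\cdot a_1\dots a_{n-1}$. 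Consequently, under the obvious identification of graded modules the two differentials differ by a factor of $k$ in each degree, so they do \emph{not} coincide on generators; the identity map on the basis is not a chain map.

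The paper's proof handles exactly this: it observes $\alpha^{v'',v'}_n\circ\alpha^{v',v}_m=\frac{1}{k}[\alpha^{v',v}_m\alpha^{v'',v'}_n]$ and produces a nontrivial (degree-dependent) rescaling isomorphism $\Lambda(\mathsf{Q}^k_V)\cong\NN(\mathsf{Lnerve}(\AAA^k_V))$ intertwining the two differentials, under which $\Lambda(Q)$ is carried onto the image of $\KK[\ner(Q)]$, and then invokes the fact that $\omega$ is determined by that image. Your outline omits this rescaling entirely. The point about independence of the embedding $i:Q\mono\mathsf{Q}^k_V$ via Proposition~\ref{prop:restrictions} is correct and matches the paper, but it does not repair the missing step: you would need to exhibit the explicit rescaling isomorphism of chain complexes, check that it respects the graded submodules $\Lambda(Q)$ and $\overline{\KK[\ner(Q)]}$, and then apply Remark~\ref{rem:embedding} (or the functoriality of $\omega$), rather than asserting that the two face maps are literally the same on generators.
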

\begin{proof}
Grigor’yan-Muranov-Vershinin-Yau define a graded module $\Lambda (Q) $ for any quiver $Q$ such that $\Lambda_n(Q)$ consists of linear combinations of non-degenerated $n$-paths of $Q$. In other words $\Lambda_n(Q)$ is the $n$th graded component of the path algebra $\KK[Q].$ For the case $Q={\sf Q}^k_V$ they define a structure of chain complex on $\Lambda({\sf Q}^k_V).$
For two edges $\alpha^{v',v}_m,  \alpha^{v'',v'}_n$ they set 
\begin{equation}
[\alpha^{v',v}_m \alpha^{v'',v'}_n  ] =  \sum_{i=1}^k \alpha_i^{v'',v}.
\end{equation}
It is easy to see that is it related to the composition in $\AAA_V^k$ by the formula
\begin{equation}
\alpha^{v'',v'}_n \circ  \alpha^{v',v}_m =  \frac{1}{k} [  \alpha^{v',v}_m\alpha^{v'',v'}_n].  
\end{equation}
They define the maps $d_i:\Lambda_n({\sf Q}_V^k) \to \Lambda_{n-1}({\sf Q}^k_V)$ by the formulas
\begin{equation}
\begin{split}
d_0(a_1 \dots a_n) &= k \cdot  a_2  \dots  a_n, \\ 
d_i(a_1 \dots a_n) &= a_1  {\dots} \cdot [\alpha_i\alpha_{i+1}]\cdot   \dots a_n, \hspace{1cm} 1\leq i\leq n-1, \\
d_n(a_1 \dots a_n) &= k \cdot a_1  \dots  a_{n-1},
\end{split}
\end{equation}
where $a_i\in ({\sf Q}_V^k)_1,$
and  they define the differential $\partial_n = \sum (-1)^{i} d_i.$ It is easy to see that the multiplication by $\frac{1}{k}$ induces an isomorphism of complexes  
\begin{equation}\label{eq:1/k}
\frac{1}{k}\cdot :\Lambda_n ({\sf Q}^k_V) \overset{\cong}\longrightarrow \NN( {\sf Lnerve}(\AAA^k_V)).
\end{equation}
Further, for any embedding $i:Q \mono {\sf Q}^k_V$ they consider $\Lambda(Q)$ as a graded submodule of the chain complex $\Lambda({\sf Q}_V^k)$ and set 
\begin{equation}
\Omega^{(k)} Q= \omega(\Lambda({\sf Q}_V^k),\Lambda(Q)).     
\end{equation}
It is easy to see that the map \eqref{eq:1/k} takes $\Lambda(Q)$ to the image of 
\begin{equation}
\KK[{\sf P}Q]\to \NN( {\sf Lnerve}(\AAA^k_V)).    
\end{equation}
The assertion follows.
\end{proof}

\begin{remark} For the definition of $\Omega^{(k)} Q$ in \cite{grigor2018path} it is not assumed that $k\cdot 1_\KK$ is invertible. However, it is assumed for the proof that the homology $H_*(\Omega^{(k)} Q)$ is homotopy invariant  \cite[Th.5.5]{grigor2018path}. 
\end{remark}

\subsection{Functoriality, strong morphisms} We say that a morphism of quivers $f:Q\to Q'$ is non-degenerate, if any non-degenerate edge maps to a non-degenerate edge $f(Q_1^{\sf N}) \subseteq (Q')_1^{\sf N}.$ Otherwise it is degenerate. For example the projection $\qq^1\epi \qq^0$ is degenerate as well as the homotopy of the identical map with itself $Q\square \qq^1 \epi Q.$ To compare, in \cite{grigor2018path} the definition of quivers without degenerate edges is used, and so only non-degenerate morphisms are considered there. Using the terminology of \cite{grigor2018path}, a non-degenerate morphism $f:Q\to Q'$ is strong, if the  map $Q(v,u)\to Q'(f(v),f(u))$ is injective for any $v,u\in Q_0.$ So strong morphisms between quivers are analogues of faithful functors between categories. 

The wide subcategory of $\Quiv$ with strong morphisms is denoted by ${\sf SQuiv}.$ The full subcategory of ${\sf SQuiv}$ consisted of quivers of power at most $k$ is denoted by
\begin{equation}
{\sf SQuiv}^{k} \subseteq {\sf SQuiv}\subseteq \Quiv. 
\end{equation}

It is easy to check that any strong morphism $f:{\sf Q}^k_V \to {\sf Q}^k_U$ induces a linear functor $\AAA(f):\AAA^k_V\to \AAA^k_U$ (if $f$ is not strong, $\AAA(f)$ is not a functor, because $\AAA(f)(z_{v',v})\ne z_{f(v'),v}$). On the other hand, it is easy to check that any strong morphism $f:Q\to Q'$ of quivers of power at most $k$ can be embedded into a diagram 
\begin{equation}
\begin{tikzcd}
Q\ar[rr,"f"] \ar[d,rightarrowtail,"i"] && Q' \ar[d,rightarrowtail,"i'"] \\
{\sf Q}^k_V \ar[rr,"\tilde f"] && {\sf Q}^k_{V'},
\end{tikzcd}
\end{equation}
where $\tilde f$ is strong, $i,i'$ are embeddings and $V=Q_0, V'=Q'_0.$ This defines a morphism of marked linear categories 
\begin{equation} 
(\AAA(\tilde f),f): \LLL(i) \longrightarrow \LLL(i').
\end{equation}

By Proposition \ref{prop:restrictions}, the induced morphism $\Omega^{(k)} Q \to \Omega^{(k)} Q'$ does not depend on the choice of $\tilde f$ and we denote it by 
\begin{equation}
\Omega^{(k)} f : \Omega^{(k)} Q \longrightarrow \Omega^{(k)} Q'.
\end{equation}
This defines a functor 
\begin{equation}
\Omega^{(k)} : {\sf SQuiv}^k \longrightarrow {\sf Ch},    
\end{equation}
where ${\sf Ch}$ is the category of chain complexes.

\subsection{Homotopy invariance of \textit{k}-power homology} 

We can define a weak cylinder functor for ${\sf SQuiv}$ as follows
\begin{equation}
{\sf cyl} = - \square \qq^1 : {\sf SQuiv}^k \longrightarrow {\sf SQuiv}^k. \end{equation}
We say that two strong morphisms of quivers 
$f,g:Q\to Q'$ are strongly
homotopic if they are homotopic with respect to this weak cylinder functor. Note that in this definition we assume that the homotopy $h:{\sf cyl}(Q)\to Q'$ is also a strong morphism.

\begin{proposition}[{cf. \cite[Th.5.5]{grigor2018path}}] Let $\KK$ be a commutative ring and $k\geq 1$ be a natural number such that $k\cdot 1_{\KK}$ is invertible in $\KK$. Strongly homotopic strong morphisms $f\sim g:Q\to Q'$ of quivers of power at most $k$ induce homotopic morphisms of complexes
\begin{equation}
    \Omega^{(k)} f \sim \Omega^{(k)} g : \Omega^{(k)} Q \longrightarrow \Omega^{(k)} Q'.
\end{equation}
\end{proposition}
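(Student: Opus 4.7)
The plan is to reduce the statement to the homotopy invariance~\eqref{eq:homotopy_le} of $\Omega$ for linearly embedded quivers. Fix embeddings $i:Q\mono {\sf Q}^k_V$, $i':Q'\mono {\sf Q}^k_{V'}$, and $\iota:Q\square\qq^1\mono {\sf Q}^k_{V\times\{0,1\}}$ (the last is available since $Q\square \qq^1$ still has power at most $k$), chosen compatibly in the sense that under $v\mapsto (v,n)$ the embedding $\iota$ restricts to $i$ on each level, matching edge indices. Choose a strong extension $\tilde h:{\sf Q}^k_{V\times\{0,1\}}\to {\sf Q}^k_{V'}$ of $h$; together with the induced linear functor $\AAA(\tilde h)$ this produces a morphism of linearly embedded quivers $(\AAA(\tilde h),h):\MMM\to \LLL(i')$, where $\MMM=(\AAA^k_{V\times\{0,1\}},Q\square \qq^1)$.

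The heart of the argument is to construct a morphism of linearly embedded quivers
\[ H:\LLL(i)\square I^{\sf le}\longrightarrow \MMM \]
whose underlying quiver map is the identity on $Q\square \qq^1$ and whose linear functor component $F:\AAA^k_V\otimes \KK[\mathpzc{F}(\qq^1)]\to \AAA^k_{V\times\{0,1\}}$ is specified on generators by $(v,n)\mapsto (v,n)$,
\[ F(\alpha^{v,v'}_j\otimes {\sf id}_n)=\alpha^{(v,n),(v',n)}_j,\qquad F(1_v\otimes (0\to 1))=\iota(1_v\times (0\to 1)), \]
and then extended using the composition law of $\AAA^k_{V\times\{0,1\}}$. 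The key consistency check concerns the morphism $\alpha^{v,v'}_j\otimes(0\to 1)$, which factors in two ways as $(1_{v'}\otimes(0\to 1))\circ(\alpha^{v,v'}_j\otimes {\sf id}_0)$ and $(\alpha^{v,v'}_j\otimes {\sf id}_1)\circ(1_v\otimes(0\to 1))$; both factorizations are sent under $F$ to the common element $\zeta_{(v,0),(v',1)}\in \AAA^k_{V\times\{0,1\}}$ by the averaging formula~\eqref{eq:comp}, which is available precisely because $k\cdot 1_\KK$ is invertible. The compatible choice of $\iota$ ensures that $F$ agrees on the subquiver $Q\square \qq^1$ with the inclusion $Q\square \qq^1\mono \AAA^k_{V\times\{0,1\}}$, so $H$ is a well-defined morphism of linearly embedded quivers, and by construction $H\circ i^n_{\LLL(i)}$ has quiver component $i^n:Q\to Q\square \qq^1$.

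By Proposition~\ref{prop:restrictions} (applied to the path pair of modules underlying each linearly embedded quiver), the chain map $\Omega(H\circ i^n_{\LLL(i)})$ coincides with the canonical functorial chain map $\Omega^{(k)} i^n$, since the two morphisms have the same restriction to $\KK[\ner(Q)]$. The homotopy invariance~\eqref{eq:homotopy_le} applied to $H$ therefore yields a chain homotopy $\Omega^{(k)} i^0\sim \Omega^{(k)} i^1:\Omega^{(k)} Q\to \Omega^{(k)}(Q\square \qq^1)$. Post-composing with $\Omega^{(k)} h=\Omega(\AAA(\tilde h),h)$ and using functoriality of $\Omega^{(k)}$ together with the identities $h\circ i^0=f$ and $h\circ i^1=g$, we conclude $\Omega^{(k)} f\sim \Omega^{(k)} g$. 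The main obstacle is the consistency check for $F$ described above, which is exactly where the hypothesis that $k\cdot 1_\KK$ is invertible enters; once that is in place, the rest is a direct invocation of the general framework of linearly embedded quivers.
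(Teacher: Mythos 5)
Your proof is correct, and while it rests on the same technical core as the paper's — defining an $\AAA$-level functor whose only consistency obstruction is the two factorisations of $\alpha\otimes(0,1)$, and resolving that by mapping to the averaged arrow $\zeta$, which is exactly where invertibility of $k\cdot 1_\KK$ enters — the organisation is genuinely different. The paper constructs the homotopy $H:\LLL(i)\square I^{\sf le}\to\LLL(i')$ in one step, hard-wiring the chosen strong extensions $\tilde f,\tilde g$ (on $\alpha\otimes{\sf id}_n$) and the homotopy $h$ (on $1_v\otimes(0,1)$) into its definition, and then invokes \eqref{eq:homotopy_le}. You instead factor: you first build a ``universal'' homotopy $H:\LLL(i)\square I^{\sf le}\to\MMM$ with target $\MMM=(\AAA^k_{V\times\{0,1\}},Q\square\qq^1)$, which shows $\Omega^{(k)}i^0\sim\Omega^{(k)}i^1:\Omega^{(k)}Q\to\Omega^{(k)}(Q\square\qq^1)$ as a statement that mentions neither $f$, $g$, nor $h$; you then post-compose with $\Omega^{(k)}h$ and use functoriality of $\Omega^{(k)}$ on ${\sf SQuiv}^k$. (For compatible choices of $\tilde f,\tilde g,\tilde h$, composing your $H$ with $(\AAA(\tilde h),h)$ reproduces the paper's $H$, so the two constructions ultimately yield the same chain homotopy.) Your route buys a cleaner separation of concerns — the homotopy $i^0\sim i^1$ becomes a universal property of the cylinder quiver $Q\square\qq^1$, with the rest pure functoriality, which is closer to the way cylinder arguments are organised in homotopical algebra — at the small cost of one extra appeal to Proposition~\ref{prop:restrictions} to identify $\Omega(H\circ i^n_{\LLL(i)})$ with the canonical $\Omega^{(k)}i^n$. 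Both arguments rely identically on \eqref{eq:homotopy_le} and on the fact that $Q\square\qq^1$ again has power at most $k$.
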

\begin{proof} We can assume that $f$ and $g$ are one-step homotopic. Let $h:Q\square \qq^1 \to Q'$ be a homotopy from $f$ to $g.$ 
Chose embeddings $i:Q\mono {\sf Q}^k_V$ and $i':Q'\mono {\sf Q}^k_{V'},$ where $V=Q_0$ and $V'=Q_0',$ and chose some  strong morphisms $\tilde f, \tilde g:{\sf Q}^k_V \to {\sf Q}^k_{V'}$ that extend $f$ and $g.$ They define linear functors $\AAA(f),\AAA(g):\AAA^k_V \to \AAA^k_{V'}.$ Consider a functor $H:\AAA^k_V \otimes \KK[\mathpzc{F}(\qq^1)]\to \AAA^k_{V'}$ which is defined on objects such that $H(v,0)=f(v)$ and $H(v,1)=g(v)$ and which is defined on morphisms such that
\begin{itemize}
\item for any edge $\alpha\in {\sf Q}^k_V(v,u)$ we have $H(\alpha \otimes {\sf id}_0)= \tilde f(\alpha),$ $H(\alpha\otimes {\sf id}_1)=\tilde g(\alpha);$
\item $H({\sf id}_v\otimes (0,1)) = h(({\sf id}_v,(0,1)));$

\item for a non-degenerate edge $\alpha\in {\sf Q}^k_V(v,u)$ we set $H(\alpha\otimes (0,1))=z_{g(u),f(v)}.$  
\end{itemize} 
It is easy to check that this functor is well-defined and it defies a morphism of marked linear categories
\begin{equation}
H: \LLL(i)\square I^{\sf lm}\longrightarrow \LLL(i')    
\end{equation}
that shows that the maps $(\AAA(\tilde f),f),(\AAA(\tilde g),g) : \LLL(i) \to \LLL(i') $ are homotopic in the sense of marked linear categories. Then the assertion follows from  \eqref{eq:homotopy_le}.
\end{proof}

\section{Square-commutative homology of quivers}
\label{sec:square-commutative}

 In this section we will present two equivalent (Proposition \ref{prop:C_equiv_Z}) definitions for a square-commutative homology of quivers and prove their basic properties. The corresponding theory coincides with path homology for graphs without non-degenerate directed triangles and double edges (Theorem \ref{th:path=sc}).
 
\begin{definition}\label{def: square-free-Z}
For a quiver $Q$ we denote by $\ZZZ(Q)$ a category such that ${\sf ob}(\ZZZ(Q))=Q_0$ and  $\ZZZ(Q)(v,u)=Q(v,u)\cup\{z_{v,u}\},$ where $z_{v,u}$ is a new formal arrow. The composition is defined so that for any \emph{non-degenerated} edges $\alpha:v\to v'$ and $\beta:v'\to v''$ we set $\beta \circ \alpha=z_{v,v''}.$ Degenerated edges $s(v)=1_v$ are the identity morphisms in this category. 
Then we define the complex $\Omega^{\sf sc}Q$ as the complex associated with the marked category $(\ZZZ(Q),Q_1)$
\begin{equation}
\Omega^{\sf sc}Q = \Omega(\ZZZ(Q),Q_1).
\end{equation}
\end{definition}

Note that the set $I=\{z_{v,u}\mid v,u\in Q_0\}$ is an ideal of $\ZZZ(Q).$ Hence, Proposition \ref{prop:ideal} implies that \begin{equation}\label{eq:omega_sc_d_i}
\Omega^{\sf sc}_nQ=\{x\in (\NN Q_1)_n \mid \tilde d_i(x)=0,\  \text{for}\ 1\leq i\leq n-1 \}.
\end{equation}

This definition of square-commutative homology is not always convenient for proving its properties. So, we define another category $\CCC(Q)$ such that $Q\subseteq \CCC(Q)$ and prove that the corresponding complexes coincide. 
\begin{definition}\label{def: square-free-C}
By $\mathpzc{F}(Q)$ we denote the path category (free category) of $Q.$ Consider its quotient $\CCC(Q)=\mathpzc{F}(Q)/\sim,$ where $\sim$ is the minimal congruence relation such that $\alpha \beta \sim \beta' \alpha'$ for any \emph{non-degenerate} arrows $\alpha,\beta,\alpha',\beta'\in Q_1^{\sf N}$ such that $t(\beta)=t(\alpha'), h(\alpha)=h(\beta'), h(\beta)=t(\alpha), h(\alpha')=t(\beta').$
\begin{equation}
\CCC(Q)=\mathpzc{F}(Q)/\{\alpha \beta = \beta' \alpha'\}, \hspace{1cm}
\begin{tikzcd}
\bullet \ar[r,"\beta"] \ar[d,"\alpha'"'] & \bullet \ar[d,"\alpha"] \\ 
\bullet \ar[r,"\beta'"'] & \bullet
\end{tikzcd}
\end{equation}
\end{definition}

Such squares in $Q$ will be called \emph{non-degenerate directed squares}. So, roughly speaking, in $\CCC(Q)$ we make all non-degenerate squares commutative. In the category $\mathpzc{F}(Q)$ there is a notion of the length of a morphism: the length of the path in the original quiver. We assume that the length of identity morphisms is zero. Since we take a quotient so that only morphisms of the same length are equivalent, the length of a morphism is well defined in $\CCC(Q).$ 

We denote by ${\sf NQuiv}$ the category of quivers and non-degenerate morphisms. The constructions $\ZZZ(Q)$ and $\CCC(Q)$ are natural with respect to non-degenerate morphisms and define functors to the category of small categories 
\begin{equation}
\ZZZ,\CCC : {\sf NQuiv} \longrightarrow {\sf Cat}. 
\end{equation}
In particular, we obtain a functor
\begin{equation}
\Omega^{\sf sc} : {\sf NQuiv}\longrightarrow {\sf Ch}.
\end{equation}

\begin{remark}
Note that the constructions $\ZZZ, \CCC$ are not natural on the whole category of quivers $\Quiv$ with not necessary  non-degenerate morphisms. For example, they are not well-defined for the projection $\qq^2\to \qq^0.$
\end{remark}

For any quiver $Q$ we consider a functor
\begin{equation}
\tau : \CCC(Q) \to \ZZZ(Q) 
\end{equation}
which is identical on $Q$. It sends a path from $v$ to $u$ of length at least two to $z_{v,u}.$ This functor is natural on $Q,$ so we can say that it is a natural transformation $\tau:\CCC\to \ZZZ.$  This functor induces a morphism of marked categories
\begin{equation}\label{eq:tau}
\tau : (\CCC(Q),Q_1) \longrightarrow (\ZZZ(Q),Q_1).
\end{equation}
\begin{proposition}\label{prop:C_equiv_Z}
For any commutative ring $\KK$ the morphism \eqref{eq:tau} induces a natural isomorphism 
\begin{equation}
\Omega(\CCC(Q),Q_1) \cong \Omega^{\sf sc}Q.
\end{equation}
\end{proposition}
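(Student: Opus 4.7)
The plan is to invoke the Isomorphism-lemma for embedded quivers (Proposition \ref{prop:isomorphism-lemma}) applied to $\tau\colon (\CCC(Q),Q)\to (\ZZZ(Q),Q)$. By construction $\tau$ is the identity on the common subquiver $Q$, so the first hypothesis of the lemma is automatic and the entire argument reduces to showing $\tau$ induces a bijection between the enlarged subquivers $Q^2\subseteq \CCC(Q)$ and $Q^2\subseteq \ZZZ(Q)$. Naturality in $Q\in {\sf NQuiv}$ will then follow formally from naturality of $\tau$ together with the construction in the Isomorphism-lemma.

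First I would unpack $Q^2$ on both sides. In $\ZZZ(Q)$ every composition of two non-degenerate arrows equals the single formal symbol $z_{v,u}$, so $Q^2(v,u)$ equals $Q(v,u)\cup\{z_{v,u}\}$ whenever there exists at least one composable non-degenerate pair $v\to w\to u$ in $Q$, and equals $Q(v,u)$ otherwise. In $\CCC(Q)$, the hom-set $Q^2(v,u)$ consists of $Q(v,u)$ together with the equivalence classes $[\beta\alpha]_\sim$ of length-two non-degenerate paths from $v$ to $u$.

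The main step is to prove that in $\CCC(Q)$ any two length-two non-degenerate paths $\beta\alpha$ and $\beta'\alpha'$ from $v$ to $u$ already coincide. This should follow from a single direct application of the square-commutativity relation: for any quadruple $\alpha,\beta,\alpha',\beta'\in Q_1^{\sf N}$ arising as two composable length-two paths from $v$ to $u$, the head/tail conditions $t(\beta)=t(\alpha')=v$, $h(\alpha)=h(\beta')=u$, $h(\beta)=t(\alpha)$, $h(\alpha')=t(\beta')$ are all automatic, so the defining relation of $\CCC(Q)$ immediately gives $\alpha\beta\sim\beta'\alpha'$. Combined with the observation that $\sim$ identifies only words of equal length in $\mathpzc{F}(Q)$ — so a well-defined length grading descends to $\CCC(Q)$ — this produces a unique length-two class $\gamma_{v,u}$ (exactly when composable non-degenerate pairs exist) distinct from every arrow of $Q(v,u)$. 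Hence $\tau$ restricts to the identity on $Q(v,u)$ and sends $\gamma_{v,u}$ to $z_{v,u}$, supplying the required isomorphism on $Q^2$, and Proposition \ref{prop:isomorphism-lemma} then delivers the desired isomorphism $\Omega(\CCC(Q),Q)\cong \Omega(\ZZZ(Q),Q)=\Omega^{\sf sc}Q$.

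The main delicate point is this length-preservation statement: without it one could worry that a length-two class becomes identified with an existing length-one arrow in $Q(v,u)$, in which case $\tau$ would only be surjective on $Q^2(v,u)$ and the Isomorphism-lemma would not apply. This is however essentially formal, since each defining relation $\alpha\beta=\beta'\alpha'$ is homogeneous of length two on both sides, so the length function on $\mathpzc{F}(Q)$ is constant on congruence classes.
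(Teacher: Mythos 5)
Your proposal is correct and follows essentially the same route as the paper: invoke Proposition \ref{prop:isomorphism-lemma} for $\tau\colon(\CCC(Q),Q)\to(\ZZZ(Q),Q)$, then check that $\tau$ restricts to bijections on $Q$ and on $Q^2$ by observing that the square-commutativity relation collapses all non-degenerate length-two paths between a fixed pair $(v,u)$ to a single class, disjoint from $Q(v,u)$ since the relation preserves length. You simply spell out the head/tail and length-homogeneity verifications that the paper dismisses as ``easy to see.''
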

\begin{proof} We denote by $Q$ the quiver $Q$ considered as a subquiver of $ \CCC(Q)$ and we denote by $Q'$ the quiver $Q$ considered as a subquiver of $\ZZZ(Q).$ Then $Q_1^2$ consists of all morphisms of length $\leq 2.$ So there are two types of morphisms in $Q_1^2:$  (1) morphisms from $Q_1$; (2) morphisms of length $2.$ Similarly there are two types of morphisms in $(Q'_1)^2$:  (1) morphisms from $Q'_1;$ (2) morphisms $z_{v,u}$ for such couples of $(v,u)$ that there exists a path of length $2$ from $v$ to $u$ in $Q'_1$. 
It is easy to see that $\tau$ induces a bijection between all  isomorphisms $Q_1\cong Q_1$ and $Q_1^2\cong (Q'_1)^2.$ 
Then the assertion follows from Proposition \ref{prop:isomorphism-lemma}.
\end{proof}

\begin{remark}\label{remark:Z'}
Note that the functor $\tau : \CCC(Q)\to \ZZZ(Q)$ generally is not surjective on morphisms. For example, if $Q=\qq^1,$ then $\CCC(\qq^1)$ has only one non-identical morphism and $\ZZZ(\qq^1)$ has two non-identical morphisms. The image $\ZZZ'(Q)$ of $\CCC(Q)$ in $\ZZZ(Q)$ can be described as follows: 
\begin{equation}
\ZZZ'(Q)(v,u) = 
\begin{cases} 
Q(v,u)\cup \{z_{v,u}\}, & \text{there is a path of length $\geq 2$ from $u$ to $v$} \\
Q(v,u), & \text{else}.
\end{cases}
\end{equation}
Since $Q\subseteq \ZZZ'(Q)\subseteq \ZZZ(Q)$ by Corollary \ref{cor:embedding_categories} we obtain that 
\begin{equation}
\Omega^{\sf sc} Q \cong \Omega(\ZZZ'(Q),Q_1).
\end{equation}
\end{remark}

\subsection{Vanishing of square commutative homology}

\begin{proposition}\label{prop:without_squares}
If $Q$ has no non-degenerated directed squares, then
\begin{equation}
\Omega_n^{\sf sc}Q=0,  \hspace{1cm}\text{for} \ n\geq 2,
\end{equation}
and $\Omega_n^{\sf sc}Q=\KK^{Q_n}$ for $n=0,1.$ Moreover, $H^{\sf sc}_n(Q)$ is isomorphic to the homology of the quiver treated as $1$-dimensional space.  
\end{proposition}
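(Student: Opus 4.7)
The plan is to reduce everything to Proposition \ref{prop:free} via Proposition \ref{prop:C_equiv_Z}. First, I would unpack the hypothesis: by the sentence immediately following Definition \ref{def: square-free-C}, a \emph{non-degenerate directed square} in $Q$ is precisely a quadruple $(\alpha,\beta,\alpha',\beta')\in (Q_1^{\sf N})^4$ satisfying the endpoint conditions $t(\beta)=t(\alpha')$, $h(\beta)=t(\alpha)$, $h(\alpha')=t(\beta')$, $h(\alpha)=h(\beta')$. If $Q$ contains no such quadruples, then the generating relations of the congruence $\sim$ in Definition \ref{def: square-free-C} are vacuous, hence $\sim$ is trivial and the quotient functor
\begin{equation}
\mathpzc{F}(Q)\longrightarrow \CCC(Q)=\mathpzc{F}(Q)/\sim
\end{equation}
is an isomorphism of categories (it is surjective by construction and injective because the congruence collapses nothing).

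Next, I would invoke Proposition \ref{prop:C_equiv_Z}, which furnishes a natural isomorphism $\Omega(\CCC(Q),Q)\cong \Omega^{\sf sc}Q$. Combining this with the isomorphism $\CCC(Q)\cong \mathpzc{F}(Q)$ from the previous paragraph (which is the identity on the subquiver $Q$, so it defines an isomorphism of embedded quivers $(\mathpzc{F}(Q),Q)\cong (\CCC(Q),Q)$), we obtain
\begin{equation}
\Omega^{\sf sc}Q \;\cong\; \Omega(\mathpzc{F}(Q),Q).
\end{equation}

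Finally, I would apply Proposition \ref{prop:free} directly to the right-hand side: it states that $\Omega_n(\mathpzc{F}(Q),Q)=0$ for $n\geq 2$, that $\Omega_n(\mathpzc{F}(Q),Q)\cong \KK^{Q_n}$ for $n=0,1$, and that $H_*(\mathpzc{F}(Q),Q)$ is the homology of $Q$ regarded as a $1$-dimensional space. Transporting this along the isomorphism above yields exactly the assertion.

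There is essentially no obstacle here once Proposition \ref{prop:C_equiv_Z} and Proposition \ref{prop:free} are in place; the only point requiring care is making sure that ``no non-degenerate directed squares'' is read in the precise sense of Definition \ref{def: square-free-C} (as opposed to, say, forbidding only commutative squares), so that the generating set of $\sim$ is genuinely empty. Once this is noted, the proof reduces to citing the two earlier propositions.
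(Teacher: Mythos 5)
Your proof is correct and follows essentially the same route as the paper's: the paper's proof reads simply ``It follows from Proposition \ref{prop:free} and the equation $\CCC(Q)=\mathpzc{F}(Q)$ for this kind of quivers,'' which is exactly the chain you spell out (the use of Proposition \ref{prop:C_equiv_Z} is left implicit in the paper, since $\Omega^{\sf sc}Q$ is defined via $\ZZZ(Q)$ rather than $\CCC(Q)$). Your more detailed account of why the congruence is trivial, and of why the isomorphism of categories is an isomorphism of embedded quivers, just makes the terse original argument explicit.
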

\begin{proof} It follows from Proposition \ref{prop:free} and the equation $\CCC(Q)=\mathpzc{F}(Q)$ for this kind of quivers.    
\end{proof}

\subsection{Box product and square-commutative homology}

\begin{lemma}\label{lemma:product_of_free}
For any two quivers $Q,Q'$ there is an isomorphism 
\begin{equation}
\mathpzc{F}(Q)\times \mathpzc{F}(Q') \cong \mathpzc{F}(Q\square Q')/\sim,  
\end{equation}
which is an identity on $Q\square Q',$ where $\sim$ is the minimal congruence relation such that $(\alpha,1_{u'}) (1_v,\beta) \sim (1_{u},\beta) ( \alpha ,1_{v'})$ for any $\alpha\in Q(v,u)$ and $\beta\in Q(v',u').$
\end{lemma}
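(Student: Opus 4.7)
The plan is to produce a functor from $\mathpzc{F}(Q\square Q')/{\sim}$ to $\mathpzc{F}(Q)\times\mathpzc{F}(Q')$ via the universal property of free categories, and then to exhibit it as a bijection on hom-sets. First, the inclusion of quivers $Q\square Q'\hookrightarrow \mathpzc{F}(Q)\times\mathpzc{F}(Q')$, sending $(\alpha,1_{u'})\mapsto (\alpha,1_{u'})$ and $(1_v,\beta)\mapsto (1_v,\beta)$, extends to a unique functor $\tilde F\colon \mathpzc{F}(Q\square Q')\to\mathpzc{F}(Q)\times\mathpzc{F}(Q')$. The identity
\[
(\alpha,1_{u'})(1_v,\beta)=(\alpha,\beta)=(1_u,\beta)(\alpha,1_{v'})
\]
holding in the product category shows that $\tilde F$ equates $\sim$-related morphisms, so it descends to a functor
\[
F\colon \mathpzc{F}(Q\square Q')/{\sim}\longrightarrow \mathpzc{F}(Q)\times \mathpzc{F}(Q'),
\]
which is identical on $Q\square Q'$ by construction.

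I would then verify that $F$ is bijective on every hom-set. For surjectivity, any morphism $(p,q)\colon (v,v')\to (u,u')$ in the product category can be written as $(p,1_{u'})(1_v,q)$; writing $p=\alpha_n\cdots\alpha_1$ and $q=\beta_m\cdots\beta_1$ as products of their underlying edges, the concatenation
\[
(\alpha_n,1_{u'})\cdots(\alpha_1,1_{u'})(1_v,\beta_m)\cdots(1_v,\beta_1)
\]
in $\mathpzc{F}(Q\square Q')$ is a preimage of $(p,q)$ under $F$. For injectivity, I would note that any path in $\mathpzc{F}(Q\square Q')$ is a sequence of ``horizontal'' edges $(\alpha,1)$ and ``vertical'' edges $(1,\beta)$, and the functor $F$ simply reads off the horizontal subword (as a path in $\mathpzc{F}(Q)$) and the vertical subword (as a path in $\mathpzc{F}(Q')$). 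Hence two paths with the same image under $F$ must be shuffles of the same horizontal and vertical sequences.

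To finish, I would invoke the standard combinatorial fact that any two shuffles of two fixed sequences are connected by a chain of transpositions of adjacent elements of distinct types -- the same connectedness principle as Lemma \ref{lemma_graph_sh}. Each such adjacent swap changes a fragment $(\alpha,1_{u'})(1_v,\beta)$ into $(1_u,\beta)(\alpha,1_{v'})$ (with the intermediate vertex forced by the endpoints of the fragment), which is exactly the generating relation of $\sim$. Therefore any two preimages of $(p,q)$ become equal in $\mathpzc{F}(Q\square Q')/{\sim}$, so $F$ is injective, hence an isomorphism identical on $Q\square Q'$.

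The only point requiring care is the bookkeeping in the last step: verifying that when one swaps an adjacent horizontal-vertical pair inside a path of $\mathpzc{F}(Q\square Q')$, the intermediate vertex that changes is uniquely determined by the endpoints of the two edges being swapped, so that the swap is genuinely an instance of the generating relation of $\sim$. This is immediate from the description of edges of the box product, so it presents no real difficulty.
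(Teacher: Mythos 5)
Your proposal is correct and follows essentially the same route as the paper: the inclusion $Q\square Q'\hookrightarrow \mathpzc{F}(Q)\times\mathpzc{F}(Q')$ extends to a full functor $\mathpzc{F}(Q\square Q')\to \mathpzc{F}(Q)\times\mathpzc{F}(Q')$ that kills the generating relation of $\sim$ and is injective on hom-sets because the relation allows sorting the horizontal and vertical edges. The only difference is cosmetic: the paper argues injectivity by asserting that the relation gives a normal form $(\alpha_n,1)\cdots(\alpha_1,1)(1,\beta_m)\cdots(1,\beta_1)$ for every morphism in the quotient, while you run the equivalent argument in the other direction via shuffle connectivity (adjacent transpositions of a horizontal--vertical pair are instances of the generating relation); both reduce to the same elementary observation.
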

\begin{proof}
The inclusion $Q\square Q' \hookrightarrow \mathpzc{F}(Q)\times \mathpzc{F}(Q')$ defines a full functor $\mathpzc{F}(Q\square Q') \to \mathpzc{F}(Q)\times \mathpzc{F}(Q'),$ which is an identity on objects. This functor sends both $(\alpha,{\sf id}_{u'}) ({\sf id}_v,\beta)$ and $({\sf id}_{u},\beta) ( \alpha ,{\sf id}_{v'})$ to $(\alpha,\beta).$ Hence, we have a full functor $\mathpzc{F}(Q\square Q')/\sim \  \to \mathpzc{F}(Q)\times \mathpzc{F}(Q').$
The relation $(\alpha,{\sf id}_{u'}) ({\sf id}_v,\beta) \sim ({\sf id}_{u},\beta) ( \alpha ,{\sf id}_{v'})$ allows to present any morphism of $\mathpzc{F}(Q\square Q')/\sim$ in the form $(\alpha_1,1)(\alpha_2,1)\dots (\alpha_n,1) (1,\beta_1)(1,\beta_2) \dots (1,\beta_m).$ It follows that the functor is injective on hom-sets. Hence the functor is an isomorphism. 
\end{proof}

\begin{proposition}\label{prop:C_square}
There is a functor
\begin{equation}
\CCC(Q)\times \CCC(Q') \longrightarrow    \CCC(Q\square Q')
\end{equation}
which is identical on $Q\square Q'.$ Moreover, if $Q=G$ and $Q=G'$ are directed graphs, then this is an isomorphism
\begin{equation}
 \CCC(G) \times \CCC(G')\cong \CCC(G\square G').    
\end{equation}
\end{proposition}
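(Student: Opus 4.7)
The plan is to break the proof into two largely independent pieces: constructing the functor in full generality, and then using the digraph hypothesis to upgrade it to an isomorphism.

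First, I will build the functor. By Lemma \ref{lemma:product_of_free}, $\mathpzc{F}(Q)\times \mathpzc{F}(Q')\cong \mathpzc{F}(Q\square Q')/\sim$, where $\sim$ is generated by $(\alpha, 1_{u'})(1_v,\beta)\sim (1_u,\beta)(\alpha, 1_{v'})$. I will show this $\sim$ already holds in $\CCC(Q\square Q')$: if $\alpha$ or $\beta$ is degenerate the equation is already true in $\mathpzc{F}(Q\square Q')$, and if both are non-degenerate then the four arrows $(\alpha,1_{u'}),(1_v,\beta),(1_u,\beta),(\alpha,1_{v'})$ are all non-degenerate in $Q\square Q'$ and form a non-degenerate directed square. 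This yields a functor $\mathpzc{F}(Q)\times \mathpzc{F}(Q')\to \CCC(Q\square Q')$ that is the identity on $Q\square Q'$. Next I would check that the defining congruences $\sim_1$ on $\mathpzc{F}(Q)$ and $\sim_2$ on $\mathpzc{F}(Q')$ both push into $\CCC(Q\square Q')$: a square $\alpha\beta=\beta'\alpha'$ in $Q$ produces the square $(\alpha,1)(\beta,1)=(\beta',1)(\alpha',1)$ in $Q\square Q'$ (all four arrows non-degenerate), and symmetrically for $Q'$. Hence the map factors through $\CCC(Q)\times \CCC(Q')$.

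Second, for digraphs I will show this functor is an isomorphism by identifying the defining congruences on both sides. The heart of the argument is a classification: every non-degenerate directed square in $G\square G'$ is of exactly one of three types — (i) purely horizontal, coming from a square in $G$ at a fixed second coordinate; (ii) purely vertical, symmetric; or (iii) a mixed geometric square $(\alpha,1_{u'})(1_v,\gamma)=(1_u,\gamma)(\alpha,1_{v'})$ with $\alpha$ a non-degenerate arrow of $G$ and $\gamma$ a non-degenerate arrow of $G'$. I would establish this by a short case analysis on the orientations (horizontal vs.\ vertical) of the four arrows $\beta,\alpha,\alpha',\beta'$. The digraph hypothesis enters decisively in the mixed case: the source/target constraints force certain pairs of arrows in $G$ (or in $G'$) to be parallel, and in a digraph any two parallel arrows coincide, which collapses the square to the geometric form (iii). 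For general quivers, parallel but distinct arrows would produce "twisted" mixed squares not of type (iii), spoiling the isomorphism.

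With this classification in hand, the congruence defining $\CCC(G\square G')$ on $\mathpzc{F}(G\square G')$ is generated by (i), (ii), and (iii). Under the isomorphism of Lemma \ref{lemma:product_of_free}, relations of type (iii) are exactly $\sim$, while (i) and (ii) correspond exactly to the horizontal-factor relations $\sim_1$ and vertical-factor relations $\sim_2$. Therefore the induced map $\mathpzc{F}(G\square G')/(\sim\cup \sim_1\cup \sim_2)\to \bigl(\mathpzc{F}(G)\times \mathpzc{F}(G')\bigr)/(\sim_1\cup \sim_2)=\CCC(G)\times \CCC(G')$ is a two-sided inverse, and the functor from the first paragraph is an isomorphism. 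The main obstacle I anticipate is the square classification in the second paragraph: carrying out the case analysis while keeping the bookkeeping of horizontal vs.\ vertical orientations correct, and pinpointing exactly where the digraph hypothesis is used to exclude twisted mixed squares.
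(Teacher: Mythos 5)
Your two-paragraph structure mirrors the paper's proof exactly: build the functor via Lemma~\ref{lemma:product_of_free} and the observation that the generating relations of $\CCC(Q)\times\CCC(Q')$ all come from non-degenerate directed squares in $Q\square Q'$; then, over digraphs, classify the squares in $G\square G'$ into three types and match them with the relations on the product side. The first half (existence of the functor, including your careful treatment of the degenerate-$\alpha$-or-$\beta$ case) is fine and matches the paper.

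The square classification is where I see a concrete gap --- and, as far as I can tell, the same gap is present in the paper's own proof. Your list of orientation patterns omits the case where $\beta$ and $\alpha$ are both horizontal while $\alpha'$ and $\beta'$ are both vertical (and its mirror). Running the bookkeeping: $\beta,\alpha$ horizontal forces the common end $(d,d')$ to satisfy $d'=a'$, while $\alpha',\beta'$ vertical forces $d=a$; so the ``square'' collapses to a bigon at $(a,a')$, with the left path an entirely horizontal loop $a\to b\to a$ and the right path an entirely vertical loop $a'\to c'\to a'$. This is a legitimate non-degenerate directed square under the incidence conditions of Definition~\ref{def: square-free-C} as soon as $G$ has a pair of opposite non-degenerate arrows $a\to b$, $b\to a$ and $G'$ has a pair $a'\to c'$, $c'\to a'$ (the complete digraph on two vertices already does this). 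The digraph hypothesis cannot collapse it to type (iii): the two paths go through genuinely different intermediate vertices, so no parallel-arrow argument applies. Worse, the relation it imposes identifies $(\gamma_2\gamma_1,{\sf id}_{a'})$ with $({\sf id}_a,\delta_4\delta_3)$ in $\CCC(G\square G')$, but these are distinct in $\CCC(G)\times\CCC(G')$: their coordinate lengths are $(2,0)$ and $(0,2)$, and the congruence defining $\CCC$ preserves length in each factor. So for such $G,G'$ the functor $\CCC(G)\times\CCC(G')\to\CCC(G\square G')$ is surjective but not injective, and no inverse can exist. To close the proof you need to exclude this case, either by an explicit hypothesis (for instance, requiring the four corners of a non-degenerate directed square to be pairwise distinct, which the commutative-square picture in Definition~\ref{def: square-free-C} suggests is the intended reading even though the prose does not say so) or by restricting to digraphs without an opposite pair of arrows. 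As written, the ``exactly one of three types'' claim does not hold and the isomorphism argument has a hole.
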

\begin{proof}
By the definition, we have a functor $\mathpzc{F}(Q\square Q') \to \CCC(Q\square Q').$ The compositions $(\alpha,1_{u'}) (1_v,\beta) $ and $ (1_{u},\beta) ( \alpha ,1_{v'})$ are mapped to to the same morphism. By Lemma \ref{lemma:product_of_free} we obtain a functor $\mathpzc{F}(Q)\times \mathpzc{F}(Q') \to \CCC(Q\square Q')$ which obviously induces a functor $\CCC(Q)\times \CCC(Q') \to \CCC(Q\square Q').$ 

Now assume that $Q=G$ and $Q'=G'$ are digraphs.  In this case there are three types of non-degenerate directed squares in $G\square G':$ (1) a non-degenerate square in $G$ times an object of $G'$; (2) an object of $G$ times a non-degenerate square in $G$; (3) squares of the type 
\begin{equation}
\begin{tikzcd}
(v,v') \ar[r,"{(\alpha,1)}"] \ar[d,"{(1,\beta)}'"'] & (u,v') \ar[d,"{(1,\beta)}"] \\
(v,u') \ar[r,"{(\alpha,1)}"'] & (u,u').
\end{tikzcd}    
\end{equation}
Here we use that there is at most one arrow  $v\to u$ and at most  one arrow $v'\to u'.$ All these relations, corresponding these three types of squares, are satisfied in $\CCC(G)\times \CCC(G').$ Hence, there is a functor $\CCC(G\square G')\to \CCC(G)\times \CCC(G'),$ which is identical on $G\square G'.$ Since $G\square G'$ is a generating set of morphisms in both categories, and both functors $\CCC(G\square G')\to \CCC(G)\times \CCC(G')$ and $\CCC(G)\times \CCC(G')\to \CCC(G\square G')$ are identical on $G\square G',$ the compositions $\CCC(G)\times \CCC(G')\to \CCC(G\square G')\to \CCC(G)\times \CCC(G')$ and $\CCC(G\square G')\to \CCC(G)\times \CCC(G') \to \CCC(G)\times \CCC(G')$ are also identical, the functors are isomorphisms.
\end{proof}

\begin{proposition}\label{prop:EZ-sc}
If $\KK$ is a principal ideal domain and $G,G'$ are directed graphs, then there is an isomorphism of complexes 
\begin{equation}
\Omega^{\sf sc}(G\square G') \cong   \Omega^{\sf sc} G \otimes \Omega^{\sf sc}G'.  
\end{equation}
 Moreover, there is a short exact sequence 
\begin{equation} 0 \to
\bigoplus_{i+j=n} H^{\sf sc}_i(G) \otimes H^{\sf sc}_j(G') \to    H^{\sf sc}_n( G \square G' ) \to \bigoplus_{i+j=n-1} {\sf Tor}^\KK_1( H^{\sf sc}_i(G),H^{\sf sc}_j(G')) \to 0.
\end{equation}
\end{proposition}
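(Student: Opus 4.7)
The plan is to assemble three previously established results into a short chain of isomorphisms. The underlying idea is: square-commutative homology is a special case of homology of embedded quivers (via Proposition \ref{prop:C_equiv_Z}); for digraphs the construction $\CCC(-)$ is compatible with box products (Proposition \ref{prop:C_square}); and homology of embedded quivers already satisfies an Eilenberg--Zilber/Künneth theorem (Proposition \ref{prop:EZ_embedded}).

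First, I would rewrite $\Omega^{\sf sc}$ in terms of $\Omega$ of an embedded quiver. By Proposition \ref{prop:C_equiv_Z}, for any quiver $Q$ there is a natural isomorphism $\Omega^{\sf sc} Q \cong \Omega(\CCC(Q), Q)$. Applying this to $Q = G$, $Q = G'$, and $Q = G \square G'$, the statement reduces to producing a natural isomorphism
\begin{equation}
\Omega(\CCC(G \square G'), G \square G') \cong \Omega(\CCC(G), G) \otimes \Omega(\CCC(G'), G').
\end{equation}

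Second, I would use the hypothesis that $G$ and $G'$ are digraphs, which is precisely what is needed to invoke the second half of Proposition \ref{prop:C_square}: namely, $\CCC(G \square G') \cong \CCC(G) \times \CCC(G')$ as categories, via an isomorphism that is the identity on $G \square G'$. This lifts to an isomorphism of embedded quivers
\begin{equation}
(\CCC(G \square G'),\, G \square G') \;\cong\; (\CCC(G) \times \CCC(G'),\, G \square G') \;=\; (\CCC(G), G) \,\square\, (\CCC(G'), G'),
\end{equation}
where the last equality is the definition of the box product of embedded quivers.

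Third, I would apply Proposition \ref{prop:EZ_embedded} to the embedded quivers $\EEE = (\CCC(G), G)$ and $\EEE' = (\CCC(G'), G')$, which yields both the desired isomorphism $\Omega(\EEE \square \EEE') \cong \Omega \EEE \otimes \Omega \EEE'$ and, as a direct consequence of its second conclusion, the stated Künneth short exact sequence for $H^{\sf sc}_*$. The only point requiring any care is the digraph hypothesis: it is used solely to promote Proposition \ref{prop:C_square} from a mere functor to an isomorphism, since for general quivers non-degenerate squares of the form $(\alpha, 1)(1,\beta) = (1,\beta')(\alpha',1)$ in $G \square G'$ can involve parallel arrows that are not forced to be equal in $\CCC(G) \times \CCC(G')$. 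No other step presents an obstacle; the work is essentially book-keeping once the preceding propositions are in hand.
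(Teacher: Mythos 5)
Your proof is correct and follows essentially the same route as the paper's, which simply cites Propositions \ref{prop:C_square} and \ref{prop:EZ_embedded}; you are a bit more explicit in first invoking Proposition \ref{prop:C_equiv_Z} to pass from $\Omega^{\sf sc}$ to the complex $\Omega(\CCC(-),-)$ of an embedded quiver, a step the paper leaves implicit.
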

\begin{proof}
It follows from Proposition \ref{prop:C_square} and Proposition   \ref{prop:EZ_embedded}.
\end{proof}

\begin{remark}
   Proposition \ref{prop:EZ-sc} can't be generalised to the case of all quivers (see Example \ref{example:two_loops}). 
\end{remark}

\subsection{Homotopy invariance of square-commutative homology} 

As usual we define a weak cylinder functor ${\sf cyl} = - \square \qq^1:{\sf NQuiv}\to {\sf NQuiv}$ and define non-degenerately homotopic non-degenerate morphisms of quivers via this weak cylinder functor. 

\begin{proposition}\label{prop:homotopy-sc}
For any commutative ring $\KK$ two non-degenerately homotopic non-degenerate morphisms of quivers $f\sim g:Q\to Q'$ induce homotopic morphisms of chain complexes 
\begin{equation}
\Omega^{\sf sc} f \sim \Omega^{\sf sc} g : \Omega^{\sf sc} Q \longrightarrow \Omega^{\sf sc} Q'.     
\end{equation}
\end{proposition}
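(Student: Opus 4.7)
The plan is to reduce the statement to the homotopy invariance for embedded quivers (Proposition \ref{prop:homotopy_emb}) via the cylinder-functor formalism of Proposition \ref{prop:cyl}. By Proposition \ref{prop:C_equiv_Z} we may identify $\Omega^{\sf sc}Q$ with $\Omega(\CCC(Q),Q)$, so it suffices to show that the functor
\[
F:{\sf NQuiv}\longrightarrow\{\text{embedded quivers}\},\qquad Q\mapsto(\CCC(Q),Q),
\]
sends non-degenerately homotopic morphisms to homotopic morphisms of embedded quivers; applying $\Omega$ and invoking Proposition \ref{prop:homotopy_emb} will then complete the argument.

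To apply Proposition \ref{prop:cyl} with cylinders ${\sf cyl}(Q)=Q\square\qq^1$ on ${\sf NQuiv}$ and $\widetilde{\sf cyl}(\EEE)=\EEE\square I^{\sf e}$ on embedded quivers, I need a natural transformation $\varphi:\widetilde{\sf cyl}\,F\to F\,{\sf cyl}$ compatible with the endpoint inclusions. Since $\qq^1$ contains no non-degenerate directed squares, we have $\CCC(\qq^1)=\mathpzc{F}(\qq^1)$, so the first half of Proposition \ref{prop:C_square} provides a functor $\CCC(Q)\times\mathpzc{F}(\qq^1)\to\CCC(Q\square\qq^1)$ that is identical on $Q\square\qq^1$. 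This yields a morphism of embedded quivers
\[
\varphi_Q:F(Q)\square I^{\sf e}=(\CCC(Q)\times\mathpzc{F}(\qq^1),\,Q\square\qq^1)\longrightarrow(\CCC(Q\square\qq^1),\,Q\square\qq^1)=F(Q\square\qq^1),
\]
and its naturality in $Q\in{\sf NQuiv}$ is immediate from the functoriality of $\CCC$ on ${\sf NQuiv}$ together with the fact that both sides of the naturality square agree on the generating subquiver $Q\square\qq^1$.

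Next, I would verify the triangles $\varphi\circ(\tilde i^n F)=F i^n$ for $n=0,1$: both composites send each $v\in Q_0$ to $(v,n)$ and each arrow $\alpha$ of $Q$ to $(\alpha,1_n)$ in $Q\square\qq^1$, and hence coincide as functors into $\CCC(Q\square\qq^1)$. Proposition \ref{prop:cyl} then yields that $F$ preserves homotopy, and composing with $\Omega$ and invoking Proposition \ref{prop:homotopy_emb} delivers the desired chain homotopy $\Omega^{\sf sc}f\sim\Omega^{\sf sc}g$.

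I do not anticipate any real obstacle: the argument is an assembly of previously established results. The only mildly subtle point is that the full \emph{isomorphism} in Proposition \ref{prop:C_square} is only claimed for digraphs, but here we need merely the comparison functor $\CCC(Q)\times\CCC(Q')\to\CCC(Q\square Q')$, which that proposition supplies for arbitrary quivers.
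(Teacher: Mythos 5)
Your proposal is correct and follows essentially the same route as the paper's proof: you identify $\Omega^{\sf sc}$ with $\Omega\circ F$ where $F(Q)=(\CCC(Q),Q)$, use the first half of Proposition~\ref{prop:C_square} (together with $\CCC(\qq^1)=\mathpzc{F}(\qq^1)$) to produce the natural transformation $\widetilde{\sf cyl}\,F\to F\,{\sf cyl}$, and then invoke Propositions~\ref{prop:cyl} and~\ref{prop:homotopy_emb}. Your write-up is in fact more explicit than the paper's, spelling out the endpoint-triangle checks and correctly noting that only the functor direction of Proposition~\ref{prop:C_square} (valid for arbitrary quivers) is needed, not the isomorphism (stated only for digraphs).
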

\begin{proof} We denote by $F:{\sf NQuiv}\to {\sf MarkCat}$ the functor given by $F(Q)=(\CCC(Q),Q_1).$ Using that $\CCC(\qq^1)=\mathpzc{F}(\qq^1)$ we obtain that ${\sf cyl}( F(Q)) = (\CCC(Q)\times \CCC(\qq^1),(Q\square\qq^1)_1)$ and $F({\sf cyl}(Q))=(\CCC(Q\square \qq^1),(Q\square \qq^1)_1).$
Proposition \ref{prop:C_square} implies that there is a natural transformation 
${\sf cyl}\: F \to F\: {\sf cyl}.$
Then the assertion follows from Proposition \ref{prop:cyl} and Proposition \ref{prop:homotopy_emb}.
\end{proof}

\subsection{Comparison of square-commutative and path homology}

For a set $V$ we denote by ${\sf c}(V)$ the category whose set of objects is $V$ and each hom-set is one-element ${\sf c}(V)(v,u)=\{(v,u)\}$.  Recall that the path homology of a digraph $G$ are defined as homology of the marked category $({\sf c}(V),G),$ where $V=G_0$ is the vertex set of $G.$ There is a unique functor $\ZZZ(G)\to {\sf c}(V)$ which is identical on objects. It sends a morphism $\alpha:v\to u$ to the morphism $(v,u):v\to u.$ It is easy to see that this defines a morphism of marked categories 
$(\ZZZ(G),G) \longrightarrow ({\sf c}(V),G),$ 
which defines a morphism of chain complexes 
\begin{equation}\label{eq:sctopath}
    \Omega^{\sf sc} G \longrightarrow \Omega G.
\end{equation}

A non-degenerated directed triangle of a quiver $Q$ is a triple of non-degenerated arrows $\alpha,\beta,\gamma$ such that $t(\alpha)=t(\gamma), h(\alpha)=t(\beta), h(\beta)=h(\gamma)$
\begin{equation}
\begin{tikzcd}
& \bullet \ar[rd,"\beta"] & \\
\bullet \ar[ru,"\alpha"] \ar[rr,"\gamma"] && \bullet
\end{tikzcd}    
\end{equation}
A double edge in a digraph $G=(V,E)$ is a is a two element set of vertices $\{u,v\}$ such that $(u,v),(v,u)\in E.$

\begin{theorem}\label{th:path=sc}
Let $G$ be a digraph without non-degenerate directed triangles and double edges. Then the morphism \eqref{eq:sctopath} is an isomorphism 
\begin{equation}
\Omega^{\sf sc} G \cong \Omega G.
\end{equation}
\end{theorem}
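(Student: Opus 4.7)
The natural plan is to deduce the theorem from the isomorphism lemma for embedded quivers (Proposition~\ref{prop:isomorphism-lemma}) applied to the morphism $\tau:(\ZZZ(G),G)\to({\sf c}(V),G)$ of \eqref{eq:sctopath}. Since $\tau$ is the identity on the common subquiver $G$, one of the two required isomorphisms $Q\cong Q'$ is immediate; the content of the argument is to verify that $\tau$ induces an isomorphism of the auxiliary subquivers $G^2_{\ZZZ(G)}\cong G^2_{{\sf c}(V)}$.

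First I would unwind these two quivers using the composition laws. Inside $\ZZZ(G)$, a composition of two non-degenerate $G$-arrows $v\to w\to u$ is by definition the formal arrow $z_{v,u}$, and compositions involving an identity return the original $G$-arrow; therefore
\[
G^2_{\ZZZ(G)}(v,u) \;=\; G(v,u)\;\cup\;\{z_{v,u}:\exists\, w,\ v\to w\to u\text{ non-degenerate in } G\}.
\]
Inside ${\sf c}(V)$ the same composition yields the unique morphism $(v,u)$, and since $|{\sf c}(V)(v,u)|=1$, the arrow $(v,u)$ lies in $G^2_{{\sf c}(V)}(v,u)$ exactly when $(v,u)\in G_1$ or a non-degenerate 2-path $v\to w\to u$ exists. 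Under $\tau$, both $\alpha\in G(v,u)$ and $z_{v,u}$ map to this single arrow $(v,u)$.

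The crucial step is injectivity of $\tau|_{G^2}$ on arrows. A collision could only occur if for some pair $(v,u)$ there is simultaneously an arrow $\alpha\in G(v,u)$ and a non-degenerate 2-path $v\to w\to u$ in $G$. Since $G$ is a digraph these three arrows are distinct and all non-degenerate, so they form exactly the non-degenerate directed triangle forbidden by hypothesis. Surjectivity is obvious, and compatibility with head, tail, and degeneracy maps is built into the construction, so $\tau|_{G^2}$ is a quiver isomorphism. Invoking Proposition~\ref{prop:isomorphism-lemma} then delivers the chain isomorphism
\[
\Omega^{\sf sc}G=\Omega(\ZZZ(G),G)\;\cong\;\Omega({\sf c}(V),G)=\Omega^{\sf GLMY}G.
\]

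The part of the argument that I expect to require the most care is the diagonal case $v=u$: the formal arrow $z_{v,v}$ is a \emph{non-degenerate} arrow of $\ZZZ(G)$ that would map to the degenerate identity $1_v\in{\sf c}(V)$, and it is produced exactly by a non-degenerate directed 2-cycle at $v$. A clean proof must either rule out such $z_{v,v}$ using the triangle-free hypothesis, or absorb this diagonal case into the bookkeeping by being precise about the identification $G_1\hookrightarrow G^2$ on each side. This is the only place where the geometry of the hypothesis genuinely has to be exploited, and it is the step on which I would concentrate the rigorous verification.
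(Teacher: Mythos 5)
Your approach is the same as the paper's: apply Proposition~\ref{prop:isomorphism-lemma} to the morphism $\tau:(\ZZZ(G),G)\to({\sf c}(V),G)$, so everything rests on the claim that $\tau$ induces a bijection $G^2_{\ZZZ(G)}\cong G^2_{{\sf c}(V)}$. But the worry you flag in your final paragraph is not a matter of careful bookkeeping --- it is a genuine gap, and it is present in the paper's own proof as well. Your middle paragraph's assertion that a collision would produce ``three arrows, distinct and all non-degenerate'' is precisely what fails when $v=u$: there $\alpha=(v,v)=1_v$ is degenerate, so no forbidden triangle is produced. The triangle-free hypothesis therefore does \emph{not} exclude $z_{v,v}$ from $G^2_{\ZZZ(G)}$: a non-degenerate directed $2$-cycle $v\to w\to v$ puts $z_{v,v}$ into $G^2_{\ZZZ(G)}(v,v)$ alongside $1_v$, while $\tau$ collapses both onto the unique arrow $1_v\in{\sf c}(V)(v,v)$. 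The paper's sentence that the two sets are disjoint and that this ``follows from the fact that $G$ has no non-degenerate directed triangles'' is exactly this unsound step.

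Neither of your two escape routes can close the gap, because the conclusion itself fails in the presence of a $2$-cycle. Take $G$ to be the digraph on $V=\{0,1\}$ whose non-degenerate edges are $(0,1)$ and $(1,0)$. Any non-degenerate directed triangle requires three pairwise distinct vertices, so $G$ has none. On the square-commutative side, every square relation in Definition~\ref{def: square-free-C} that arises in $G$ is of the trivial form $\alpha\beta\sim\alpha\beta$, so $\CCC(G)=\mathpzc{F}(G)$ and Proposition~\ref{prop:free} gives $\Omega^{\sf sc}_nG=0$ for $n\geq 2$; concretely, by~\eqref{eq:omega_sc_d_i} the two generators $\langle(0,1),(1,0)\rangle$ and $\langle(1,0),(0,1)\rangle$ of $(\NN Q)_2$ are sent by $\tilde d_1$ to the nonzero elements $\langle z_{0,0}\rangle$ and $\langle z_{1,1}\rangle$, neither of which lies in $(\NN Q)_1$. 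On the GLMY side the same two generators are sent by $\tilde d_1$ to $\langle 1_0\rangle=0$ and $\langle 1_1\rangle=0$, so by Proposition~\ref{prop:low_dim} we get $\Omega^{\sf GLMY}_2G\cong\KK^2$. Hence $\Omega^{\sf sc}G\not\cong\Omega^{\sf GLMY}G$, and Theorem~\ref{th:path=sc} as stated requires the additional hypothesis that $G$ has no non-degenerate directed $2$-cycles. Under that extra assumption $z_{v,v}\notin G^2_{\ZZZ(G)}$ for every $v$, the case $v=u$ is vacuous, and your $v\neq u$ argument (and the paper's) goes through.
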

\begin{proof}
We denote by $M$ the arrows of the digraph $G$ considered as morphisms of $\ZZZ(G)$ and denote by $M'$ the arrows of digraph $G$ 
considered as morphisms of ${\sf c}(V).$ The set $M^2$ is a disjoint union of two sets of morphisms: (1) arrows of $M$; (2) morphisms $z_{v,u}$ for such couples that there is a path of length $2$ from $v$ to $u.$ The set $(M')^2$ is also a \emph{disjoint} union of two sets: (1) edges $(v,u)\in E$ (2) couples $(v,u)$ such that there is a path of length $2$ from $v$ to $u.$ The fact that the last two sets are disjoint follows from the fact that $G$ has no non-degenerate directed triangles and double edges. Then it is easy to see that the morphism $(\ZZZ(G),M)\to ({\sf c}(V),M')$ induces isomorphisms $M\cong M'$ and $M^2\cong (M')^2.$ Then the assertion follows from Proposition \ref{prop:isomorphism-lemma}. 
\end{proof}

\subsection{Simplicial complexes and square-commutative homology} 

Following \cite{grigor2014graphs} we can associate two graphs $G(S)$ and $G'(S)$  with a simplicial complex $S$, such that 
\begin{equation}
G(S) \subseteq G'(S).    
\end{equation}
The vertices of the graph $G'(S)$ are simplices of $S.$ For two simplices $\sigma,\tau\in S$ there is an arrow $\sigma\to \tau$ in $G'(S)$ if and only if $\tau\subseteq \sigma.$ The graph $G(S)$ is a subgraph of $G'(S)$ with the same set of vertices. The arrow $\sigma \to \tau$ is in $G(S)$ if and only if $\tau\subseteq \sigma$ and ${\sf dim}(\sigma) = {\sf dim}(\tau)+1.$  It is proved in  \cite{grigor2014graphs} that 
\begin{equation}\label{eq:cong_G(S)}
    H_*(S)\cong \PH_*(G(S)) \cong \PH_*(G'(S)).
\end{equation}

\begin{theorem}\label{th:simpl_complex_sc} For any simplicial complex $S$ there is an isomorphism 
\begin{equation}
    H_*(S) \cong H^{\sf sc}_*(G(S)).
\end{equation}
\end{theorem}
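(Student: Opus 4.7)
The plan is to reduce this to the already-proved Theorem \ref{th:path=sc} together with the isomorphism \eqref{eq:cong_G(S)}. The strategy rests on the observation that $G(S)$ has very restricted triangle geometry, which lets us identify its square-commutative homology with its GLMY-homology.

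First I would analyze the combinatorial structure of the digraph $G(S)$. Recall that an arrow $\sigma \to \tau$ exists in $G(S)$ precisely when $\tau \subseteq \sigma$ and ${\sf dim}(\sigma) = {\sf dim}(\tau) + 1$. Thus every non-degenerate arrow in $G(S)$ strictly decreases the dimension of the simplex by exactly one. Suppose for contradiction that a non-degenerate directed triangle
\begin{equation*}
\begin{tikzcd}
& \sigma' \ar[rd,"\beta"] & \\
\sigma \ar[ru,"\alpha"] \ar[rr,"\gamma"] && \sigma''
\end{tikzcd}
\end{equation*}
exists in $G(S)$. Then following the arrows $\alpha$ and $\beta$ we would have ${\sf dim}(\sigma'') = {\sf dim}(\sigma) - 2$, while following the arrow $\gamma$ gives ${\sf dim}(\sigma'') = {\sf dim}(\sigma) - 1$. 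This is a contradiction, so $G(S)$ contains no non-degenerate directed triangles.

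Having established this, Theorem \ref{th:path=sc} applies directly to $G=G(S)$ and yields an isomorphism of chain complexes
\begin{equation*}
\Omega^{\sf sc} G(S) \cong \Omega^{\sf GLMY} G(S),
\end{equation*}
and hence an isomorphism of homology $H_*^{\sf sc}(G(S)) \cong H_*^{\sf GLMY}(G(S))$. Combining this with \eqref{eq:cong_G(S)}, which identifies $H_*^{\sf GLMY}(G(S))$ with $H_*(S)$, produces the desired isomorphism $H_*(S)\cong H_*^{\sf sc}(G(S))$.

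There is really no technical obstacle here: the whole proof is a two-line syllogism once one notices the dimension-counting argument that rules out triangles. The only thing worth double-checking is the convention on the direction of arrows in $G(S)$ (so that the triangle obstruction is indeed incompatible with strict dimension drop), but this is a trivial combinatorial verification and not a real difficulty.
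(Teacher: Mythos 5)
Your proof is correct and follows exactly the same route as the paper's proof: verify that $G(S)$ has no non-degenerate directed triangles (by the dimension-drop argument, which the paper states as a fact without writing out), apply Theorem \ref{th:path=sc}, and combine with the isomorphism \eqref{eq:cong_G(S)} from the cited reference.
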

\begin{proof}
It follows from Theorem \ref{th:path=sc}, the fact that there are no non-degenerate directed triangles and double edges in $G(S),$ and the isomorphism \eqref{eq:cong_G(S)}.
\end{proof}

\subsection{Comparison of square-commutative and \textit{k}-power homology} 

Let $k\geq 1$ be an integer such that $k\cdot 1_\KK$ is invertible in $\KK$ and let $Q$ be a quiver of power at most $k.$ Consider an embedding $i:Q\mono {\sf Q}^k_V$ and the corresponding embedding $i^\AAA :Q \mono \AAA^k_V$ (see Subsection  \ref{subsection:def_k-power}). Consider a functor $i^\ZZZ:\ZZZ(Q) \to \AAA^k_V$ which sends $\alpha\in Q_1$ to $i(\alpha)$ and $z_{v,v'}$ to $\zeta_{v,v'}=\frac{1}{k} \sum_{i=1}^k \alpha_i^{v,v'}.$ It is easy to see that this is a functor which induces a morphism of marked linear categories
\begin{equation}
i^\AAA:(\KK[\ZZZ(Q)],Q) \longrightarrow (\AAA^k_V,i^\AAA(Q)).
\end{equation}
This induces a morphism of chain complexes 
\begin{equation}\label{eq:sc-k}
\Omega^{\sf sc} Q \longrightarrow \Omega^{(k)} Q.    
\end{equation}

\begin{proposition} Let $\KK$ be a commutative ring such that $k\cdot 1_\KK$ is invertible in $\KK$ and let $Q$ be a quiver of power at most $k-1.$ Then the morphism \eqref{eq:sc-k} is an isomorphism
\begin{equation}
\Omega^{\sf sc} Q \cong \Omega^{(k)} Q. 
\end{equation}
\end{proposition}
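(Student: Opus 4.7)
The strategy is to give parallel explicit descriptions of $\Omega^{\sf sc}_n Q$ and $\Omega^{(k)}_n Q$ as the \emph{same} subcomplex of $(\NN Q)_n$, and then to observe that the morphism \eqref{eq:sc-k} acts as the identity on this common description.

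First I would apply Proposition~\ref{prop:ideal} to the embedded quiver $(\ZZZ(Q), Q)$ using the ideal $I = \{z_{v, u} : v, u \in Q_0\}$: clearly $I \cap Q_1^{\sf N} = \emptyset$, and the composition in $\ZZZ(Q)$ of any two non-degenerate arrows of $Q$ lies in $I$. The proposition then identifies $\Omega^{\sf sc}_n Q$ with the submodule of $(\NN Q)_n$ on which every interior face map $\tilde d_i$, $1 \leq i \leq n-1$, vanishes, and presents the differential as $\tilde d_0 + (-1)^n \tilde d_n$. Writing $x = \sum a_{\alpha_1, \ldots, \alpha_n} \langle \alpha_1, \ldots, \alpha_n\rangle$, the condition $\tilde d_i(x) = 0$ in $\NN \ZZZ(Q)$ is equivalent to the vanishing of the combinatorial sum
\begin{equation}
S_i(\bar\beta; v, v') := \sum_{v'',\, \alpha \in Q(v, v''),\, \alpha' \in Q(v'', v')} a_{\beta_1, \ldots, \beta_{i-1}, \alpha, \alpha', \beta_{i+1}, \ldots, \beta_{n-1}}
\end{equation}
for every choice of $(\bar\beta, v, v')$, since these sums are the coefficients of the linearly independent basis vectors $\langle \beta_1, \ldots, \beta_{i-1}, z_{v, v'}, \beta_{i+1}, \ldots, \beta_{n-1}\rangle$.

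For the $k$-power side, the differential on $\NN({\sf Lnerve}(\AAA^k_V))$ restricted to $(\NN Q)_n$ has $\tilde d_0(x), \tilde d_n(x)$ automatically in $(\NN Q)_{n-1}$, while for $1 \leq i \leq n-1$ the composition rule $\alpha_{i+1} \circ \alpha_i = \zeta_{v_{i-1}, v_{i+1}} = \frac{1}{k}\sum_j \alpha_j^{v_{i-1}, v_{i+1}}$ yields an expansion of $\tilde d_i(x)$ in which only the $i$-th entry of each basis vector can lie outside $i(Q)$. This ``single-defect'' observation shows that the non-$\NN Q$ contributions of the various $\tilde d_i$ have pairwise disjoint support, hence $\partial(x) \in (\NN Q)_{n-1}$ iff each $\tilde d_i(x) \in (\NN Q)_{n-1}$ for $1 \leq i \leq n-1$. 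The hypothesis power $\leq k-1$ then enters decisively: for every pair $(v, v')$ there exists an index $j$ with $\alpha_j^{v, v'} \notin i(Q)$; the coefficient of $\langle \beta_1, \ldots, \beta_{i-1}, \alpha_j^{v, v'}, \beta_{i+1}, \ldots, \beta_{n-1}\rangle$ in $\tilde d_i(x)$ equals $\tfrac{1}{k} S_i(\bar\beta; v, v')$, independently of $j$, so forcing it to vanish for an ``external'' $j$ makes $S_i(\bar\beta; v, v') = 0$ and therefore kills the coefficient for \emph{all} $j$, giving $\tilde d_i(x) = 0$ outright.

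Assembling these facts, $\Omega^{(k)}_n Q$ is carved out of $(\NN Q)_n$ by exactly the same conditions $S_i(\bar\beta; v, v') = 0$ that define $\Omega^{\sf sc}_n Q$; the outer face maps $\tilde d_0, \tilde d_n$ involve no composition in either category, so the differentials reduce to the same formula $\tilde d_0 + (-1)^n \tilde d_n$ on both sides; and the morphism \eqref{eq:sc-k} is tautologically the identity on $(\NN Q)_n$, hence restricts to an isomorphism of chain complexes. The one subtle point is the strict inequality $\mathrm{power}(Q) < k$: were $Q(v, v')$ to exhaust ${\sf Q}^k_V(v, v')$ for some pair $(v, v')$, one could no longer recover the vanishing of $S_i$ from the condition $\tilde d_i(x) \in \NN Q$, and the identification of the two complexes would break.
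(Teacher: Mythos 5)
Your proof is correct, but it takes a genuinely different and more computational route than the paper's. The paper proves the isomorphism in three lines: since the power of $Q$ is strictly less than $k$, the set $i^\AAA(Q)(v,u) \cup \{\zeta_{v,u}\}$ is linearly independent in $\AAA^k_V(v,u)$, hence the functor $i^\ZZZ: \KK[\ZZZ(Q)] \to \AAA^k_V$ is an embedding (a split monomorphism on each hom-module, since $1/k$ is a unit), and the assertion then follows at once from Remark~\ref{rem:embedding}, which says that $\omega(C,D)$ is unchanged when one enlarges the ambient chain complex $C$ along a monomorphism. Your argument, by contrast, explicitly identifies $\Omega^{\sf sc}_n Q$ and $\Omega^{(k)}_n Q$ with the \emph{same} solution set of linear conditions $S_i(\bar\beta;v,v') = 0$ inside $(\NN Q)_n$: on the square-commutative side this identification comes from Proposition~\ref{prop:ideal} applied to the ideal $I = \{z_{v,u}\}$ of $\ZZZ(Q)$, while on the $k$-power side you re-derive in this specific case what Remark~\ref{rem:embedding} gives abstractly, via the single-defect observation (the non-$\NN Q$ parts of the various $\tilde d_i$ live in pairwise disjoint direct summands of the linear nerve, which is what lets you pass from $\partial(x)\in \NN Q$ to $\tilde d_i(x)\in \NN Q$ for each individual $i$) together with invertibility of the common coefficient $\frac{1}{k}$. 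Both proofs invoke the hypothesis power $\leq k-1$ at exactly the same point, namely to guarantee $i(Q)(v,u) \subsetneqq {\sf Q}^k_V(v,u)$; the paper converts this into linear independence of $i^\AAA(Q)(v,u)\cup\{\zeta_{v,u}\}$, whereas you convert it into the existence of an ``external'' basis vector $\alpha_j^{v,v'}$ whose vanishing coefficient forces $S_i=0$. Your route has the merit of making the defining conditions of $\Omega^{(k)}$ completely explicit and of exposing precisely where the hypotheses bite, at the cost of more bookkeeping; the paper's argument is faster but rests on the general machinery of Section~\ref{section:omega}.
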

\begin{proof}
Since the power of $Q$ is less then $k,$ we have $i(Q)(v,u) \subsetneqq {\sf Q}^k_V(v,u)$ for each $v,u\in V.$ It follows that the set $i^\AAA(Q)(v,u) \cup \{ \zeta_{v,u} \} $ is linearly independent in $\AAA^k_V(v,u)$. Therefore $i^\ZZZ: \KK[\ZZZ(Q)] \to \AAA^k_V$ is an embedding.  Then the assertion follows from Remark \ref{rem:embedding}.
\end{proof}

\begin{corollary}[{cf. \cite[Th.4.4]{grigor2018path}}]\label{cor:kl-iso}
Let $k,$ $l$ be positive integers such that $k\cdot 1_\KK$ and $l\cdot 1_\KK$ are invertible in $\KK,$ and let $Q$ be a quiver of power strictly less that $k$ and $l.$ Then there are isomorphisms 
\begin{equation}
    \Omega^{(k)} Q \cong \Omega^{(l)} Q, \hspace{1cm} H^{(k)}_*(Q) \cong H^{(l)}_*(Q).
\end{equation}
\end{corollary}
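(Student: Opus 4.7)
The plan is to derive the corollary as an immediate consequence of the preceding proposition, which establishes $\Omega^{\sf sc}Q \cong \Omega^{(k)}Q$ whenever the power of $Q$ is strictly less than $k$ and $k\cdot 1_\KK$ is invertible in $\KK$. Since the hypotheses of the corollary guarantee that this preceding proposition applies both with the parameter $k$ and with the parameter $l$, one obtains two isomorphisms of chain complexes with a common middle term.

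Concretely, I would proceed in two short steps. First, apply the preceding proposition to the pair $(\KK,k)$: the assumptions that $k\cdot 1_\KK$ is invertible and that the power of $Q$ is strictly less than $k$ give an isomorphism $\Omega^{\sf sc}Q \cong \Omega^{(k)}Q$ of chain complexes. Second, apply the same proposition to the pair $(\KK,l)$ to obtain $\Omega^{\sf sc}Q \cong \Omega^{(l)}Q$. Composing one of these with the inverse of the other produces the claimed isomorphism
\begin{equation}
\Omega^{(k)}Q \;\cong\; \Omega^{\sf sc}Q \;\cong\; \Omega^{(l)}Q.
\end{equation}
Passing to homology then yields $H_*^{(k)}(Q)\cong H_*^{\sf sc}(Q) \cong H_*^{(l)}(Q)$, which gives the second assertion.

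There is really no main obstacle here: the whole content of the corollary is already packaged into the preceding proposition, and the role of the square-commutative complex $\Omega^{\sf sc}Q$ is precisely to serve as a common object that does not depend on the choice of $k$. The only thing worth remarking on is that the condition "strictly less than" on the power is essential in both applications, because it is exactly what ensures that the set $i^\AAA(Q)(v,u)\cup\{\zeta_{v,u}\}$ remains linearly independent inside $\AAA^k_V(v,u)$ (and analogously in $\AAA^l_V(v,u)$), so that $\ZZZ(Q)$ embeds linearly into both $\AAA^k_V$ and $\AAA^l_V$.
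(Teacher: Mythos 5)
Your proposal is correct and follows exactly the route the paper intends: the statement is labeled a corollary of the immediately preceding proposition, and the two applications of that proposition (once with $k$, once with $l$, then composing through the common middle term $\Omega^{\sf sc}Q$) is precisely the argument. Your closing remark correctly identifies why the strict inequality on the power is needed, namely to ensure the linear embedding $\KK[\ZZZ(Q)]\to\AAA^k_V$ (and likewise for $l$) exists.
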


\begin{remark} Let $k,l$ are two distinct positive integers and $Q$ is a quiver whose power is strictly less than $k$ and $l.$  On one hand, in 
\cite[Remark 4.5]{grigor2018path} it is stated that $H_*^{(k)}(Q)$ and $H_*^{(l)}(Q)$ are not necessarily isomorphic in this case. They state that there is an isomorphism of modules $\Omega^{(k)}_n Q \cong \Omega^{(l)}_n Q,$ which is not compatible with the differential. On the other hand, Corollary \ref{cor:kl-iso} states that the complexes $\Omega^{(k)}Q$ and $\Omega^{(l)}Q$ are isomorphic as well as their homology $H_*^{(k)}(Q)$ and $H_*^{(l)}(Q)$. This is because in \cite{grigor2018path} authors use another approach to the definition that allows to define $k$-power homology without the assumption that $k\cdot 1_\KK$ is invertible. But our approach to the definition (that uses marked linear categories) allows us to define the $k$-power homology only assuming that $k\cdot 1_\KK$ is invertible. So, the modules  $H_*^{(k)}(Q)$ and $H_*^{(l)}(Q)$ can be non-isomorphic only if at least one of the integers $k$ and $l$ is not invertible in $\KK$. The authors show that in the case of $\KK=\ZZ$ the groups $H^{(k)}_0(Q,\ZZ)$ and $H^{(l)}_0(Q,\ZZ)$ are non-isomorphic \cite[Prop 4.3]{grigor2018path}.
\end{remark} 

\subsection{Examples}

\begin{example}[{cf. \cite[Example 4.6]{grigor2018path}}]
Let $G$ be a digraph with three vertices and there non-degenerate arrows 
\begin{equation}\label{eq:graph:triangle}
\begin{tikzcd}
& \bullet \ar[rd]& \\
\bullet \ar[rr] \ar[ru] & & \bullet
\end{tikzcd}
\end{equation}
By Proposition \ref{prop:without_squares} we have $H_0^{\sf sc}(G)=H_1^{\sf sc}(G)\cong \KK$ and $H_n^{\sf sc}(G)=0$ for $n\geq 2.$  On the other hand this graph is contractible in the sense of GLMY-theory  \cite{grigor2014homotopy}, and hence 
\begin{equation}
H_1^{\sf sc}(G)\not \cong \PH_1(G) =0.    
\end{equation}
Note that we also have a variant of homotopy invariance theorem for square-commutative homology (Proposition \ref{prop:homotopy-sc}) but it works only for \emph{non-degenerately} homotopic \emph{non-degenerated} morphisms. This weak version of homotopy invariance theorem does not allow to prove that graphs contractible in the sense of GLMY-theory have trivial square-commutative homology.
\end{example}

\begin{example}
Take the graph $G$ from the previous example \eqref{eq:graph:triangle} and assume that $\KK$ is a principal ideal domain. Then by Proposition \ref{prop:EZ-sc} we have 
\begin{equation}
H_i^{\sf sc}(G^{\square n}) = \KK^{\binom{n}{i}},
\end{equation}
where $G^{\square n} = G \square \dots \square G.$
\end{example}

\begin{example}
A large source of examples is Theorem  \ref{th:simpl_complex_sc}. For instance, we can take a triangulation $S$ of the Klein bottle and obtain a digraph $G(S)$ such that $H^{\sf sc}_1(G(S),\ZZ)=\ZZ\oplus \ZZ/2.$ This example shows that there can be torsion in $H^{\sf sc}_*(G,\ZZ).$
\end{example}

\begin{example}
Consider the quiver $Q:$
\begin{equation}
\begin{tikzcd}
\bullet \ar[r,bend left=20,"\alpha_1"] \ar[r,bend right=20,"\alpha_2"']  & \bullet \ar[r,bend left=20,"\beta_1"] \ar[r,bend right=20,"\beta_2"']  & \bullet 
\end{tikzcd}    
\end{equation}
and the category $\ZZZ'(Q)$ (see Remark \ref{remark:Z'}). There are four non-degenerate composable $2$-sequences $(\alpha_i,\beta_j)$ for $i,j\in \{1,2\}$ that form a basis of $\NN_2 \ZZZ'(Q)$ and $\NN_n \ZZZ'(Q)=0$ for $n\geq 1.$ By \eqref{eq:omega_sc_d_i} we have $\Omega^{\sf sc}_2(Q)={\sf Ker}(\tilde d_1: \overline{\KK[{\sf P} Q]}_2 \to \NN\ZZZ'(Q)_1)$ and $\Omega_n^{\sf sc}(Q)=\overline{\KK[{\sf P}Q]}_n$ for $n=0,1.$  For an element $x=a_{11}(\alpha_1,\beta_1)+a_{12}(\alpha_1,\beta_2)+a_{21}(\alpha_2,\beta_1)+a_{22}(\alpha_2,\beta_2) \in (\NN \ZZZ'(Q))_2$ the condition $d_1(x)=0$ is equivalent to $a_{11}+a_{12}+a_{21}+a_{22}=0.$  Therefore we have a 3-element basis of $\Omega^{\sf sc}_2(Q)$ given by $(\alpha_1,\beta_1) - (\alpha_1,\beta_2),$ $(\alpha_1,\beta_2) - (\alpha_2,\beta_1 ), $ and $(\alpha_2,\beta_1) - (\alpha_2,\beta_2).$
\begin{equation}
    \Omega^{\sf sc}Q: \hspace{1cm} 0\to \KK^3 \to \KK^4 \to \KK^3.
\end{equation}
Simple computation shows that that homology of this complex is 
\begin{equation}
H_0^{\sf sc}(Q)=\KK, \hspace{5mm} H_1^{\sf sc}(Q)=0, \hspace{5mm}  H_2^{\sf sc}(Q)=\KK
\end{equation}
and the non-trivial $2$-cycle is given by $ (\alpha_1,\beta_1) - (\alpha_1,\beta_2) - (\alpha_2,\beta_1) + (\alpha_2,\beta_2).$
\end{example}

\begin{example}
\label{example:two_loops}
Consider the quiver with one loop and the quiver with two loops.
\begin{equation}
Q^{(1)}: \ \ \  
\begin{tikzcd}
\bullet \ar[loop, in=135,out=225,looseness=10,"\alpha"]
\end{tikzcd}    
\hspace{1cm}
Q^{(2)}: \ \ \  
\begin{tikzcd}
\bullet 
\ar[loop, in=135,out=225,looseness=10,"\alpha"] 
\ar[loop, in=45,out=-45,looseness=10,"\beta"']
\end{tikzcd}    
\end{equation}
Note that 
$Q^{(2)} = Q^{(1)} \square Q^{(1)}.$
It is easy to compute that  $\Omega^{\sf sc}(Q^{(1)})$ has the following form 
\begin{equation}
 \Omega^{\sf sc}(Q^{(1)}): \hspace{1cm} 0 \to \KK \overset{0}\to \KK   
\end{equation}
and the square commutative homology are equal to homology of the circle. 
\begin{equation}
H^{\sf sc}_*(Q^{(1)}) = H_*(S^1). 
\end{equation}
In particular $\Omega^{\sf sc}_n(Q^{(1)})=0$ for $n\geq 2.$ On the other hand, we claim that
\begin{equation}
 \Omega^{\sf sc}_n(Q^{(2)})\ne 0, \hspace{1cm} n\geq 0.   
\end{equation}
Let us prove it. We denote by $W_n$ the set of all sequences $s=(\gamma_1,\dots,\gamma_n),$ where $\gamma_i\in \{ \alpha,\beta \}.$ We set ${\sf sgn}(s)=(-1)^k,$ where $k$ the number of $\beta$ in the sequence. Then it is easy to check that $ \sum_{s\in W_n} {\sf sgn}(s) \langle s \rangle \in \Omega_n^{\sf sc}(Q^{(2)}).$  
This follows that 
\begin{equation}
    \Omega^{\sf sc}(Q^{(1)} \square Q^{(1)}) \not\cong \Omega^{\sf sc}(Q^{(1)}) \otimes \Omega^{\sf sc}(Q^{(1)}). 
\end{equation}
Hence, Proposition \ref{prop:EZ-sc} can't be generalised to all quivers. 
\end{example}

\section{Marked groups}\label{sec:homology_of_subsets}

\subsection{Definition and basic properties}

A subset of a group $M\subseteq G$ is called pointed, if it contains the unit. 
A marked group is a couple $(G,M),$ where $G$ is a group and $M$ is a pointed subset of $G.$ A group can be treated as a category with one object, so a marked group is a particular case of a marked category. Therefore a marked group $(G,M)$ 
defines a chain complex $\Omega(G,M)$ and its homology 
\begin{equation}
    \PH_*(G,M) := H_*(\Omega(G,M)) 
\end{equation}
is called path homology of the marked group. If we want to specify the ring $\KK$ we use the notation $\PH_*(G,M,\KK).$ Note that if $M$ is a subgroup of $G,$ then $\PH_*(G,M)=H_*(M)$ is just the ordinary group homology. 

A morphism marked groups $f:(G,M)\to (G',M')$ is a homomorphism $f:G\to G'$ such that $f(M)\subseteq M'.$ Such a morphism defines a morphism of complexes $\Omega(G,M)\to \Omega(G',M')$ and a morphism of modules $\PH_*(G,M)\to \PH_*(G',M').$

\ 

\newpage

\subsubsection{Conjugated homomorphisms} A ``homotopy invariance'' for marked groups has the following form. Let $f,g:(G,M)\to (G',M')$ be two morphisms of marked groups such that there exists an element $m\in M'$ such that the homomorphisms are conjugated by this element i.e. 
\begin{equation}
f(y)=m^{-1}g(y)m \hspace{1cm}    \text{for any } y\in G.
\end{equation}
Then $m$ can be treated as a natural transformation from $f$ to $g$ if we treat them as functors between categories $G$ and $G'.$  Then Proposition \ref{prop:nat_tranf} and Proposition \ref{prop:homotopy_emb} imply that the induced morphisms of chain complexes are homotopic 
\begin{equation}
\Omega f\sim \Omega g : \Omega(G,M) \longrightarrow \Omega(G',M').    
\end{equation}
In particular, the homomorphisms  $\PH_*(f)=\PH_*(g): \PH_*(G,M)\to \PH_*(G',M')$ are equal. 
 
\subsubsection{Isomorphism lemma for marked groups}\label{par:isomorphism} For a subset $M\subseteq G$ we set $M^2=\{mm' \mid m,m'\in M\}.$ If $f:(G,M)\to (G',M')$ is a morphism of marked groups such that $f$ induces bijections $M\cong M'$ and $M^2 \cong (M')^2,$ then Proposition \ref{prop:isomorphism-lemma} implies that $f$ induces an isomorphism 
\begin{equation}\label{eq:is:X^2}
\PH_*(G,M) \cong \PH_*(G',M').    
\end{equation}
In particular, if $G\subseteq G'$ then 
$
\PH_*(G,M)\cong \PH_*(G',M).    
$

\subsubsection{Eilenberg–Zilber theorem and the K\"unneth formula} If $\KK$ is a principal ideal domain and $(G,M)$ and $(G',M')$ are marked groups, then Proposition \ref{prop:EZ_embedded} implies that there is an isomorphism of complexes 
\begin{equation}\label{eq:EZ-groups}
    \Omega(G,M) \otimes \Omega(G',M') \cong \Omega(G\times G', M \vee M'),
\end{equation}
where $M\vee M'=(M\times 1) \cup (1\times M'),$
which implies that there is the corresponding K\"unneth-like short exact sequence: if we set $H_*=\PH_*(G,M),$ $H'_*={\sf PH}_*(G',M')$  and $H''_*={\sf PH}_*(G\times G',M\vee M')$ then there is a natural short exact sequence. \begin{equation} \label{eq:kunneth-groups} 0 \longrightarrow 
\bigoplus_{i+j=n} H_i\otimes H'_j \longrightarrow H''_n \longrightarrow \bigoplus_{i+j=n-1} {\sf Tor}^\KK_1(H_i,H'_j) \longrightarrow 0.
\end{equation}

\subsubsection{Free product}

For two groups $G$ and $G'$ we denote by $G*G'$ their free product and we denote by $\iota:G\to G*G'$ and $\iota':G'\to G*G'$  the canonical embeddings. 

Note that for any group $G$ we have $\Omega_0G =\NN_0 G\cong \KK$ is generated by one element, which is the only object of $G,$ treated a category. We set 
\begin{equation}
R := {\sf Ker}(\NN_0 G \oplus \NN_0 G' \epi \NN_0(G*G')) \cong \KK.
\end{equation}

\begin{proposition}\label{prop:free_product} For any marked groups $(G,M)$ and $(G',M')$ the morphisms $\iota,\iota'$ induce an isomorphism
\begin{equation}
\Omega(G*G',\iota(M)\cup \iota'(M')) \cong (\Omega(G,M)\oplus \Omega(G',M'))/R. 
\end{equation} 
In particular, we have an isomorphism
\begin{equation}\label{eq:free:iso}
 \PH_n(G*G',\iota(M)\cup \iota'(M')) \cong \PH_n(G,M)\oplus \PH_n(G',M')  
\end{equation}
for $n\geq 1.$
\end{proposition}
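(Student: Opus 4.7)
The plan is to show that $\iota,\iota'$ induce a surjection of chain complexes $\Omega(G,X)\oplus \Omega(G',X')\epi \Omega(G*G',Y)$, where $Y=\iota(X)\cup\iota'(X')$, whose kernel is the copy of $R$ concentrated in degree $0$. The resulting chain-level isomorphism $(\Omega(G,X)\oplus\Omega(G',X'))/R\cong \Omega(G*G',Y)$ then yields the homology identification in positive degrees automatically.

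First I exploit the basis structure of $\NN Y$. Since $\iota(X)\cap \iota'(X')=\{1\}$ inside $G*G'$, the non-degenerate arrow set decomposes as a disjoint union $Y_1^{\sf N}=U\sqcup U'$ with $U=\iota(X\setminus\{1\})$ and $U'=\iota'(X'\setminus\{1\})$. Hence, for $n\geq 1$, the module $(\NN Y)_n$ has a basis indexed by $(U\sqcup U')^n$, and each basis element carries a ``type'' $t\in\{0,1\}^n$ recording which side each entry belongs to. The two pure types $(0,\dots,0)$ and $(1,\dots,1)$ pick out exactly the subspaces $\iota_*((\NN X)_n)$ and $\iota'_*((\NN X')_n)$. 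The heart of the proof is the claim that \emph{for $n\geq 1$, every element of $\Omega_n(G*G',Y)$ has zero component on every mixed type}.

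The proof of this claim is the only non-routine step, and the place where the free product really enters. Writing $\partial x=\sum_j (-1)^j \tilde d_j x$ and tracking the reduced length of each entry in $\NN_{n-1}(G*G')$: the maps $\tilde d_0,\tilde d_n$ and every $\tilde d_j$ applied at a position with $t_j=t_{j+1}$ produce sequences whose entries all have reduced length $1$, whereas at a transition position $t_j\neq t_{j+1}$ the new entry $a_{j+1}a_j$ is a reduced word of length $2$. Since $\NN_{n-1}Y$ is spanned only by sequences of length-$1$ entries, the length-$2$ contributions to $\partial x$ have to cancel. Uniqueness of the normal form in the free product makes the argument work: the map $(a,b)\mapsto ba$ from $U\times U'$ into length-$2$ reduced words of alternation pattern ``$U'U$'' is injective, and reduced words of the two possible alternation patterns span linearly independent subspaces of $\NN_{n-1}(G*G')$; so at each transition position $j$ the two patterns must vanish independently, forcing every coefficient of a mixed-type basis sequence in $x$ to be zero.

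Once the lemma is in hand, any $x\in \Omega_n(G*G',Y)$ with $n\geq 1$ splits as $x=y+z$ with $y\in\iota_*((\NN X)_n)$ and $z\in\iota'_*((\NN X')_n)$. For $n\geq 2$, the pieces $\partial y$ and $\partial z$ live in the linearly independent pure-$G$ and pure-$G'$ parts of $\NN_{n-1}(G*G')$, so $\partial x\in \NN_{n-1}Y$ separates into $\partial y\in \iota_*((\NN X)_{n-1})$ and $\partial z\in \iota'_*((\NN X')_{n-1})$, placing $y\in \iota_*(\Omega_n(G,X))$ and $z\in \iota'_*(\Omega_n(G',X'))$; the case $n=1$ is automatic since $(\NN G)_0=\KK$. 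In degree $0$ both morphisms send the unique basis element to the same object of $G*G'$, so the kernel of $\Omega_0(G,X)\oplus \Omega_0(G',X')\to \Omega_0(G*G',Y)$ is precisely $R$. Assembling these pieces gives the stated chain-level isomorphism, whence the homology identification for $n\geq 1$.
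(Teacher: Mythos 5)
Your proof is correct, and it is in fact more careful than the paper's. Both follow the same route: identify $\Omega(G*G',\iota(X)\cup\iota'(X'))$ with $(\Omega(G,X)\oplus\Omega(G',X'))/R$ at the chain level using the normal form in the free product, then read off the homology in positive degrees. But the paper compresses the crucial step into the claim that the monomorphism $(\NN G\oplus\NN G')/R\hookrightarrow\NN(G*G')$ restricts to an isomorphism of graded submodules $(\NN X\oplus\NN X')/R\cong\NN(\iota(X)\cup\iota'(X'))$, followed by an appeal to Remark \ref{rem:embedding}. As literally stated, that isomorphism fails for $n\geq 2$ whenever both $X$ and $X'$ are non-trivial: a basis of $\NN_n(\iota(X)\cup\iota'(X'))$ consists of \emph{all} non-degenerate $n$-tuples of elements of $\iota(X\setminus 1)\sqcup\iota'(X'\setminus 1)$, including mixed-type ones, while the image of $(\NN X\oplus\NN X')/R$ is only the two pure types. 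What is actually needed — and is exactly your key lemma — is the weaker but correct assertion that the mixed-type basis elements never contribute to the $\omega$-subcomplex. Your normal-form argument proves this cleanly: at a transition position $j$ the face $\tilde d_j$ produces a length-$2$ syllable $a_{j+1}a_j$ whose reduced decomposition pins down $(a_j,a_{j+1})$ uniquely, and whose alternation pattern and position prevent it from cancelling against any other face contribution; since length-$2$ syllables lie outside $\NN_{n-1}(\iota(X)\cup\iota'(X'))$, every mixed-type coefficient of a $\partial$-invariant element must vanish. The subsequent splitting $x=y+z$ into pure types, the disjointness of the pure-$G$ and pure-$G'$ supports of $\partial y$ and $\partial z$, and the degree-$0$ identification of the kernel with $R$ are all straightforward and correct. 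So your write-up actually closes a small gap in the paper's own argument.
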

\begin{proof} For a group $\mathpzc{G}$ we set $\NN \mathpzc{G} = \NN(\KK[\Nrv( \mathpzc{G})]).$ It is easy to see that the morphism $(\NN G \oplus \NN G')/R \to \NN(G*G')$ is a monomorphism. Moreover, this monomorphism induces an isomorphism of graded submodules $ (\NN M\oplus \NN M')/R \cong \NN (\iota(M)\cup \iota' (M')).$ Then the assertion follows from Remark \ref{rem:embedding}.
\end{proof}

\begin{proposition}\label{prop:union}
Let $G$ be a group and $M,N$ be  subsets of $G,$ containing unit, such that the sets $M^2\setminus 1, MN\setminus 1 ,NM \setminus 1 ,N^2\setminus 1$ are disjoint and assume that for any $x_1,x_2\in M$ and $y_1,y_2\in N$ such that $(x_1,y_1)\ne (x_2,y_2)$ we have $x_1y_1\ne x_2y_2$ and $y_1x_1\ne y_2x_2.$ Then for any $n\geq 1$
\begin{equation}
\PH_n(G,M\cup N) \cong \PH_n(G,M) \oplus \PH_n(G,N).
\end{equation}
\end{proposition}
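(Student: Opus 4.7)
The plan is to reduce to Proposition \ref{prop:free_product} via the isomorphism-lemma. Set $G':=G*G$, let $\iota,\iota':G\to G'$ be the two canonical inclusions, let $\varphi:G'\to G$ be the fold homomorphism (identity on each factor), and set $W:=\iota(X)\cup\iota'(Y)\subseteq G'$. Then $\varphi$ is a morphism of pointed subsets of groups $(G',W)\to(G,X\cup Y)$, and the plan is to show it induces an isomorphism $\Omega(G',W)\cong\Omega(G,X\cup Y)$ via Proposition \ref{prop:isomorphism-lemma}, after which Proposition \ref{prop:free_product} instantly yields the claimed splitting in degrees $n\geq 1$.

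First I would verify the bijection $W\cong X\cup Y$. Specializing the injectivity hypothesis to the pairs $(x_1,y_1)=(z,1)$ and $(x_2,y_2)=(1,z)$ (both in $X\times Y$ when $z\in X\cap Y$) shows that no $z\in X\cap Y$ with $z\neq 1$ can exist, so $X\cap Y=\{1\}$; combined with $\iota(G)\cap\iota'(G)=\{1\}$ in $G*G$, this gives the first required bijection. For the bijection on squares I would use the reduced-word decomposition
\begin{equation}
W^2 \;=\; \iota(X^2)\;\cup\;\iota'(Y^2)\;\cup\;\{\iota(x)\iota'(y):x\in X\setminus 1,\,y\in Y\setminus 1\}\;\cup\;\{\iota'(y)\iota(x):x\in X\setminus 1,\,y\in Y\setminus 1\},
\end{equation}
in which $\iota(X^2)$ and $\iota'(Y^2)$ intersect only at $1$, and the two length-$2$ reduced-word pieces are disjoint from each other and from the first two. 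Under $\varphi$ these four pieces map onto $X^2$, $Y^2$, $\{xy:x\in X\setminus 1,\,y\in Y\setminus 1\}$ and $\{yx:x\in X\setminus 1,\,y\in Y\setminus 1\}$, respectively. The injectivity hypothesis makes the last two surjections bijections (and in particular, comparing any $(x,y)\neq(1,1)$ with $(1,1)$ forces these images to avoid $1$), while the disjointness hypothesis on $X^2\setminus 1,\,XY\setminus 1,\,YX\setminus 1,\,Y^2\setminus 1$ separates the four images inside $(X\cup Y)^2=X^2\cup XY\cup YX\cup Y^2$. Together they yield $W^2\cong(X\cup Y)^2$.

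Proposition \ref{prop:isomorphism-lemma} then supplies $\Omega(G,X\cup Y)\cong\Omega(G',W)$, and Proposition \ref{prop:free_product} applied to $G*G$ with subsets $X$ and $Y$ in the two factors gives
\begin{equation}
H_n\bigl(G*G,\,\iota(X)\cup\iota'(Y)\bigr)\cong H_n(G,X)\oplus H_n(G,Y)\qquad(n\geq 1).
\end{equation}
Composing the two isomorphisms produces the desired $H_n(G,X\cup Y)\cong H_n(G,X)\oplus H_n(G,Y)$ for $n\geq 1$.

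The main obstacle is the second half of the first step: matching the four pieces of $W^2\subseteq G*G$ against the four summands $X^2,XY,YX,Y^2$ of $(X\cup Y)^2$ uses both the disjointness and the injectivity assumptions simultaneously, and this bookkeeping is where the precise form of the hypotheses genuinely pulls its weight.
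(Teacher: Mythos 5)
Your proof is correct and follows essentially the same route as the paper's: consider the fold map $(G*G,\iota(X)\cup\iota'(Y))\to(G,X\cup Y)$, verify the bijections on the subset and its square, and then combine Proposition \ref{prop:isomorphism-lemma} with Proposition \ref{prop:free_product}. The paper simply asserts the two bijections as following from the hypotheses, whereas you have filled in the verification (including the observation that $X\cap Y=\{1\}$ and the reduced-word bookkeeping in $G*G$), and you have also correctly written $\iota'(Y)$ where the paper's proof has a small typo ($\iota(Y)$).
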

\begin{proof}
Consider the morphism of marked groups $(G*G,\iota(M)\cup \iota(N))\to (G,M\cup N).$ The assumptions imply that this morphism induces bijections $\iota(M)\cup \iota(N)\cong M\cup N$ and $(\iota(M)\cup \iota(N))^2\cong (M\cup N)^2.$ Then the assertion follows from Proposition \ref{prop:free_product} and the isomorphism lemma (Paragraph \ref{par:isomorphism}).
\end{proof}

\begin{example}
Assume that $\KK=\ZZ.$ Take 
\begin{equation}
G=\langle x,y\mid x^2=y^2=[[x,y],y]=1 \rangle.  
\end{equation}
Then the subsets $\{1,x\}$ and $\{1,y\}$ are subgroups and $\PH_*(G,\{1,x\})\cong \PH_*(G,\{1,y\})$ are isomorphic to the homology of the two-element group. Therefore 
Proposition \ref{prop:union} implies that 
\begin{equation}
\PH_n(G,\{1,x,y\}) = 
\begin{cases} \ZZ,&  n=0\\
(\ZZ/2)^2,&  n \text{ is odd} \\ 
0,& n\geq 2 \text{ is even}
\end{cases}    
\end{equation}
\end{example}

\subsubsection{Low dimensional homology} Proposotion \ref{prop:low_dim} implies that $\Omega_0(G,M)=\KK,$ $\Omega_1(G,M)=\KK[X]$ and 
\begin{equation}
\Omega_2(G,M) = {\sf span} \{ \langle x_1, y_1 \rangle - \langle x_2,y_2 \rangle\mid y_1x_1=y_2x_2, x_i,y_i\in X \}.    
\end{equation}
The differential $\Omega_1(G,M)\to \Omega_0(G,M)$ is trivial. Hence, we have $\PH_0(G,M)=\KK$ and 
\begin{equation}
\PH_1(G,M) = \KK[X]/{\sf span}\{ x_1+y_1-x_2-y_2  \mid y_1x_1=y_2x_2, x_i,y_i\in M  \}. 
\end{equation}

\subsubsection{Cohomology of marked groups, and their ideals} 

Since a marked group $(G,M)$ can be treated as a marked category, following Subsection \ref{subsec:cohomology_of_embedded_quivers} we can consider the cohomology $\PH^*(G,M)$ as a graded algebra. The construction is natural by $(G,M),$ and for any morphism $f:(G,M)\to (G',M')$ we obtain a homomorphism 
\begin{equation}
    \PH^*(G',M') \longrightarrow \PH^*(G,M).
\end{equation}
In particular, for any marked group $(G,M)$ we have a homomorphism $H^*(G) \to \PH^*(G,M),$ whose kernel is an ideal. Hence $M$ defines an ideal
$I_M\triangleleft H^*(G).$

\subsection{Abelian groups and Pontryagin product} If $G$ is an abelian group, then the product map $G\times G\to G$ is a homomorphism. Using the Eilenberg-Zilber morphism  $H_*(G)\otimes H_*(G)\to H_*(G\times G),$ this allows us to define a map 
\begin{equation}
    H_*(G)\otimes H_*(G)\to H_*(G\times G) \to H_*(G),
\end{equation}
which is called Pontryagin product on $H_*(G)$ that makes $H_*(G)$ graded-commutative algebra (\cite[Ch. V \S 5]{brown2012cohomology}). 

In the similar manner a marked abelian group $(G,M)$ there is a morphism
\begin{equation} \label{eq:pontryagin}
\PH_*(G,M)\otimes \PH_*(G,M) \to \PH_*(G\times G, M \vee M) \to \PH_*(G,M),
\end{equation}
where the map $\PH_*(G,M)\otimes \PH_*(G,M) \to \PH_*(G\times G, M \vee M)$ is induced by the Eilenberg-Zilber map (see \eqref{eq:kunneth-groups}). This product will be also called Pontryagin product on $\PH_*(G,M).$

\begin{proposition}\label{prop:Pontryagin}  For a marked abelian group $(G,M)$ the Pontryagin product \eqref{eq:pontryagin} defines a structure of graded-commutative (assotiative, unital) algebra on $\PH_*(G,M)$ that depends naturally of $(G,M).$ Moreover, if $M=G,$ then the Pontryagin product \eqref{eq:pontryagin} coincides with the classical Pontryagin product on $H_*(G).$
\end{proposition}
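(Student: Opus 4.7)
The plan is to present $\mu$ as the composition $\mu = m_{*}\circ \varepsilon_{*}$, where $\varepsilon:\Omega(G,X)\otimes \Omega(G,X) \xrightarrow{\cong} \Omega(G\times G, X\vee X)$ is the Eilenberg--Zilber isomorphism of Proposition \ref{prop:EZ_embedded} and $m$ is induced by the group multiplication, and then to verify the algebra axioms by reducing each one to a property of $m$ or of $\varepsilon$. First I would observe that since $G$ is abelian, $m:G\times G\to G$ is a group homomorphism, and $m(x,1)=m(1,x)=x\in X$, so $m$ restricts to a morphism of embedded quivers $m:(G\times G, X\vee X)\to (G,X)$. Naturality of $\mu$ in $(G,X)$ is then inherited from the naturality of $m$, $\varepsilon$ and $H_{*}$.

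Associativity amounts to showing that both $\mu(\mu\otimes 1)$ and $\mu(1\otimes \mu)$ coincide with $H_{*}(m^{(3)})$ applied to the iterated Eilenberg--Zilber isomorphism $\Omega(G,X)^{\otimes 3}\cong \Omega(G^{3}, X\vee X\vee X)$, where $m^{(3)}:G^{3}\to G$ is the (unambiguous) triple multiplication; this follows from the associativity of $\varepsilon$ together with the associativity of $m$. For unitality, the unit is $1\in \KK = H_{0}(G,X)$, and the shuffle formula underlying Lemma \ref{lemma:EZ-for-pairs} shows that $\varepsilon(1\otimes a)$ is represented on the chain level by $(1,a)$, which $m$ sends to $a$. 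For graded commutativity, let $\tau:G\times G\to G\times G$ be the swap; $m\tau=m$ by abelianness, while the classical sign-lemma for the shuffle map gives $\tau_{*}\circ \varepsilon(a\otimes b) = (-1)^{|a||b|}\varepsilon(b\otimes a)$, and composing with $m_{*}$ yields $\mu(a\otimes b)=(-1)^{|a||b|}\mu(b\otimes a)$.

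For the comparison with the classical Pontryagin product when $X=G$: the inclusion of embedded quivers $(G\times G, G\vee G)\hookrightarrow (G\times G, G\times G)$ induces a chain map $j:\Omega(G\times G, G\vee G)\to \NN(\KK[\ner(G\times G)])$; inspecting the shuffle formula shows that $j\circ \varepsilon$ is the classical Eilenberg--Zilber map $\NN G\otimes \NN G \to \NN(\KK[\ner(G\times G)])$. Since $m$ factors as $(G\times G, G\vee G)\hookrightarrow (G\times G, G\times G)\to (G, G)$, post-composing with $m_{*}$ recovers the classical Pontryagin product on $H_{*}(G)$.

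The anticipated main obstacle is bookkeeping rather than conceptual: confirming on the chain level that the restricted Eilenberg--Zilber isomorphism from Proposition \ref{prop:EZ_embedded}, viewed through the embedding $\Omega(G\times G, X\vee X)\hookrightarrow \NN(\KK[\ner(G\times G)])$, is literally the restriction of the classical shuffle map. Once that identification is pinned down, the classical sign-lemma, unit computation and associativity of the shuffle product all carry over verbatim to the subcomplex $\Omega(G\times G, X\vee X)$, and the four claims of the proposition follow at once.
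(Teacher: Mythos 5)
Your argument is essentially the paper's: both realize the product as $m_{*}\circ\varepsilon_{*}$, rely on Lemma~\ref{lemma_restriction} (implicit in Proposition~\ref{prop:EZ_embedded}) to know that the classical shuffle map restricts to $\Omega(G,X)$, and reduce every algebra axiom to the known associativity, unitality and graded-symmetry of the Eilenberg--Zilber map. The paper packages these observations more compactly by noting that $(\NN,\varepsilon)$ is a symmetric lax monoidal functor, so $\NN\KK[\ner(G)]$ is automatically a graded-commutative dg-algebra, and that $\Omega(G,X)$ is a dg-subalgebra of it, from which all four claims (including the $X=G$ comparison, since then $\Omega(G,G)=\NN\KK[\ner(G)]$) follow at once --- the same ingredients you apply one axiom at a time.
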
 
\begin{proof}
It is well known \cite[p.220]{quillen1969rational} and can be easily checked that the Eilenberg-Zilber map is associative and commutative in the following sense. For any simplicial modules $A,A',A''$ the diagrams 
\begin{equation}
\begin{tikzcd}
\NN A \otimes \NN A' \otimes \NN A'' \ar[r,"\varepsilon \otimes 1"] \ar[d,"1\otimes \varepsilon"] & \NN (A\otimes A') \otimes \NN A'' \ar[d,"\varepsilon"] \\
\NN A \otimes \NN(A'\otimes A'') \ar[r,"\varepsilon"] & \NN(A\otimes A' \otimes A'')
\end{tikzcd}    
\end{equation}
and 
\begin{equation}
\begin{tikzcd}
\NN A \otimes \NN A' \ar[r,"\varepsilon"] \ar[d,"t"] & \NN(A\otimes A')\ar[d,"\NN(T)"] \\
\NN A' \otimes \NN A \ar[r,"\varepsilon"] & \NN(A'\otimes A)
\end{tikzcd}    
\end{equation}
are commutative, where $t:\NN A\otimes \NN A' \to \NN A' \otimes \NN A$ is a morphism of complexes given by $t(a\otimes a')= (-1)^{nn'} a' \otimes a$ for $a\in \NN_n A $ and $a'\in \NN_{n'} A',$ and $T:A\otimes A' \to A' \otimes A$ is given by $T(a\otimes a') = a'\otimes a.$ (Someone, who likes abstract nonsense,  can say that $(\NN,\varepsilon)$ is a symmetric lax monoidal functor ${\sf sMod}\to {\sf Ch}_{\geq 0}$. It is also related to ``monoidal Dold-Kan correspondence''). 
This follows that for any commutative simplicial algebra $\mathpzc{A}$ there is a structure of graded-commutative algebra on $\NN \mathpzc{A}$ defined by the map $\NN \mathpzc{A} \otimes \NN \mathpzc{A} \overset{\varepsilon}\longrightarrow \NN (\mathpzc{A} \otimes \mathpzc{A}) \longrightarrow \NN  \mathpzc{A},$ where $\mathpzc{A} \otimes \mathpzc{A} \to  \mathpzc{A} $ is the multiplication morphism.  

In our case we consider the commutative simplicial algebra $\mathpzc{A}=\KK[\Nrv(G)]$ with the multiplication defined by multiplication on $G.$ Then we obtain a structure of graded-commutative dg-algebra on $\NN G := \NN(\KK[\Nrv(G)])$ that induces the Pontryagin product on $H_*(G)=H_*( \NN G).$ Lemma \ref{lemma_restriction} implies that  $\Omega(G,M)$ is a dg-subalgebra of $\NN G$ and the following diagram is commutative.
\begin{equation}
\begin{tikzcd}
\Omega(G,M)\otimes \Omega(G,M) \ar[r,"\varepsilon"] \ar[d,hookrightarrow] & \Omega(G\times G,X\vee X) \ar[r,"\Omega(\mu)"] \ar[d] & \Omega(G,M) \ar[d] \\ 
\NN G \otimes \NN G \ar[r,"\varepsilon"] & \NN(G\times G) \ar[r,"\NN(\mu)"] & \NN G
\end{tikzcd}    
\end{equation}
It follows that the Pontryagin product on $\PH_*(G,M)$ is induced by the product on the subalgebra $\Omega(G,M)\subseteq \NN G,$ which is graded-commutative dg-algebra. It follows that $\PH_*(G,M)$ with Pontryagin product is also a graded-commutative algebra. If $M=G,$ then $\Omega(G,M)= \NN G$ and hence the Pontryagin product on $\PH_*(G,G)$ coincides with the classical Pontryagin product. 
\end{proof}

\begin{remark}
Proposition \ref{prop:Pontryagin} implies that for any pointed subset of an abelian group $M\subseteq G,$ the map $\PH_*(G,M)\to H_*(G)$ is an algebra homomorphism. In particular, the image of this homomorphism is a subalgebra. So any  $M\subseteq G$ defines a subalgebra of $H_*(G).$ 
\end{remark}

\subsection{Coacyclic subsets}

In this subsection we assume that $\KK=\ZZ.$ A pointed subset $M\subseteq G$ will be called coacyclic if the morphism $\PH_*(G,M)\to H_*(G)$ is an isomorphism. Further we list some properties of coacyclic subsets. 

\begin{proposition}\label{prop:coacycllic}
If $M\subseteq G,M'\subseteq G'$ are coacyclic subsets, then 
\begin{equation}
M \vee M'\subseteq G\times G',\hspace{1cm}\iota(M)\cup \iota'(M')\subseteq G*G'
\end{equation}
are coacyclic subsets. 
\end{proposition}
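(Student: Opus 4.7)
My aim is to exhibit both maps $H_*(G\times G', X\vee X') \to H_*(G\times G')$ and $H_*(G*G', \iota(X)\cup\iota'(X')) \to H_*(G*G')$ as isomorphisms by lifting them to chain-level quasi-isomorphisms, using naturality in $(G,X)$ and $(G',X')$.

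For the box product, the plan is to show that $G\vee G'$ itself is coacyclic in $G\times G'$, and then deduce the general case. Observe that taking $X=G$ gives $\Omega(G,G)=\NN G$ (and likewise for $G'$). Specializing the Eilenberg--Zilber isomorphism \eqref{eq:EZ-groups} yields $\NN G\otimes \NN G' \cong \Omega(G\times G', G\vee G')$. By Lemma \ref{lemma_restriction} this isomorphism is the restriction of the classical Eilenberg--Zilber shuffle map $\varepsilon \colon \NN G\otimes \NN G' \to \NN(G\times G')$, which is a homotopy equivalence; hence the inclusion $\Omega(G\times G', G\vee G')\hookrightarrow \NN(G\times G')$ is a quasi-isomorphism. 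For the general case, coacyclicity of $X$ and $X'$ gives quasi-isomorphisms $\Omega(G,X)\to \NN G$ and $\Omega(G',X')\to \NN G'$ between complexes of free $\ZZ$-modules (free since $\NN G$ has a basis of non-degenerate tuples and submodules of free abelian groups are free), so their tensor product is a quasi-isomorphism. Transporting both sides via Theorem \ref{th:EZ} converts this into a quasi-isomorphism $\Omega(G\times G', X\vee X')\to \Omega(G\times G', G\vee G')$, which composed with the previous one yields the desired quasi-isomorphism to $\NN(G\times G')$.

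For the free product, I would apply Proposition \ref{prop:free_product} once to $(G,X),(G',X')$ and once to $(G,G),(G',G')$. By naturality of that isomorphism in each argument, and since $\Omega(G*G',G*G')=\NN(G*G')$, I get a commutative diagram of short exact sequences of chain complexes
\begin{equation*}
\begin{tikzcd}
0 \ar[r] & R \ar[r] \ar[d,equal] & \Omega(G,X)\oplus \Omega(G',X') \ar[r] \ar[d] & \Omega(G*G',\iota(X)\cup\iota'(X')) \ar[r] \ar[d] & 0 \\
0 \ar[r] & R \ar[r] & \NN G\oplus \NN G' \ar[r] & \NN(G*G') \ar[r] & 0
\end{tikzcd}
\end{equation*}
in which the left vertical arrow is the identity and the middle one is a direct sum of the two coacyclicity quasi-isomorphisms. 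Applying the five-lemma to the induced long exact homology sequences (trivially, since $R$ is concentrated in degree $0$) forces the right vertical arrow to be a quasi-isomorphism, which is exactly coacyclicity of $\iota(X)\cup\iota'(X')$.

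The main technical point that requires care is the identification, under the specialization $X=G$, $X'=G'$, of the abstract isomorphism from Theorem \ref{th:EZ} with the restriction of the classical Eilenberg--Zilber shuffle map; this is what upgrades the algebraic isomorphism $\NN G\otimes \NN G'\cong \Omega(G\times G', G\vee G')$ into the geometric statement that the inclusion into $\NN(G\times G')$ is a quasi-isomorphism. Everything else reduces to naturality of the constructions and standard quasi-isomorphism manipulations over the PID $\ZZ$.
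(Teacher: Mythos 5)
Your argument for the box product is essentially a chain-level unwinding of the paper's one-line appeal to the K\"unneth sequence \eqref{eq:kunneth-groups}: you first establish that $G\vee G'\subseteq G\times G'$ is coacyclic by identifying the restricted Eilenberg--Zilber map with the classical shuffle map, and then reduce the general case to this one via naturality of Theorem \ref{th:EZ} and the flatness of the chain groups over $\ZZ$. This is correct and is the right way to make the paper's terse reference precise.

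The free-product half, however, contains a genuine error. The bottom row of your diagram,
\begin{equation*}
0 \to R \to \NN G\oplus \NN G' \to \NN(G*G') \to 0,
\end{equation*}
is not a short exact sequence: the map $\NN G\oplus\NN G'\to\NN(G*G')$ is injective in positive degrees (and has kernel $R$ in degree $0$), but it is far from surjective --- already in degree $1$ the class $\langle gg'\rangle$ for nontrivial $g\in G$, $g'\in G'$ is not in the image, and in higher degrees one has mixed tuples. You seem to have conflated the pointed subset $\iota(G)\cup\iota'(G')\subseteq G*G'$ with the full group $G*G'$; these are very different (the former is not closed under multiplication). Applying Proposition \ref{prop:free_product} to $(G,G)$ and $(G',G')$ yields $\Omega\bigl(G*G', \iota(G)\cup\iota'(G')\bigr)\cong(\NN G\oplus\NN G')/R$, \emph{not} $\NN(G*G')$, and the equality $\Omega(G*G',G*G')=\NN(G*G')$ you invoke refers to a different pointed subset and does not help. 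With the corrected bottom row $0\to R\to\NN G\oplus\NN G'\to\Omega(G*G',\iota(G)\cup\iota'(G'))\to 0$ your five-lemma argument gives that $\Omega(G*G',\iota(X)\cup\iota'(X'))\to\Omega(G*G',\iota(G)\cup\iota'(G'))$ is a quasi-isomorphism, but you still need the additional (and genuinely separate) fact that $\iota(G)\cup\iota'(G')\subseteq G*G'$ is itself coacyclic, i.e.\ that the inclusion $(\NN G\oplus\NN G')/R\hookrightarrow \NN(G*G')$ is a quasi-isomorphism. That is the classical statement that $B(G*G')\simeq BG\vee BG'$ (Mayer--Vietoris for the free product), and it must be cited or proved; without it your chain does not reach $\NN(G*G')$.
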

\begin{proof}
It follows from  \eqref{eq:kunneth-groups} and Proposition \ref{prop:free_product}. 
\end{proof}

\begin{example}
For any group $G$ the group itself $X=G\subseteq G$ is a coacyclic subset.
\end{example}

\begin{example} By Proposition \ref{prop:coacycllic} for any groups $G,G'$
the subsets $G\vee G'\subseteq G\times G$ and $\iota(G)\cup \iota'(G)\subseteq G*G'$ are coacyclic. 
\end{example} 

\begin{example}\label{examp:Z}
It is easy to check that $\{0,1\}\subseteq \ZZ$ is a coacyclic subset. 
\end{example}
\begin{example}
Proposition \ref{prop:coacycllic} and Example \ref{examp:Z} imply that $\{0,e_1,\dots,e_n\}\subseteq \ZZ^n$ is a coacyclic subset, where $e_1,\dots,e_n$ is the standard basis of $\ZZ^n.$
\end{example}
\begin{example}
Proposition \ref{prop:coacycllic} and Example \ref{examp:Z} imply that $\{1,x_1,\dots,x_n\}\subseteq F(x_1,\dots,x_n)$ is a coacyclic subset, where $F(x_1,\dots,x_n)$ is a free group.
\end{example}
\begin{example}
For example, if we consider the Higman group
\begin{equation}
G=\langle x_0,x_1,x_2,x_3 \mid x_i^{-1}x_{i+1}x_i=x_{i+1}^2, \ i\in \ZZ/4 \rangle,
\end{equation}
then $H_n(G)=0$ for $n\geq 1$ and the one-element set $M=\{1\}$ is coacyclic in this group. More generally, one element set $M=\{1\}$ is coacyclic in a group $G$ if and only if $G$ is acyclic.
\end{example}

\begin{example}
Here we present a non-example of coacyclic subset. Let $F=F(x_1,\dots,x_n)$ be a free group and $\gamma_i\subseteq F$ be the lower central series of $F,$ which is defined by the formula $\gamma_{i+1}=[\gamma_i,F],$ where $\gamma_1=F.$ Take $i\geq 3,$  set $G=F/\gamma_i$ and let $M\subseteq G$ to be the image of $\{1,x_1,\dots,x_n\}.$ 
Then the Hopf's formula says that $H_2(G)=\gamma_i/\gamma_{i+1}\ne 0.$ The equation \eqref{eq:is:X^2} implies that $\PH_*(F,\{1,x_1,\dots,x_n\})=\PH_*(G,M).$ Therefore $\PH_2(G,M)=0$ and the generating set $M\subseteq G$ is not coacyclic.
\end{example}

\section{Marked algebras} \label{sec:Hochschild}

\subsection{Path Hochschild homology}
In this section for simplicity we will assume that $\KK$ is a field. Let $\Lambda$ be an (associative, unital) $\KK$-algebra and $V$ be a $\Lambda$-bimodule. We denote by $A(\Lambda,V)$ the simplicial module such that $A(\Lambda)_n=V\otimes \Lambda^{\otimes n}$ and 
\begin{equation}
\begin{split}
d_0(v\otimes \lambda_1 \otimes \dots \otimes \lambda_n) &= v\lambda_1 \otimes \dots \otimes \lambda_n, \\
d_i(v\otimes \lambda_1 \otimes \dots \otimes \lambda_n) &=  v \otimes \lambda_1 \otimes \dots \otimes \lambda_i\lambda_{i+1}\otimes \dots \otimes \lambda_n, \hspace{5mm} 1\leq i\leq n-1, \\ 
d_n(v\otimes \lambda_1 \otimes \dots \otimes \lambda_n) &=  \lambda_n v \otimes \lambda_1 \otimes \dots \otimes \lambda_{n-1},\\
s_i(v\otimes \lambda_1 \otimes \dots \otimes \lambda_n) &= v\otimes \lambda_1 \otimes \dots \otimes \lambda_i \otimes 1 \otimes \lambda_{i+1} \dots \otimes \lambda_n.
\end{split}
\end{equation}

Then Hochschild homology of $\Lambda$ with coefficients in $V$ can be defined as 
\begin{equation}
HH_*(\Lambda,V) = H_*( \NN( A(\Lambda,V)) ).    
\end{equation}

A vector subspace $M$ of $\Lambda$ is called pointed if $1\in M.$ A marked algebra is a couple $(\Lambda, M),$ where $M$ is a pointed vector subspace of $\Lambda$. For a marked algebra we define a path submodule $B(\Lambda,M,V) \subseteq A( \Lambda,V )$ by the formula $B(\Lambda,M,V) = V \bar \otimes M^{\bar \otimes n}.$ It is easy to check that $B(\Lambda,M,V)$ is indeed a path submodule of $A(\Lambda,M).$ Then we define the path 
Hochschild homology of the marked algebra $(\Lambda,M)$ with coefficients in $V$ as the homology of the corresponding path pair of modules an set
\begin{equation}
\begin{split}
\PPP(\Lambda,M,V) &= (A(\Lambda,V), B(\Lambda,M,V)), \\
\Omega(\Lambda,M,V) &= \Omega(\PPP(\Lambda,M,V)),\\
{\sf PHH}_*(\Lambda,M,V) &= H_*( \Omega(\Lambda,M,V) ).
\end{split}
\end{equation}
The algebra $\Lambda$ can be considered as a bimodule over itself. We set $\Omega(\Lambda,M)=\Omega(\Lambda,M,\Lambda)$ and $ {\sf PHH}_*(\Lambda,M)={\sf PHH}_*(\Lambda,M,\Lambda).$

\subsection{Eilenberg-Zilber theorem for marked algebras} 

\begin{proposition}
Let $(\Lambda,M)$ and $(\Lambda',M')$ 
be marked algebras, and $V$ and $V'$ be bimodules over $\Lambda$ and $\Lambda'$ respectively. Then there is an isomorphism \begin{equation}
    \Omega(\Lambda\otimes \Lambda', M\otimes \KK + \KK\otimes M',V\otimes V') \cong \Omega(\Lambda,M,V) \otimes \Omega(\Lambda',M',V').
\end{equation} 
In particular, we have
\begin{equation} \Omega(\Lambda\otimes \Lambda', M\otimes \KK + \KK\otimes M') \cong \Omega(\Lambda,M) \otimes \Omega(\Lambda',M')
\end{equation}
and 
\begin{equation}
{\sf PHH}_*(\Lambda\otimes \Lambda', M\otimes \KK + \KK\otimes M') \cong {\sf PHH}_*(\Lambda,M) \otimes {\sf PHH}_*(\Lambda',M').
\end{equation}
\end{proposition}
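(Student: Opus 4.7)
The plan is to reduce the statement to Corollary \ref{cor:EZ} by exhibiting an isomorphism of path pairs
\[
\theta : \PPP(\Lambda,V,M) \square \PPP(\Lambda',V',M') \;\xrightarrow{\cong}\; \PPP(\Lambda \otimes \Lambda', V \otimes \KK + \KK \otimes V', M \otimes M'),
\]
and then using that $\Omega$ carries isomorphisms of path pairs to isomorphisms of chain complexes. Since $\KK$ is a field, the freeness and direct-summand hypotheses of Theorem \ref{th:EZ} are automatic, so Corollary \ref{cor:EZ} already identifies the right-hand side of the proposition with $\Omega$ of the box product.

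The first step is to check that the obvious levelwise permutation
\[
\theta_n : M \otimes \Lambda^{\otimes n} \otimes M' \otimes (\Lambda')^{\otimes n} \;\xrightarrow{\cong}\; M \otimes M' \otimes (\Lambda \otimes \Lambda')^{\otimes n}
\]
assembles into a simplicial isomorphism $A(\Lambda,M) \otimes A(\Lambda',M') \cong A(\Lambda \otimes \Lambda', M \otimes M')$. Compatibility with degeneracies is immediate since both sides insert the unit; for the interior face maps one uses that the diagonal multiplication $(a \otimes b)(c \otimes d) = ac \otimes bd$ on $\Lambda \otimes \Lambda'$ matches the componentwise action of $d_i \otimes d_i$; the exterior faces involve the bimodule multiplications, which are handled by the same factorwise identity.

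The second step is to verify that $\theta$ carries the path submodule $B(\Lambda,V,M) \diamond B(\Lambda',V',M')$ onto $B(\Lambda \otimes \Lambda', V \otimes \KK + \KK \otimes V', M \otimes M')$. By Lemma \ref{lemma_box_eq}, the source in degree $n$ is the sum over shuffles $(\mu,\nu) \in {\sf Sh}(l,k)$, $k+l=n$, of $s_\mu(M \bar\otimes V^{\bar\otimes k}) \bar\otimes s_\nu(M' \bar\otimes V'^{\bar\otimes l})$; each such summand, after applying $\theta_n$, lands in $(M \otimes M') \otimes W_1 \otimes \cdots \otimes W_n$, where at each of the $n$ positions either $W_i = V \otimes \KK$ (the $\Lambda$-slot is active and the $\Lambda'$-slot is a filler unit from $s_\nu$) or $W_i = \KK \otimes V'$ (vice versa), the pattern being determined by $(\mu,\nu)$. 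Since over a field $\bar\otimes = \otimes$ and one has the identity $(V \otimes \KK + \KK \otimes V')^{\otimes n} = \sum_{S \subseteq [n]} Y^S$ of subspaces of $(\Lambda \otimes \Lambda')^{\otimes n}$ (with $Y^S_i$ equal to $V \otimes \KK$ or $\KK \otimes V'$ according to whether $i \in S$), summing over the $2^n$ shuffles recovers exactly $(M \otimes M') \otimes (V \otimes \KK + \KK \otimes V')^{\otimes n}$.

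With $\theta$ established as an isomorphism of path pairs, functoriality of $\Omega$ gives an isomorphism $\Omega(\PPP \square \PPP') \cong \Omega(\Lambda \otimes \Lambda', V \otimes \KK + \KK \otimes V', M \otimes M')$; composing with the Eilenberg--Zilber isomorphism of Corollary \ref{cor:EZ} proves the claim. The only substantive technical point is the combinatorial matching in Step 2 between shuffles and position-patterns, but this is essentially bookkeeping about how the degeneracies $s_\mu, s_\nu$ insert filler units into the tensor factors and poses no real obstacle.
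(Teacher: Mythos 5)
Your proof is correct and follows essentially the same route as the paper: both construct the levelwise interleaving isomorphism $\theta:A(\Lambda,M)\otimes A(\Lambda',M')\to A(\Lambda\otimes\Lambda',M\otimes M')$, check via the shuffle description of $\diamond$ (Lemma \ref{lemma_box_eq}) and the distributivity identity $(V\otimes\KK+\KK\otimes V')^{\otimes n}=\sum_{I\sqcup J=[n-1]}(\cdots)$ that $\theta$ matches path submodules, and then invoke Corollary \ref{cor:EZ}. Your write-up is a little more explicit about the simplicial compatibility of $\theta$ and about where the field hypothesis enters, but the substance is the same.
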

\begin{proof}
In order to use Corollary \ref{cor:EZ}  we only need to prove that 
\begin{equation}
\PPP(\Lambda\otimes \Lambda', M\otimes \KK + \KK\otimes M',V\otimes V') \cong \PPP(\Lambda,M,V) \square \PPP(\Lambda',M',V').    
\end{equation}
Consider the map
\begin{equation}
\theta:A(\Lambda,V) \otimes A(\Lambda',V') \longrightarrow A(\Lambda\otimes \Lambda' ,V\otimes V')    
\end{equation}
defined by 
\begin{equation}
\begin{split}
&\theta ((v\otimes \lambda_1\otimes \dots \otimes \lambda_n) \otimes (v'\otimes \lambda_1'\otimes \dots \otimes \lambda_n' ))=\\
&= (v\otimes v') \otimes (\lambda_1\otimes \lambda_1')\otimes \dots \otimes (\lambda_n\otimes \lambda_n').
\end{split}
\end{equation}
Obviously, $\tau$ is an isomorphism of simplicial modules. So, it is sufficient to prove that  
\begin{equation}
\theta((B(\Lambda,M,V) \diamond B(\Lambda',M',V'))_n) = B(\Lambda \otimes \Lambda' ,M\otimes \KK+\KK\otimes M',V\otimes V')_n    
\end{equation}
for any $n.$ Set $M_1=M$ and $M_0=\KK.$ For a subset $I\subseteq \{0,\dots,n-1\}$ we set $M_I=M_{I(0)}\otimes \dots \otimes M_{I(n-1)},$ where $I(x)=1,$ if $x\in I,$ and $I(x)=0,$ if $x\notin I.$ 
Then for any surjective order preserving map $\sigma:[n]\to [k]$ we have $\sigma^*( V\otimes M^{\otimes k}) = V \otimes M_{\Ker(\sigma)}.$ 
Similarly we define $M'_I$ and obtain $\tau^*(V'\otimes (M')^{\otimes l})=V'\otimes M'_{\Ker(\tau)}$ for any surjective order preserving map $\tau:[n]\epi [l].$  
Therefore, it is sufficient to prove that 
\begin{equation}
\theta \left( \sum_{I\cup J =\{0,\dots,n-1\}} (V\otimes M_I)\otimes (V'\otimes M'_J)   \right) = (V\otimes V')\otimes (M\otimes \KK + \KK \otimes M')^{\otimes n}
\end{equation}
which follows from the fact that 
\begin{equation}
(M\otimes \KK + \KK \otimes M')^{\otimes n} = \sum_{I\sqcup J=\{0,\dots,n-1\}} (M_{I(0)}\otimes M'_{J(0)}) \otimes \dots \otimes (M_{I(n-1)} \otimes M'_{J(n-1)}).
\end{equation}
\end{proof}

\subsection{Isomorphism lemma for marked algebras}

For a vector subspace of an algebra $M\subseteq \Lambda$ we denote by $M^2$ the vector subspace generated by all pairwise products from $M.$

\begin{proposition} Let 
$f:\Lambda\to \Lambda'$ be an algebra homomorphism and $M,M'$ be pointed vector subspaces of these algebras such that $f$ induces isomorphisms $M\cong M'$ and $M^2\cong (M')^2.$ Then for any  $\Lambda'$-bimodule $V$ the homomorphism $f$ induces an isomorphism 
\begin{equation}
\Omega(\Lambda,M,V)\cong \Omega(\Lambda',M',V),   
\end{equation}
where $V$ is considered as a $\Lambda$-bimodule via $f.$
\end{proposition}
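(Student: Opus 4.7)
The plan is to deduce the statement from the Isomorphism-lemma for complexes (Proposition~\ref{prop:E-iso}), applied to the morphism $F := \mathrm{id}_M\otimes f^{\otimes\bullet}\colon A(\Lambda,M)\to A(\Lambda',M)$, where on the left $M$ is regarded as a $\Lambda$-bimodule via $f$. A direct check of the simplicial identities shows $F$ is a morphism of simplicial modules satisfying $F(B(\Lambda,V,M))\subseteq B(\Lambda',V',M)$, and hence descends to a morphism of chain complexes with graded submodules $\overline{F}\colon(\NN A(\Lambda,M),\overline{B}(\Lambda,V,M))\to(\NN A(\Lambda',M),\overline{B}(\Lambda',V',M))$.

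For the auxiliary graded submodule required by Proposition~\ref{prop:E-iso}, I would exploit that $1\in V$ implies $V\subseteq V^2$, and set $\tilde E_n := M\otimes(V^2)^{\otimes n}$ and $E_n := \rho(\tilde E_n)$, with $E'_n$ defined analogously. A direct inspection of face maps gives $\partial(B_n)\subseteq\tilde E_{n-1}$: the exterior faces $d_0,d_n$ preserve $B$ using the bimodule structure, while $d_i$ for $1\le i\le n-1$ multiplies adjacent $V$-tensors, landing in a tensor factor of the form $V^2$. Applying $\rho$ yields $\partial^{\NN A}(\overline{B})\subseteq E$ and analogously $\partial^{\NN A'}(\overline{B}')\subseteq E'$, while $f(V^2)\subseteq(V')^2$ gives $\overline{F}(E)\subseteq E'$.

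The crucial step is to verify that $\overline{F}$ restricts to isomorphisms $\overline{B}(\Lambda,V,M)\cong \overline{B}(\Lambda',V',M)$ and $E\cong E'$. The key input is an algebra counterpart of Lemma~\ref{lemma:DB}: over a field, with $1\in V$, one has $B_n\cap DA_n=DB_n$. The argument is a direct-sum decomposition of $\Lambda^{\otimes n}$ compatible with the splitting $\Lambda=\KK\cdot 1\oplus V_0\oplus W$ afforded by the field hypothesis and $1\in V$. Once this is established, $\overline{B}_n\cong B_n/DB_n$; the hypothesis $V\cong V'$ then makes $F|_{B_n}$ an isomorphism (since tensoring over a field preserves isomorphisms), simplicial naturality gives $F(DB_n)=DB'_n$, and one concludes $\overline{F}$ is an isomorphism on $\overline{B}$. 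The same reasoning with $V^2$ and $(V')^2$ in place of $V$ and $V'$ handles $E\cong E'$.

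Proposition~\ref{prop:E-iso} then yields $\Omega(\Lambda,V,M)=\omega(\NN A(\Lambda,M),\overline{B}(\Lambda,V,M))\cong \omega(\NN A(\Lambda',M),\overline{B}(\Lambda',V',M))=\Omega(\Lambda',V',M)$. The main obstacle is the identification $B_n\cap DA_n=DB_n$, the algebra counterpart of Lemma~\ref{lemma:DB}; both the field hypothesis on $\KK$ and the condition $1\in V$ are essential to produce the compatible decomposition.
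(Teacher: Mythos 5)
Your proof is correct and takes essentially the same route as the paper: both apply Proposition~\ref{prop:E-iso} with auxiliary submodule $E = \overline{B(\Lambda,V^2,M)}$, and both rely on the field hypothesis together with $1\in V$ to identify $\overline{B(\Lambda,V,M)}_n$ with $M\otimes(V/\KK)^{\otimes n}$. The paper states this identification directly inside $\NN A(\Lambda,M)_n = M\otimes(\Lambda/\KK)^{\otimes n}$, while you reach the same conclusion through the equivalent intermediate claim $(\DD B)_n = (\DD A)_n\cap B_n$, the algebra analogue of Lemma~\ref{lemma:DB}.
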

\begin{proof}
Set $\overline{\Lambda} = \Lambda /\KK,$  $\overline{M}=M/\KK$ and $\overline{M^2}=M^2/\KK.$ And similarly for $\overline{\Lambda'},\overline{M'},\overline{(M')^2}.$ Then 
\begin{equation}
\NN A(\Lambda,V)_n= V \otimes \overline{\Lambda}^{\otimes n}.
\end{equation}
Since $\KK$ is a field, $\bar M^{\otimes n}$ is a vector subspace of $ \bar \Lambda^{\otimes n}$ and we see that 
\begin{equation}
\overline{B(\Lambda,M,V)} = V\otimes \overline{M}^{\otimes n}. \end{equation}
Similar formulas hold for $\overline{B(\Lambda',M',V)},$ and for $\overline{B(\Lambda,M^2,V)}$ and $\overline{B(\Lambda',(M')^2,V)}.$ Since $f$ induces isomorphisms $\overline{M}\cong \overline{M'}$ and $\overline{M^2}\cong \overline{(M')^2},$ these formulas imply that the map $\NN A(\Lambda,V)\to \NN A(\Lambda',V)$ induces isomorphisms $\overline{B(\Lambda,M,V)}\cong \overline{B(\Lambda',M',V)}$ and $ \overline{B(\Lambda,M^2,V)}\cong \overline{B(\Lambda',(M')^2,V)}.$ Then the assertion follows from Proposition \ref{prop:E-iso}. 
\end{proof}

\section{Appendix. Box product of path sets via Day convolution}\label{Appendix}

The aim of this section is to present a more categorial point of view on box product of path pairs by introducing a box product of path sets. We show that the functor $\PPi_\square: \Pi^{\sf op}\times \Pi\times \Pi \to {\sf Sets}$ gives rise a structure of pro-monoidal category on $\Pi,$ that defines the box product on the category of path sets by the Day convolution \cite{day1970closed},  \cite{day1970construction}, \cite{im1986universal}.  For simplicity we will consider only path sets here, however, this can be easily generalised to path objects of a Benabou cosmos.

\subsection{Pro-functors} 

Here we remind the notion of a pro-functor (also called distributor). A more detailed review of this theory can be found in \cite[\S 7.8]{borceux1994handbook}. 

Let $\CCC$ and $\DDD$ be categories. A profuctor $\FFF:\CCC\rightsquigarrow \DDD$ is a functor $\FFF: \DDD^{\sf op}\times \CCC \to {\sf Sets}.$ The composition of two profunctors $\FFF:\CCC \rightsquigarrow \DDD$ and $\GGG:\DDD \rightsquigarrow \MMM$ is defined as the coend 
\begin{equation}\label{eq:pro-composition}
(\GGG\odot \FFF)(e,c) = \int^d \GGG(e,d)\times \FFF(d,c). 
\end{equation} This composition is associative up to natural isomorphism.

Every functor $f:\CCC\to \DDD$ defines a profunctor $\DDD(1,f):\CCC \rightsquigarrow \DDD $  given by $\DDD(1,f)(d,c)=\DDD(d,f(c)).$ An advantage of profunctors over functors is that for any subcategories $\CCC'\subseteq \CCC$ and $\DDD'\subseteq \DDD$ a profunctor $\CCC \rightsquigarrow \DDD$ induces a profunctor $\CCC'\rightsquigarrow \DDD'.$

For a category $\CCC$ we denote by ${\sf PSh}(\CCC)$ the category of presheaves. If we denote by $\mathbf{1}$ the category with one object, then a presheaf on $\CCC$ is a pro-functor $\mathbf{1}\to \CCC.$ Any pro-functor $\FFF:\CCC \rightsquigarrow \DDD$ defines a  functor 
\begin{equation}
    \FFF_* :{\sf PSh}(\CCC) \longrightarrow {\sf PSh}(\DDD)
\end{equation}
given by the composition \eqref{eq:pro-composition}. Moreover, we have a natural isomorphism 
\begin{equation}
\GGG_* \circ \FFF_* \cong (\GGG \odot \FFF)_*.
\end{equation}

\subsection{Pro-monoidal category} 

A pro-monoidal category is a category $\CCC$ together with the following data
\begin{itemize}
    \item a profunctor $\PPP:\CCC \times \CCC \rightsquigarrow \CCC;$
    \item a profunctor $\JJJ:\mathbf{1}\rightsquigarrow  \CCC;$
    \item associativity isomorphism $\alpha: \PPP \odot (\PPP\times 1) \cong \PPP\odot (1\times \PPP)$
    \item unit isomorphisms $\lambda:\PPP\odot \JJJ\cong \PPP$ and $\rho:  \JJJ\odot \PPP \cong \PPP.$
\end{itemize}
satisfying the pentagon and the unit conditions (see \cite[Def.2.1.1]{day1970construction} for details). Any monoidal category is a pro-monoidal category where $\PPP:\CCC^{\sf op}\times \CCC\times \CCC\to {\sf Sets}$ is defined as 
\begin{equation}
\PPP(c_1,c_2,c_3) = \CCC(c_1,c_2\otimes c_3)
\end{equation}
and $\JJJ:\CCC^{\sf op}\to {\sf Sets}$ is defined as $\JJJ(c)=\CCC(c,1_\CCC).$

An advantage of promonoidal categories over monoidal categories is that a subcategory of a promonoidal category has an induced structure of a promonoidal category. On the other hand the category of presheaves on a promonoidal category has a natural structure of a monoidal category, which is called Day convolution. 

\subsection{Day convolution}

Assume that $(\CCC,\PPP,\JJJ, \alpha  ,\rho,\lambda)$ is a pro-monoidal category. Then we can define the monoidal structure on the category of presheaves ${\sf PSh}(\CCC),$ where the tensor product is defined as the coend  
\begin{equation}
(X\otimes Y)(c) = \int^{c_1,c_2} \PPP(c,c_1,c_2) \times X(c_1)\times Y(c_2)
\end{equation}
and $\JJJ\in {\sf PSh}(\CCC)$ is the unit object.

\subsection{Box product of path sets}

Consider the functor $\PPP=\PPi_\square$
\begin{equation}
\PPi_\square : \Pi^{\sf op}\times \Pi \times \Pi \longrightarrow {\sf Sets}
\end{equation}
defined in \eqref{eq:PPi_square},
and the one-point path set $\JJJ=*:\Pi^{\sf op}\to {\sf Sets},$ which can be defined by the formula  $\JJJ_n=*=\Pi([n],[0]).$
We claim that they define a pro-monoidal structure on $\Pi.$ Indeed, consider the embedding to the category of quivers $\qq:\Pi\to {\sf Quiv}$ (Proposition \ref{prop:q_embedding}). It is easy to check that the box-product of quivers defines a monoidal structure on ${\sf Quiv},$ where the unit object is $\qq^0.$ Then it can be restricted to a structure of pro-monoidal category on the full subcategory $\qq(\Pi)\subseteq {\sf Quiv}$ consisted of the quivers $\qq^n.$ Since a subcategory of a monoidal category inherits a promonoidal structure, we obtain that $\qq(\Pi)$ is a promonoidal category, where the promonoidal structure is defined by the functors
$
\PPP(\qq^n,\qq^k,\qq^l)={\sf Quiv}(\qq^n,\qq^k\square \qq^l)    
$ and $\JJJ(\qq^n)={\sf Quiv}(\qq^n,\qq^0)=*.$
Since the category $\Pi$ is isomorphic to $\qq(\Pi),$  the isomorphism 
\begin{equation}
\PPi_\square(n;k,l)\cong {\sf Quiv}(\qq^n,\qq^k\square \qq^l)
\end{equation}
(Proposition \ref{proposition:square_nerve}) implies that $\PPi_\square$ defines a structure of promonoidal category on $\Pi$. 

Then we can define the box product of path sets $P,P'\in {\sf PSh}(\Pi)$ as the Day convolution:
\begin{equation}
    P\square P' = \int^{[k],[l]} \PPi_\square(-;k,l) \times P_k \times P'_l.
\end{equation}
Now we will define a map 
\begin{equation} \label{eq:theta_coend}
    \theta : P\square P' \longrightarrow P\times P'.
\end{equation}

For any $n,k,l$ there is a map 
\begin{equation}
\theta_{n,k,l} : \PPi_\square(n,k,l)\times P_k \times P_l \longrightarrow P_n \times P'_n,
\end{equation}
\begin{equation}
((f,g),x,y) \mapsto (f^*(x), g^*(y)).
\end{equation}

It is easy to check that for any two morphisms $\varphi:[k]\to [k']$ and $\psi:[l]\to [l']$ of $\Pi$ the diagram 
\begin{equation}
\begin{tikzcd}
& \PPi_\square(n,k,l)\times P_{k}\times P_{l} \ar[rd,"\theta_{n,k,l}"] & \\ 
\PPi_\square(n,k,l)\times P_{k'}\times P_{l'} \ar[ru,"1\times \varphi^*\times \psi^*"] \ar[rd,"{(\varphi,\psi)_*\times 1 \times 1}"'] && P_n\times P'_n \\
& \PPi_\square(n,k',l')\times P_{k'}\times P_{l'} \ar[ru,"\theta_{n,k',l'}"'] &
\end{tikzcd}    
\end{equation}
Then by the universal property of coend we obtain a map \eqref{eq:theta_coend}.

Note that ${\sf Im}(\theta_{n,k,l})=\bigcup_{(f,g)\in \PPi_\square(n,k,l)} f^*(P_k)\times g^*(P'_l).$ Therefore 
\begin{equation}
{\sf Im}(\theta_n : (P \square P')_n\to P_n\times P'_n) = \bigcup_{(f,g)\in \PPi_\square(n)} f^*( P_k ) \times g^*(P_l').     
\end{equation}
Therefore Lemma \ref{lemma:diamond_for_sets} implies that in the definition of the box product of path pairs $(X,Y)\square (X',Y') = (X\times X',Y\diamond Y')$ the path set $Y\diamond Y'$ is the image of the path set $Y\square Y':$
\begin{equation}
    Y\diamond Y' = {\sf Im}( Y\square Y' \longrightarrow X\times X' ).
\end{equation}

\printbibliography
\end{document}